\newcommand{\bC}{{\mathbb C}}
\newcommand{\bD}{{\mathbb D}}
\newcommand{\bK}{{\mathbb K}}
\newcommand{\bQ}{{\mathbb Q}}
\newcommand{\bR}{{\mathbb R}}
\newcommand{\bZ}{{\mathbb Z}}
\newcommand{\bN}{{\mathbb N}}
\newcommand{\g}{{\mathfrak g}}
\newcommand{\fl}{{\mathfrak l}}
\newcommand{\h}{{\mathfrak h}}
\newcommand{\fo}{{\mathfrak o}}
\newcommand{\p}{{\mathfrak p}}
\newcommand{\fs}{{\mathfrak s}}
\newcommand{\sA}{{\mathscr A}}
\newcommand{\sC}{{\mathscr C}}
\newcommand{\cA}{{\mathcal A}}
\renewcommand{\b}{{\mathfrak b}}
\newcommand{\GL}{{{\mbox{\rm GL}}}}
\newcommand{\adj}{{{\mbox{\rm adj}}}}
\newcommand{\Ad}{{{\mbox{\rm Ad}}}}
\newcommand{\Ext}{{{\mbox{\rm Ext}}}}
\newcommand{\Rank}{{{\mbox{\rm Rank}}}}
\newcommand{\Lie}{{{\mbox{\rm Lie}}}}
\newcommand{\pr}{{{\mbox{\rm pr}}}}
\newcommand{\Hom}{{{\mbox{\rm Hom}}}}
\newcommand{\Id}{{{\mbox{\rm Id}}}}
\newcommand{\im}{{{\mbox{\rm im}}}}
\newcommand{\Sta}{{{\mbox{\rm Stab}}}}
\newcommand{\Rep}{{{\mbox{\rm Rep}}}}
\newcommand{\Tilt}{{{\mbox{\rm Tilt}}}}
\newcommand{\End}{{{\mbox{\rm End}}}}
\newcommand{\Mod}{{{\mbox{\rm Mod}}}}
\newcommand{\twopartdef}[4]{\left\{
	\begin{array}{ll}
		#1 & \mbox{if } #2 \\
		#3 & \mbox{if } #4
	\end{array}
	\right.}
\newcommand{\threepartdef}[6]{\left\{
	\begin{array}{ll}
		#1 & \mbox{if } #2 \\
		#3 & \mbox{if } #4 \\
		#5 & \mbox{if } #6
	\end{array}
	\right.}
\newcommand{\fourpartdef}[8]{\left\{
	\begin{array}{ll}
		#1 & \mbox{if } #2 \\
		#3 & \mbox{if } #4 \\
		#5 & \mbox{if } #6 \\
		#7 & \mbox{if } #8
	\end{array}
	\right.}
\newtheorem{theorem}{Theorem}[section]
\newtheorem{prop}[theorem]{Proposition}
\newtheorem{lemma}[theorem]{Lemma}
\newtheorem*{dfn}{Definition}
\newtheorem{cor}[theorem]{Corollary}
\newtheorem{rmk}{Remark}
\newtheorem{nota}{Notation}
\newcommand{\arxiv}[1]{{\tt arXiv:#1}}
\begin{document}
	\title{Tilting modules and highest weight theory for reduced enveloping algebras}
	\author{Matthew Westaway}
	\email{M.P.Westaway@bham.ac.uk}
	\address{School of Mathematics, University of Birmingham, Birmingham, B15 2TT, UK}
	\date{\today}
	\subjclass[2020]{Primary 17B10; Secondary 17B35; 17B45; 17B50}
	\keywords{Lie algebras, reduced enveloping algebras, standard Levi form, tilting modules, translation functors}
	
	\begin{abstract}
		Let $G$ be a reductive algebraic group over an algebraically closed field of characteristic $p>0$, and let $\g$ be its Lie algebra. Given $\chi\in\g^{*}$ in standard Levi form, we study a category $\sC_\chi$ of graded representations of the reduced enveloping algebra $U_\chi(\g)$. Specifically, we study the effect of translation functors and wall-crossing functors on various highest-weight-theoretic objects in $\sC_\chi$, including tilting modules. We also develop the theory of canonical $\Delta$-flags and $\overline{\nabla}$-sections of  $\Delta$-flags, in analogy with similar concepts for algebraic groups studied by Riche and Williamson.
	\end{abstract}
	\maketitle
	
	\tableofcontents

\section{Introduction}\label{s1}

Let $G$ be a reductive algebraic group over an algebraically closed field $\bK$ of characteristic $p>0$, and let $\g$ be its Lie algebra. In positive characteristic, unlike over $\bC$, the finite-dimensional representation theory of $G$ is very different from that of $\g$. However, in certain situations there are noticeable similarities between the two theories. There has been much recent progress in the representation theory of algebraic groups; a natural question is whether this can be extended to the theory for Lie algebras. In this paper, we generalise some important results from the world of algebraic groups to the world of Lie algebras, particularly focusing on results of central importance to \cite{RW1}.

One of the main goals of the representation theory of algebraic groups is to understand the category $\Rep(G)$ of finite-dimensional rational $G$-modules. This category turns out to have a lot of nice properties. By the geometric Satake equivalence it can be understood geometrically, as a certain category of perverse sheaves on an affine Grassmannian \cite{MV}. Its principal block can be equipped with a well-behaved action of the diagrammatic Hecke category \cite{BR,C}. Furthermore, and of greatest relevance for this paper, it is a (lower finite) highest weight category \cite{BS,CPS,RAGS}.

What does it mean to be a highest weight category? For present purposes, it suffices to say that there exists a partially-ordered indexing set $\Lambda$ of the simple modules $L(\lambda)$, $\lambda\in \Lambda$, in $\Rep(G)$ and two particular families of modules $\Delta(\lambda)$ and $\nabla(\lambda)$, $\lambda\in\Lambda$, such that we have the following diagram $$\Delta(\lambda)\twoheadrightarrow L(\lambda)\hookrightarrow \nabla(\lambda).$$ The simple module $L(\lambda)$ is the irreducible head of $\Delta(\lambda)$ and the irreducible socle of $\nabla(\lambda)$. In $\Rep(G)$, and in many other highest weight categories, the objects $\Delta(\lambda)$ and $\nabla(\lambda)$ (which we call standard and costandard modules respectively) are better understood than the simple modules $L(\lambda)$. A key question is therefore to determine the composition multiplicities $$[\Delta(\lambda):L(\mu)]\quad\mbox{and}\quad[\nabla(\lambda):L(\mu)]$$ for $\lambda,\mu\in\Lambda$ (these multiplicities will coincide in $\Rep(G)$, and in any category with a duality).

A special case of this problem is determining the composition multiplicities in the principal block of $\Rep(G)$. In other words, if $W_p$ is the affine Weyl group corresponding to $G$ and we identify $\Lambda$ with a subset of the character group $X(T)$ (for some maximal torus $T$ in $G$), then the linkage principle tells us that $$[\Delta(\lambda):L(\mu)]\neq 0 \quad\implies \quad \mu\in W_p\cdot \lambda,$$ where $W_p\cdot\lambda$ denotes the orbit of $\lambda$ under the dot-action of $W_p$. The principal block $\Rep_0(G)$ consists of those $M\in \Rep(G)$ such that $$[M:L(\mu)]\neq 0 \quad \implies \quad \mu\in W_p\cdot 0.$$ In the principal block, then, we must determine $$[\Delta(w\cdot 0):L(u\cdot 0)]\quad\mbox{and}\quad[\nabla(w\cdot 0):L(u\cdot 0)]$$ for $w,u\in W_p$.

In 1980, Lusztig \cite{L2} conjectured an answer to this question in terms of Kazhdan-Lusztig polynomials. This conjecture was verified by Andersen, Janzten and Soergel in \cite{AJS} for $p\gg 0$ (concluding a programme initiated by Lusztig in \cite{L90a,L90b}, the other steps of which having been completed in \cite{KT95,KT96,KL93,KL94a,KL94b,L94,L95}), however Williamson in \cite{W1,W2} demonstrated the failure of the conjecture for $G=\GL_n$ and some $p>n$. In \cite[Conj. 5.1, Eqn (1.11)]{RW1}, Riche and Williamson made a new conjecture for the answer to this question, this time in terms of so-called $p$-Kazhdan-Lusztig polynomials. Riche and Williamson show in \cite{RW1}, in an argument heavily relying on the highest weight theory of $\Rep(G)$, that their conjecture would follow from the existence of an action of the Hecke category on the principal block of $\Rep(G)$. This action exists due to separate arguments of Bezrukavnikov-Riche \cite{BR} and Ciappara \cite{C}; in fact, Riche and Williamson's conjecture was proved in \cite{AMRW} and \cite{RW2} using two alternative arguments before such an action was found. There thus currently exist at least three proofs of Riche and Williamson's conjecture.

Given a reductive algebraic group $G$ over an algebraically closed field $\bK$ of characteristic $p>0$, its Lie algebra $\g=\Lie(G)$ is a restricted Lie algebra. It is known that each simple module for $\g$ is a simple module for a reduced enveloping algebra $$U_\chi(\g)\coloneqq\frac{U(\g)}{\langle x^p-x^{[p]}-\chi(x)^p\mid x\in\g\rangle}$$ where $\chi\in\g^{*}$. Understanding the representation theory of these reduced enveloping algebras is thus key in understanding the representation theory of $\g$. In general, the representation theory of the reduced enveloping algebras $U_\chi(\g)$ can be quite complicated; for certain $\chi$, however, more is known. In particular, for $\chi$ in standard Levi form the representation theory of $U_\chi(\g)$ behaves in some respects like the representation theory of $G$. One important characteristic of $\chi$ being in standard Levi form is that $U_\chi(\g)$ can be graded in a convenient way by a certain abelian group, and so we may look at the category of graded $U_\chi(\g)$-modules. This ends up being a better-behaved category than the one of ungraded modules and so we focus our attention on it; in fact, we focus on a certain subcategory $\sC_\chi$ which captures all the structure of the larger category. This category was studied in \cite{Jan4,Jan} and has recently also been studied in \cite{Li,LS,West}.

In \cite{GW}, Goodwin and the author show that $\sC_\chi$ is, in the language of \cite{BS}, an essentially finite fibered highest weight category. This is similar to being a highest weight category, but with some additional subtlety. The diagram we discussed above now becomes 
$$\Delta(\lambda)\twoheadrightarrow \overline{\Delta}(\lambda)\twoheadrightarrow L_\chi(\lambda)\hookrightarrow \overline{\nabla}(\lambda)\hookrightarrow  \nabla(\lambda),$$ and we have to distinguish between standard modules $\Delta(\lambda)$ and proper standard modules $\overline{\Delta}(\lambda)$ (and, analogously, costandard and proper costandard modules $\nabla(\lambda)$ and $\overline{\nabla}(\lambda)$). We may once again look to compute $$[\overline{\Delta}(\lambda):L_\chi(\mu)]\quad \mbox{and}\quad [\overline{\nabla}(\lambda):L_\chi(\mu)]$$ for $\lambda,\mu\in\Lambda_\chi$; as before, we focus initially on the principal block of $\sC_\chi$. There is, once again, a conjecture of Lusztig regarding the answer to this question \cite{L3} (see also \cite[\S 11.24]{Jan}). For $p\gg 0$, related questions have been studied and answered in \cite{BL}.

In this paper we extend the highest-weight-theoretic results of \cite{RW1} to the category $\sC_\chi$; we hope that this will ultimately allow us to argue as in \cite{RW1} to gain a deeper understanding of the composition multiplicities $[\overline{\Delta}(\lambda):L_\chi(\mu)]$ in $\sC_\chi$. The key highest weight theory ingredients in \cite{RW1} are the theories of: tilting modules; translation and wall-crossing functors (and their effects on tilting modules); canonical $\nabla$-flags; and sections of $\nabla$-flags.

Tilting theory in $\sC_\chi$ is discussed in \cite{Li,GW} and is further fleshed out in Subsection~\ref{ss3.3}. In particular, there are indecomposable tilting modules $T_\chi(\lambda)$ for $\lambda\in\Lambda$. Translation functors are discussed in \cite{Jan4,Jan}. We demonstrate the effect of translation functors on standard objects in Propositions~\ref{TransFunct} and \ref{TransFunct2}, and the effect on tilting modules in Proposition~\ref{TransTilt}, Corollary~\ref{TransTilt2} and Corollary~\ref{RefTransWall}. In particular, Corollary~\ref{RefTransWall} states the following, in analogy to \cite[II.E.11]{RAGS}. Here $W_{I,p}$ denotes the subgroup of the affine Weyl group generated by those affine reflections corresponding to roots lying in $\bZ I$, $W^{I,0}$ denotes a certain subset of $W_p$ defined Notation~\ref{NOT}, and $d$ denotes a particular function from the alcoves of $X(T)$ to $\bZ$ described in Subsection~\ref{ss4.2}.

\begin{theorem}\label{RefTransWall2}
	Let $s\in W_p$ be a reflection in a wall of the fundamental alcove of $X(T)$ and let $\mu\in X(T)$ lie on such wall.
	\begin{enumerate}
		\item If $wsw^{-1}\notin W_{I,p}$ and $ws\cdot 0<w\cdot 0$ then $$\Theta_s(T_\chi(w\cdot 0))=T_\chi(w\cdot 0)\oplus T_\chi(w\cdot 0)$$ and $$\Theta_s(T_\chi(ws\cdot 0))=T_\chi(w\cdot 0)\oplus T_{< d(w\cdot 0)}$$  where $T_{< d(w\cdot 0)}$ is a tilting module which decomposes as a direct sum of indecomposable tilting modules indexed by $u\cdot 0$ for $u\in W^{I,0}$ with $d(u\cdot 0)< d(w\cdot 0)$.
		\item If $wsw^{-1}\in W_{I,p}$ then $$\Theta_s(T_\chi(
		w\cdot 0))=T_\chi(w\cdot 0)\oplus T_\chi(w\cdot 0).$$
	\end{enumerate}
\end{theorem}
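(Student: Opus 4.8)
The plan is to deduce this from the already-established results on translation functors and their interaction with tilting modules, mirroring the strategy of \cite[II.E.11]{RAGS}. The wall-crossing functor $\Theta_s$ is by definition the composite $T^\mu_0 \circ T^0_\mu$ of a translation out to the wall defined by $s$ and a translation back, so everything should reduce to understanding $T^0_\mu$ and $T^\mu_0$ applied to the relevant tilting modules, together with a counting argument on $\Delta$-flags. First I would recall from Proposition~\ref{TransTilt} (and Corollary~\ref{TransTilt2}) that translation functors send tilting modules to tilting modules in $\sC_\chi$, so that $\Theta_s(T_\chi(w\cdot 0))$ and $\Theta_s(T_\chi(ws\cdot 0))$ are tilting; hence each is a direct sum of indecomposables $T_\chi(u\cdot 0)$, and it remains only to identify the multiplicities. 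Since $\Theta_s$ is exact and sends modules with $\Delta$-flags to modules with $\Delta$-flags, the multiplicities $(\Theta_s(T):\Delta(u\cdot 0))$ are computed from the known effect of $T^0_\mu$, $T^\mu_0$ on standard objects in Propositions~\ref{TransFunct} and \ref{TransFunct2}; in particular $\Theta_s(\Delta(w\cdot 0))$ has a two-step $\Delta$-flag with subquotients $\Delta(w\cdot 0)$ and $\Delta(ws\cdot 0)$ (in the appropriate order dictated by the Bruhat-type order on $W^{I,0}$-representatives).

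Next I would split into the two cases. In case (2), when $wsw^{-1}\in W_{I,p}$, the point $w\cdot 0$ is "on the wall'' from the point of view of the grading group, so the translation to the wall is essentially an equivalence on the relevant block and the translation back just doubles; concretely, $\Theta_s(\Delta(w\cdot 0))$ has $\Delta$-flag with both subquotients equal to $\Delta(w\cdot 0)$, and the same for $\nabla$-flags, forcing $\Theta_s(T_\chi(w\cdot 0))$ to be tilting with character $2\,\ch T_\chi(w\cdot 0)$ and hence $\cong T_\chi(w\cdot 0)^{\oplus 2}$ by the classification of tilting modules via characters. In case (1), with $wsw^{-1}\notin W_{I,p}$ and $ws\cdot 0 < w\cdot 0$: the self-duality of $T_\chi(w\cdot 0)$ and the structure of $\Theta_s$ give that $\Theta_s(T_\chi(w\cdot 0))$ has a $\Delta$-flag in which $\Delta(w\cdot 0)$ appears with multiplicity $(T_\chi(w\cdot 0):\Delta(w\cdot 0)) + (T_\chi(w\cdot 0):\Delta(ws\cdot 0))$; the key input is that since $ws\cdot 0 < w\cdot 0$, these two $\Delta$-multiplicities in the single tilting module $T_\chi(w\cdot 0)$ are \emph{equal} (this is the tilting analogue of the fact that $T_\chi(w\cdot 0)$, restricted to the wall, is indecomposable tilting), so the $\Delta(w\cdot 0)$-multiplicity in $\Theta_s(T_\chi(w\cdot 0))$ is $2(T_\chi(w\cdot 0):\Delta(w\cdot 0))$, which is the maximal possible for a tilting module admitting a summand $T_\chi(w\cdot 0)^{\oplus 2}$; combined with the fact that no $T_\chi(u\cdot 0)$ with $u\cdot 0 > w\cdot 0$ can appear (as $\Theta_s$ does not raise the highest weight past $w\cdot 0$), this pins down $\Theta_s(T_\chi(w\cdot 0)) = T_\chi(w\cdot 0)^{\oplus 2}$ exactly. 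The statement for $\Theta_s(T_\chi(ws\cdot 0))$ then follows by comparing characters: $\ch \Theta_s(T_\chi(ws\cdot 0)) = \ch T_\chi(w\cdot 0) + (\text{lower terms})$, where "lower'' is measured by the function $d$ of Subsection~\ref{ss4.2}, so $\Theta_s(T_\chi(ws\cdot 0)) = T_\chi(w\cdot 0) \oplus T_{<d(w\cdot 0)}$ with $T_{<d(w\cdot 0)}$ a sum of $T_\chi(u\cdot 0)$ with $d(u\cdot 0) < d(w\cdot 0)$.

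The main obstacle I anticipate is the bookkeeping with the indexing set $W^{I,0}$ and the function $d$: one must check carefully that the $\Delta$-subquotients produced by $\Theta_s$ on a standard module are indexed by $W^{I,0}$-representatives, that the order in which they occur is governed by $d$, and that $d(ws\cdot 0) < d(w\cdot 0)$ precisely in the regime $ws\cdot 0 < w\cdot 0$ of case (1) — this is where the hypothesis that $s$ is a reflection in a wall of the fundamental alcove and $\mu$ lies on that wall gets used. The equality of the two $\Delta$-multiplicities $(T_\chi(w\cdot 0):\Delta(w\cdot 0)) = (T_\chi(w\cdot 0):\Delta(ws\cdot 0))$ in case (1) is the conceptual heart of the argument; it should follow from applying $T^0_\mu$ to $T_\chi(w\cdot 0)$, using that $T^0_\mu T_\chi(w\cdot 0)$ is indecomposable tilting on the wall (an input I would extract from Corollary~\ref{TransTilt2}/Corollary~\ref{RefTransWall}) and reading off its $\Delta$-flag, exactly as in the algebraic group case. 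Everything else is formal manipulation with exact functors preserving $\Delta$- and $\nabla$-flags plus the character-based classification of indecomposable tilting modules in $\sC_\chi$.
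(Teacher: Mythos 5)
Your overall strategy (reduce to the effect of $T_0^\mu$ and $T_\mu^0$ on standard objects, then identify the resulting tilting module from its $\nabla$-multiplicities and the linear independence of tilting classes in the Grothendieck group) is the same as the paper's, but two of your key steps do not hold up.

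First, your justification of the central equality $(T_\chi(w\cdot 0):\Delta(ws\cdot 0))=(T_\chi(w\cdot 0):\Delta(w\cdot 0))=1$ rests on the claim that the translate of $T_\chi(w\cdot 0)$ onto the wall is indecomposable. It is not: Proposition~\ref{TransTilt} gives $T_0^\mu(T_\chi(w\cdot 0))\cong T_\chi(w\cdot\mu)\oplus T_\chi(w\cdot\mu)$, and in any case determining that decomposition is exactly equivalent to computing the multiplicity you are trying to establish, so the argument as proposed is circular. The paper breaks the circle with Proposition~\ref{nzhom}: a non-zero map $\Delta(ws\cdot 0)\to\Delta(w\cdot 0)$ is produced from the short exact sequence $0\to\Delta(w\cdot 0)\to T_\mu^{0}(\Delta(w\cdot\mu))\to\Delta(ws\cdot 0)\to 0$, using the $X(T)/\bZ I$-grading, the generation of $Q^I_\chi(w\cdot 0)$ by its top graded piece, and the vanishing $T_0^\mu(L_\chi(w\cdot 0))=0$. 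This is the genuinely new ingredient, and your proposal has no substitute for it.

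Second, the character argument is run in the wrong block. On $[\sC_\chi(0)]$ the operator $[\Theta_s]$ sends $[\Delta(u\cdot 0)]$ to $[\Delta(u\cdot 0)]+[\Delta(us\cdot 0)]$, which equals $2[\Delta(u\cdot 0)]$ only when $usu^{-1}\in W_{I,p}$; the lower terms of the $\Delta$-flag of $T_\chi(w\cdot 0)$ need not satisfy this, so you cannot conclude that $[\Theta_s(T_\chi(w\cdot 0))]=2[T_\chi(w\cdot 0)]$ in case (2) — that equality is a consequence of the theorem, not an input. The paper instead performs the doubling on the wall, where $[T_0^\mu][T_\mu^0]$ multiplies every $[Q^I_\chi(u\cdot\mu)]$ by $2$ uniformly, deduces $T_0^\mu T_\mu^0(T_\chi(w\cdot\mu))\cong T_\chi(w\cdot\mu)^{\oplus 2}$, and only then pins down $T_\mu^0(T_\chi(w\cdot\mu))$ and $T_0^\mu(T_\chi(w\cdot 0))$ separately (Steps 2--4 of the proof of Proposition~\ref{TransTilt}). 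Finally, the passage from summands indexed by $\kappa\prec w\cdot\mu$ to summands with $d(u\cdot 0)<d(w\cdot 0)$, which you defer as bookkeeping, genuinely requires the order-comparison statements Lemma~\ref{PrecML} and Corollary~\ref{PrecML2} relating $\prec$ on the $\mu$-orbit to $\prec$ on the $0$-orbit.
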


We see how the theory of canonical $\nabla$-flags and sections of $\nabla$-flags extends to the category $\sC_\chi$ in Subsection~\ref{ss5.1}. Key in \cite{RW1} is understanding the effect of translation functors on sections of $\nabla$-flags; we extend this to $\sC_\chi$ in Theorems~\ref{LMSect} and \ref{MLSect}. We conclude with Theorem~\ref{TruncWall}, an analogue of \cite[Prop 3.10]{RW1}. As a sample of the kind of results we get, the following is Theorem~\ref{BasTilt1}. In it, the maps $\iota_{w\cdot 0}$ denote fixed inclusions $\Delta(w\cdot 0)\hookrightarrow T_\chi(w\cdot 0)$.
	
\begin{theorem}\label{BasTilt1Int}
	Let $s\in W_p$ be a reflection in a wall of the fundamental alcove of $X(T)$ and let $\mu\in X(T)$ lie on such wall. Let $M$ in the principal block of $\sC_\chi$ have a costandard filtration. Let $u\in W^{I,0}$ be such that $usu^{-1}\notin W_{I,p}$ and $us\cdot 0<u\cdot 0$. Let $f_1,\ldots,f_n$ be a family of morphisms in $\Hom_{\sC_\chi}(T_\chi(u\cdot 0),M)$ such that the maps $f_1\circ \iota_{u\cdot 0},\ldots,f_n\circ \iota_{u\cdot 0}$ form a basis of $\Hom_{\sC_\chi}(\Delta(u\cdot 0),M)$. Let also $g_1,\ldots,g_m$ be a family of morphisms in $\Hom_{\sC_\chi}(T_\chi(us\cdot 0),M)$ such that the maps $g_1\circ \iota_{us\cdot 0},\ldots,g_m\circ \iota_{us\cdot 0}$ form a basis of $\Hom_{\sC_\chi}(\Delta(us\cdot 0),M)$. Then there exist families of morphisms $F_1,\ldots,F_r:T_\chi(u\cdot 0)\to \Theta_s(T_\chi(u\cdot 0))$ and $G_1,\ldots,G_t:T_\chi(u\cdot 0)\to \Theta_s(T_\chi(us\cdot 0))$ such that the compositions $$\Delta(u\cdot 0)\hookrightarrow T_\chi(u\cdot 0)\xrightarrow{F_i} \Theta_s(T_\chi(u\cdot 0))\xrightarrow{\Theta_s(f_j)} \Theta_s(M)$$ and $$\Delta(u\cdot 0)\hookrightarrow T_\chi(u\cdot\lambda)\xrightarrow{G_k} \Theta_s(T_\chi(us\cdot 0))\xrightarrow{\Theta_s(g_l)} \Theta_s(M)$$ form a basis of $\Hom_{\sC_\chi}(\Delta(u\cdot 0),\Theta_s(M))$.
\end{theorem}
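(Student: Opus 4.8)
The plan is to exploit the exactness and biadjointness of the wall-crossing functor $\Theta_s$ to reduce the statement to a description of $\Hom_{\sC_\chi}(\Theta_s(\Delta(u\cdot 0)),M)$, and then to read off such a description from the two-step $\Delta$-filtration of $\Theta_s(\Delta(u\cdot 0))$ together with the decompositions of $\Theta_s$ on the relevant tilting modules provided by case~(1) of Theorem~\ref{RefTransWall2} (which applies since $usu^{-1}\notin W_{I,p}$). This is essentially the $\Hom$-space form of the statements about $\Theta_s$ and $\overline{\nabla}$-sections in Theorems~\ref{LMSect} and~\ref{MLSect}, and I would either deduce it from those or argue directly as follows.

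First I would assemble the homological input. Since $M$ has a costandard filtration, $\Ext^1_{\sC_\chi}(X,M)=0$ for every $\Delta$-filtered $X$. Combining the adjunction $\Hom_{\sC_\chi}(\Delta(u\cdot 0),\Theta_s(M))\cong\Hom_{\sC_\chi}(\Theta_s(\Delta(u\cdot 0)),M)$ with the short exact sequence $0\to\Delta(u\cdot 0)\xrightarrow{\psi}\Theta_s(\Delta(u\cdot 0))\xrightarrow{q}\Delta(us\cdot 0)\to 0$ of Proposition~\ref{TransFunct} (in this direction because $us\cdot 0<u\cdot 0$) gives the short exact sequence
$$0\to\Hom_{\sC_\chi}(\Delta(us\cdot 0),M)\xrightarrow{(-)\circ q}\Hom_{\sC_\chi}(\Theta_s(\Delta(u\cdot 0)),M)\xrightarrow{(-)\circ\psi}\Hom_{\sC_\chi}(\Delta(u\cdot 0),M)\to 0 ,$$
so $\dim\Hom_{\sC_\chi}(\Delta(u\cdot 0),\Theta_s(M))=n+m$. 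It then remains to construct compositions of the two prescribed shapes whose mates under the adjunction lift the basis $\{f_j\circ\iota_{u\cdot 0}\}$ of the right-hand term, while the remaining ones span the submodule $\Hom_{\sC_\chi}(\Delta(us\cdot 0),M)$ (embedded via $(-)\circ q$) with basis $\{g_l\circ\iota_{us\cdot 0}\}$.

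Second I would unwind the adjunction using naturality of the counit $\Theta_s\Theta_s\Rightarrow\mathrm{id}$: the composition $\Theta_s(f_j)\circ F_i\circ\iota_{u\cdot 0}$ has mate $f_j\circ\widehat{F_i}\circ\Theta_s(\iota_{u\cdot 0})$ and $\Theta_s(g_l)\circ G_k\circ\iota_{u\cdot 0}$ has mate $g_l\circ\widehat{G_k}\circ\Theta_s(\iota_{u\cdot 0})$, where $\widehat{F_i}\colon\Theta_s(T_\chi(u\cdot 0))\to T_\chi(u\cdot 0)$ and $\widehat{G_k}\colon\Theta_s(T_\chi(u\cdot 0))\to T_\chi(us\cdot 0)$ are the mates of $F_i,G_k$. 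The $\widehat{F_i}$ I would take from the projections of $\Theta_s(T_\chi(u\cdot 0))\cong T_\chi(u\cdot 0)^{\oplus 2}$ onto its summands: since $\Theta_s(\iota_{u\cdot 0})\circ\psi$ is a nonzero element of $\Hom_{\sC_\chi}(\Delta(u\cdot 0),T_\chi(u\cdot 0))^{\oplus 2}$ and $[T_\chi(u\cdot 0):\nabla(u\cdot 0)]=1$, some such $\widehat{F}$ has $\widehat{F}\circ\Theta_s(\iota_{u\cdot 0})\circ\psi$ a nonzero multiple of $\iota_{u\cdot 0}$. The $\widehat{G_k}$ I would choose so that $\widehat{G}\circ\Theta_s(\iota_{u\cdot 0})$ is a nonzero multiple of $\iota_{us\cdot 0}\circ q$; this is possible because $\Hom_{\sC_\chi}(\Theta_s(\Delta(u\cdot 0)),T_\chi(us\cdot 0))$ is one-dimensional, spanned by $\iota_{us\cdot 0}\circ q$ (a $\Delta$-flag/$\nabla$-flag multiplicity count using $u\cdot 0\not\le us\cdot 0$), restriction along $\Theta_s(\iota_{u\cdot 0})$ is onto it (the obstruction $\Ext^1_{\sC_\chi}(\Theta_s(T_\chi(u\cdot 0)/\Delta(u\cdot 0)),T_\chi(us\cdot 0))$ vanishes, the first argument being $\Delta$-filtered and the second $\nabla$-filtered), and $\Hom_{\sC_\chi}(\Delta(u\cdot 0),T_\chi(us\cdot 0))=0$, forcing $\widehat{G}\circ\Theta_s(\iota_{u\cdot 0})\circ\psi=0$. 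With these choices the mates of the first family restrict under $(-)\circ\psi$ to scalar multiples of the $f_j\circ\iota_{u\cdot 0}$, and the mates of the second family lie in the image of $(-)\circ q$ and correspond there to scalar multiples of the $g_l\circ\iota_{us\cdot 0}$; hence together they form a basis of $\Hom_{\sC_\chi}(\Theta_s(\Delta(u\cdot 0)),M)$, and transporting through the adjunction yields the asserted basis of $\Hom_{\sC_\chi}(\Delta(u\cdot 0),\Theta_s(M))$.

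I expect the second step to be the real work: chasing the structure maps $\iota_{u\cdot 0}$, $\psi$, $q$ and the idempotent splittings of Theorem~\ref{RefTransWall2} through $\Theta_s$ and through the $(\Theta_s,\Theta_s)$-adjunction via the triangle identities, and checking that the choices of the $F$'s and $G$'s are simultaneously valid. One must also track the grading shifts hidden in the decompositions of $\Theta_s$ on the tilting modules, and, when the fibres of $\sC_\chi$ are nontrivial, read the one-dimensionality assertions above only up to the fibre (so e.g.\ ``spanned by $\iota_{u\cdot 0}$'' must be interpreted accordingly). Given Theorem~\ref{RefTransWall2} and the translation-functor computations on standard objects, the remainder is formal.
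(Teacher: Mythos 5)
Your route is genuinely different from the paper's: the paper deduces this theorem from the machinery of $\overline{\Delta}$-sections of $\nabla$-flags (Lemmas~\ref{Stand2PropStand} and \ref{PropStand2Stand} converting between $\Delta$- and $\overline{\Delta}$-sections, and Theorems~\ref{LMSect} and \ref{MLSect} on translation onto and off the wall), whereas you work directly with the self-adjunction of $\Theta_s$ and the two-step standard filtration of $\Theta_s(\Delta(u\cdot 0))$. Your reduction to $\Hom_{\sC_\chi}(\Theta_s(\Delta(u\cdot 0)),M)$, the dimension count $n+m$, and the identification of the mates as $f_j\circ\widehat{F_i}\circ\Theta_s(\iota_{u\cdot 0})$ and $g_l\circ\widehat{G_k}\circ\Theta_s(\iota_{u\cdot 0})$ are all correct, and this is an attractive, more direct framework.

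There is, however, a genuine gap in the construction of $\widehat F$ and $\widehat G$, precisely at the point your closing caveat gestures at but does not resolve. In $\sC_\chi$ one has $\dim\Hom_{\sC_\chi}(\Delta(u\cdot 0),T_\chi(u\cdot 0))=N_I(u\cdot 0)=\left\vert W_I\right\vert$ and likewise $\dim\Hom_{\sC_\chi}(\Theta_s(\Delta(u\cdot 0)),T_\chi(us\cdot 0))=\left\vert W_I\right\vert$, so for $I\neq\emptyset$ neither space is one-dimensional; the fact $(T_\chi(u\cdot 0):\nabla(u\cdot 0))=1$ only controls $\Hom_{\sC_\chi}(\overline{\Delta}(u\cdot 0),-)$, not $\Hom_{\sC_\chi}(\Delta(u\cdot 0),-)$ --- this $\Delta$ versus $\overline{\Delta}$ discrepancy is exactly what the paper's conversion lemmas are for. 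Consequently the claim that some projection $\widehat F$ yields a nonzero multiple of $\iota_{u\cdot 0}$ is unjustified, and moreover ``nonzero'' would not suffice: every $h\in\Hom_{\sC_\chi}(\Delta(u\cdot 0),T_\chi(u\cdot 0))$ equals $\iota_{u\cdot 0}\circ e$ for a unique $e\in\End_{\sC_\chi}(\Delta(u\cdot 0))$ (a local algebra of dimension $\left\vert W_I\right\vert$, cf.\ Propositions~\ref{Trunc} and \ref{EndStand}), and $\{f_j\circ h\}$ is a basis if and only if $e$ is a unit; for $e$ in the radical (e.g.\ $h$ factoring through $\overline{\Delta}(u\cdot 0)$) the set already fails to be a basis for $M=\nabla(u\cdot 0)$. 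The good news is that your own $\Ext$-vanishing observation repairs this: restriction along $\Theta_s(\iota_{u\cdot 0})$ and then along $\psi$ is surjective onto $\Hom_{\sC_\chi}(\Delta(u\cdot 0),T_\chi(u\cdot 0))$, so you may choose $\widehat F$ with $\widehat F\circ\Theta_s(\iota_{u\cdot 0})\circ\psi=\iota_{u\cdot 0}$ exactly, and likewise $\widehat G$ with $\widehat G\circ\Theta_s(\iota_{u\cdot 0})=\iota_{us\cdot 0}\circ q$ exactly; with these choices (and $r=t=1$) the remainder of your argument closes.
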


The layout of this paper is as follows. In Section~\ref{s2} we give some preliminary facts about the category $\sC_\chi$; in Subsection~\ref{ss2.1} we establish some notation, in Subsection~\ref{ss2.2} we define the category $\sC_\chi$ and discuss some properties of it, and in Subsection~\ref{ss2.3} we explain the duality $\bD$ on $\sC_\chi$. We then, in Section~\ref{s3}, introduce highest weight theory. In Subsection~\ref{ss3.1} we recall some generalities about highest weight theory, following \cite{BS}, and in Subsection~\ref{ss3.2} we see how this applies to $\sC_\chi$. In Subsection~\ref{ss3.3} we prove some results about maps between tilting modules and other modules in $\sC_\chi$ which will be key in  what follows. Section~\ref{s4} focuses on translation functors in $\sC_\chi$, beginning with some generalities in Subsection~\ref{ss4.1} and exploring the effect of translation functors on certain modules in Subsection~\ref{ss4.2}. We conclude in Section~\ref{s5} by extending the theory of canonical $\nabla$-flags and their sections to $\sC_\chi$. In Subsection~\ref{ss5.1} we generalise the definitions and main parts of the theory; in Subsections~\ref{ss5.2} and \ref{ss5.3} we see how translation onto and off of a wall affects sections of $\nabla$-flags; and finally, in Subsection~\ref{ss5.4}, we put this together to conclude with some remarks about maps between wall-crossing functors.

{\bf Acknowledgments:} The author would like to thank Simon Goodwin for useful discussions regarding aspects of this paper. The author was supported during this research by a research fellowship from the Royal Commission for the Exhibition of 1851.

\section{Preliminaries on $\sC_\chi$}\label{s2}
\subsection{Notation and Set-up}\label{ss2.1}
Let $G$ be a reductive algebraic group over an algebraically closed field $\bK$ of characteristic $p>0$. We fix a maximal torus $T$ of $G$ and a Borel subgroup $B$ of $G$ containing $T$. Set $X(T)$ to be the character group and $Y(T)$ to be the cocharacter group of $T$, and let $\langle \cdot,\cdot\rangle: X(T)\times Y(T)\to \bZ$ be the perfect pairing between $X(T)$ and $Y(T)$. Note that $X(T)$ and $Y(T)$ are both isomorphic to $\bZ^{\tiny \Rank(T)}$.

We denote by $\g, \b$ and $\h$ the Lie algebras of $G$, $B$ and $T$, respectively. The Lie algebra $\g$ is restricted, with $p$-th power map denoted $x\mapsto x^{[p]}$; this also restricts to a $p$-th power map on $\b$ and on $\h$.

We write $\Phi\subseteq X(T)$ for the root system of $\g$ corresponding to $T$ so that  $\g=\h\oplus\bigoplus_{\alpha\in\Phi}\g_\alpha$, where $$\g_\alpha\coloneqq\{x\in\g\mid \Ad(t)(x)=\alpha(t)x\mbox{ for all }t\in T\}.$$ Denote by $\Phi^{+}$ the positive roots corresponding to $B$ and by $\Phi_s$ the associated simple roots. Given $\alpha\in \Phi$, we write $\alpha^\vee\in Y(T)$ for the corresponding coroot. For $\alpha\in \Phi$, we define $h_\alpha\coloneqq d\alpha^\vee(1)\in\h$, and we choose $e_\alpha\in \g_\alpha$ and $e_{-\alpha}\in\g_{-\alpha}$ so that $[e_\alpha,e_{-\alpha}]=h_\alpha$. 

To each $\alpha\in \Phi_s$ we associate the simple reflection $s_\alpha:X(T)\to X(T)$ given by $s_\alpha(\lambda)=\lambda-\langle\lambda,\alpha^\vee\rangle\alpha$ and, for $m\in\bZ$, the translation $t_{\alpha,m}:X(T)\to X(T)$ given by $t_\alpha(\lambda)=\lambda+m\alpha$. The group generated by the simple reflections $s_\alpha$ for $\alpha\in \Phi_s$ will be denoted $W$ (the Weyl group) and the group generated by the simple reflections $s_\alpha$ and the translations $t_{\alpha,mp}$ for $\alpha\in \Phi_s$ and $m\in\bZ$ will be denoted $W_{p}$ (the affine Weyl group). We will also write $s_{\alpha,mp}=t_{\alpha,mp}\circ s_\alpha$; $W_p$ is then also generated by the $s_{\alpha,mp}$ for $\alpha\in\Phi_s$ and $m\in\bZ$.

We denote by $\rho\in X(T)\otimes_{\bZ}\bQ$ the half-sum of positive roots $\frac{1}{2}\sum_{\alpha\in\Phi^{+}}\alpha$; this will in fact lie in $X(T)$ under the assumptions we are soon to make. The dot-action of $W$ (resp. $W_p$) on $X(T)$ is then defined by $$w\cdot\lambda=w(\lambda+\rho)-\rho$$ for $w\in W$ (resp. $W_p$) and $\lambda\in X(T)$. Both this action and the usual action descend to actions on $X(T)\otimes_{\bZ}\bR$ and on $\h^{*}$. Given $\lambda\in X(T)$ (or $X(T)\otimes_{\bZ}\bR$, or $\h^{*}$), we denote by $W\cdot\lambda$ the orbit of $\lambda$ under the dot-action of $W$ and by $W(\lambda)$ the orbit under the usual action of $W$ (although we shall rarely use the latter). On the other hand, we will write $\Sta_{W}(\lambda)$ for the stabiliser of $\lambda$ under the dot-action, since we shall never need to consider such stabiliser for the usual action. We will use analogous notation for $W_p$ and any other groups for which it makes sense. 

We assume throughout this paper that $(G,p)$ satisfy Jantzen's standard assumptions; that is to say (A) the derived subgroup of $G$ is simply connected, (B) $p$ is good for $G$, and (C) there exists a non-degenerate symmetric bilinear form on $\g$. Assumption (C) implies that there exists a $G$-equivariant isomorphism $\Theta:\g\xrightarrow{\sim}\g^{*}$, where $G$ acts via the adjoint/coadjoint actions respectively. We say that $\chi\in\g^{*}$ is {\bf semisimple} (resp. {\bf nilpotent}) if $\Theta^{-1}(\chi)$ is. Note that $\chi$ is nilpotent if and only if there exists $g\in G$ such that $g\cdot\chi(\b)=0$; when $\chi$ is nilpotent, we will therefore normally assume that $\chi(\b)=0$. 

We denote by $U(\g)$ the universal enveloping algebra of $\g$ and for each $\chi\in\g^{*}$ we define the {\bf reduced enveloping algebra} $$U_\chi(\g)=\frac{U(\g)}{\langle x^p-x^{[p]}-\chi(x)^p\mid x\in\g\rangle};$$ this is an associative algebra of dimension $p^{\dim\g}$. Every simple $\g$-module is a simple $U_\chi(\g)$-module for some $\chi\in\g^{*}$ (see, for example, \cite{Jan2}). Using Jantzen's standard assumptions, we may reduce most questions we are interested in to the case when $\chi$ is nilpotent.

Given $I\subseteq \Phi_s$, we define a linear form $\chi_I\in\g^{*}$ by $$\chi_I(\b)=0\quad\mbox{and}\quad \chi_I(e_{-\alpha})=\twopartdef{1}{\alpha\in I,}{0}{\alpha\notin I,}$$ for all $\alpha\in\Phi^{+}$. We say that such $\chi_I$ is in {\bf standard Levi form}. Given such $I$ we denote by $W_I\subseteq W$ the subgroup generated by the $s_\alpha$ for $\alpha\in I$ and by $W_{I,p}\subseteq W_p$ the subgroup generated by the $s_\alpha$ and the $t_{\alpha,mp}$ for $\alpha\in I$ and $m\in\bZ$. Furthermore, we denote by $w_0$ the longest element of $W$ and by $w_I$ the longest element of $W_I$. 

{\bf Important note:} From now on, we only consider $\chi$ in standard Levi form and we assume that we have fixed $I\subseteq\Phi_s$ to determine it. We therefore omit $I$ from the notation going forward.

\subsection{The Category $\sC_\chi$}\label{ss2.2}

The Lie algebra $\g$ can be equipped with an $X(T)$-grading such that $\h$ lies in grade 0 and each $\g_\alpha$ lies in grade $\alpha$. This can be extended to an $X(T)$-grading of $U(\g)$. When $\chi$ is in standard Levi form corresponding to $I\subseteq\Phi_s$, this grading descends to an $X(T)/\bZ I$-grading of $U_\chi(\g)$.

We may thus consider the category of $X(T)/\bZ I$-graded $U_\chi(\g)$-modules. In fact, we consider a subcategory which we denote by $\sC_\chi$; this category has been previously studied by Jantzen in \cite{Jan4,Jan}. This category is defined as follows: Its objects are finite-dimensional $U_\chi(\g)$-modules $M$ which, as $\bK$-vector spaces, decompose as $$M=\bigoplus_{\lambda+\bZ I\in X(T)/\bZ I} M_{\lambda+\bZ I}$$ for some subspaces $M_{\lambda+\bZ I}$, subject to the following conditions:
\begin{enumerate}
	\item[(1)] For any $\sigma+\bZ I, \lambda +\bZ I\in X(T)/\bZ I$, we have $(U_\chi(\g)_{\sigma+\bZ I}) M_{\lambda+\bZ I}\subseteq M_{\sigma + \lambda+\bZ I}$.
	\item[(2)] For each $\lambda+\bZ I\in X/\bZ I$ there is a $\bK$-vector space decomposition $$M_{\lambda+\bZ I}=\bigoplus_{\substack{d\mu\in \h^{*} \\ \mu\in \lambda+\bZ I +pX(T)}} M_{\lambda+\bZ I}^{d\mu}$$ with the property that $$hm=d\mu(h)m$$ for each $h\in \h$, $\mu\in \lambda+\bZ I+pX(T)$, and $m\in M_{\lambda+\bZ I}^{d\mu}$.
\end{enumerate}
The category of $X(T)/\bZ I$-graded $U_\chi(\g)$-modules then decomposes as a direct sum of copies of this category (see \cite[\S 11.5]{Jan}).

Let us define $\g_I=\h\oplus\bigoplus_{\alpha\in\Phi\cap\bZ I} \g_\alpha$, $\b_I=\h\oplus\bigoplus_{\alpha\in\Phi^{+}\cap\bZ I} \g_\alpha$ and $\p_I=\g_I\oplus\bigoplus_{\alpha\in \Phi^{+}\setminus \bZ I}\g_\alpha$. We also define a number of categories similar to $\sC_\chi$, but defined for certain subalgebras of $\g$ (such as $\g_I$) instead. In lieu of giving an explicit definition for each of them, we simply list the notation in Table~\ref{tab1}; it is a straightforward exercise to adapt the definition of $\sC_\chi$ to each of these settings (noting, however, that we are always working with $X(T)/\bZ I$-gradings, even in those categories where other gradings exist). The reader may consult \cite{West} for more detail; we have tried to use consistent notation from that paper.

	\begin{table}
	\label{tab1}
	\begin{center}
		\bgroup
		\renewcommand{\arraystretch}{2}
		\caption{Analogues of the category $\sC_\chi$}
		\begin{tabular}{|| p{10em} | p{10em} ||}
		\hline
		$\mbox{Subalgebra of }\g$ & $\mbox{Notation for category}$ \\ [0.5ex]
		\hline
		$\g$ & $\sC_\chi$\\ [0.5ex]
		\hline
		$\b$ & $\sC_\chi'$\\ [0.5ex]
		\hline
		$\h$ & $\sC_\chi''$ \\ [0.5ex]
		\hline
		$\g_I$ & $\sC_\chi^{I}$\\ [0.5ex]
		\hline
		$\p_I$ & $\sC_\chi^{I,+}$\\ [0.5ex]
		\hline
	\end{tabular}
\egroup
\end{center}
\end{table}

There are certain functors between these categories that we will use at times throughout this paper. Two of these are extension functors, $\sC_\chi''\to\sC_\chi'$ and $\sC_\chi^I\to\sC_\chi^{I,+}$, where we trivially extend the module structure. We do not give names to these two functors. Two more functors that we care about are the induction functors $$Z_\chi:\sC_\chi'\to \sC_\chi,\qquad Z_\chi(M)=U_\chi(\g)\otimes_{U_\chi(\b)} M$$ and $$\Gamma_\chi:\sC_\chi^{I,+}\to \sC_\chi,\qquad \Gamma_\chi(M)=U_\chi(\g)\otimes_{U_\chi(\p_I)} M.$$ Further details of these constructions can be found in \cite[\S 4]{West}.

Given $\lambda\in X(T)$, we define $\bK_\lambda\in\sC_\chi''$ to be the $U_\chi(\h)$-module which lies entirely in grade $\lambda+\bZ I$ and on which $h\in\h$ acts via $d\lambda(h)$. Extending this to an object in $\sC_\chi'$, we define $$Z_\chi(\lambda)\coloneqq Z_\chi(\bK_\lambda),$$ which we call the {\bf baby Verma module} corresponding to $\lambda$. Each $Z_\chi(\lambda)$ has a unique simple quotient $L_\chi(\lambda)\in\sC_\chi$ (see \cite[\S 10.2]{Jan}).

Furthermore, we will write $$Z_{\chi,I}(\lambda)\coloneqq U_\chi(\g_I)\otimes_{U_0(\b_I)}\bK_{\lambda}\in \sC_\chi^I,$$ which we view as lying entirely in grade $\lambda+\bZ I$. This is irreducible in $\sC_\chi^I$; furthermore, we have $Z_{\chi}(\lambda)_{\lambda+\bZ I}\cong Z_{\chi,I}(d\lambda)$ as objects in $\sC_\chi^I$ (see \cite[\S 2.8]{Jan4}). We define $Q_{\chi,I}(\lambda)\in\sC_\chi^I$ to be the projective cover of $Z_{\chi,I}(\lambda)$. We may then define $$Q_{\chi}^I(\lambda)\coloneqq \Gamma_\chi(Q_{\chi,I}(\lambda))\in\sC_\chi,$$ where we trivially extend $Q_{\chi,I}(\lambda)$ to an object in $\sC_\chi^{I,+}$. Finally, we denote by $Q_\chi(\lambda)\in\sC_\chi$ the projective cover of $L_\chi(\lambda)\in\sC_\chi$. It is shown in \cite[\S 11.9]{Jan} that, given $\lambda,\mu\in X(T)$, $$L_\chi(\lambda)\cong L_\chi(\mu) \iff Z_\chi(\lambda)\cong Z_\chi(\mu)\iff\lambda\in W_{I,p}\cdot\mu.$$

A module $M\in\sC_\chi$ is said to have a {\bf $Q$-filtration} if there exists a filtration $$0=M_0\subseteq M_1\subseteq M_2\subseteq \cdots\subseteq M_n=M$$ of $M$ such that, for each $1\leq i\leq n$, $M_i/M_{i-1}\cong Q_{\chi}^I(\mu_i)$ for some $\mu_i\in X(T)$. If $M$ has a $Q$-filtration then we denote by $(M:Q_\chi^I(\lambda))$ the number of times $Q_{\chi}^I(\lambda)$ appears as a factor of this filtration (it is shown in \cite[Prop. 8.16]{West} that this does not depend on the filtration; this would also follow from \cite[Thm 3.11]{BS} using that $\sC_\chi$ is an essentially finite fibered highest weight category \cite{GW}). We similarly say that $M\in\sC_\chi$ has a {\bf $Z$-filtration} if there exists a filtration $$0=M_0\subseteq M_1\subseteq M_2\subseteq \cdots\subseteq M_n=M$$ of $M$ such that, for each $1\leq i\leq n$, $M_i/M_{i-1}\cong Z_{\chi}(\mu_i)$ for some $\mu_i\in X(T)$.

If $L\in\sC_\chi$ is a simple module and $M\in\sC_\chi$ then we write $[M:L]$ for the number of times $L$ appears in the composition series of $M$. For $\lambda,\mu\in X(T)$, it is shown in \cite[Prop. 11.18]{Jan} that $$(Q_\chi(\lambda):Q_\chi^I(\mu))=[Z_\chi(\mu):L_\chi(\lambda)].$$ Furthermore, \cite[\S 11.18]{Jan} shows that $Q_\chi^I(\lambda)$ has a filtration with $\left\vert W_I\cdot( \lambda+pX)\right\vert$ successive quotients, all of which are isomorphic to $Z_\chi(\lambda)$. For later use, let us denote $N_I(\lambda)\coloneqq\left\vert W_I\cdot( \lambda+pX)\right\vert$.

\subsection{Duality on $\sC_\chi$}\label{ss2.3}

	Given $M\in \sC_\chi$, the dual space $M^{*}$ can be equipped with a $\g$-module structure by defining $$(x\cdot f)(m)=f(-x\cdot m)\quad \mbox{for all  }x\in\g,f\in M^{*}\mbox{ and } m\in M.$$ With this action, $M$ in fact becomes a $U_{-\chi}(\g)$-module. Furthermore, $M$ can be equipped with an $X(T)/\bZ I$-grading by setting $$(M^{*})_{\lambda+\bZ I}=\{f\in M^{*}\mid f(M_{\nu+\bZ I})=0\mbox{ for all }\nu+\bZ I\in X(T)/\bZ I\mbox{ with }\nu+\bZ I\neq -\lambda+\bZ I\}$$ for $\lambda+\bZ I\in X(T)/\bZ I$. With this structure, it can be checked that $M$ lies in $\sC_{-\chi}$ (which we haven't formally defined but is defined as $\sC_\chi$ is except with $U_{-\chi}(\g)$ in place of $U_\chi(\g)$). 
	
	In \cite{Jan5}, Jantzen introduces an automorphism $\tau:G\to G$ (depending on $I$) such that $\tau$ restricts to an automorphism $T\to T$ (and hence the corresponding automorphism $\tau:\g\to\g$ restricts to an automorphism $\h\to\h$) with the properties that $\chi\circ\tau^{-1}=-\chi$ and $\lambda\circ\tau=-w_I\lambda$ for all $\lambda\in X(T)$.
	
	Given $N\in\sC_{-\chi}$ we define $\,^\tau N$ to be the $\g$-module with the same underlying $\bK$-vector space as $N$ and with $\g$-module structure given by $$x\cdot n=\tau^{-1}(x)n\quad \mbox{for all   } x\in\g \mbox{ and }n\in N,$$ where the $\g$-action on the right-hand side is that of $N$. Furthermore, $\,^{\tau}N$ can be equipped with an $X(T)/\bZ I$-grading by setting $$(\,^{\tau}N)_{\lambda+\bZ I}=N_{-\lambda+\bZ I}$$ for each $\lambda+\bZ I\in X(T)/\bZ I$. It can be checked (see \cite[\S 11.15]{Jan}) that $\,^{\tau}N\in\sC_\chi$.
	
	Combining these two operations, we may define a functor $\bD:\sC_\chi\to\sC_\chi$ by setting $$\bD(M)=\,^{\tau}(M^{*}).$$  It is straightforward to see that we can also define a quasi-inverse to $\bD$, denoted  $\overline{\bD}:\sC_\chi\to\sC_\chi$, by defining $$\overline{\bD}(M)=(\,^{\tau^{-1}}M)^{*}$$ (where $\,^{\tau^{-1}}M$ is defined in an analogous way to $\,^\tau M$). The map $\bD$ is thus an anti-equivalence of categories. More details on $\bD$ can be found in \cite{Jan}.
	
	In particular, it is shown in \cite[\S 11.16]{Jan} that $\bD(L_\chi(\lambda))\cong L_\chi(\lambda)$ for all $\lambda\in X(T)$. Therefore, $\bD$ (and, similarly, $\overline{\bD}$) is a duality (i.e. an anti-equivalence of categories $\sC_\chi\to\sC_\chi$ which fixes isomorphism classes of irreducible modules). In particular, this means that $\bD(Q_\chi(\lambda))\cong Q_\chi(\lambda)$ for $\lambda\in X(T)$ since projective covers and injective hulls coincide in $\sC_\chi$ (see \cite[Lem. 2.7]{Jan4}).

\section{Preliminaries on Highest Weight Theory}\label{s3}

In this section, we summarise the necessary background on highest weight categories and the applications relevant to this paper. We largely follow \cite{BS} for this exposition; other relevant sources are \cite{BD,CPS,Dlab,M}. The application of this theory to the category of $\sC_\chi$ is developed in \cite{GW}.

\subsection{Highest Weight Theory Generalities}\label{ss3.1}

A non-unital associative $\bK$-algebra $A$ is called {\bf locally unital} if it is equipped with a family of idempotents $\{e_j\mid j\in J\}$ with $e_je_k=\delta_{jk}e_j$ such that $$A=\bigoplus_{j,k\in J} e_jAe_k.$$ For such $A$, an $A$-module consists of a module $V$ in the usual sense which decomposes as $V=\bigoplus_{j\in J} e_j V$. We call $A$ {\bf essentially finite-dimensional} if $\dim(e_jA)$ and $\dim(Ae_j)$ are finite for all $j\in J$. Writing $\Mod(A)$ for the category of all finite-dimensional $A$-modules, we say that a $\bK$-linear category $\sA$ is {\bf essentially finite Abelian} if it is equivalent to $\Mod(A)$ for some essentially finite-dimensional locally unital algebra $A$. An equivalent definition for an essentially small category $\sA$ is that $\sA$ is Abelian, all objects in $\sA$ have finite length, $\dim(\Hom_{\sA}(M,N))<\infty$ for all $M,N\in\sA$, and $\sA$ has enough projectives and injectives (see \cite[Cor. 2.20]{BS}).

Given an essentially finite Abelian category $\sA$, let $\Lambda$ be an indexing set for the (isomorphism classes of) irreducible objects $\{L(\lambda)\mid \lambda\in\Lambda\}$ in $\sA$, and suppose that $\Lambda$ is equipped with a partial order $\preceq$. We assume throughout that $\preceq$ is interval finite, i.e. that for all $\lambda,\mu\in\Lambda$ there exist only finitely many $\gamma\in\Lambda$ such that $\mu\preceq\gamma\preceq\lambda$. 

We call a subset $\Omega$ of $\Lambda$ an {\bf upper set} if $\lambda\in\Omega$, $\lambda\preceq \mu$ implies $\mu\in\Omega$, and call $\Omega$ a {\bf lower set} if $\lambda\in\Omega$, $\mu\preceq\lambda$ implies $\mu\in\Omega$. Given any lower set $\Omega\subseteq\Lambda$, let us write $\sA_\Omega$ for the Serre subcategory of $\sA$ generated by the $L(\mu)$ with $\mu\in \Omega$ and $\sA^{\Lambda\setminus\Omega}$ for the Serre quotient $\sA/\sA_{\Omega}$. By \cite[Lem. 2.24]{BS} both $\sA_\Omega$ and $\sA^{\Lambda\setminus\Omega}$ are essentially finite Abelian categories.

Let us denote by $i_\Omega:\sA_\Omega\to\sA$ the canonical inclusion and by $j_{\Lambda\setminus\Omega}:\sA\to \sA^{\Lambda\setminus\Omega}$ the canonical projection; we may apply the recollement formalism to this setting (see \cite[\S 2.5]{BS} for discussion of recollement in this setting and \cite{BBD,CPS} for more general discussion of recollement). Using this, the map $i_{\Omega}$ has a left adjoint $i_{\Omega}^{\ast}:\sA\to\sA_\Omega$ and a right adjoint $i_{\Omega}^{!}:\sA\to\sA_{\Omega}$. Furthermore, the map $j_{\Lambda\setminus\Omega}$ has a left adjoint $j_{\Lambda\setminus\Omega}^{\ast}:\sA^{\Lambda\setminus\Omega}\to\sA$ and a right adjoint $j_{\Lambda\setminus\Omega}^{!}:\sA^{\Lambda\setminus\Omega}\to\sA$. As in \cite[Lem. 2.3, 2.4]{BS}, the units or counits of adjunctions (as appropriate) give $$i_{\Omega}^{\ast}\circ i_{\Omega}\simeq \Id_{\sA_\Omega}\simeq i_{\Omega}^{!}\circ i_{\Omega}$$ and
$$j_{\Lambda\setminus\Omega}\circ j_{\Lambda\setminus\Omega}^{\ast}\simeq \Id_{\sA^{\Lambda\setminus\Omega}}\simeq j_{\Lambda\setminus\Omega}\circ j_{\Lambda\setminus\Omega}^{!}.$$

Given $\lambda\in \Lambda$, we write $\Lambda^{\preceq\lambda}=\{\mu\in\Lambda\mid \mu\preceq\lambda\}$, $\Lambda^{\prec\lambda}=\{\mu\in\Lambda\mid \mu\prec\lambda\}$, $\Lambda^{\nsucceq\lambda}=\{\mu\in\Lambda\mid \mu\nsucceq\lambda\}$ and $\Lambda^{\nsucc\lambda}=\{\mu\in\Lambda\mid \mu\nsucc\lambda\}$; these are all lower sets of $\Lambda$. We then abbreviate the notation $$\sA_{\preceq\lambda}=\sA_{\Lambda^{\preceq\lambda}},\quad \sA_{\prec\lambda}=\sA_{\Lambda^{\prec\lambda}},\quad \sA_{\nsucceq\lambda}=\sA_{\Lambda^{\nsucceq\lambda}},\quad \sA_{\nsucc\lambda}=\sA_{\Lambda^{\nsucc\lambda}} $$ and 
$$\sA^{\npreceq\lambda}=\sA^{\Lambda^{\npreceq\lambda}},\quad \sA^{\nprec\lambda}=\sA^{\Lambda^{\nprec\lambda}},\quad \sA^{\succeq\lambda}=\sA^{\Lambda^{\succeq\lambda}},\quad \sA^{\succ\lambda}=\sA^{\Lambda^{\succ\lambda}}.$$

We will denote by $i_{\preceq \lambda}=i_{\Lambda^{\preceq\lambda}}:\sA_{\preceq\lambda} \to \sA$ the canonical inclusion and by $j_{\npreceq \lambda}=j_{\Lambda^{\npreceq}}:\sA\to \sA^{\npreceq \lambda}$ the canonical projection. Note that the category $\sA_{\preceq\lambda}$ has irreducible objects $\{L^{\preceq\lambda}(\mu)\mid\mu\in\Lambda^{\preceq \lambda}\}$, where $L^{\preceq\lambda}(\mu)\coloneqq L(\mu)$ (and so $i_{\preceq\lambda}(L^{\preceq\lambda}(\mu))=L(\mu)$), and the latter has irreducible objects $\{L^{\npreceq\lambda}(\mu)\mid\mu\in\Lambda\setminus\Lambda^{\preceq \lambda}\}$, where $L^{\npreceq\lambda}(\mu)\coloneqq j_{\npreceq \lambda}(L(\mu))$. 

Note that $\sA_{\prec\lambda}$ is also the Serre subcategory of $\sA_{\preceq\lambda}$ generated by the $L(\mu)$ with $\mu\prec\lambda$, and we denote by $\sA_\lambda$ the Serre quotient $\sA_{\preceq\lambda}/\sA_{\prec\lambda}$. In this setting we denote by $i_{\prec\lambda, \preceq\lambda}:\sA_{\prec\lambda}\to\sA_{\preceq\lambda}$ the canonical inclusion and by $j_\lambda:\sA_{\preceq\lambda}\to\sA^{\lambda}$ the canonical projection. As noted before, these are essentially finite Abelian categories, $\sA_{\prec\lambda}$ has simple objects $L^{\prec\lambda}(\mu)\coloneqq L(\mu)$ for $\mu\prec\lambda$, and $\sA^\lambda$ has unique simple object $L_\lambda(\lambda)\coloneqq j_\lambda(L^{\preceq\lambda}(\lambda))$.

As above, the maps $i_{\preceq \lambda}$ and $i_{\prec\lambda, \preceq\lambda}$ have left adjoints $i_{\preceq\lambda}^{\ast}:\sA\to\sA_{\preceq\lambda}$ and $i_{\prec\lambda,\preceq\lambda}^{\ast}:\sA_{\preceq\lambda}\to\sA_{\prec\lambda}$ and right adjoints $i_{\preceq\lambda}^{!}:\sA\to\sA_{\preceq\lambda}$ and $i_{\prec\lambda,\preceq\lambda}^{!}:\sA_{\preceq\lambda}\to\sA_{\prec\lambda}$. Furthermore, the maps $j_{\npreceq\lambda}$ and $j_\lambda$ have left adjoints $j_{\npreceq\lambda}^{\ast}:\sA_{\npreceq\lambda}\to\sA$ and $j_{\lambda}^{\ast}:\sA^{\lambda}\to\sA_{\preceq\lambda}$ and right adjoints $j_{\npreceq\lambda}^{!}:\sA_{\npreceq\lambda}\to\sA$ and $j_{\lambda}^{!}:\sA^{\lambda}\to\sA_{\preceq\lambda}$. They, of course, satisfy the same isomorphisms noted above for $\Omega$.


Given $\lambda\in\Lambda$, we define $P_\lambda(\lambda)$ (resp. $I_\lambda(\lambda)$) to be the projective cover (resp. injective hull) of $L_\lambda(\lambda)$ in $\sA^\lambda$ and $P(\lambda)$ (resp. $I(\lambda)$) to be the projective cover (resp. injective hull) of $L(\lambda)$ in $\sA$. We then, following \cite{BS}, make the following definitions:
$$\Delta(\lambda)\coloneqq i_{\preceq\lambda}\circ j_\lambda^{!}(P_\lambda(\lambda))\quad\mbox{is called the {\bf standard object} in }\sA\mbox{ corresponding to }\lambda\in \Lambda.$$
$$\nabla(\lambda)\coloneqq i_{\preceq\lambda}\circ j_\lambda^{\ast}(I_\lambda(\lambda))\quad\mbox{is called the {\bf costandard object} in }\sA\mbox{ corresponding to }\lambda\in \Lambda.$$
$$\overline{\Delta}(\lambda)\coloneqq i_{\preceq\lambda}\circ j_\lambda^{!}(L_\lambda(\lambda))\quad\mbox{is called the {\bf proper standard object} in }\sA\mbox{ corresponding to }\lambda\in \Lambda.$$
$$\overline{\nabla}(\lambda)\coloneqq i_{\preceq\lambda}\circ j_\lambda^{\ast}(L_\lambda(\lambda))\quad\mbox{is called the {\bf proper costandard object} in }\sA\mbox{ corresponding to }\lambda\in \Lambda.$$
Note that there exist maps $\overline{\Delta}(\lambda)\twoheadrightarrow L(\lambda)\hookrightarrow \overline{\nabla}(\lambda)$, such that $L(\lambda)$ is the irreducible socle of $\overline{\nabla}(\lambda)$ and the irreducible head of $\overline{\Delta}(\lambda)$. We may alternatively define the standard object of $\sA$ corresponding to $\lambda\in\Lambda$ to be the largest quotient of $P(\lambda)$ which lies in $\sA^{\preceq \lambda}$, and the corresponding proper standard object to be the largest quotient of $\Delta(\lambda)$ with the property that all of its composition factors lie in $\sA^{\prec\lambda}$ except for its irreducible head. Similarly, we may define the corresponding costandard object of $\sA$ corresponding to $\lambda\in\Lambda$ to be the largest subobject of $I(\lambda)$ which lies in $\sA^{\preceq \lambda}$, and the corresponding proper standard object to be the largest subobject of $\nabla(\lambda)$ with the property that all of its composition factors lie in $\sA^{\prec\lambda}$ except for its irreducible socle. Note that if $\sA$ is equipped with a duality then $\nabla(\lambda)=\bD(\Delta(\lambda))$ and $\overline{\nabla}(\lambda)=\bD(\overline{\Delta}(\lambda))$ for all $\lambda\in\Lambda$.

Given a sign function $\varepsilon:\Lambda\to\{\pm\}$, we define $$\Delta_\varepsilon(\lambda)\coloneqq\twopartdef{\Delta(\lambda)}{\varepsilon(\lambda)=+,}{\overline{\Delta}(\lambda)}{\varepsilon(\lambda)=-,}\quad\mbox{and}\quad \nabla_\varepsilon(\lambda)\coloneqq\twopartdef{\overline{\nabla}(\lambda)}{\varepsilon(\lambda)=+,}{\nabla(\lambda)}{\varepsilon(\lambda)=-.}$$ We say that $\sA$ satisfies condition $(P\Delta_\varepsilon)$ if, for each $\lambda\in\Lambda$, there exists a projective object $P_\lambda\in \sA$ with a filtration $$0=P_0\subseteq P_1\subseteq P_2\subseteq \cdots \subseteq P_{n-1}\subseteq P_n=P_\lambda$$ such that $P_n/P_{n-1}\cong \Delta_\varepsilon(\lambda)$ and, for each $1\leq i\leq n-1$, there exists $\mu_i\in \Lambda$ with $\mu_i\succ \lambda$ such that $P_i/P_{i-1}\cong \Delta_\varepsilon(\mu_i)$. Similarly, we say that $\sA$ satisfies condition $(I\nabla_\varepsilon)$ if, for each $\lambda\in\Lambda$, there exists an injective object $I_\lambda\in \sA$ with a filtration $$0=I_0\subseteq I_1\subseteq I_2\subseteq \cdots \subseteq I_{n-1}\subseteq I_n=I_\lambda$$ such that $I_1\cong \nabla_\varepsilon(\lambda)$ and, for each $2\leq i\leq n$, there exists $\mu_i\in \Lambda$ with $\mu_i\succ \lambda$ such that $I_i/I_{i-1}\cong \nabla_\varepsilon(\mu_i)$. It is shown in \cite[Thm 3.5]{BS} that these two conditions are equivalent.

A essentially finite Abelian category $\sA$ is called an {\bf essentially finite fibered highest weight category} if $(P\Delta_\varepsilon)$ (or, equivalently, $(I\nabla_\varepsilon)$) holds for all sign functions $\varepsilon:\Lambda\to\{\pm\}$. If $\Omega\subseteq\Lambda$ is a lower set, then $\sA_\Omega$ and $\sA^{\Lambda\setminus\Omega}$ are also essentially finite fibered highest weight categories if $\sA$ is (see Theorems 3.17 and 3.18 in \cite{BS}).

From now on, let $\sA$ be an essentially finite fibered highest weight category and fix a sign function $\varepsilon:\Lambda\to\{\pm\}$. An object $M\in\sA$ is said to have a {\bf $\Delta_\varepsilon$-filtration} (resp. {\bf $\nabla_\varepsilon$-filtration}) if it has a filtration \begin{equation}\label{filt}
	0=M_0\subseteq M_1\subseteq \cdots \subseteq M_{n-1}\subseteq M_n= M \tag{$\dagger$}
\end{equation} such that, for each $1\leq i\leq n$, there exists $\lambda_i\in\Lambda$ with $M_i/M_{i-1}\cong\Delta_\varepsilon(\lambda_i)$ (resp. $M_i/M_{i-1}\cong \nabla_\varepsilon(\lambda_i)$). If $M$ has a $\Delta_\varepsilon$-filtration (resp. $\nabla_\varepsilon$-filtration), we write $(M:\Delta_\varepsilon(\lambda))$ (resp. $(M:\nabla_\varepsilon(\lambda))$) for the number of appearances of $\Delta_\varepsilon(\lambda)$ (resp. $\nabla_\varepsilon(\lambda)$) in a $\Delta_\varepsilon$-filtration (resp. $\nabla_\varepsilon$-filtration) of $M$. It is shown in \cite[Thm 3.14]{BS} that if $M$ has a $\Delta_\varepsilon$-filtration then $(M:\Delta_\varepsilon(\lambda))=\dim\Hom_{\sA}(M,\nabla_\varepsilon(\lambda))$ and if $M$ has a $\nabla_\varepsilon$-filtration then $(M:\nabla_\varepsilon(\lambda))=\dim\Hom_{\sA}(\Delta_\varepsilon(\lambda),M)$; this, in particular, tells us that the number of appearances of $\Delta_\varepsilon(\lambda)$ (resp. $\nabla_\varepsilon(\lambda)$) in a $\Delta_\varepsilon$-filtration (resp. $\nabla_\varepsilon$-filtration) of $M$ is independent of the filtration.

We may also, at times, use the following nomenclature: we say that $M\in\sA$ has a {\bf standard filtration} (resp. {\bf proper standard filtration}, resp. {\bf costandard filtration}, resp. {\bf proper costandard filtration}) if it has a filtration \begin{equation*}
	0=M_0\subseteq M_1\subseteq \cdots \subseteq M_{n-1}\subseteq M_n=M 
\end{equation*} such that, for each $1\leq i\leq n$, there exists $\lambda_i\in\Lambda$ such that $M_i/M_{i-1}\cong\Delta(\lambda_i)$ (resp. $\overline{\Delta}(\lambda_i)$, resp. $\nabla(\lambda_i)$, resp. $\overline{\nabla}(\lambda_i)$).

It is worth at this point mentioning a couple of properties that standard and costandard modules have that will be useful later. Firstly, Lemma 3.48 in \cite{BS} shows that \begin{equation}\label{dimHom}\dim\Ext_{\sA}^n(\Delta_\varepsilon(\lambda),\nabla_\varepsilon(\mu))=\twopartdef{1}{n=0 \mbox{ and } \lambda=\mu,}{0}{\mbox{not};}
\end{equation}
 as a corollary, this means that $$\Hom_{\sA}(\Delta(\lambda),\nabla(\mu))=0$$ for all $\lambda\neq \mu$. Another important corollary of this result is that whenever $M$ has a standard filtration and $N$ has a proper costandard filtration, (or $M$ has a proper standard filtration and $N$ has a costandard filtration) we have, for $n\geq 1$: $$\Ext_{\sC}^n(M,N)=0.$$ We furthermore have by Lemma 3.44 in \cite{BS}, for $\lambda,\mu\in \Lambda$, $$\Ext_{\sA}^1(\Delta(\lambda),\Delta(\mu))\neq 0 \implies \lambda\preceq \mu$$ and $$\Ext_{\sA}^1(\nabla(\mu),\nabla(\lambda))\neq 0 \implies \lambda\preceq \mu.$$

 Finally, we note that $$[\Delta(\lambda):L(\mu)]\neq 0\implies \mu\preceq\lambda$$ for $\lambda,\mu\in\Lambda$ (and the same result for $\nabla(\lambda)$).

An object $M\in\sA$ is called an {\bf $\varepsilon$-tilting object} if it has a $\Delta_\varepsilon$-filtration and a $\nabla_\varepsilon$-filtration, and we denote by $\Tilt_\varepsilon(\sA)$ the full subcategory of $\sA$ consisting of $\varepsilon$-tilting objects in $\sA$. In this paper, we will assume that all essentially finite fibered highest weight categories are {\bf tilting-bounded}, see \cite[Def. 4.20]{BS} for this definition. Under this assumption, we may label the indecomposable $\varepsilon$-tilting objects in $\sA$ by the elements $\lambda\in \Lambda$ (the indexing set of the irreducible objects of $\sA$) and write them as $T_\varepsilon(\lambda)$; every tilting object in $\sA$ decomposes as a direct sum of such indecomposable tilting objects. The object $T_\varepsilon(\lambda)$ can be uniquely characterised (amongst the indecomposable tilting objects) by three equivalent conditions (see \cite[Thm 4.2]{BS}, and the discussion following \cite[Lem. 4.21]{BS}):
\begin{enumerate}
	\item $T_\varepsilon(\lambda)$ has a $\Delta_\varepsilon$-filtration as in (\ref{filt}) such that $M_1\cong \Delta_\varepsilon(\lambda)$;
	\item $T_\varepsilon(\lambda)$ has a $\nabla_\varepsilon$-filtration as in (\ref{filt}) such that $M_n/M_{n-1}\cong \nabla_\varepsilon(\lambda)$; or
	\item $T_\varepsilon(\lambda)\in \sA^{\preceq\lambda}$ and $$j_\lambda(T_\varepsilon(\lambda))\cong\twopartdef{P_\lambda(\lambda)}{\varepsilon(\lambda)=+,}{I_\lambda(\lambda)}{\varepsilon(\lambda)=-.}$$
\end{enumerate}

Note that from this characterisation, we see that when $\varepsilon(\lambda)=+$ we have $$(T(\lambda):\Delta(\lambda))=1$$ and when $\varepsilon(\lambda)=-$ we have 
$$(T_\epsilon(\lambda):\nabla(\lambda))=1.$$ Furthermore, by construction, $$(T_\varepsilon(\lambda):\nabla(\mu))\neq 0\quad \implies\quad \mu\preceq \lambda$$ for $\lambda,\mu\in\Lambda$ (and the analogous result for $\Delta(\mu)$). This also implies that when $\varepsilon(\lambda)=+$ we have $$[T(\lambda):L(\lambda)]=[\Delta(\lambda): L(\lambda)]$$ and when $\varepsilon(\lambda)=-$ we have $$[T(\lambda):L(\lambda)]=[\nabla(\lambda): L(\lambda)].$$

\subsection{Highest Weight Theory of $\sC_\chi$.}\label{ss3.2}

The character group $X(T)$ can be equipped with two partial orders which are of relevance to this paper. The first we write $\leq$ and is defined as $$\lambda\leq \mu \quad \iff \quad \mu-\lambda=\sum_{\alpha\in\Phi_s} k_\alpha \alpha \,\,\mbox{ for }\,\, k_\alpha\in\bZ_{\geq 0}. $$ The second is usually written as  $\uparrow$ (see, for example, \cite[\S 6.4]{RAGS}), although in this paper we will often write it as $\preceq$ in order to more easily distinguish between $\preceq$ and $\prec$. We define it as the transitive closure of the following relation: 
$$s_{\alpha,mp}\cdot\lambda\uparrow\lambda \quad \iff\quad  \langle\lambda+\rho,\alpha^\vee\rangle\geq mp,$$ where $\alpha\in\Phi^{+}$, $m\in\bZ$ and $\lambda\in X(T)$. Note that $\mu\uparrow\lambda$ implies $\mu\leq\lambda$ and $\mu\in W_p\cdot\lambda$. 

We want the partial order $\preceq$ (or $\uparrow$) on $X(T)$ to be a partial order on $\Lambda$, the indexing set of irreducible objects in $\sC_\chi$. At the moment, $\Lambda$ is defined to be the $W_{I,p}$-orbits on $X(T)$ under the dot-action; in order to partially order it via $\preceq$ we need to identify it with a subset of $X(T)$. This leads us into a little bit of alcove geometry, so we recall the basics below. 

Given $\alpha\in \Phi$ and $n\in\bZ$, we can define a hyperplane $H_{\alpha,n}$ of $X(T)\otimes_{\bZ}\bR$ by the equation $\langle \lambda+\rho,\alpha^\vee\rangle=np$. The connected components of $$(X(T)\otimes_\bZ \bR) \setminus \bigcup_{\substack{\alpha\in\Phi^{+}\\ n\in\bZ}} H_{\alpha,n}$$ are called {\bf alcoves}. The wall of an alcove $A$ are the interiors of the closed sets $\overline{A}\cap H_{\alpha,n}$ for any such $\alpha,n$ for which this is non-empty. More details can be found in \cite[II.6.1]{RAGS}. We saw earlier that $W_p$ acts on $X(T)\otimes_\bZ \bR$ via the dot-action; this action induces a simply-transitive action on the alcoves.

Define $$C=\{\lambda\in X(T)\otimes_{\bZ}\bR\mid 0< \langle \lambda+\rho,\alpha\rangle< p \mbox{ for all }\alpha\in\Phi^{+} \}$$ and $$\overline{C}=\{\lambda\in X(T)\otimes_{\bZ}\bR\mid 0\leq \langle \lambda+\rho,\alpha\rangle\leq p \mbox{ for all }\alpha\in\Phi^{+} \}.$$ We call $C$ the {\bf fundamental alcove}. Note that $\overline{C}$ is a fundamental domain for the dot-action of $W_p$ on $X(T)\otimes_{\bZ}\bR$. We define by $S_p\subseteq W_p$ the set of reflections in the walls of $C$; then $(W_p,S_p)$ is a Coxeter system (see, for example, \cite[II.6.3]{RAGS} and references therein) and we call elements of $S_p$ the {\bf simple reflections} in $W_p$.

\begin{rmk}\label{Cox}
	When $\Phi$ is an indecomposable root system, define $h=\max_{\beta\in\Phi^{+}}\{\langle \rho,\beta^\vee\rangle +1\}$; in general, define $h$ to be the maximum of these values as we run over the indecomposable components of $\Phi$. When $\Phi$ is indecomposable, this is the Coxeter number of $\Phi$. Note that $C\cap X(T)\neq \emptyset$ if and only if $p\geq h$. From now on, we therefore always assume that $p\geq h$.
\end{rmk}

We furthermore define $$X(T)_+=\{\lambda\in X(T) \mid \langle \lambda+\rho,\alpha\rangle\geq 0 \mbox{ for all }\alpha\in\Phi^{+} \}.$$

For a subset $I\subseteq \Phi_s$, we define $$C_I=\{\lambda\in X(T)\otimes_{\bZ}\bR\mid 0< \langle \lambda+\rho,\alpha\rangle< p \mbox{ for all }\alpha\in\Phi^{+}\cap\bZ I \}$$ and  $$\overline{C}_I=\{\lambda\in X(T)\otimes_{\bZ}\bR\mid 0\leq \langle \lambda+\rho,\alpha\rangle\leq p \mbox{ for all }\alpha\in\Phi^{+}\cap\bZ I \}.$$ The closed set $\overline{C}_{I}$ is a fundamental domain for the action of $W_{I,p}$ on $X(T)\otimes_{\bZ}\bR$.

The first partial order that we defined, $\leq$, descends to a partial order on $X(T)/\bZ I$ as
$$\lambda +\bZ I\leq \mu +\bZ I \quad \iff \quad \mu-\lambda +\bZ I=\sum_{\alpha\in\Phi_s} k_\alpha \alpha +\bZ I\,\,\mbox{ for }\,\, k_\alpha\in\bZ_{\geq 0}. $$ The second partial order we defined, $\preceq$ (also denoted $\uparrow$), meanwhile, restricts to a partial order on $\overline{C}_I\cap X(T)$. Note that since $\overline{C}_I$ is a fundamental domain for the dot-action of $W_{I,p}$ on $X(T)\otimes_\bZ \bR$, there is a bijection between $\overline{C}_I \cap X(T)$ and the set of $W_{I,p}$-orbits on $X(T)$ (under the dot-action). We therefore identify these two sets, and throughout this paper we usually denote both of them by $\Lambda_I$. As just discussed, $\preceq$ restricts to a partial order on $\Lambda_I$.

Having established these generalities, let us return to the main setting of this paper - the category $\sC_\chi$. The category $\sC_\chi$ is an essentially finite fibered highest weight category by \cite{GW}. Here, the irreducible objects are indexed by the partially-ordered set $(\overline{C}_I\cap X(T),\uparrow)$. The irreducible, standard, proper standard, costandard and proper costandard objects in $\sC_\chi$ are then $$L(\lambda)=L_\chi(\lambda),$$
$$\Delta(\lambda)=Q_\chi^I(\lambda),$$
$$\overline{\Delta}(\lambda)=Z_\chi(\lambda),$$
$$\nabla(\lambda)=\bD(Q_\chi^I(\lambda)),$$
$$\overline{\nabla}(\lambda)=\bD(Z_\chi(\lambda))$$ where $\bD$ is the duality on $\sC_\chi$ discussed in Subsection~\ref{ss2.3}. (In fact, $\bD(Z_\chi(\lambda))$ and $\bD(Q_\chi^I(\lambda))$ can be described more explicitly - see \cite{Jan,GW} for more details - but we don't need that for this paper so we omit it here.) We also have $$P(\lambda)=Q_\chi(\lambda)=I(\lambda).$$ Note also that these descriptions mean that, in  $\sC_\chi$, standard filtrations are the same thing as $Q$-filtrations and proper standard filtrations are the same things as $Z$-filtration. We shall use this terminology interchangeably.

It is also worth noting here that, as discussed above, $\Delta(\lambda)=Q_\chi^I(\lambda)$ has a proper standard filtration consisting of $N_I(\lambda)$ successive quotients, all of which are isomorphic to $\overline{\Delta}(\lambda)=Z_\chi(\lambda)$ (and the dual statement holds for $\nabla(\lambda)$). This means in particular that for any $M,N\in\sC_\chi$ where $M$ has a standard filtration and $N$ has a costandard filtration, we have $$\Ext_{\sC_\chi}^n(M,N)=0$$ for all $n\geq 1$.

Furthermore, $\sC_\chi$ is tilting-bounded and we may therefore define the indecomposable $\varepsilon$-tilting objects $T_\varepsilon(\lambda)$. In this case, we have by \cite{GW}\footnote{It was also independently shown in \cite{Li} that tilting modules and projective modules coincide in $\sC_\chi$.} that $T_\varepsilon(\lambda)=T_{-\varepsilon}(\lambda)$ and we denote both by $T_\chi(\lambda)$ (for this reason, we also from now on refer to tilting modules rather then $\varepsilon$-tilting modules). Note that there exists a bijection $f:\overline{C}_I\cap X(T)\to \overline{C}_I\cap X(T)$ such that $L_{-\chi}(\lambda+2(p-1)\rho)^{*}\cong L_\chi(f(\lambda))$. It is shown in \cite{GW} that $$T_\chi(\lambda)\cong Q_\chi(f(\lambda))$$ for all $\lambda\in \overline{C}_I\cap X(T)$.

Note that Remark 4.3 in \cite{BS} shows that $$\dim\Hom_{\sC_\chi}(T_\chi(\lambda), \overline{\nabla}(\lambda))=(T_\chi(\lambda):\Delta(\lambda))=1$$ and $$\dim\Hom_{\sC_\chi}(\overline{\Delta}(\lambda),T_\chi(\lambda))=(T_\chi(\lambda):\nabla(\lambda))=1. $$

Furthermore, there exist surjections $T_\chi(\lambda)\twoheadrightarrow \nabla(\lambda)\twoheadrightarrow \overline{\nabla}(\lambda)$ arising out of the $\nabla$-filtration of $T_\chi(\lambda)$ and the $\overline{\nabla}$-filtration of $\nabla(\lambda)$ we have already discussed. We can therefore see that all non-zero homomorphisms $T_\chi(\lambda)\to \overline{\nabla}(\lambda)$ are surjective. Similarly, there exist injections $\overline{\Delta}(\lambda)\hookrightarrow \Delta(\lambda)\hookrightarrow T_\chi(\lambda)$, and thus all non-zero homomorphisms $\overline{\Delta}(\lambda)\to T_\chi(\lambda)$ are injective. 

We fix these injections and surjections once-and-for-all, and we give them the following notation. For each $\lambda\in\Lambda_I$, we fix inclusions $$\iota_\lambda:\Delta(\lambda)\hookrightarrow T_\chi(\lambda),\quad\quad\widehat{\iota}_\lambda:\overline{\Delta}(\lambda)\hookrightarrow \Delta(\lambda),\quad\mbox{and}\quad\overline{\iota}_\lambda\coloneqq\iota_\lambda\circ\widehat{\iota}_\lambda:\overline{\Delta}(\lambda)\to T_\chi(\lambda).$$ Furthermore, we define the projections $$\pi_\lambda\coloneqq\bD(\iota_\lambda):T_\chi(\lambda)\to \nabla(\lambda),\quad\quad\widehat{\pi}_\lambda\coloneqq\bD(\widehat{\iota}_\lambda):\nabla(\lambda)\twoheadrightarrow \overline{\nabla}(\lambda),\quad\mbox{and} \quad\overline{\pi}_\lambda\coloneqq \widehat{\pi}_\lambda\circ\pi_\lambda=\bD(\overline{\iota}_\lambda):T_\chi(\lambda)\twoheadrightarrow \overline{\nabla}(\lambda).$$ (For the latter, we have to note that $\bD(T_\chi(\lambda))=T_\chi(\lambda)$, which holds as we observed above that the analogous statement holds for projective modules.)


Since $(\Delta(\lambda),\overline{\Delta}(\lambda))=(\nabla(\lambda),\overline{\nabla}(\lambda))=N_I(\lambda)$, we may also conclude that $\dim\Hom_{\sC_\chi}(T_\chi(\lambda), \nabla(\lambda))=(T_\chi(\lambda):\overline{\Delta}(\lambda))=N_I(\lambda)$ and $\dim\Hom_{\sC_\chi}(\Delta(\lambda),T_\chi(\lambda))=(T_\chi(\lambda):\overline{\nabla}(\lambda))=N_I(\lambda)$.

\subsection{Tilting Modules in $\sC_\chi$}\label{ss3.3}

In order to get a better understanding of the homomorphisms between tilting modules in $\sC_\chi$, it will be helpful to establish some preliminary results. We do so in this subsection; it should be compared with the standard results on tilting modules in highest weight categories (see, for example, \cite[\S 2]{RW1}).

\begin{prop}\label{TiltBas}
Let $M\in\sC_\chi$ have a costandard filtration and let $\lambda\in\Lambda_I$. Then there exists a collection  $\overline{X}_\lambda$ of morphisms $T_\chi(\lambda)\to M$ such that $\{x\circ\overline{\iota}_\lambda\mid x\in \overline{X}_\lambda\}$ forms a basis of $\Hom_{\sC_\chi}(\overline{\Delta}(\lambda),M)$.
\end{prop}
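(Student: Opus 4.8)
The plan is to induct on the length of a costandard filtration of $M$, reducing to the case $M = \overline{\nabla}(\mu)$ via the Ext-vanishing properties recorded in Subsection~\ref{ss3.1}. First I would observe that, since $M$ has a costandard filtration, a \emph{costandard} filtration can be refined to a \emph{proper costandard} filtration (each $\nabla(\mu)$ has an $\overline{\nabla}(\mu)$-filtration, as noted for $\sC_\chi$ above), so it suffices to treat $M$ with a proper costandard filtration and induct on its length. The base case $M = 0$ is trivial (both sides are zero by the definition of $\overline{\Delta}(\lambda)$ and \eqref{dimHom}, using that $T_\chi(\lambda)$ has a $\Delta_\varepsilon$-filtration and hence $\Hom(T_\chi(\lambda),0)=0$).

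For the inductive step, write a short exact sequence $0 \to M' \to M \to \overline{\nabla}(\mu) \to 0$ where $M'$ has a shorter proper costandard filtration. Applying $\Hom_{\sC_\chi}(\overline{\Delta}(\lambda), -)$: since $\overline{\Delta}(\lambda)$ has a proper standard filtration and $\overline{\nabla}(\mu)$ is proper costandard, $\Ext^1_{\sC_\chi}(\overline{\Delta}(\lambda), M') = 0$ (this is the corollary of \cite[Lem. 3.48]{BS} recorded in Subsection~\ref{ss3.1}, applied inductively along the filtration of $M'$), so we get a short exact sequence
\[
0 \to \Hom_{\sC_\chi}(\overline{\Delta}(\lambda), M') \to \Hom_{\sC_\chi}(\overline{\Delta}(\lambda), M) \to \Hom_{\sC_\chi}(\overline{\Delta}(\lambda), \overline{\nabla}(\mu)) \to 0.
\]
Applying $\Hom_{\sC_\chi}(T_\chi(\lambda), -)$ to the same sequence, I would use that $T_\chi(\lambda)$ has a $\Delta_\varepsilon$-filtration and $M'$ has a costandard filtration to get $\Ext^1_{\sC_\chi}(T_\chi(\lambda), M') = 0$, hence a short exact sequence of $\Hom$-spaces from $T_\chi(\lambda)$ as well. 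Precomposition with $\overline{\iota}_\lambda$ gives a map between these two short exact sequences (a commuting $3\times 3$-type diagram). By the inductive hypothesis, precomposition with $\overline{\iota}_\lambda$ is surjective onto $\Hom_{\sC_\chi}(\overline{\Delta}(\lambda), M')$, and by the base/atomic case it is surjective onto $\Hom_{\sC_\chi}(\overline{\Delta}(\lambda), \overline{\nabla}(\mu))$; a diagram chase (five lemma on the cokernels, or a direct splicing argument) then shows precomposition with $\overline{\iota}_\lambda$ is surjective onto $\Hom_{\sC_\chi}(\overline{\Delta}(\lambda), M)$. Choosing lifts $x \in \Hom_{\sC_\chi}(T_\chi(\lambda), M)$ of a basis of $\Hom_{\sC_\chi}(\overline{\Delta}(\lambda), M)$ gives the required collection $\overline{X}_\lambda$ (one can trim any dependent elements, or simply take the lifts of a chosen basis so that the $x \circ \overline{\iota}_\lambda$ are exactly that basis).

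The genuinely load-bearing step — hence the main obstacle — is the \textbf{atomic case} $M = \overline{\nabla}(\mu)$: one must show that precomposition $\Hom_{\sC_\chi}(T_\chi(\lambda), \overline{\nabla}(\mu)) \to \Hom_{\sC_\chi}(\overline{\Delta}(\lambda), \overline{\nabla}(\mu))$ is surjective. By \eqref{dimHom}, $\Hom_{\sC_\chi}(\overline{\Delta}(\lambda), \overline{\nabla}(\mu))$ is $\bK$ if $\lambda = \mu$ and $0$ otherwise, so the only content is at $\lambda = \mu$, where I need a single morphism $T_\chi(\lambda) \to \overline{\nabla}(\lambda)$ whose restriction along $\overline{\iota}_\lambda$ is nonzero. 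For this I would invoke the surjection $\overline{\pi}_\lambda : T_\chi(\lambda) \twoheadrightarrow \overline{\nabla}(\lambda)$ fixed in Subsection~\ref{ss3.2}: the composite $\overline{\pi}_\lambda \circ \overline{\iota}_\lambda : \overline{\Delta}(\lambda) \to \overline{\nabla}(\lambda)$ factors through $L(\lambda)$ (it hits the head of $\overline{\Delta}(\lambda)$ and lands in the socle $L(\lambda)$ of $\overline{\nabla}(\lambda)$) and is nonzero because $\overline{\iota}_\lambda$ is injective with image not contained in $\Rad T_\chi(\lambda)$ — equivalently, because $\overline{\pi}_\lambda$ is surjective and $\overline{\iota}_\lambda$ identifies $\overline{\Delta}(\lambda)$ with the bottom step of a $\Delta_\varepsilon$-/$\overline{\Delta}$-filtration of $T_\chi(\lambda)$ that survives under $\overline{\pi}_\lambda$. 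Thus $\overline{\pi}_\lambda$ itself is the needed lift, proving surjectivity with $\overline{X}_\lambda = \{\overline{\pi}_\lambda\}$ in the atomic case. Once this is in hand, the induction above completes the proof.
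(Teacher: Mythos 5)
There is a genuine gap in the inductive step, and it is located exactly where you place the weight of the argument. You reduce to a \emph{proper} costandard filtration of $M$ and then claim $\Ext^1_{\sC_\chi}(\overline{\Delta}(\lambda),M')=0$ because "$\overline{\Delta}(\lambda)$ has a proper standard filtration and $M'$ is filtered by proper costandards." But the Ext-vanishing recorded in Subsection~\ref{ss3.1} (equation \eqref{dimHom}) is for $\Ext^n(\Delta_\varepsilon(\lambda),\nabla_\varepsilon(\mu))$ with a \emph{single} sign function $\varepsilon$: for $\lambda=\mu$ it only gives the vanishing of $\Ext^{\geq 1}(\overline{\Delta}(\lambda),\nabla(\lambda))$ and of $\Ext^{\geq 1}(\Delta(\lambda),\overline{\nabla}(\lambda))$ --- one side "proper," the other "full." It does \emph{not} give $\Ext^1(\overline{\Delta}(\lambda),\overline{\nabla}(\lambda))=0$, and in fact this group is nonzero whenever $N_I(\lambda)>1$ (i.e.\ whenever $I\neq\emptyset$, by Lemma~\ref{StabSize}): if all the connecting $\Ext^1$'s along the $\overline{\nabla}(\lambda)$-filtration of $\nabla(\lambda)$ vanished, your short exact sequences would force $\dim\Hom_{\sC_\chi}(\overline{\Delta}(\lambda),\nabla(\lambda))=N_I(\lambda)$, contradicting the value $1$ from \eqref{dimHom}. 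So the very first nontrivial instance of your induction (building $\nabla(\lambda)$ out of its $\overline{\nabla}(\lambda)$-layers) already fails, and the left-hand exact sequence in your $3\times 3$ diagram is not right-exact.

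The fix is to induct on the length of the costandard ($\nabla$-) filtration itself, without refining, taking $M=\nabla(\mu)$ as the atomic case. There the needed vanishings do hold: $\Ext^1_{\sC_\chi}(\overline{\Delta}(\kappa),\nabla(\mu))=0$ for all $\kappa$ handles the base case (apply $\Hom(-,\nabla(\lambda))$ to $0\to\overline{\Delta}(\lambda)\to T_\chi(\lambda)\to T_\chi(\lambda)/\overline{\Delta}(\lambda)\to 0$, using that the cokernel has a proper standard filtration), and in the inductive step one uses $\Ext^1(T_\chi(\lambda),M_{n-1})=0$ and $\Ext^1(\overline{\Delta}(\lambda),M_{n-1})=0$ for $M_{n-1}$ with a costandard filtration, followed by the four lemma. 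As a secondary point, your atomic case is also not fully justified as written: the nonvanishing of $\overline{\pi}_\lambda\circ\overline{\iota}_\lambda$ is asserted rather than proved (a priori the single copy of $L_\chi(\lambda)$ in $\overline{\iota}_\lambda(\overline{\Delta}(\lambda))$ could land in $\ker\overline{\pi}_\lambda$, which contains $N_I(\lambda)-1$ such composition factors), whereas with $\nabla(\mu)$ as the atomic case no such explicit nonvanishing is needed.
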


\begin{proof}
	We first consider the case where $M=\nabla(\mu)$ for some $\mu\in \Lambda_I$. The result is immediate when $\mu\neq \lambda$, since we have in this case $$\Hom_{\sC_\chi}(\overline{\Delta}(\lambda),\nabla(\mu))=0.$$ For $\lambda=\mu$, we consider the exact sequence $$ \Hom_{\sC_\chi}(T_\chi(\lambda)/\overline{\Delta}(\lambda),\nabla(\lambda))\to\Hom_{\sC_\chi}(T_\chi(\lambda),\nabla(\lambda))\to\Hom_{\sC_\chi}(\overline{\Delta}(\lambda),\nabla(\lambda))\to \Ext^1_{\sC_\chi}(T_\chi(\lambda)/\overline{\Delta}(\lambda),\nabla(\lambda)).$$ The result follows in this case since $\Ext_{\sC_\chi}^1(\overline{\Delta}(\kappa),\nabla(\lambda))=0$ for all $\kappa\in \Lambda_I$ (see (\ref{dimHom}) in Subsection~\ref{ss3.1}).
	
	Suppose $M$ has costandard filtration $$0=M_0\subseteq M_1\subseteq \cdots \subseteq M_{n-1}\subseteq M_n=M$$ where $M/M_{n-1}\cong \nabla(\mu)$. By induction, we may assume that $$\Hom_{\sC_\chi}(T_\chi(\lambda),M_{n-1})\to \Hom_{\sC_\chi}(\overline{\Delta}(\lambda),M_{n-1})$$ is surjective. We then get the commutative diagram (with exact rows)
	\begin{eqnarray*}
		\begin{array}{c}\xymatrix{
				\Hom_{\sC_\chi}(T_\chi(\lambda),M_{n-1}) \ar@{->}[r] \ar@{->>}[d] & \Hom_{\sC_\chi}(T_\chi(\lambda),M) \ar@{->}[r] \ar@{->}[d] & \Hom_{\sC_\chi}(T_\chi(\lambda),\nabla(\mu)) \ar@{->}[r] \ar@{->>}[d]  & \Ext_{\sC_\chi}^1(T_\chi(\lambda),M_{n-1})=0 \ar@{^{(}->}[d]\\
				\Hom_{\sC_\chi}(\overline{\Delta}(\lambda),M_{n-1}) \ar@{->}[r] & \Hom_{\sC_\chi}(\overline{\Delta}(\lambda),M) \ar@{->}[r] & \Hom_{\sC_\chi}(\overline{\Delta}(\lambda),\nabla(\mu))  \ar@{->}[r] & \Ext_{\sC_\chi}^1(\overline{\Delta}(\lambda),M_{n-1})=0
		}\end{array}
	\end{eqnarray*}
where we recall that $\Ext_{\sC_\chi}^1(A,B)=0$ whenever $A$ has a proper standard filtration and $B$ has a costandard filtration. The four-lemma then implies that $$\Hom_{\sC_\chi}(T_\chi(\lambda),M)\to \Hom_{\sC_\chi}(\overline{\Delta}(\lambda),M)$$ is surjective, as required.
\end{proof}

An almost identical argument shows:

\begin{prop}\label{TiltBas2}
	Let $M\in\sC_\chi$ have a costandard filtration and let $\lambda\in\Lambda_I$. Then there exists a collection  $X_\lambda$ of morphisms $T_\chi(\lambda)\to M$ such that $\{x\circ\iota_\lambda\mid x\in X_\lambda\}$ forms a basis of $\Hom_{\sC_\chi}(\Delta(\lambda),M)$.
\end{prop}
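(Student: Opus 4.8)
The plan is to mimic the proof of Proposition~\ref{TiltBas} almost verbatim, replacing $\overline{\Delta}(\lambda)$ by $\Delta(\lambda)$ and $\overline{\iota}_\lambda$ by $\iota_\lambda$ throughout. The only structural fact about $\overline{\Delta}(\lambda)$ that was used in the previous proof is that it has a proper standard filtration (so that $\Ext^1_{\sC_\chi}(\overline{\Delta}(\lambda),B)=0$ whenever $B$ has a costandard filtration, by (\ref{dimHom}) and the remarks following it). The analogous input here is that $\Delta(\lambda)=Q_\chi^I(\lambda)$ has a \emph{standard} filtration (trivially, being standard itself), hence $\Ext^1_{\sC_\chi}(\Delta(\lambda),B)=0$ whenever $B$ has a costandard filtration — this is exactly the vanishing recorded at the end of Subsection~\ref{ss3.2}. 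So the homological bookkeeping goes through identically.

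First I would treat the base case $M=\nabla(\mu)$. If $\mu\neq\lambda$ then $\Hom_{\sC_\chi}(\Delta(\lambda),\nabla(\mu))=0$ by (\ref{dimHom}) (or the displayed consequence $\Hom_{\sA}(\Delta(\lambda),\nabla(\mu))=0$ for $\lambda\neq\mu$), so we may take $X_\lambda=\emptyset$. For $\mu=\lambda$, apply $\Hom_{\sC_\chi}(-,\nabla(\lambda))$ to the short exact sequence $0\to\Delta(\lambda)\xrightarrow{\iota_\lambda} T_\chi(\lambda)\to T_\chi(\lambda)/\Delta(\lambda)\to 0$; here $T_\chi(\lambda)/\Delta(\lambda)$ has a standard filtration (from the $\Delta$-filtration of $T_\chi(\lambda)$ with bottom term $\Delta(\lambda)$, by characterisation (1) of $T_\chi(\lambda)$), so $\Ext^1_{\sC_\chi}(T_\chi(\lambda)/\Delta(\lambda),\nabla(\lambda))=0$ and the restriction map $\Hom_{\sC_\chi}(T_\chi(\lambda),\nabla(\lambda))\to\Hom_{\sC_\chi}(\Delta(\lambda),\nabla(\lambda))$ is surjective; pick $X_\lambda$ to be lifts of a basis.

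Then I would run the induction on the length of a costandard filtration $0=M_0\subseteq\cdots\subseteq M_n=M$ with $M/M_{n-1}\cong\nabla(\mu)$, exactly as before. The inductive hypothesis gives surjectivity of $\Hom_{\sC_\chi}(T_\chi(\lambda),M_{n-1})\to\Hom_{\sC_\chi}(\Delta(\lambda),M_{n-1})$; combining the long exact $\Hom$–$\Ext$ sequences for $T_\chi(\lambda)$ and for $\Delta(\lambda)$ applied to $0\to M_{n-1}\to M\to\nabla(\mu)\to 0$, and using $\Ext^1_{\sC_\chi}(T_\chi(\lambda),M_{n-1})=0$ (as $T_\chi(\lambda)$ has a $\Delta$-filtration and $M_{n-1}$ a costandard filtration) and $\Ext^1_{\sC_\chi}(\Delta(\lambda),M_{n-1})=0$, the four-lemma yields surjectivity of $\Hom_{\sC_\chi}(T_\chi(\lambda),M)\to\Hom_{\sC_\chi}(\Delta(\lambda),M)$. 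Choosing $X_\lambda\subseteq\Hom_{\sC_\chi}(T_\chi(\lambda),M)$ so that $\{x\circ\iota_\lambda\mid x\in X_\lambda\}$ is a basis of $\Hom_{\sC_\chi}(\Delta(\lambda),M)$ completes the proof. There is no real obstacle: the surjectivity delivered by the four-lemma does not a priori give a basis of lifts, but that is routine — lift a chosen basis and discard redundancy — and in any case one only needs surjectivity to extract such an $X_\lambda$. The one point to double-check is that $T_\chi(\lambda)/\Delta(\lambda)$ genuinely has a standard filtration, which follows since $\Delta_\varepsilon(\lambda)=\Delta(\lambda)$ sits at the bottom of a $\Delta$-filtration of $T_\chi(\lambda)$ for the sign function with $\varepsilon(\lambda)=+$, and $T_\chi(\lambda)=T_\varepsilon(\lambda)$ is independent of $\varepsilon$ here.
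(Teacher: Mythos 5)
Your proposal is correct and is exactly the argument the paper intends: the paper gives no separate proof of Proposition~\ref{TiltBas2}, stating only that "an almost identical argument" to Proposition~\ref{TiltBas} works, and your substitution of $\Delta(\lambda)$ for $\overline{\Delta}(\lambda)$ together with the vanishing $\Ext^1_{\sC_\chi}(\Delta(\lambda),N)=0$ for $N$ with a costandard filtration is precisely that argument.
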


Using the duality $\bD$, we have the dual statements.

\begin{cor}\label{TiltBasCor}
	Let $M\in\sC_\chi$ have a standard filtration and let $\lambda\in\Lambda_I$. Then there exists a collection  $Y_\lambda$ of morphisms $M\to T_\chi(\lambda)$ such that $\{\pi_\lambda\circ y \mid y\in Y_\lambda\}$ forms a basis of $\Hom_{\sC_\chi}(M,\nabla(\lambda))$ and a collection  $\overline{Y}_\lambda$ of morphisms $M\to T_\chi(\lambda)$ such that $\{\overline{\pi}_\lambda\circ y \mid y\in \overline{Y}_\lambda\}$ forms a basis of $\Hom_{\sC_\chi}(M,\overline{\nabla}(\lambda))$.
\end{cor}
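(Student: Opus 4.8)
The plan is to deduce Corollary~\ref{TiltBasCor} from Propositions~\ref{TiltBas} and~\ref{TiltBas2} by applying the duality $\bD$, exploiting the fact that $\bD$ is an exact anti-equivalence which fixes tilting modules and interchanges the relevant families of objects. First I would observe that if $M\in\sC_\chi$ has a standard filtration, then $\bD(M)$ has a costandard filtration, since $\bD(\Delta(\mu))=\nabla(\mu)$ and $\bD$ is exact; hence Proposition~\ref{TiltBas2} applies to $\bD(M)$ and the fixed weight $\lambda\in\Lambda_I$. This gives a collection $X_\lambda'$ of morphisms $T_\chi(\lambda)\to\bD(M)$ such that $\{x'\circ\iota_\lambda\mid x'\in X_\lambda'\}$ is a basis of $\Hom_{\sC_\chi}(\Delta(\lambda),\bD(M))$.

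Next I would apply $\bD$ to this data. Since $\bD$ is a $\bK$-linear anti-equivalence, it induces isomorphisms $\Hom_{\sC_\chi}(A,B)\xrightarrow{\sim}\Hom_{\sC_\chi}(\bD(B),\bD(A))$, and in particular it sends a basis to a basis. Applying $\bD$ to a morphism $x':T_\chi(\lambda)\to\bD(M)$ gives (after identifying $\bD\bD(M)$ with $M$ via the canonical isomorphism, and using $\bD(T_\chi(\lambda))\cong T_\chi(\lambda)$) a morphism $y=\bD(x'):M\to T_\chi(\lambda)$; set $Y_\lambda\coloneqq\{\bD(x')\mid x'\in X_\lambda'\}$. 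Then $\bD(x'\circ\iota_\lambda)=\bD(\iota_\lambda)\circ\bD(x')=\pi_\lambda\circ y$, using the definition $\pi_\lambda=\bD(\iota_\lambda)$. Since $\bD$ carries the basis $\{x'\circ\iota_\lambda\}$ of $\Hom_{\sC_\chi}(\Delta(\lambda),\bD(M))$ to a basis of $\Hom_{\sC_\chi}(\bD\bD(M),\nabla(\lambda))=\Hom_{\sC_\chi}(M,\nabla(\lambda))$, we conclude that $\{\pi_\lambda\circ y\mid y\in Y_\lambda\}$ is the desired basis. The proper costandard statement is obtained identically, starting from Proposition~\ref{TiltBas} instead of Proposition~\ref{TiltBas2}: it produces $\overline{X}_\lambda'$ with $\{x'\circ\overline{\iota}_\lambda\}$ a basis of $\Hom_{\sC_\chi}(\overline{\Delta}(\lambda),\bD(M))$, and applying $\bD$ together with $\overline{\pi}_\lambda=\bD(\overline{\iota}_\lambda)$ gives the collection $\overline{Y}_\lambda\coloneqq\{\bD(x')\mid x'\in\overline{X}_\lambda'\}$ with $\{\overline{\pi}_\lambda\circ y\mid y\in\overline{Y}_\lambda\}$ a basis of $\Hom_{\sC_\chi}(M,\overline{\nabla}(\lambda))$.

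I do not expect any serious obstacle here; the only points requiring a little care are bookkeeping ones: keeping track of the canonical isomorphism $\bD\circ\bD\simeq\Id_{\sC_\chi}$ (or, equivalently, using the quasi-inverse $\overline{\bD}$ in one direction), the identification $\bD(T_\chi(\lambda))\cong T_\chi(\lambda)$ from Subsection~\ref{ss3.2}, and the contravariance of $\bD$ so that composites get reversed in the correct order, $\bD(x'\circ\iota_\lambda)=\pi_\lambda\circ\bD(x')$. All of these are already recorded in Subsections~\ref{ss2.3} and~\ref{ss3.2}, so the argument is genuinely just a transport-of-structure across the duality and can be stated in a few lines.
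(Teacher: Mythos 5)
Your proof is correct and is exactly the route the paper intends: the corollary is stated there with only the remark ``Using the duality $\bD$, we have the dual statements,'' i.e.\ one applies $\bD$ to Propositions~\ref{TiltBas} and~\ref{TiltBas2} for $\bD(M)$, using $\bD(\Delta(\lambda))=\nabla(\lambda)$, $\bD(\overline{\Delta}(\lambda))=\overline{\nabla}(\lambda)$, $\bD(T_\chi(\lambda))\cong T_\chi(\lambda)$ and $\pi_\lambda=\bD(\iota_\lambda)$, $\overline{\pi}_\lambda=\bD(\overline{\iota}_\lambda)$. Your write-up simply makes explicit the bookkeeping (contravariance, $\bD\bD\simeq\Id$) that the paper leaves implicit.
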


\begin{prop}
	Let $g:T_\chi(\lambda)\to\nabla(\lambda)$ be a surjective morphism in $\sC_\chi$. Then the composition $\overline{\Delta}(\lambda)\xhookrightarrow{\overline{\iota}_\lambda} T_\chi(\lambda)\xrightarrow{g} \nabla(\lambda)$ is non-zero.
\end{prop}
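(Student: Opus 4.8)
The plan is to show that $g \circ \overline{\iota}_\lambda$ being zero leads to a contradiction with the multiplicity count $\dim \Hom_{\sC_\chi}(T_\chi(\lambda), \overline{\nabla}(\lambda)) = 1$ recorded in Subsection~\ref{ss3.2}. So suppose for contradiction that $g \circ \overline{\iota}_\lambda = 0$. Then $g$ factors through the quotient $T_\chi(\lambda)/\overline{\Delta}(\lambda)$, i.e.\ there is a morphism $\bar g : T_\chi(\lambda)/\overline{\Delta}(\lambda) \to \nabla(\lambda)$ with $g = \bar g \circ q$, where $q : T_\chi(\lambda) \twoheadrightarrow T_\chi(\lambda)/\overline{\Delta}(\lambda)$ is the canonical projection. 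Since $g$ is surjective, so is $\bar g$.

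First I would analyse the module $T_\chi(\lambda)/\overline{\Delta}(\lambda)$. The module $T_\chi(\lambda)$ has a standard filtration beginning with $\Delta(\lambda)$ (characterisation (1) of $T_\chi(\lambda)$ in Subsection~\ref{ss3.1}, in the form $T_\chi(\lambda) = T_\varepsilon(\lambda)$ for $\varepsilon(\lambda) = +$), and in turn $\Delta(\lambda)$ has a proper standard ($Z$-)filtration with all $N_I(\lambda)$ subquotients isomorphic to $\overline{\Delta}(\lambda)$; fixing $\overline{\iota}_\lambda$ identifies $\overline{\Delta}(\lambda)$ with the bottom step of this combined filtration. Hence $T_\chi(\lambda)/\overline{\Delta}(\lambda)$ has a proper standard filtration, and the composition factors $L(\kappa)$ of $T_\chi(\lambda)/\overline{\Delta}(\lambda)$ coming from the remaining $N_I(\lambda) - 1$ copies of $\overline{\Delta}(\lambda)$ inside $\Delta(\lambda)$ satisfy $\kappa \preceq \lambda$, while those coming from the higher standard steps $\Delta(\mu_i)$ of $T_\chi(\lambda)$ (with $\mu_i \succ \lambda$) have $\kappa \preceq \mu_i$. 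The key point I want is that $L(\lambda)$ occurs in $T_\chi(\lambda)/\overline{\Delta}(\lambda)$ strictly fewer times than in $T_\chi(\lambda)$: indeed $[T_\chi(\lambda) : L(\lambda)] = [\Delta(\lambda) : L(\lambda)] = N_I(\lambda)\cdot[\overline{\Delta}(\lambda):L(\lambda)] = N_I(\lambda)$ (using $[\overline\Delta(\lambda):L(\lambda)]=1$), so $[T_\chi(\lambda)/\overline{\Delta}(\lambda) : L(\lambda)] = N_I(\lambda) - 1$.

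Now compare with $\overline{\nabla}(\lambda)$. Composing $\bar g$ with the surjection $\nabla(\lambda) \twoheadrightarrow \overline{\nabla}(\lambda)$ gives a surjection $T_\chi(\lambda)/\overline{\Delta}(\lambda) \twoheadrightarrow \overline{\nabla}(\lambda)$, so in particular $L(\lambda) = \Soc(\overline{\nabla}(\lambda))$ is a composition factor of $T_\chi(\lambda)/\overline{\Delta}(\lambda)$; this already forces $N_I(\lambda) \geq 2$, which is fine, but more usefully: precomposing $\overline\pi_\lambda$ with $q^{-1}$-lift considerations, any surjection $h : T_\chi(\lambda)/\overline{\Delta}(\lambda)\twoheadrightarrow\overline\nabla(\lambda)$ yields via $h\circ q$ a \emph{second}, linearly independent element of $\Hom_{\sC_\chi}(T_\chi(\lambda),\overline{\nabla}(\lambda))$ beyond the fixed $\overline{\pi}_\lambda$ — independent precisely because $\overline{\pi}_\lambda \circ \overline{\iota}_\lambda \neq 0$ (it is the canonical map $\overline\Delta(\lambda)\twoheadrightarrow L(\lambda)\hookrightarrow\overline\nabla(\lambda)$, which is non-zero as $\overline\nabla(\lambda)$ has simple socle $L(\lambda)$) whereas $(h\circ q)\circ\overline{\iota}_\lambda = 0$. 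This contradicts $\dim \Hom_{\sC_\chi}(T_\chi(\lambda), \overline{\nabla}(\lambda)) = 1$. Therefore $g \circ \overline{\iota}_\lambda \neq 0$.

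The main obstacle is making precise that $g\circ\overline\iota_\lambda = 0$ genuinely produces a map \emph{independent} of the already-fixed generator of the one-dimensional $\Hom$-space: the cleanest route is the linear-independence argument via non-vanishing of $\overline{\pi}_\lambda\circ\overline{\iota}_\lambda$ versus vanishing of the new map on $\overline{\iota}_\lambda$, as above. An alternative packaging, which avoids the $\Hom$-space dimension count, is to observe directly that $\overline{\Delta}(\lambda)$ is the unique (by simple socle of the sub $\overline{\Delta}(\lambda)\subseteq T_\chi(\lambda)$, or by the filtration) copy through which the essential occurrence of $L(\lambda)$ supporting the surjection onto $\overline{\nabla}(\lambda)$ must pass; I would cross-check this against Proposition~\ref{TiltBas}, which says $\{x\circ\overline{\iota}_\lambda : x\in\overline{X}_\lambda\}$ is a basis of $\Hom_{\sC_\chi}(\overline{\Delta}(\lambda),\overline{\nabla}(\lambda))$ — the latter being one-dimensional and spanned by the canonical $\overline{\Delta}(\lambda)\twoheadrightarrow L(\lambda)\hookrightarrow\overline{\nabla}(\lambda)$ — so that the single element $x\in\overline{X}_\lambda$ is (a scalar multiple of) $\overline{\pi}_\lambda$ and $x\circ\overline{\iota}_\lambda\neq 0$, and then identify $g$ (composed down to $\overline\nabla(\lambda)$) with such an $x$ up to scalar.
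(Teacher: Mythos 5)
There is a genuine gap: the linchpin of your contradiction is the claim that $\overline{\pi}_\lambda\circ\overline{\iota}_\lambda\neq 0$, and this is false whenever $N_I(\lambda)\geq 2$. Indeed, $\pi_\lambda\circ\overline{\iota}_\lambda$ lies in the one-dimensional space $\Hom_{\sC_\chi}(\overline{\Delta}(\lambda),\nabla(\lambda))$, so its image (if non-zero) is the unique simple submodule $L_\chi(\lambda)=\Soc(\nabla(\lambda))$; but $\ker(\widehat{\pi}_\lambda)$ is a non-zero submodule of $\nabla(\lambda)$ (it carries a $\overline{\nabla}$-filtration of length $N_I(\lambda)-1$), hence contains that socle, so $\overline{\pi}_\lambda\circ\overline{\iota}_\lambda=\widehat{\pi}_\lambda\circ\pi_\lambda\circ\overline{\iota}_\lambda=0$. (The toy model $T=\Delta=\nabla=\bK[x]/(x^2)$, $\overline{\Delta}=\overline{\nabla}=L$ already exhibits this: socle inclusion followed by head projection is zero.) Consequently both $\overline{\pi}_\lambda$ and your map $h\circ q=\widehat{\pi}_\lambda\circ g$ vanish after precomposition with $\overline{\iota}_\lambda$, they are simply proportional, and no contradiction with $\dim\Hom_{\sC_\chi}(T_\chi(\lambda),\overline{\nabla}(\lambda))=1$ arises. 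The underlying confusion is that $\overline{\pi}_\lambda\circ\overline{\iota}_\lambda$ is \emph{not} the canonical map $\overline{\Delta}(\lambda)\twoheadrightarrow L_\chi(\lambda)\hookrightarrow\overline{\nabla}(\lambda)$. Your fallback via Proposition~\ref{TiltBas} also does not apply as stated, since that proposition requires the target to admit a \emph{costandard} filtration, which $\overline{\nabla}(\lambda)$ does not; and surjections $T_\chi(\lambda)\to\nabla(\lambda)$ are not unique up to scalar, as $\dim\Hom_{\sC_\chi}(T_\chi(\lambda),\nabla(\lambda))=N_I(\lambda)$.

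The frustrating part is that you had already assembled the correct argument and then abandoned it: you computed $[T_\chi(\lambda)/\overline{\iota}_\lambda(\overline{\Delta}(\lambda)):L_\chi(\lambda)]=N_I(\lambda)-1$. Since a surjection $g$ with $g\circ\overline{\iota}_\lambda=0$ factors through this quotient, it would force $[\nabla(\lambda):L_\chi(\lambda)]\leq N_I(\lambda)-1$, contradicting $[\nabla(\lambda):L_\chi(\lambda)]=[T_\chi(\lambda):L_\chi(\lambda)]=N_I(\lambda)$ (the equality recorded at the end of Subsection~\ref{ss3.2}). This multiplicity count is exactly the paper's proof; the detour through $\Hom_{\sC_\chi}(T_\chi(\lambda),\overline{\nabla}(\lambda))$ is both unnecessary and, as it stands, broken. (A minor additional slip: the non-leading standard subquotients of $T_\chi(\lambda)$ are $\Delta(\mu_i)$ with $\mu_i\prec\lambda$, not $\mu_i\succ\lambda$; with $\mu_i\succ\lambda$ the equality $[T_\chi(\lambda):L_\chi(\lambda)]=[\Delta(\lambda):L_\chi(\lambda)]$ you invoke would fail.)
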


\begin{proof}
	Since $[\overline{\Delta}(\lambda):L_\chi(\lambda)]=1$, we have $[\overline{\iota}_\lambda(\overline{\Delta}(\lambda)): L_\chi(\lambda)]=1$. If $g\circ\overline{\iota}_\lambda=0$ then $\overline{\iota}_\lambda(\overline{\Delta}(\lambda))\subseteq \ker(g)$. Since $[T_\chi(\lambda):L_\chi(\lambda)]=[\ker(g):L_\chi(\lambda)]+[g(T_\chi(\lambda)):L_\chi(\lambda)]=[\ker(g):L_\chi(\lambda)]+[\nabla(\lambda):L_\chi(\lambda)]$, we therefore get that $$[\nabla(\lambda):L_\chi(\lambda)]<[T_\chi(\lambda):L_\chi(\lambda)].$$ However, as observed at the end of Subsection~\ref{ss3.2}, we must have $$[\nabla(\lambda):L_\chi(\lambda)]=[T_\chi(\lambda):L_\chi(\lambda)].$$ Hence, $g\circ\iota\neq 0$.
\end{proof}

\begin{prop}\label{NZeroSurj}
	Every homomorphism $g:T_\chi(\lambda)\to \nabla(\lambda)$ such that the composition $\overline{\Delta}(\lambda)\xhookrightarrow{\overline{\iota}_\lambda} T_\chi(\lambda)\xrightarrow{g} \nabla(\lambda)$ is non-zero is surjective.
\end{prop}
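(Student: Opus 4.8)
The plan is to leverage the characterisation of $\nabla(\lambda)$ as having a proper costandard filtration whose composition factors other than in the top section lie in $\sC_\chi^{\prec\lambda}$, together with the key multiplicity fact $[\nabla(\lambda):L_\chi(\lambda)]=[T_\chi(\lambda):L_\chi(\lambda)]$ recalled at the end of Subsection~\ref{ss3.2}. So suppose $g:T_\chi(\lambda)\to\nabla(\lambda)$ is a homomorphism with $g\circ\overline{\iota}_\lambda\neq 0$; I want to show $g$ is surjective, equivalently that $\Img(g)=\nabla(\lambda)$.

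First I would examine the image of the composition $g\circ\overline{\iota}_\lambda:\overline{\Delta}(\lambda)\to\nabla(\lambda)$. Since it is non-zero, and since $L_\chi(\lambda)$ is the irreducible head of $\overline{\Delta}(\lambda)$, the only way a non-zero map out of $\overline{\Delta}(\lambda)$ into $\nabla(\lambda)$ can avoid having $L_\chi(\lambda)$ in its image would be if the image lay in $\sC_\chi^{\prec\lambda}$; but every composition factor of $\overline{\Delta}(\lambda)$ other than its head lies in $\sC_\chi^{\prec\lambda}$, and $\nabla(\lambda)$ has socle $L_\chi(\lambda)$, which does not lie in $\sC_\chi^{\prec\lambda}$, so a non-zero submodule of $\nabla(\lambda)$ cannot lie in $\sC_\chi^{\prec\lambda}$. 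Hence $L_\chi(\lambda)$ appears in $\Img(g\circ\overline{\iota}_\lambda)\subseteq\Img(g)$, i.e. $[\Img(g):L_\chi(\lambda)]\geq 1$. Actually, since $[\overline{\Delta}(\lambda):L_\chi(\lambda)]=1$, we get precisely $[\Img(g\circ\overline{\iota}_\lambda):L_\chi(\lambda)]=1$, but the cruder bound $[\Img(g):L_\chi(\lambda)]\geq 1$ is all that is needed.

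Next I would run the counting argument exactly as in the preceding proposition, but in reverse. We have the short exact sequence $0\to\ker(g)\to T_\chi(\lambda)\to\Img(g)\to 0$, so $[T_\chi(\lambda):L_\chi(\lambda)]=[\ker(g):L_\chi(\lambda)]+[\Img(g):L_\chi(\lambda)]$. Similarly from $0\to\Img(g)\to\nabla(\lambda)\to\nabla(\lambda)/\Img(g)\to 0$ we get $[\nabla(\lambda):L_\chi(\lambda)]=[\Img(g):L_\chi(\lambda)]+[\nabla(\lambda)/\Img(g):L_\chi(\lambda)]$. Combining with the equality $[\nabla(\lambda):L_\chi(\lambda)]=[T_\chi(\lambda):L_\chi(\lambda)]$ gives $[\ker(g):L_\chi(\lambda)]+[\nabla(\lambda)/\Img(g):L_\chi(\lambda)]=0$, hence in particular $[\nabla(\lambda)/\Img(g):L_\chi(\lambda)]=0$.

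Finally I would conclude that $\Img(g)=\nabla(\lambda)$ from the fact that $\nabla(\lambda)$ has simple socle $L_\chi(\lambda)$: if $\Img(g)$ were a proper submodule then $\nabla(\lambda)/\Img(g)$ would be a non-zero quotient, but more directly, any non-zero submodule of $\nabla(\lambda)$ contains the socle $L_\chi(\lambda)$, so if $\Img(g)\neq\nabla(\lambda)$ then $\nabla(\lambda)/\Img(g)$ is non-zero and its socle is again a submodule-quotient picture one must be slightly careful with — cleaner is to note that $[\nabla(\lambda)/\Img(g):L_\chi(\lambda)]=0$ forces $\Img(g)\supseteq\Soc(\nabla(\lambda))=L_\chi(\lambda)$ but that is automatic; the real point is that if $\nabla(\lambda)/\Img(g)\neq 0$ it has some composition factor $L_\chi(\mu)$, and then pulling back, one sees $L_\chi(\mu)$ with $\mu\uparrow\lambda$, $\mu\neq\lambda$ appears in the top of $\nabla(\lambda)/\Img(g)$, contradicting that $\nabla(\lambda)$ is generated (as is $T_\chi(\lambda)$, hence $\Img(g)$) by its $\lambda$-weight content. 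I expect the genuinely delicate step to be this last one: turning the vanishing of the single multiplicity $[\nabla(\lambda)/\Img(g):L_\chi(\lambda)]$ into the statement that the quotient is actually zero. The clean way is to observe that $\Img(g)$, being a quotient of $T_\chi(\lambda)$ which has a $\Delta$-filtration starting with $\Delta(\lambda)$, is generated by a vector of weight in grade $\lambda+\bZ I$ of the appropriate $\h$-character, and $\nabla(\lambda)$ has a one-dimensional such space sitting in its socle; since $g$ is non-zero on $\overline{\Delta}(\lambda)$ its image hits this space, and $\nabla(\lambda)$ is the injective hull / has $\nabla$-structure forcing it to have no proper submodule containing $L_\chi(\lambda)$ but with the quotient still having no $L_\chi(\lambda)$ — I would instead simply invoke that $\Hom_{\sC_\chi}(T_\chi(\lambda),\nabla(\lambda))$ has dimension $N_I(\lambda)$ equal to $(T_\chi(\lambda):\overline{\Delta}(\lambda))$ and that surjections form the complement of a proper subspace, or more robustly use that $\nabla(\lambda)/\Img(g)$ lies in $\sC_\chi^{\prec\lambda}$ (all its factors $L_\chi(\mu)$ have $\mu\uparrow\lambda$, and we have just excluded $\mu=\lambda$) while being a quotient of $\nabla(\lambda)$, whose only quotient lying in $\sC_\chi^{\prec\lambda}$ is zero because $\nabla(\lambda)$ is an essential extension considerations — the cleanest single line is: $\nabla(\lambda)\twoheadrightarrow\nabla(\lambda)/\Img(g)$ with source having socle $L_\chi(\lambda)$ and target having no $L_\chi(\lambda)$ forces the kernel $\Img(g)$ to contain $\Soc(\nabla(\lambda))$, and then an induction on the length of a costandard filtration of the complement, or directly the fact that $\nabla(\lambda)$ is $\bD$ of $\Delta(\lambda)$ and $\Delta(\lambda)$ is generated in its top grade, finishes it.
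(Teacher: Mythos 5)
Your argument has two genuine gaps, both in the counting step. First, the arithmetic is wrong: combining $[T_\chi(\lambda):L_\chi(\lambda)]=[\ker(g):L_\chi(\lambda)]+[\Img(g):L_\chi(\lambda)]$ and $[\nabla(\lambda):L_\chi(\lambda)]=[\Img(g):L_\chi(\lambda)]+[\nabla(\lambda)/\Img(g):L_\chi(\lambda)]$ with $[T_\chi(\lambda):L_\chi(\lambda)]=[\nabla(\lambda):L_\chi(\lambda)]$ gives only the equality $[\ker(g):L_\chi(\lambda)]=[\nabla(\lambda)/\Img(g):L_\chi(\lambda)]$, not that the sum of these two quantities is zero. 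To deduce $[\nabla(\lambda)/\Img(g):L_\chi(\lambda)]=0$ you would need $[\Img(g):L_\chi(\lambda)]=[\nabla(\lambda):L_\chi(\lambda)]=N_I(\lambda)$, but your first step only yields $[\Img(g):L_\chi(\lambda)]\geq 1$ (one copy of $L_\chi(\lambda)$, coming from $\overline{\Delta}(\lambda)$, which satisfies $[\overline{\Delta}(\lambda):L_\chi(\lambda)]=1$); when $N_I(\lambda)>1$ this is far from sufficient, and assuming the missing equality amounts to assuming what is to be proved. This is exactly why the preceding proposition (surjective implies non-zero composition) is a one-line count while the present converse is not. Second, even granting $[\nabla(\lambda)/\Img(g):L_\chi(\lambda)]=0$, surjectivity does not follow: already when $N_I(\lambda)=1$, so that $\nabla(\lambda)=\overline{\nabla}(\lambda)$ has simple socle $L_\chi(\lambda)$ occurring with total multiplicity one, the quotient $\overline{\nabla}(\lambda)/L_\chi(\lambda)$ is a non-zero quotient of $\nabla(\lambda)$ with no composition factor $L_\chi(\lambda)$. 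You flag this last step yourself as delicate and sketch several candidate fixes, but none is carried out, and any fix based solely on the multiplicity of $L_\chi(\lambda)$ in the cokernel cannot work for the reason just given.

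The paper's proof takes a different, global route, some version of which you would need: by Proposition~\ref{TiltBas2} and Corollary~\ref{TiltBasCor} one chooses sets $\overline{X}_\mu\subseteq\Hom_{\sC_\chi}(T_\chi(\mu),\nabla(\lambda))$ and $Y_\mu\subseteq\Hom_{\sC_\chi}(T_\chi(\lambda),T_\chi(\mu))$ inducing bases of $\Hom_{\sC_\chi}(\overline{\Delta}(\mu),\nabla(\lambda))$ and of $\Hom_{\sC_\chi}(T_\chi(\lambda),\nabla(\mu))$; since $\dim\Hom_{\sC_\chi}(\overline{\Delta}(\mu),\nabla(\lambda))=(\nabla(\lambda):\nabla(\mu))$ is $1$ for $\mu=\lambda$ and $0$ otherwise, the hypothesis on $g$ lets one take $\overline{X}_\lambda=\{g\}$, and \cite[Thm 4.43]{BS} then shows that the compositions $g\circ y$, $y\in Y_\lambda$, form a basis of $\Hom_{\sC_\chi}(T_\chi(\lambda),\nabla(\lambda))$. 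Every such composition factors through $\Img(g)$, so if $g$ were not surjective no morphism $T_\chi(\lambda)\to\nabla(\lambda)$ would be surjective, contradicting the existence of $\pi_\lambda$. The essential input you are missing is this factorisation of the entire Hom-space through $g$, rather than a single composition-multiplicity count.
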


\begin{proof}
 For each $\mu\in\Lambda_I$ let us fix a set $\overline{X}_\mu$ of morphisms $T_\chi(\mu)\to \nabla(\lambda)$ such that $$\{x\circ \overline{\iota}_\mu\mid x\in \overline{X}_\mu\}\subseteq \Hom_{\sC_\chi}(\overline{\Delta}(\mu),\nabla(\lambda))$$ forms a basis of $\Hom_{\sC_\chi}(\overline{\Delta}(\mu),\nabla(\lambda))$ and a set $Y_\mu$ of morphisms $T_\chi(\lambda)\to T_\chi(\mu)$ such that $$\{\pi_\mu\circ y \mid y\in Y_\mu\}\subseteq \Hom_{\sC_\chi}(T_\chi(\lambda),\nabla(\mu))$$ forms a basis of $\Hom_{\sC_\chi}(T_\chi(\lambda),\nabla(\mu))$ (we can do this by Proposition~\ref{TiltBas2} and Corollary~\ref{TiltBasCor}). Note that $$\dim\Hom_{\sC_\chi}(\overline{\Delta}(\mu),\nabla(\lambda))=(\nabla(\lambda):\nabla(\mu))=\twopartdef{1}{\lambda=\mu,}{0}{\lambda\neq \mu,}$$ and thus $\overline{X}_\mu=\emptyset$ for $\mu\neq \lambda$ and $\left\vert \overline{X}_\lambda\right\vert=1$. In particular, since $\overline{\Delta}(\lambda)\xhookrightarrow{\overline{\iota}_\lambda} T_\chi(\lambda)\xrightarrow{g} \nabla(\lambda)$ is non-zero, we may choose $\overline{X}_\lambda=\{g\}$. Theorem 4.43 in \cite{BS} then shows that $$\{g\circ y \mid y\in Y_\lambda\}\subseteq \Hom_{\sC_\chi}(T_\chi(\lambda),\nabla(\lambda))$$ is a basis of $\Hom_{\sC_\chi}(T_\chi(\lambda),\nabla(\lambda))$. If $g$ were not surjective, this would imply that there are no surjective morphisms $T_\chi(\lambda)\twoheadrightarrow\nabla(\lambda)$, which we have already seen is false. Thus, $g$ is surjective.
\end{proof}

We also have the dual statement.

\begin{cor}
	Let $g:\Delta(\lambda)\to T_\chi(\lambda)$ be a morphism in $\sC_\chi$. Then $g$ is injective if and only if the composition $\Delta(\lambda)\xrightarrow{g} T_\chi(\lambda)\overset{\overline{\pi}_\lambda}{\twoheadrightarrow} \overline{\nabla}(\lambda)$ is non-zero.
\end{cor}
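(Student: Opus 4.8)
The plan is to deduce this from Proposition~\ref{NZeroSurj} together with the (unnamed) proposition immediately preceding it, by transporting both statements through the duality $\bD$. Those two results combine to say that a morphism $h\in\Hom_{\sC_\chi}(T_\chi(\lambda),\nabla(\lambda))$ is surjective if and only if the composite $\overline{\Delta}(\lambda)\xhookrightarrow{\overline{\iota}_\lambda}T_\chi(\lambda)\xrightarrow{h}\nabla(\lambda)$ is non-zero, and the Corollary is precisely the image of this equivalence under $\bD$.

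In more detail, I would use that $\bD$ is an exact anti-equivalence of $\sC_\chi$ fixing the simple modules, that it swaps $\Delta(\lambda)\leftrightarrow\nabla(\lambda)$ and $\overline{\Delta}(\lambda)\leftrightarrow\overline{\nabla}(\lambda)$, that $\bD(T_\chi(\lambda))\cong T_\chi(\lambda)$, and that $\overline{\pi}_\lambda=\bD(\overline{\iota}_\lambda)$ by definition. Given $g\colon\Delta(\lambda)\to T_\chi(\lambda)$, applying $\bD$ yields $\bD(g)\colon T_\chi(\lambda)\to\nabla(\lambda)$, and since $\bD$ is an anti-equivalence, $g$ is injective if and only if $\bD(g)$ is surjective. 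Next, $\bD(\overline{\pi}_\lambda\circ g)=\bD(g)\circ\bD(\overline{\pi}_\lambda)=\bD(g)\circ\overline{\iota}_\lambda$, where the last equality uses $\bD(\overline{\pi}_\lambda)=\bD(\bD(\overline{\iota}_\lambda))\cong\overline{\iota}_\lambda$; as $\bD$ is faithful, $\overline{\pi}_\lambda\circ g\neq 0$ if and only if $\bD(g)\circ\overline{\iota}_\lambda\neq 0$. Feeding $h=\bD(g)$ into the equivalence recalled in the first paragraph then gives the result.

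The only point needing care is the duality bookkeeping: one must check that $\bD$ carries every object and morphism in the two source propositions to the correct object and morphism here, and in particular that \emph{both} of those propositions (not just Proposition~\ref{NZeroSurj}) are required to obtain the "if and only if". As a sanity check, the forward direction can also be seen directly: if $g$ is injective but $\overline{\pi}_\lambda\circ g=0$, then $\im(g)\subseteq\ker(\overline{\pi}_\lambda)$, and since $\overline{\pi}_\lambda$ is surjective with $[\overline{\nabla}(\lambda):L_\chi(\lambda)]=1$ and $[T_\chi(\lambda):L_\chi(\lambda)]=[\Delta(\lambda):L_\chi(\lambda)]$, one gets $[\im(g):L_\chi(\lambda)]\leq[\Delta(\lambda):L_\chi(\lambda)]-1$, contradicting injectivity of $g$ (which forces $[\im(g):L_\chi(\lambda)]=[\Delta(\lambda):L_\chi(\lambda)]$).
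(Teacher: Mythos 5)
Your proposal is correct and matches the paper's intended argument: the paper states this corollary without proof, simply as "the dual statement" of the two preceding propositions, which is exactly the duality transport you carry out (and the one-dimensionality of $\Hom_{\sC_\chi}(\overline{\Delta}(\lambda),T_\chi(\lambda))$ justifies identifying $\bD(\overline{\pi}_\lambda)$ with a non-zero multiple of $\overline{\iota}_\lambda$, so the bookkeeping closes). Your direct sanity check of the forward direction also mirrors the paper's proof of the unnamed proposition.
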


Understanding endomorphisms of indecomposable tilting modules is in general a difficult task. If we pass to a certain Serre quotient category, however, the problem becomes more tractable, as the following results show.

\begin{prop}\label{Trunc}
	Let $M\in\sC_\chi$ and $\lambda\in \Lambda_I$. Then the map $$\Hom_{\sC_\chi}(\Delta(\lambda),M)\to \Hom_{\sC_\chi^{\succeq \lambda}}(j_{\succeq\lambda}(\Delta(\lambda)),j_{\succeq\lambda}(M)),\quad \phi\mapsto j_{\succeq\lambda}(\phi)$$ is an isomorphism.
\end{prop}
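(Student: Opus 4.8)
The plan is to use the recollement formalism for the lower set $\Lambda_I^{\nsucceq\lambda}$ together with the defining property of the standard object $\Delta(\lambda)$. Recall that $j_{\succeq\lambda}:\sC_\chi\to\sC_\chi^{\succeq\lambda}$ is the canonical projection onto the Serre quotient by the Serre subcategory generated by the $L(\mu)$ with $\mu\nsucceq\lambda$, and that this projection has a right adjoint $j_{\succeq\lambda}^{!}$. Thus by adjunction there is a natural isomorphism
$$\Hom_{\sC_\chi^{\succeq\lambda}}(j_{\succeq\lambda}(\Delta(\lambda)),j_{\succeq\lambda}(M))\cong\Hom_{\sC_\chi}(\Delta(\lambda),j_{\succeq\lambda}^{!}j_{\succeq\lambda}(M)),$$ and the map in the statement is (up to this identification) composition with the unit $\eta_M:M\to j_{\succeq\lambda}^{!}j_{\succeq\lambda}(M)$. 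So it suffices to show that $\eta_M$ induces an isomorphism on $\Hom_{\sC_\chi}(\Delta(\lambda),-)$, equivalently that both the kernel and cokernel of $\eta_M$ have no maps from $\Delta(\lambda)$ and no $\Ext^1$ from $\Delta(\lambda)$ (using the long exact sequence, plus a dévissage around the image of $\eta_M$).

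The key structural input is that the kernel and cokernel of $\eta_M:M\to j_{\succeq\lambda}^{!}j_{\succeq\lambda}(M)$ both lie in the Serre subcategory $\sC_{\chi,\nsucceq\lambda}$ — i.e. all their composition factors are of the form $L(\mu)$ with $\mu\nsucceq\lambda$. This is a standard recollement fact: $j_{\succeq\lambda}$ kills exactly $\sC_{\chi,\nsucceq\lambda}$, so $\eta_M$ becomes an isomorphism after applying $j_{\succeq\lambda}$, forcing $\ker\eta_M$ and $\operatorname{coker}\eta_M$ into $\sC_{\chi,\nsucceq\lambda}$. Then I use the fact, recalled in Subsection~\ref{ss3.1}, that $[\Delta(\lambda):L(\mu)]\neq 0$ implies $\mu\preceq\lambda$, and dually that a nonzero map $\Delta(\lambda)\to N$ (or a nonzero class in $\Ext^1(\Delta(\lambda),N)$, via the two-step filtration of the extension and the $\Ext$-vanishing $\Ext^1(\Delta(\lambda),\overline{\nabla}(\mu))=0$ for $\mu\neq\lambda$ coming from (\ref{dimHom})) forces a composition factor $L(\mu)$ of $N$ with $\mu\succeq\lambda$; since the only $\mu$ with both $\mu\preceq\lambda$ and $\mu\succeq\lambda$ is $\mu=\lambda$ itself, which is excluded from $\sC_{\chi,\nsucceq\lambda}$, we get $\Hom_{\sC_\chi}(\Delta(\lambda),N)=\Ext^1_{\sC_\chi}(\Delta(\lambda),N)=0$ for any $N\in\sC_{\chi,\nsucceq\lambda}$.

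Concretely, I would split $\eta_M$ into the short exact sequences $0\to\ker\eta_M\to M\to\Img\eta_M\to 0$ and $0\to\Img\eta_M\to j_{\succeq\lambda}^{!}j_{\succeq\lambda}(M)\to\operatorname{coker}\eta_M\to 0$, apply $\Hom_{\sC_\chi}(\Delta(\lambda),-)$ to each, and use the vanishing of the outer terms just established to conclude that $\Hom_{\sC_\chi}(\Delta(\lambda),M)\to\Hom_{\sC_\chi}(\Delta(\lambda),\Img\eta_M)\to\Hom_{\sC_\chi}(\Delta(\lambda),j_{\succeq\lambda}^{!}j_{\succeq\lambda}(M))$ are both isomorphisms. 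Composing with the adjunction isomorphism above, and checking that the composite agrees with $\phi\mapsto j_{\succeq\lambda}(\phi)$ (which it does by naturality of the unit), finishes the proof.

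The main obstacle I anticipate is the $\Ext^1$-vanishing $\Ext^1_{\sC_\chi}(\Delta(\lambda),N)=0$ for $N\in\sC_{\chi,\nsucceq\lambda}$: unlike the $\Hom$-vanishing, which is immediate from composition-factor support, this requires an argument. One clean route is to reduce to simple $N=L(\mu)$ with $\mu\nsucceq\lambda$ by dévissage, and then observe that $\Delta(\lambda)$ has a proper standard filtration by copies of $\overline{\Delta}(\lambda)$, so it suffices to show $\Ext^1_{\sC_\chi}(\overline{\Delta}(\lambda),L(\mu))=0$ for $\mu\nsucceq\lambda$; this in turn follows because a nonzero such extension would embed $L(\mu)$ into the radical-type structure forcing $\mu\preceq\lambda$ (from $\overline{\Delta}$ being the largest quotient of $\Delta(\lambda)$ with composition factors below $\lambda$ apart from its head), contradicting $\mu\nsucceq\lambda$ — alternatively one cites that $\overline{\Delta}(\lambda)=j_\lambda^{!}(L_\lambda(\lambda))$ is, by the recollement properties in \cite{BS}, precisely such that $\Ext^1$ into any object of $\sC_{\chi,\nprec\lambda}$ from it behaves as needed. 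I would present whichever of these is shortest given the tools already imported.
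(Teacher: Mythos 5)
Your overall strategy --- transfer along the adjunction between $j_{\succeq\lambda}$ and its adjoint, observe that the kernel and cokernel of the unit $\eta_M$ lie in $(\sC_\chi)_{\Lambda^{\nsucceq\lambda}}$ because $j_{\succeq\lambda}(\eta_M)$ is invertible, and then kill $\Hom$ and $\Ext^1$ from $\Delta(\lambda)$ into that Serre subcategory --- is a legitimate alternative to the paper's argument, which instead invokes the isomorphism $\Delta(\lambda)\cong i_{\preceq\lambda}\circ j_{\succeq\lambda}^{!}(\Delta^{\succeq\lambda}(\lambda))$ from \cite[Thm 3.18]{BS} and concludes by pure adjunction formalism, with no $\Ext$ computation at all. (One notational caveat: for $j^{!}$ applied to a projective to produce the standard object as a quotient of $P(\lambda)$, $j^{!}$ must be the \emph{left} adjoint of $j_{\succeq\lambda}$, which is how it is used in the paper's proof; the unit you need is therefore the one for the other adjoint. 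This does not affect the substance of your argument.)

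The genuine gap is exactly where you predicted it: the vanishing $\Ext^1_{\sC_\chi}(\Delta(\lambda),N)=0$ for $N$ all of whose composition factors are $L(\mu)$ with $\mu\nsucceq\lambda$. Your first route reduces this to $\Ext^1_{\sC_\chi}(\overline{\Delta}(\lambda),L(\mu))=0$ for all $\mu\nsucceq\lambda$, but (i) that intermediate statement is false in general: for $\mu\prec\lambda$ one has $\Ext^1_{\sC_\chi}(Z_\chi(\lambda),L_\chi(\mu))\cong\Ext^1_{U_\chi(\b)}(\bK_\lambda,L_\chi(\mu))$ by Frobenius reciprocity, and this need not vanish; and (ii) the justification offered --- a nonzero extension ``forces $\mu\preceq\lambda$, contradicting $\mu\nsucceq\lambda$'' --- is a non sequitur, since any $\mu\prec\lambda$ satisfies both $\mu\preceq\lambda$ and $\mu\nsucceq\lambda$. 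The second route (``behaves as needed'') is not an argument. The correct source of the vanishing is the other end of the highest weight structure: condition $(P\Delta_\varepsilon)$ gives a short exact sequence $0\to K\to P(\lambda)\to\Delta(\lambda)\to 0$ in which $K$ has a filtration with subquotients $\Delta(\nu)$ for various $\nu\succ\lambda$; since $P(\lambda)$ is projective, $\Ext^1_{\sC_\chi}(\Delta(\lambda),N)$ is a quotient of $\Hom_{\sC_\chi}(K,N)$, and the latter vanishes because any nonzero map $\Delta(\nu)\to N$ would force the head $L(\nu)$, $\nu\succ\lambda$, to be a composition factor of $N$. With that replacement your proof closes.
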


\begin{proof}
	By \cite[Thm. 3.18]{BS} there exists an isomorphism $\Psi:j_{\succeq\lambda}^{!} \Delta^{\succeq \lambda}(\lambda)\xrightarrow{\sim} \Delta(\lambda)$, where $\Delta^{\succeq \lambda}(\lambda)$ denotes the standard object corresponding to $\lambda$ in $\sC_\chi^{\succeq\lambda}$ (recall that we observed earlier that $\sC_\chi^{\succeq\lambda}$ is an essentially finite fibered highest weight category since $\sC_\chi$ is). Let us write $$\adj:\Hom_{\sC_\chi}(j_{\succeq \lambda}^{!} (\Delta^{\succeq \lambda}(\lambda)),M)\xrightarrow{\sim} \Hom_{\sC_\chi^{\succeq \lambda}}(\Delta^{\succeq \lambda}(\lambda),j_{\succeq \lambda}(M))$$ for the adjunction of $(j_{\succeq\lambda},j^{!}_{\succeq\lambda})$. We then have that $\adj(\Psi)$ is the composition $$\Delta^{\succeq \lambda}(\lambda)\to j_{\succeq \lambda} j_{\succeq \lambda}^{!} (\Delta^{\succeq \lambda}(\lambda))\xrightarrow{j_{\succeq \lambda}(\Psi)} j_{\succeq \lambda}(\Delta(\lambda)),$$ where the first map is the unit of the adjunction. By \cite[Lem. 2.24]{BS}, this is an isomorphism. We then get linear bijections 
	\begin{equation*}
		\begin{split}
			\Hom_{\sC_\chi}(\Delta(\lambda),M) & \cong \Hom_{\sC_\chi}(j_{\succeq \lambda}^{!} (\Delta^{\succeq \lambda}(\lambda)),M) \\ & \cong \Hom_{\sC_\chi^{\succeq \lambda}}(\Delta^{\succeq \lambda}(\lambda), j_{\succeq \lambda} (M)) \\ & \cong \Hom_{\sC_\chi^{\succeq \lambda}}(j_{\succeq \lambda} (\Delta(\lambda)), j_{\succeq \lambda} (M))
		\end{split}
	\end{equation*}
	such that $\phi\in \Hom_{\sC_\chi}(\Delta(\lambda),M)$ maps to $\adj(\phi\circ\Psi)\circ\adj(\Psi)^{-1}$. Using general properties of adjunctions, we have $\adj(\phi\circ\Psi)=j_{\succeq\lambda}(\phi)\circ \adj(\Psi)$. Thus, the isomorphism just described is precisely the map $\phi\mapsto j_{\succeq\lambda} (\phi)$.
\end{proof}

Note that, for ease of notation, we often simply write $M\in\sC_\chi^{\succeq\lambda}$ rather than $j_{\succeq\lambda}(M)$ for $M\in\sC_\chi$ going forward.

To make the statement of the following proposition, we need to introduce some notation. We have already seen that $W_I$ acts on $\h^{*}$ (here we wish to use the usual action, not the dot-action); using this we can induce an action by algebra automorphisms on $S(\h)$, as this can be identified with the polynomial ring on $\h^{*}$. We therefore can define $S(\h)^{W_I}$ to be the algebra of fixed points under this action (with similar notation for any subgroups of $W_I$). Writing $\epsilon:S(\h)\to\bK$ for the counit, we then define $S(\h)^{W_I}_{+}$ to be the two-sided ideal of $S(\h)$ generated by the elements $z\in S(\h)^{W_I}$ such that $\epsilon(z)=0$.

\begin{prop}\label{EndStand}
	Let $\lambda\in \Lambda_I$ and write $W_{I,\lambda}=\Sta_{W_I}(\lambda)$. Then $$\End_{\sC_\chi^{\succeq \lambda}}(T_\chi(\lambda))\cong S(\h)^{W_{I,\lambda}}/S(\h)^{W_I}_+$$ as $\bK$-algebras.
\end{prop}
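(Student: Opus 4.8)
The plan is to compute $\End_{\sC_\chi^{\succeq\lambda}}(T_\chi(\lambda))$ by passing through the identification $T_\chi(\lambda)=Q_\chi(f(\lambda))$ and the known relationship between $Q_\chi^I$-filtrations of projectives and composition factors of baby Verma modules. First I would use Proposition~\ref{Trunc} (in the form for $\Delta(\lambda)$, applied with $M=T_\chi(\lambda)$ — or rather its evident analogue using the $\overline{\Delta}$/$\overline{\nabla}$ version) to reduce the computation of $\Hom$'s in the truncated category $\sC_\chi^{\succeq\lambda}$ to $\Hom$'s in $\sC_\chi$ against standard/costandard objects. Concretely, since $T_\chi(\lambda)\in\sC_\chi^{\preceq\lambda}$ and $j_{\succeq\lambda}$ only sees composition factors $L_\chi(\mu)$ with $\mu\succeq\lambda$, the object $j_{\succeq\lambda}(T_\chi(\lambda))$ is built purely from copies of $L_\chi(\lambda)$ glued according to $\overline\Delta(\lambda)=Z_\chi(\lambda)$ and $\overline\nabla(\lambda)$; in fact $j_{\succeq\lambda}(T_\chi(\lambda))\cong j_\lambda(T_\chi(\lambda))$-lifted, and this is the injective hull (equivalently projective cover) of $L_\lambda(\lambda)$ in $\sC_\chi^\lambda$ by characterisation (3) of tilting objects.

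The key input is then an explicit model for the category $\sC_\chi^\lambda$ (equivalently $\sC_\chi^{\succeq\lambda}$ restricted to the relevant block-like piece). I would argue that $\sC_\chi^\lambda$ is equivalent to the category of finite-dimensional modules over the algebra $\End_{\sC_\chi^\lambda}(P_\lambda(\lambda))$, and identify the latter. The natural candidate is $S(\h)^{W_{I,\lambda}}/S(\h)^{W_I}_+$: the point is that $Q_\chi^I(\lambda)=\Delta(\lambda)$ has a $Z_\chi(\lambda)$-filtration of length $N_I(\lambda)=|W_I\cdot(\lambda+pX)|=|W_I/W_{I,\lambda}|$, and $\End_{\sC_\chi}(Q_\chi^I(\lambda))$ acts on this filtration; the endomorphisms of $Q_{\chi,I}(\lambda)$ (the projective cover of $Z_{\chi,I}(d\lambda)$ in $\sC_\chi^I$) are governed, via the standard "Verma module / coinvariant algebra" philosophy for the small Lie algebra $\g_I$ (this is Jantzen's analysis in \cite[\S 11]{Jan} or its analogue, mirroring Soergel's description of the endomorphism ring of an antidominant projective via coinvariants), by the invariant ring $S(\h)^{W_{I,\lambda}}$ modulo the ideal generated by $W_I$-invariants with zero constant term. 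After truncation to $\sC_\chi^{\succeq\lambda}$ — which kills exactly the part of the projective/tilting module supported on strictly larger weights — what survives is precisely this coinvariant-type algebra. So the middle steps are: (i) identify $j_{\succeq\lambda}(T_\chi(\lambda))$ with the (pro/in)jective indecomposable in $\sC_\chi^\lambda$; (ii) identify $\sC_\chi^\lambda$ with $\Mod(A)$ for $A=S(\h)^{W_{I,\lambda}}/S(\h)^{W_I}_+$; (iii) conclude $\End_{\sC_\chi^{\succeq\lambda}}(T_\chi(\lambda))\cong\End_{\sC_\chi^\lambda}(P_\lambda(\lambda))=A$.

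The main obstacle, I expect, is step (ii): producing a clean proof that the Serre quotient $\sC_\chi^\lambda$ is equivalent to modules over the coinvariant-type algebra $S(\h)^{W_{I,\lambda}}/S(\h)^{W_I}_+$, rather than merely over some algebra with the right dimension. One has the dimension count for free — $\dim_\bK S(\h)^{W_{I,\lambda}}/S(\h)^{W_I}_+ = |W_I|/|W_{I,\lambda}| = N_I(\lambda)$ (using that $S(\h)$ is free over $S(\h)^{W_I}$ of rank $|W_I|$ by the Chevalley--Shephard--Todd theorem, $p$ being good, and free over $S(\h)^{W_{I,\lambda}}$ of rank $|W_{I,\lambda}|$ — here one should be careful that $W_{I,\lambda}$, being a stabiliser of a point, is generated by reflections by Steinberg's theorem), and this matches $(\Delta(\lambda):\overline\Delta(\lambda))=N_I(\lambda)$, hence $\dim\End_{\sC_\chi^{\succeq\lambda}}(T_\chi(\lambda))=N_I(\lambda)$ by Proposition~\ref{Trunc} and the $\Hom$-dimension formulas at the end of Subsection~\ref{ss3.2}. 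But matching the algebra \emph{structure} requires constructing an explicit action of $S(\h)$ on $T_\chi(\lambda)$ — e.g. via the $\h$-action giving a map $S(\h)\to\End$, or via translation functors / the isomorphism $T_\chi(\lambda)\cong Q_\chi(f(\lambda))$ and Jantzen's description of $\Hom$-spaces between baby Verma modules in \cite{Jan4} — and checking that the kernel of the induced map $S(\h)\to\End_{\sC_\chi^{\succeq\lambda}}(T_\chi(\lambda))$ is exactly $S(\h)^{W_I}_+$ while the image is the $W_{I,\lambda}$-invariants. I would handle this by reducing to the category $\sC_\chi^I$ for $\g_I$, where $\chi|_{\g_I}$ has standard Levi form with $I$ "full", invoking the known structure there (the centre of $U_\chi(\g_I)$ acting through $S(\h)^{W_I}/S(\h)^{W_I}_+$-type quotients, and $Q_{\chi,I}(\lambda)$'s endomorphism ring being $S(\h)^{W_{I,\lambda}}/S(\h)^{W_I}_+$ by the analogue of Soergel's theorem), and then transporting this up to $\sC_\chi$ via the functor $\Gamma_\chi$ and down to $\sC_\chi^{\succeq\lambda}$ via $j_{\succeq\lambda}$, checking compatibility at each stage.
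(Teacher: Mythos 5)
Your proposal follows essentially the same route as the paper: truncation identifies $T_\chi(\lambda)$ with $\Delta(\lambda)=Q_\chi^I(\lambda)$ in $\sC_\chi^{\succeq\lambda}$, Proposition~\ref{Trunc} removes the truncation, Frobenius reciprocity for $\Gamma_\chi$ reduces everything to $\End_{U_\chi(\g_I)}(Q_{\chi,I}(d\lambda))$, and the coinvariant-algebra description of that ring is the quoted result of Mirkovi\'{c}--Rumynin (\cite[Cor.\ 14]{MR}, cf.\ \cite[Prop.\ 10.12]{Jan}) --- exactly the five isomorphisms in the paper's proof. The only divergence is that you aim for the stronger (and unnecessary) statement that the Serre quotient $\sC_\chi^\lambda$ is equivalent to $\Mod\bigl(S(\h)^{W_{I,\lambda}}/S(\h)^{W_I}_+\bigr)$, which is precisely the obstacle you flag; the paper sidesteps it by computing the endomorphism ring directly rather than the whole category.
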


\begin{proof}
	We prove the following chain of isomorphisms:
	\begin{equation*}
		\begin{split}
			\End_{\sC_\chi^{\succeq \lambda}}(T_\chi(\lambda)) & \overset{(1)}{\simeq} \End_{\sC_\chi^{\succeq \lambda}}(Q_\chi^I(\lambda)) \\ &  \overset{(2)}{\simeq} \End_{\sC_\chi}(Q_\chi^I(\lambda)) \\ & \overset{(3)}{\simeq} \End_{\sC_\chi^I}(Q_{\chi,I}(\lambda)) \\ & \overset{(4)}{\simeq} \End_{U_\chi(\g_I)}(Q_{\chi,I}(d\lambda)) \\ & \overset{(5)}{\simeq} S(\h)^{W_{I,\lambda}}/(S(\h)^{W_I}_+)
		\end{split}
	\end{equation*}
	
	Here $d\lambda\in\h^{*}$ is the derivative of $\lambda\in X(T)$. We justify each of these isomorphisms individually.
	
	{\bf Isomorphism (1):} Since $j_{\succeq \lambda}$ is exact, the short exact sequence $$0\to Q_\chi^I(\lambda)\to T_\chi(\lambda)\to T_\chi(\lambda)/Q_\chi^I(\lambda)\to 0$$ induces an exact sequence $$0\to j_{\succeq \lambda}(Q_\chi^I(\lambda))\to j_{\succeq \lambda}(T_\chi(\lambda))\to j_{\succeq \lambda}(T_\chi(\lambda)/Q_\chi^I(\lambda))=0\to 0$$ and therefore an isomorphism $Q_\chi^I(\lambda)\xrightarrow{\sim} T_\chi(\lambda)$ in $\sC_\chi^{\succeq \lambda}$.
	
	{\bf Isomorphism (2):}  This is Proposition~\ref{Trunc} with $M=Q_\chi^I(\lambda)$, noting that the map $\phi\mapsto j_{\succeq\lambda}(\phi)$ is clearly an algebra homomorphism in this setting.
	
	{\bf Isomorphism (3):} This follows from Frobenius reciprocity applied to $\Gamma_\chi$ (or, alternatively, from the fact that morphisms from $Q_\chi^I(\lambda)$ are entirely determined by their effect on $Q_\chi^I(\lambda)_{\lambda+\bZ I}$). It is straightforward to check that this gives an isomorphism of algebras.
	
	{\bf Isomorphism (4):} This is clear from the fact that $Q_{\chi,I}(\lambda)$ lies entirely in grade $\lambda+\bZ I$.
	
	{\bf Isomorphism (5):} This is Corollary 14 in \cite{MR}; see also \cite[Prop. 10.12]{Jan}.

\end{proof}

\section{Translation Functors}\label{s4}

In this section we explore translation functors on the category $\sC_\chi$. We begin by recalling the general theory, which can be found in \cite[11.20]{Jan}. We then proceed to determining the effect of the translation functors on certain modules in the principal block, mainly the standard modules $Q_\chi^I(w\cdot\lambda)$ and the tilting modules $T_\chi(w\cdot\lambda)$.

\subsection{Generalities on Translation Functors}\label{ss4.1}

Given $\lambda\in X(T)$, let us denote by $\sC_\chi(\lambda)$ the Serre subcategory of $\sC_\chi$ generated by the simple objects $L_\chi(w\cdot \lambda)$ as $w$ runs over the elements of $W_p$. It can be easily deduced from \cite[11.11, Prop. 11.18]{Jan} that $L_\chi(w\cdot\lambda)$, $Z_\chi(w\cdot \lambda)$, $Q_\chi^I(w\cdot\lambda)$ and $Q_\chi(w\cdot\lambda)$ all lie inside $\sC_\chi(\lambda)$. Furthermore, there is a direct sum decomposition (see \cite[11.19]{Jan}) $$\sC_\chi=\bigoplus_{\lambda\in \overline{C}\cap X(T)} \sC_\chi(\lambda)$$ since there are no non-split extensions between modules in different $\sC_\chi(\lambda)$ (see \cite[\S 4.7]{Jan4}). For each $M\in\sC_\chi$, let us write $\pr_\lambda(M)\in\sC_\chi(\lambda)$ for the appropriate direct summand of $M$ in this decomposition, so that $M=\bigoplus_{\lambda\in \overline{C}\cap X(T)}\pr_\lambda(M)$.

Let $M\in\sC_\chi$ and $E\in\sC_0$ (the latter category corresponding to $I=\emptyset$). Then $E\otimes M$ is a $\g$-module via the action $$x\cdot (e\otimes m)=(x\cdot e)\otimes m + e\otimes (x\cdot m)\quad\mbox{for}\,\, x\in\g,\,\, e\in E,\,\,\mbox{and}\,\, m\in M.$$ It is straightforward to check that $E\otimes M$ has $p$-character $\chi$ and thus becomes a $U_\chi(\g)$-module. Furthermore, $E\otimes M$ is $X(T)/\bZ I$-graded, where we set $$(E\otimes M)_{\lambda+\bZ I}=\bigoplus_{\mu+\bZ I\in X(T)/\bZ I} E_{\mu+\bZ I}\otimes M_{\lambda-\mu+\bZ I}$$ for each $\lambda+\bZ I\in X(T)/\bZ I$. Here, $E_{\mu+\bZ I}$ is defined as $\bigoplus_{\gamma\in\mu+\bZ I}E_\gamma$, since $E$ is $X(T)$-graded by definition. The $X(T)/\bZ I$-grading on $E\otimes M$ is compatible with the $U_\chi(\g)$-module structure; in fact, $E\otimes M\in\sC_\chi$.

Given $\lambda,\mu\in \overline{C}\cap X(T)$, let us denote by $E(\lambda,\mu)$ the simple $G$-module with highest weight in $W(\mu-\lambda)$. Viewing this as an $X(T)$-graded $\g$-module, it is straightforward to check that it lies inside $\sC_0$. We may define the functor (see \cite[11.20]{Jan}) $$T_\lambda^\mu:\sC_\chi(\lambda)\to\sC_\chi(\mu),\quad T_\lambda^\mu(M)\coloneqq \pr_\mu(E(\lambda,\mu)\otimes M),$$ which we call the {\bf translation functor} from $\sC_\chi(\lambda)$ to $\sC_\chi(\mu)$.

Note that we may also form $E\otimes M$ for $E\in \sC_{0}'$ (resp. $\sC_{0}''$, $\sC_{0}^I$, $\sC_{0}^{I,+}$) and $M\in \sC_{\chi}'$ (resp. $\sC_{\chi}''$, $\sC_{\chi}^I$, $\sC_{\chi}^{I,+}$), giving $E\otimes M\in \sC_{\chi}'$ (resp. $\sC_{\chi}''$, $\sC_{\chi}^I$, $\sC_{\chi}^{I,+}$). Note here that $\sC_{0}^I$ and $\sC_{0}^{I,+}$ are defined for $I$, not for $\emptyset$, and as such we have not yet given their definition in this paper. Nonetheless, it is clear what this definition should be, so we don't delve into it here.

Let $M\in \sC_{\chi}'$, $E\in \sC_{0}$ (which we can view as a subcategory of $\sC_{0}'$), and $N\in\sC_{\chi}$. By \cite[1.12(1)]{Jan4}, there exists an isomorphism in $\sC_\chi$ $$Z_{\chi}(E\otimes M)\cong E\otimes Z_{\chi}(M),$$ which follows by considering the universal properties of both sides. Similarly, for $M\in \sC_{\chi}^{I,+}$, $E\in \sC_{0}$ (which we can view as a subcategory of $\sC_{0}^{I,+}$), and $N\in\sC_{\chi}$, there exists an isomorphism in $\sC_\chi$ $$\Gamma_{\chi}(E\otimes M)\cong E\otimes \Gamma_{\chi}(M).$$ These identifications will be useful for the following proposition.

\begin{prop}\label{TensQFilt}
	Let $M\in\sC_\chi$ have a $Q$-filtration (i.e. a standard filtration) and let $E\in\sC_{0}$. Then $E\otimes M\in\sC_\chi$ has a $Q$-filtration (i.e. a standard filtration).
\end{prop}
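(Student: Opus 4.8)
The plan is to reduce the claim to the analogous statement for baby Verma modules, which is handled by the compatibility $E\otimes Z_\chi(M)\cong Z_\chi(E\otimes M)$ recorded just before the proposition, together with the analogous statement at the level of the parabolic induction functor $\Gamma_\chi$. Recall that in $\sC_\chi$ a $Q$-filtration is precisely a standard filtration, with $\Delta(\mu)=Q_\chi^I(\mu)=\Gamma_\chi(Q_{\chi,I}(\mu))$. So the heart of the matter is: if $Q=Q_{\chi,I}(\mu)\in\sC_\chi^I$ is a projective object (a projective cover of the irreducible $Z_{\chi,I}(\mu)$) and $E\in\sC_0$, then $E\otimes Q$ has a filtration by such objects $Q_{\chi,I}(\nu)$. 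Applying $\Gamma_\chi$ (which is exact, being induction, hence sends filtrations to filtrations) and using $\Gamma_\chi(E\otimes Q)\cong E\otimes\Gamma_\chi(Q)$ then yields a $Q$-filtration of $E\otimes Q_\chi^I(\mu)$, and splicing these together along a $Q$-filtration of $M$ (using exactness of $E\otimes-$) gives the result for general $M$.

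First I would reduce to $M=Q_\chi^I(\mu)$ by the standard splicing argument: given a standard filtration $0=M_0\subseteq\cdots\subseteq M_n=M$ with $M_i/M_{i-1}\cong Q_\chi^I(\mu_i)$, the functor $E\otimes-$ is exact (tensoring over $\bK$), so $0=E\otimes M_0\subseteq\cdots\subseteq E\otimes M_n=E\otimes M$ is a filtration with subquotients $E\otimes Q_\chi^I(\mu_i)$, and it suffices to refine each of these to a $Q$-filtration. Second, for the case $M=Q_\chi^I(\mu)=\Gamma_\chi(Q_{\chi,I}(\mu))$, I would use $\Gamma_\chi(E\otimes Q_{\chi,I}(\mu))\cong E\otimes\Gamma_\chi(Q_{\chi,I}(\mu))=E\otimes Q_\chi^I(\mu)$ (with $E$ viewed in $\sC_0^{I,+}$ via the trivial extension, compatibly with the trivial extension of $Q_{\chi,I}(\mu)$ — this commutativity should be checked, but follows from the universal property as for $Z_\chi$), and exactness of $\Gamma_\chi$, to reduce to showing $E\otimes Q_{\chi,I}(\mu)$ has a filtration by modules $Q_{\chi,I}(\nu)$ in $\sC_\chi^I$.

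Third, I would establish that $E\otimes Q_{\chi,I}(\mu)$ is filtered by projective covers $Q_{\chi,I}(\nu)$. Since $Q_{\chi,I}(\mu)$ is projective in $\sC_\chi^I$ and $E$ is finite-dimensional with a trivial submodule (the highest or lowest weight line — actually one wants: $E$ has a filtration whose subquotients are one-dimensional, i.e. a composition series as an $\h$-module refining the weight space structure, but more relevantly $E\otimes P$ is projective whenever $P$ is, because $E\otimes-$ has a biadjoint $E^*\otimes-$, so it preserves projectives), $E\otimes Q_{\chi,I}(\mu)$ is projective in $\sC_\chi^I$; and any projective object in $\sC_\chi^I$ is a direct sum of the $Q_{\chi,I}(\nu)$ (as $\sC_\chi^I$ is a finite abelian category with these as the indecomposable projectives, $Z_{\chi,I}(\nu)$ being the irreducibles). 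A direct sum of $Q_{\chi,I}(\nu)$'s trivially has a $Q$-filtration. This is the step I expect to be the main obstacle: one must verify carefully that $\sC_\chi^I$ genuinely has enough projectives with the $Q_{\chi,I}(\nu)$ as indecomposable projectives and that $E\otimes-$ preserves projectivity there — the adjunction argument ($E\otimes-\dashv E^*\otimes-$ and $E^*\otimes-\dashv E\otimes-$ up to the usual twist, so $E\otimes-$ is exact and preserves projectives and injectives) is the clean way, but one should confirm it is valid in the graded setting $\sC_\chi^I$ rather than only in $\sC_\chi$. Alternatively, one can avoid projectivity of $\sC_\chi^I$ entirely by arguing directly in $\sC_\chi$: $E\otimes Q_\chi^I(\mu)$ is a direct summand of $E\otimes Q_\chi(\mu')$ for a suitable projective $Q_\chi(\mu')$ (since $Q_\chi^I(\mu)$ is a summand of $Q_\chi(\mu')$ by the filtration result $(Q_\chi(\lambda):Q_\chi^I(\mu))$ facts, or rather: $Q_\chi^I(\mu)$ is relatively projective), $E\otimes Q_\chi(\mu')$ is projective in $\sC_\chi$ hence a sum of $Q_\chi(\nu)$'s, each $Q_\chi(\nu)$ has a $Q$-filtration, $Q$-filtered modules are closed under direct summands by \cite[Thm 3.11]{BS} / \cite[Prop. 8.16]{West}, and finally one concludes that $E\otimes Q_\chi^I(\mu)$, being a summand of a $Q$-filtered module, is itself $Q$-filtered. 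This second route is probably cleaner and is the one I would write up.
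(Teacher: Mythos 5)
Your overall outline (reduce to $M=Q_\chi^I(\mu)$ by exactness of $E\otimes-$, use the tensor identity $\Gamma_\chi(E\otimes-)\cong E\otimes\Gamma_\chi(-)$, and finish with projectivity of $Q_{\chi,I}(\mu)$) is the paper's outline, but your main route skips the step where the actual work happens. After applying the tensor identity, the object you must filter is $E\otimes \iota(Q_{\chi,I}(\mu))$ \emph{in} $\sC_\chi^{I,+}$, where $E$ carries its full $\p_I$-action by restriction from $\g$ (not by trivial extension from $\g_I$ -- your parenthetical has this backwards, and with a trivially extended $E$ the tensor identity would fail). This object is \emph{not} the trivial extension of the $\sC_\chi^I$-object $E\vert_{\g_I}\otimes Q_{\chi,I}(\mu)$, because the nilradical of $\p_I$ acts nontrivially on $E$; the paper's proof contains an explicit warning that the two are not equal in general. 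So "reduce to showing $E\otimes Q_{\chi,I}(\mu)$ has a filtration by $Q_{\chi,I}(\nu)$ in $\sC_\chi^I$" is not yet a reduction. The missing idea is to filter $E$ by its $X(T)/\bZ I$-graded pieces, ordered compatibly with the partial order on $X(T)/\bZ I$, so that each subspace $E_i=\bigoplus_{j\le i}E_{\sigma_j+\bZ I}$ is a $\p_I$-submodule and the successive subquotients $E_i\otimes\iota(Q_{\chi,I}(\mu))/E_{i-1}\otimes\iota(Q_{\chi,I}(\mu))\cong\iota(E_{\sigma_i+\bZ I}\otimes Q_{\chi,I}(\mu))$ genuinely are trivial extensions of objects of $\sC_\chi^I$. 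Only at that point does your (correct) projectivity argument in $\sC_\chi^I$ -- $E_{\sigma_i+\bZ I}\otimes Q_{\chi,I}(\mu)$ is projective, hence a direct sum of $Q_{\chi,I}(\nu)$'s -- apply. The worry you flag about projectivity in the graded setting is not the obstacle; the parabolic-versus-Levi issue is.

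Your proposed alternative route does not work. $Q_\chi^I(\mu)=\Delta(\mu)$ is a standard object, not a projective one, and $Q_\chi(\mu')$ is indecomposable (a projective cover), so its only nonzero direct summand is itself; $Q_\chi^I(\mu)$ occurs in $Q_\chi(\mu')$ only as a subquotient of a filtration, never as a direct summand, except in the degenerate case where $\Delta(\mu)$ is itself projective. Hence you cannot realize $E\otimes Q_\chi^I(\mu)$ as a direct summand of $E\otimes Q_\chi(\mu')$, and the "summand of a $Q$-filtered module" conclusion has nothing to stand on. (A genuinely different correct route would be homological: show $\Ext^1_{\sC_\chi}(E\otimes M,\overline{\nabla}(\kappa))\cong\Ext^1_{\sC_\chi}(M,E^{*}\otimes\overline{\nabla}(\kappa))=0$ using that $E^{*}\otimes\overline{\nabla}(\kappa)$ has a proper costandard filtration, and invoke the homological criterion for standard filtrations from \cite{BS}; but that is not what you wrote.)
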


\begin{proof}
	Since $M\mapsto E\otimes M$ is an exact functor, any $M\in\sC_{\chi}$ with a $Q$-filtration $$0=M_0\subseteq M_1\subseteq\cdots\subseteq M_{r-1}\subseteq M_r=M$$ such that $M_i/M_{i-1}\cong Q_{\chi}^I(\lambda_i)$, for $1\leq i\leq r$, maps to a filtration $$0=E\otimes M_0\subseteq E\otimes M_1\subseteq\cdots\subseteq E\otimes M_{r-1}\subseteq E\otimes M_r=E\otimes M$$ such that $(E\otimes M_i)/(E\otimes M_{i-1})\cong E\otimes Q_{\chi}^I(\lambda_i)$ for each $1\leq i \leq r$. We claim that, for $\lambda\in X(T)$, $E\otimes Q_{\chi}^I(\lambda)\in\sC_{\chi}$ has a $Q$-filtration. From above, we have $$E\otimes Q_{\chi}^I(\lambda)\cong\Gamma_{\chi}(E\otimes Q_{I,\chi}(\lambda)),$$ where we view $Q_{I,\chi}(\lambda)\in\sC_{\chi}^I$ as an object in $\sC_{\chi}^{I,+}$ by trivial extension and, on the right hand side, we view $E$ as an object in $\sC_\chi^{I,+}$.
	
	{\bf Note:} One can make $E\otimes Q_{I,\chi}(\lambda)\in\sC_\chi^I$ (formed by viewing $E$ as an element of $\sC_\chi^I$) into an object in $\sC_{\chi}^{I,+}$ by trivial extension. On the other hand, one can make $Q_{I,\chi}(\lambda)\in\sC_\chi^I$ into an object in $\sC_{\chi}^{I,+}$ by trivial extension and then tensor with $E\in\sC_\chi^{I,+}$. Our usual notation would denote the resulting objects from these processes in the same way, even though they will not in general be equal. So, for this proof only, if $M\in\sC_{\chi}^I$ then we denote by $\iota(M)$ the object in $\sC_{\chi}^{I,+}$ obtained by trivial extension. Hence, with this notation established, the above statement should say $$E\otimes Q_{\chi}^I(\lambda)\cong\Gamma_{\chi}(E\otimes \iota(Q_{I,\chi}(\lambda))).$$
	
	Since $\Gamma_\chi$ is exact and $\Gamma_\chi(\iota(Q_{I,\chi}(\mu)))=Q_\chi^I(\mu)$ for $\mu\in X(T)$, it is enough to show that $E\otimes \iota(Q_{I,\chi}(\lambda))\in \sC_\chi^{I,+}$ has a filtration with successive quotients of the form $\iota(Q_{I,\chi}(\mu))$ for $\mu\in X(T)$.
	
	Let $(\sigma_1+\bZ I,\ldots,\sigma_n+\bZ I)\in (X(T)/\bZ I)^n$ be such that $E_{\sigma_j+\bZ I}\neq 0$ for $1\leq j\leq n$, $$E=\bigoplus_{j=1}^nE_{\sigma_j+\bZ I},$$ and $i>j$ if $\sigma_i+\bZ I<\sigma_j+\bZ I$. For $1\leq i\leq n$, set $$E_i\coloneqq\bigoplus_{j=1}^i E_{\sigma_j+\bZ I}.$$ One can check that each $E_i$ lies in $\sC_{0}^{I,+}$. We may therefore define $$V_i(X)\coloneqq E_i\otimes \iota(X)\in \sC_{\chi}^{I,+}$$ for any $X\in\sC_{\chi}^{I}$.
	
	Note that $E_{\sigma_j+\bZ I}\in\sC_{0}^I$ for all $1\leq j\leq n$. If $X\in\sC_{\chi}^I$, we then get (using that $i>j$ if $\sigma_i+\bZ I<\sigma_j +\bZ I$) $$V_i(X)/V_{i-1}(X)\cong \iota(E_{\sigma_i+\bZ I}\otimes X)\in \sC_{\chi}^{I,+}.$$ Since $Q_{I,\chi}(\lambda)\in\sC_{\chi}^I$ is projective, $E_{\sigma_i+\bZ I}\otimes Q_{I,\chi}(\lambda)\in\sC_\chi^I$ is also projective. Hence, we have $$E_{\sigma_i+\bZ I}\otimes Q_{I,\chi}(\lambda)=\bigoplus_{\mu\in \overline{C}_I\cap X(T)}  Q_{I,\chi}(\mu)^{m(\mu,i)}\in\sC_\chi^I$$ for some $m(\mu,i)\in\bZ_{\geq 0}$. Indeed, since $\sC_\chi^I$ is defined in exactly the same way as $\sC_\chi$ except using $U_\chi(\g_I)$ in place of $U_\chi(\g)$, everything we know about $\sC_\chi$ remains true for $\sC_\chi^I$; in particular, the simple objects are indexed by orbits of $W_{I,p}$ on $X(T)$ under the dot-action, and by construction the $Q_{\chi,I}(\mu)$ are the projective covers of those simple modules.
	
	Hence, $$\iota(E_{\sigma_i+\bZ I}\otimes Q_{I,\chi}(\lambda))=\bigoplus_{\mu\in \overline{C}_I\cap X(T)}  \iota(Q_{I,\chi}(\mu))^{m(\mu,i)}.$$ Thus, $$V_i(Q_{I,\chi}(\lambda))/V_{i-1}(Q_{I,\chi}(\lambda))=\bigoplus_{\mu\in \overline{C}_I\cap X(T)}  \iota(Q_{I,\chi}(\mu))^{m(\mu,i)}$$ in $\sC_{\chi}^{I,+}$. We therefore have $$0=V_0(Q_{I,\chi}(\lambda))\subseteq V_1(Q_{I,\chi}(\lambda))\subseteq V_2(Q_{I,\chi}(\lambda))\subseteq \cdots V_{n-1}(Q_{I,\chi}(\lambda))\subseteq V_n(Q_{I,\chi}(\lambda))=E\otimes \iota(Q_{I,\chi}(\lambda))$$ with $V_i(Q_{I,\chi}(\lambda))/V_{i-1}(Q_{I,\chi}(\lambda))\cong \bigoplus_{\mu\in \overline{C}_I\cap X(T)}  \iota(Q_{I,\chi}(\mu))^{m(\mu,i)}$. The result then follows.
%
%
\end{proof}

\begin{cor}\label{StandTransFilt}
	Let $\lambda,\mu\in \overline{C}\cap X(T)$ and let $M\in\sC_\chi$ have a standard filtration. Then $T_\lambda^\mu(M)$ has a standard filtration.
\end{cor}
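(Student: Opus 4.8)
The plan is to deduce this directly from Proposition~\ref{TensQFilt}, combined with the behaviour of the block projections on standard objects. Recall that $T_\lambda^\mu(M)=\pr_\mu(E(\lambda,\mu)\otimes M)$ with $E(\lambda,\mu)\in\sC_0$, and that each $\pr_\nu$ is an exact functor, being projection onto a direct summand in the block decomposition $\sC_\chi=\bigoplus_{\nu\in\overline{C}\cap X(T)}\sC_\chi(\nu)$ (so that, if one prefers to work with the functor as literally defined on $\sC_\chi(\lambda)$, one may first replace $M$ by $\pr_\lambda(M)$, which again has a standard filtration by the same argument).

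First I would apply Proposition~\ref{TensQFilt} with $E=E(\lambda,\mu)$ to obtain that $E(\lambda,\mu)\otimes M$ has a $Q$-filtration $0=N_0\subseteq N_1\subseteq\cdots\subseteq N_r=E(\lambda,\mu)\otimes M$ with $N_i/N_{i-1}\cong Q_\chi^I(\kappa_i)$ for suitable $\kappa_i\in X(T)$. Applying the exact functor $\pr_\mu$ then gives a filtration $0=\pr_\mu(N_0)\subseteq\cdots\subseteq\pr_\mu(N_r)=T_\lambda^\mu(M)$ with successive subquotients $\pr_\mu(Q_\chi^I(\kappa_i))$.

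The decisive point is that each standard object is supported on a single block: writing $\nu_i$ for the unique element of $\overline{C}\cap X(T)$ with $\kappa_i\in W_p\cdot\nu_i$ (using that $\overline{C}$ is a fundamental domain for the dot-action of $W_p$), we have $Q_\chi^I(\kappa_i)\in\sC_\chi(\nu_i)$ by \cite[11.11, 11.18]{Jan}. Hence $\pr_\mu(Q_\chi^I(\kappa_i))$ is $Q_\chi^I(\kappa_i)$ if $\kappa_i\in W_p\cdot\mu$ and is $0$ otherwise; discarding the indices for which the subquotient vanishes leaves a filtration of $T_\lambda^\mu(M)$ with standard subquotients, which is what we want.

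I do not anticipate a genuine obstacle: all of the work is already contained in Proposition~\ref{TensQFilt}, and what remains — exactness of $\pr_\mu$ and the fact that it either preserves or annihilates each standard object — is routine. The one step I would take care to spell out is that the block decomposition splits each $Q_\chi^I(\kappa_i)$ into a single block, since this is precisely what lets the $Q$-filtration survive the projection $\pr_\mu$.
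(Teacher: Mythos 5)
Your argument is correct and is exactly the reasoning the paper leaves implicit in deducing this corollary from Proposition~\ref{TensQFilt}: tensor to get a $Q$-filtration, then apply the exact block projection $\pr_\mu$, using that each $Q_\chi^I(\kappa_i)$ lies in a single block $\sC_\chi(\nu_i)$ so that it is either preserved or annihilated. No further comment is needed.
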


The following statement is observed in \cite[\S 4.7]{Jan4}:

\begin{lemma}
	Let $E\in\sC_0$ and $\lambda\in X(T)$. Then $E\otimes Z_{\chi}(\lambda)$ has a $Z$-filtration in which $Z_{\chi}(\mu)$ appears $\sum_{\sigma\in W_{I,p}\cdot\mu} \dim E_{\sigma-\lambda}$ times for each $\mu\in \overline{C}_I\cap X(T)$. 
\end{lemma}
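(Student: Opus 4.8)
The plan is to reduce the statement about $Z$-filtrations of $E \otimes Z_\chi(\lambda)$ to a purely combinatorial count on the $\mathfrak{g}_I$-side, mirroring the structure of the proof of Proposition~\ref{TensQFilt}. First I would use the isomorphism $Z_\chi(E \otimes \mathbb{K}_\lambda) \cong E \otimes Z_\chi(\lambda)$ coming from \cite[1.12(1)]{Jan4}, so that the claim becomes a statement about the object $E \otimes \mathbb{K}_\lambda \in \sC_\chi'$. Recall that $Z_\chi(-)$ is exact, so any filtration of $E \otimes \mathbb{K}_\lambda$ in $\sC_\chi'$ with subquotients $\mathbb{K}_{\mu_i}$ pushes forward to a $Z$-filtration of $E \otimes Z_\chi(\lambda)$ with subquotients $Z_\chi(\mu_i)$. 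Thus I would first establish that $E \otimes \mathbb{K}_\lambda$, as an object of $\sC_\chi'$, has a filtration whose subquotients are the $\mathbb{K}_\mu$, with $\mathbb{K}_\mu$ appearing once for each weight $\sigma$ of $E$ (counted with multiplicity $\dim E_\sigma$) such that $\sigma + \lambda \in W_{I,p}\cdot\mu$; equivalently $\mathbb{K}_\mu$ appears $\sum_{\sigma \in W_{I,p}\cdot\mu}\dim E_{\sigma - \lambda}$ times.

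To get the filtration of $E \otimes \mathbb{K}_\lambda$ in $\sC_\chi'$, I would pass through the category $\sC_\chi''$ of $\mathfrak{h}$-modules, which is semisimple with simple objects the $\mathbb{K}_\nu$ indexed by $\overline{C}_I \cap X(T)$ after quotienting by $W_{I,p}$-orbits — more precisely, as an $\mathfrak{h}$-module in the graded category $\sC_\chi''$, $E \otimes \mathbb{K}_\lambda$ decomposes as a direct sum of one-dimensional pieces, one for each graded weight space of $E$, each of which is isomorphic to $\mathbb{K}_{\sigma + \lambda}$ for the corresponding weight $\sigma$ of $E$. Since the weight $\sigma + \lambda$ as an object of $\sC_\chi''$ only depends on its $W_{I,p}$-orbit (the simple $\mathfrak{h}$-modules in $\sC_\chi''$ are indexed exactly by such orbits, as follows from the definition of the grading on $U_\chi(\h)$ and the $p$-character condition), one counts $\mathbb{K}_\mu$ with multiplicity $\sum_{\sigma \in W_{I,p}\cdot\mu}\dim E_{\sigma-\lambda}$. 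The extension functor $\sC_\chi'' \to \sC_\chi'$ is exact and sends $\mathbb{K}_\nu \mapsto \mathbb{K}_\nu$, but it does not preserve semisimplicity, so the direct sum decomposition in $\sC_\chi''$ only yields a \emph{filtration} in $\sC_\chi'$; to make this rigorous I would order the weights $\sigma_1, \ldots, \sigma_k$ of $E$ (with multiplicity) so that $\sigma_i + \lambda$ does not strictly precede $\sigma_j + \lambda$ in the $\leq$ order when $i < j$ — exactly as in the ordering argument in the proof of Proposition~\ref{TensQFilt} — and take the partial sums of the corresponding lines as the terms of the filtration, which are then $\mathfrak{b}$-stable because $\mathfrak{n}$ raises weights.

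The main obstacle, as in Proposition~\ref{TensQFilt}, is the bookkeeping around the $W_{I,p}$-orbit structure: one must be careful that $\mathbb{K}_\nu$ and $\mathbb{K}_{\nu'}$ are isomorphic in $\sC_\chi'$ (hence that $Z_\chi(\nu) \cong Z_\chi(\nu')$) precisely when $\nu \in W_{I,p} \cdot \nu'$, which is the content already recorded in \cite[\S 11.9]{Jan} and \cite[\S 2.8]{Jan4}, and that the normalisation $\mu \in \overline{C}_I \cap X(T)$ in the statement is the choice of orbit representative. I would also need to check that the ordering of weights making the partial sums $\mathfrak{b}$-submodules exists; this is immediate since $\leq$ is a partial order on the finitely many weights of $E$ and any linear extension works, just as in the earlier proof. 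With these points in place, applying $Z_\chi$ termwise and invoking exactness gives the desired $Z$-filtration of $E \otimes Z_\chi(\lambda)$ with the stated multiplicities, and independence of the multiplicities from the chosen filtration follows because $(E \otimes Z_\chi(\lambda) : Z_\chi(\mu))$ can be computed as a composition-factor count, or alternatively from the general theory in \cite[Thm 3.14]{BS}.
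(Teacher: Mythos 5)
Your overall strategy is the right one, and it is essentially the argument behind the citation: the paper does not prove this lemma but refers to \cite[\S 4.7]{Jan4}, and the standard proof there is exactly what you describe --- use the tensor identity $Z_\chi(E\otimes \bK_\lambda)\cong E\otimes Z_\chi(\lambda)$, filter $E\otimes\bK_\lambda$ in $\sC_\chi'$ by weight lines ordered so that higher $X(T)$-weights come first (so the partial sums are $\fn$-stable), and push the filtration through the exact functor $Z_\chi$. The final multiplicity count is also correct.

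There is, however, one false intermediate claim that you should excise. You assert that the simple objects of $\sC_\chi''$ are indexed by $W_{I,p}$-orbits, and that $\bK_\nu\cong\bK_{\nu'}$ in $\sC_\chi'$ \emph{precisely when} $\nu\in W_{I,p}\cdot\nu'$. This is not true: $\bK_\nu$ is determined by its grade $\nu+\bZ I$ and the character $d\nu$ of $\h$, so $\bK_\nu\cong\bK_{\nu'}$ if and only if $\nu-\nu'\in \bZ I\cap pX(T)=p\bZ I$, which is only the translation part of the orbit. The reflections in $W_I$ genuinely change $d\nu$, so $\bK_{s_\alpha\cdot\nu}\not\cong\bK_\nu$ in general. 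The identification across the whole $W_{I,p}$-orbit happens only \emph{after} inducing to $U_\chi(\g)$: this is the equivalence $Z_\chi(\nu)\cong Z_\chi(\nu')\iff\nu\in W_{I,p}\cdot\nu'$ from \cite[\S 11.9]{Jan}, which you also cite. Your conclusion survives because the multiplicity in the statement is a count of isomorphism classes of baby Verma modules, not of $\bK$'s: the number of subquotients $\bK_{\sigma+\lambda}$ whose image under $Z_\chi$ is isomorphic to $Z_\chi(\mu)$ is $\sum_{\sigma\in W_{I,p}\cdot\mu}\dim E_{\sigma-\lambda}$ by the baby-Verma criterion alone. So the fix is simply to delete the claim about $\sC_\chi''$ and perform the orbit-grouping at the last step, after applying $Z_\chi$.
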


In particular, we get the following corollaries.

\begin{cor}
	Let $M\in\sC_\chi$ have a $Z$-filtration (i.e. a proper standard filtration) and let $E\in\sC_{0}$. Then $E\otimes M$ has a $Z$-filtration (i.e. a proper standard filtration).
\end{cor}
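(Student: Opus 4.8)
The plan is to deduce this corollary directly from the preceding Lemma by the same exact-functor argument used in the proof of Proposition~\ref{TensQFilt}. First I would observe that the functor $M\mapsto E\otimes M$ on $\sC_\chi$ is exact (this is immediate from the fact that tensoring over $\bK$ is exact and that the $\g$-action and $X(T)/\bZ I$-grading are defined coordinatewise). Hence, given a module $M$ with a $Z$-filtration (equivalently, a proper standard filtration in the highest weight structure of $\sC_\chi$)
$$0=M_0\subseteq M_1\subseteq\cdots\subseteq M_{r-1}\subseteq M_r=M$$
with $M_i/M_{i-1}\cong Z_\chi(\lambda_i)=\overline{\Delta}(\lambda_i)$, applying $E\otimes-$ produces a filtration
$$0=E\otimes M_0\subseteq E\otimes M_1\subseteq\cdots\subseteq E\otimes M_r=E\otimes M$$
with successive quotients $(E\otimes M_i)/(E\otimes M_{i-1})\cong E\otimes Z_\chi(\lambda_i)$.

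Next I would invoke the preceding Lemma, which tells us that each $E\otimes Z_\chi(\lambda_i)$ itself admits a $Z$-filtration (with $Z_\chi(\mu)$ appearing $\sum_{\sigma\in W_{I,p}\cdot\mu}\dim E_{\sigma-\lambda_i}$ times). Splicing these filtrations of the quotients back into the filtration of $E\otimes M$ — a routine refinement argument, pulling back each step of the filtration of $E\otimes Z_\chi(\lambda_i)$ along the quotient map $E\otimes M_i\twoheadrightarrow E\otimes M_i/(E\otimes M_{i-1})$ — yields a genuine $Z$-filtration of $E\otimes M$. This establishes that $E\otimes M$ has a $Z$-filtration, equivalently a proper standard filtration.

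Honestly, there is no real obstacle here: the corollary is a formal consequence of the Lemma together with exactness of $E\otimes-$, exactly paralleling how Proposition~\ref{TensQFilt} reduced the $Q$-filtration statement to the single-module case $E\otimes Q_\chi^I(\lambda)$. The only point requiring a word of care is the identification of "$Z$-filtration" with "proper standard filtration", but this was already recorded in Subsection~\ref{ss3.2} (proper standard filtrations are the same as $Z$-filtrations in $\sC_\chi$, since $\overline{\Delta}(\lambda)=Z_\chi(\lambda)$). So the write-up would be two or three lines: exactness gives a filtration with quotients $E\otimes Z_\chi(\lambda_i)$; the Lemma refines each such quotient; concatenate.
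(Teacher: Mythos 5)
Your argument is correct and is exactly the one the paper intends: the corollary is stated without proof as an immediate consequence of the preceding lemma, via exactness of $E\otimes-$ and refinement of the resulting filtration, just as in the proof of Proposition~\ref{TensQFilt}. Nothing is missing.
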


\begin{cor}
	Let $\lambda,\mu\in \overline{C}\cap X(T)$ and let $M\in\sC_\chi$ have a proper standard filtration. Then $T_\lambda^\mu(M)$ has a proper standard filtration.
\end{cor}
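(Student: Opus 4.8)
The plan is to mirror the proof of Corollary~\ref{StandTransFilt} (which deduces the standard-filtration case from Proposition~\ref{TensQFilt}), but with the corollary immediately above in place of Proposition~\ref{TensQFilt}. Unwinding the definition of the translation functor, $T_\lambda^\mu(M)=\pr_\mu(E(\lambda,\mu)\otimes M)$, so it suffices to establish two points: (a) $E(\lambda,\mu)\otimes M$ has a proper standard filtration; and (b) applying the block projection $\pr_\mu$ to a module with a proper standard filtration again yields a module with a proper standard filtration.

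For (a), recall that $E(\lambda,\mu)$ was defined as a simple $G$-module viewed inside $\sC_0$, so the corollary above applies verbatim with $E=E(\lambda,\mu)$, giving that $E(\lambda,\mu)\otimes M\in\sC_\chi$ has a $Z$-filtration. For (b), use that the block decomposition $\sC_\chi=\bigoplus_{\nu\in\overline{C}\cap X(T)}\sC_\chi(\nu)$ is compatible with proper standard objects: each $Z_\chi(\kappa)$ lies entirely in the single summand $\sC_\chi(\nu)$ for which $\kappa\in W_p\cdot\nu$ (as recorded when $\sC_\chi(\lambda)$ was introduced, via \cite[11.11, Prop. 11.18]{Jan}), and $\pr_\mu$ is exact. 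Hence, given a proper standard filtration $0=N_0\subseteq N_1\subseteq\cdots\subseteq N_r=E(\lambda,\mu)\otimes M$ with $N_i/N_{i-1}\cong Z_\chi(\kappa_i)$, applying $\pr_\mu$ produces a filtration of $\pr_\mu(E(\lambda,\mu)\otimes M)=T_\lambda^\mu(M)$ whose subquotients are $\pr_\mu(Z_\chi(\kappa_i))$, each equal to $Z_\chi(\kappa_i)$ if $\kappa_i\in W_p\cdot\mu$ and to $0$ otherwise; discarding the trivial steps gives the required proper standard filtration.

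I do not anticipate any real obstacle: the substantive work has already been carried out in the lemma on $E\otimes Z_\chi(\lambda)$ and its first corollary (paralleling Proposition~\ref{TensQFilt} and Corollary~\ref{StandTransFilt} in the standard-filtration case). The only point needing a word of care is the compatibility of $\pr_\mu$ with the block decomposition, i.e. that $\pr_\mu(Z_\chi(\kappa))$ is either $Z_\chi(\kappa)$ or zero; this is precisely the assertion that $Z_\chi(\kappa)\in\sC_\chi(\nu)$ for the unique $\nu\in\overline{C}\cap X(T)$ with $\kappa\in W_p\cdot\nu$, which is already invoked in the discussion preceding Proposition~\ref{TensQFilt}.
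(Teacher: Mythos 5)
Your proposal is correct and matches the route the paper intends: the paper states this corollary (like Corollary~\ref{StandTransFilt}) without proof as an immediate consequence of the preceding tensor-product result, and your argument simply fills in the two implicit steps — apply the corollary on $E\otimes M$ with $E=E(\lambda,\mu)$, then use exactness of $\pr_\mu$ together with the fact that each $Z_\chi(\kappa)$ lives in a single block, so each subquotient survives or vanishes. No gaps.
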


In order to get the analogous results for costandard and proper costandard filtrations, the most straightforward way is to use the duality $\bD$. Since $\bD$ is exact and fixes irreducible objects, it is straightforward to see that $\bD$ sends objects in $\sC_\chi(\gamma)$ to objects in $\sC_\chi(\gamma)$. This therefore allows us to state the following well-known proposition (whose proof we include, since we couldn't find one in the literature).

\begin{prop}\label{TransDual}
	Let $M\in\sC_\chi$ and let $\lambda,\mu\in \overline{C}\cap X(T)$. Then $\bD(T_\lambda^\mu(M))\cong T_\lambda^\mu(\bD(M))$.
\end{prop}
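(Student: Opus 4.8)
The plan is to reduce the statement to two facts: that $\bD$ commutes with the block projections $\pr_\gamma$, and that tensoring by $E(\lambda,\mu)$ commutes with $\bD$ up to natural isomorphism. Granting these, for $M\in\sC_\chi$ I would simply compute
\[
\bD(T_\lambda^\mu(M))=\bD(\pr_\mu(E(\lambda,\mu)\otimes M))\cong\pr_\mu(\bD(E(\lambda,\mu)\otimes M))\cong\pr_\mu(E(\lambda,\mu)\otimes\bD(M))=T_\lambda^\mu(\bD(M)),
\]
which is the claim. (When $M$ does not lie in a single $\sC_\chi(\lambda)$ one reads $T_\lambda^\mu(M)$ as $\pr_\mu(E(\lambda,\mu)\otimes M)$; alternatively one also uses that $\pr_\lambda$ commutes with $\bD$, which is the same argument as for $\pr_\mu$.)

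For the first fact I would argue as follows: $\bD$ is exact and, as noted just before the proposition, sends $\sC_\chi(\gamma)$ into $\sC_\chi(\gamma)$ for every $\gamma\in\overline{C}\cap X(T)$. Applying the additive functor $\bD$ to the decomposition $M=\bigoplus_\gamma\pr_\gamma(M)$ therefore yields $\bD(M)\cong\bigoplus_\gamma\bD(\pr_\gamma(M))$ with $\bD(\pr_\gamma(M))\in\sC_\chi(\gamma)$, and by uniqueness of the block decomposition this forces $\pr_\gamma(\bD(M))\cong\bD(\pr_\gamma(M))$.

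For the second fact, recall $\bD(N)=\,^{\tau}(N^{*})$. There is a canonical $\g$-module isomorphism $(E\otimes M)^{*}\cong E^{*}\otimes M^{*}$, obtained from the standard isomorphism of vector spaces, which one checks intertwines the contragredient actions via the Leibniz rule; and there is a canonical isomorphism $\,^{\tau}(A\otimes B)\cong\,^{\tau}A\otimes\,^{\tau}B$ coming from the fact that $\tau$ is an automorphism. In each case one must also verify compatibility with the relevant $X(T)/\bZ I$-gradings, keeping in mind that $E\in\sC_0$ is regarded as $X(T)/\bZ I$-graded by coarsening its $X(T)$-grading. Composing these gives a natural isomorphism $\bD(E\otimes M)\cong(\,^{\tau}(E^{*}))\otimes\bD(M)$ for every $E\in\sC_0$, so it remains to prove $\,^{\tau}(E(\lambda,\mu)^{*})\cong E(\lambda,\mu)$ in $\sC_0$.

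This last isomorphism is where the real content lies, and it is a weight computation. The module $E(\lambda,\mu)$ is a simple $G$-module, so its set of $T$-weights is a $W$-stable subset $S\subseteq X(T)$ whose dominant element lies in $W(\mu-\lambda)$; the contragredient $E(\lambda,\mu)^{*}$ is then a simple $G$-module with weight set $-S$; and $\,^{\tau}(-)$, being the twist by the group automorphism $\tau$ which induces $-w_I$ on $X(T)$, turns this into a simple $G$-module with weight set $-w_I(-S)=w_I(S)=S$, the last equality holding because $S$ is $W$-stable and $w_I\in W$. Hence $\,^{\tau}(E(\lambda,\mu)^{*})$ and $E(\lambda,\mu)$ are simple $G$-modules with the same weights, so they are isomorphic as $G$-modules, and hence as objects of $\sC_0$ once one checks that the $\,^{\tau}$-grading convention agrees with the weight-space grading after coarsening. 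I expect the main obstacle to be purely a matter of bookkeeping — keeping the various grading, coarsening, and twisting conventions straight through the previous paragraph and this one — since once $\,^{\tau}(E(\lambda,\mu)^{*})\cong E(\lambda,\mu)$ is established the remainder is formal.
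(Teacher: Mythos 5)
Your proposal is correct and follows essentially the same route as the paper's proof: reduce to $\bD(E(\lambda,\mu)\otimes M)\cong\bD(E(\lambda,\mu))\otimes\bD(M)$ together with the block-preservation of $\bD$, and then identify $\bD(E(\lambda,\mu))\cong E(\lambda,\mu)$ by a weight computation using the $W$-invariance of weight multiplicities and $w_I\in W$. The only cosmetic difference is that you phrase the last step via the full ($W$-stable) weight multiset while the paper pins down the highest weight directly; both are fine.
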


\begin{proof}
	
	Given a finite-dimensional $G$-module $E$, let us define $\bD(E)$ to be the $G$-module $\,^{\tau}(E^{*})$ (so $g\in G$ acts on $\bD(E)$ as $\tau^{-1}(g)$ acts on $E^{*}$). This module has an $X(T)$-grading with $$\bD(E)_\gamma=(E^{*})_{-w_I\gamma}=\{f\in E^{*}\mid f(E_\kappa)=0\mbox{ for all }\kappa \in X(T)\mbox{ such that }\kappa\neq w_I\gamma\}$$ for $\gamma\in X(T)$. Giving $\bD(E)$ an $X(T)/\bZ I$-grading by setting $\bD(E)_{\nu+\bZ I}=\bigoplus_{\gamma\in \nu+\bZ I} \bD(E)_{\gamma}$, we can view $\bD(E)$ as a module in $\sC_0$ via differentiation.
	
	
	Furthermore, we can check that for each $\nu+\bZ I\in X(T)/\bZ I$ we have  $$\bigoplus_{\gamma\in \nu+\bZ I} \bD(E)_{\gamma}=\left\{f\in E^{*}\mid f\left(\sum_{\varepsilon\in \gamma+\bZ I} E_\varepsilon\right)=0\mbox{ for all }\gamma+\bZ I\in X(T)/\bZ I\mbox{ such that }\gamma+\bZ I\neq \nu+\bZ I\right\}.$$ It is then easy to check $\bD(E(\lambda,\mu)\otimes M)\cong \bD(E(\lambda,\mu))\otimes \bD(M)$; we need to use the above equality to see that the $X(T)/\bZ I$-gradings line up.

	{\bf Claim:} $\bD(E(\lambda,\mu))=E(\lambda,\mu)$ in $\sC_0$.
	
	{\bf Proof of claim:} As $E(\lambda,\mu)$ is a simple $G$-module, there exists $\nu\in X(T)_{+}$ such that $E(\lambda,\mu)=L(\nu)$. The simplicity of $E(\lambda,\mu)$ implies the simplicity of $\bD(E(\lambda,\mu))$ as a $G$-module, so there exists $\kappa\in X(T)_{+}$ such that $\bD(E(\lambda,\mu))=L(\kappa)$. We claim $\kappa=\nu$. We know that $\dim L(\nu)_\nu=1$ and that $\dim L(\nu)_\eta\neq 0$ implies $\eta\leq \nu$ (see, for example, \cite[Prop II.2.4]{RAGS}). Furthermore, we know that $\dim L(\nu)_\eta=\dim L(\nu)_{w\eta}$ for all $w\in W$ (see, for example, \cite[II.1.19(2)]{RAGS}). By construction, we have $\dim\bD(L(\nu))_\nu=\dim (L(\nu)^{*})_{-w_I\nu}=\dim L(\nu)_{w_I\nu}=\dim L(\nu)_\nu=1$ and $0\neq \dim\bD(L(\nu))_\eta=\dim (L(\nu)^{*})_{-w_I\eta}=\dim L(\nu)_{w_I\eta}=\dim L(\nu)_\eta$ implies $\eta\leq \nu$. This implies that $\nu=\kappa$ and thus that $\bD(E(\lambda,\mu))\cong E(\lambda,\mu)$ (as $G$-modules, and hence in $\sC_0$).
	
	Putting this all together, we get that $\bD(E(\lambda,\mu)\otimes M)\cong E(\lambda,\mu)\otimes \bD M$ and thus $\pr_\mu(\bD(E(\lambda,\mu)\otimes M))\cong \pr_\mu(E(\lambda,\mu)\otimes \bD(M))=T_\lambda^\mu(\bD(M))$. On the other hand, since $\bD$ preserves each $\sC_\chi(\gamma)$ for $\gamma\in X(T)$, it is clear that $\pr_\mu(\bD(E(\lambda,\mu)\otimes M))=\bD(\pr_\mu(E(\lambda,\mu)\otimes M))=\bD(T_\lambda^\mu(M))$.
\end{proof}

\begin{cor}
	Let $\lambda,\mu\in \overline{C}\cap X(T)$ and let $w\in W_p$. Then $T_\lambda^\mu(\nabla(w\cdot\lambda))=\bD(T_\lambda^\mu(\Delta(w\cdot\lambda)))$ and $T_\lambda^\mu(\overline{\nabla}(w\cdot\lambda))=\bD(T_\lambda^\mu(\overline{\Delta}(w\cdot\lambda)))$.
\end{cor}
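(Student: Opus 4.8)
The plan is to read this off directly from Proposition~\ref{TransDual}. Recall from Subsection~\ref{ss3.2} that in $\sC_\chi$ the costandard and proper costandard objects are defined by $\nabla(\kappa)=\bD(Q_\chi^I(\kappa))=\bD(\Delta(\kappa))$ and $\overline{\nabla}(\kappa)=\bD(Z_\chi(\kappa))=\bD(\overline{\Delta}(\kappa))$, and that this makes sense for any $\kappa\in X(T)$ (not just $\kappa\in\overline{C}_I\cap X(T)$) since $Q_\chi^I$ and $Z_\chi$ only depend on the $W_{I,p}$-orbit. So first I would take $\kappa=w\cdot\lambda$ and note that $\Delta(w\cdot\lambda)=Q_\chi^I(w\cdot\lambda)$ and $\overline{\Delta}(w\cdot\lambda)=Z_\chi(w\cdot\lambda)$ lie in the block $\sC_\chi(\lambda)$ --- this is recorded at the start of Subsection~\ref{ss4.1} and holds because $w\cdot\lambda\in W_p\cdot\lambda$ --- so that the translation functor $T_\lambda^\mu$ is defined on them.

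Then I would simply apply Proposition~\ref{TransDual} with $M=\Delta(w\cdot\lambda)$: it yields $\bD(T_\lambda^\mu(\Delta(w\cdot\lambda)))\cong T_\lambda^\mu(\bD(\Delta(w\cdot\lambda)))=T_\lambda^\mu(\nabla(w\cdot\lambda))$, which is the first claimed identity. Applying it again with $M=\overline{\Delta}(w\cdot\lambda)$ and using $\bD(\overline{\Delta}(w\cdot\lambda))=\overline{\nabla}(w\cdot\lambda)$ gives $\bD(T_\lambda^\mu(\overline{\Delta}(w\cdot\lambda)))\cong T_\lambda^\mu(\overline{\nabla}(w\cdot\lambda))$, the second identity.

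There is no real obstacle: all the work sits in Proposition~\ref{TransDual}, and the only points to watch are purely notational, namely that $\Delta(w\cdot\lambda)$ and $\overline{\Delta}(w\cdot\lambda)$ are to be read as the objects $Q_\chi^I(w\cdot\lambda)$, $Z_\chi(w\cdot\lambda)$ of $\sC_\chi(\lambda)$ rather than merely as objects indexed by $\overline{C}_I\cap X(T)$, and that $\bD$ carries $\sC_\chi(\lambda)$ into itself (established in the course of proving Proposition~\ref{TransDual}) so that both sides of each asserted isomorphism genuinely live in $\sC_\chi(\mu)$.
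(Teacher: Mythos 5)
Your proposal is correct and is exactly the intended argument: the paper states this as an immediate corollary of Proposition~\ref{TransDual} (with no written proof), obtained by taking $M=\Delta(w\cdot\lambda)$ and $M=\overline{\Delta}(w\cdot\lambda)$ and using $\nabla(w\cdot\lambda)=\bD(\Delta(w\cdot\lambda))$, $\overline{\nabla}(w\cdot\lambda)=\bD(\overline{\Delta}(w\cdot\lambda))$. Your remarks on the notational points (block membership and $\bD$ preserving $\sC_\chi(\lambda)$) are accurate and consistent with the paper.
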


\begin{cor}\label{TransCostand}
	Let $\lambda,\mu\in \overline{C}\cap X(T)$ and let $M\in\sC_\chi$ have a costandard (resp. proper costandard) filtration. Then $T_\lambda^\mu(M)$ has a costandard (resp. proper costandard) filtration.
\end{cor}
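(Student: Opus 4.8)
The plan is to deduce this from the statements already proved for standard and proper standard filtrations (Corollary~\ref{StandTransFilt} and its proper-standard analogue stated just above it) by transporting everything through the duality $\bD$, using Proposition~\ref{TransDual}.

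First I would record the elementary observation that a module $N\in\sC_\chi$ has a costandard (resp. proper costandard) filtration if and only if $\bD(N)$ has a standard (resp. proper standard) filtration. Indeed, $\bD$ is an exact anti-equivalence of $\sC_\chi$ with $\nabla(\nu)\cong\bD(\Delta(\nu))$ and $\overline{\nabla}(\nu)\cong\bD(\overline{\Delta}(\nu))$ for all $\nu\in\Lambda_I$: applying $\bD$ to a filtration $0=N_0\subseteq N_1\subseteq\cdots\subseteq N_n=N$ turns each inclusion $N_{i-1}\subseteq N_i$ into a surjection $\bD(N_i)\twoheadrightarrow\bD(N_{i-1})$, whose kernels assemble into a filtration of $\bD(N)$ (with the order of the subquotients reversed) whose successive quotients are $\bD(N_i/N_{i-1})$; and $\overline{\bD}\circ\bD\simeq\Id_{\sC_\chi}$ gives the converse implication. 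This matches the notion of (proper) standard/costandard filtration used in Section~\ref{s3}, where only the existence of a filtration with the prescribed subquotients (in some order) is required.

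Then the argument is a short chain. Suppose $M$ has a costandard filtration. By the observation, $\bD(M)$ has a standard filtration, so by Corollary~\ref{StandTransFilt} the module $T_\lambda^\mu(\bD(M))$ has a standard filtration. By Proposition~\ref{TransDual} we have $\bD(T_\lambda^\mu(M))\cong T_\lambda^\mu(\bD(M))$, so $\bD(T_\lambda^\mu(M))$ has a standard filtration, and applying the observation once more shows that $T_\lambda^\mu(M)$ has a costandard filtration. The proper costandard case is word-for-word identical, with ``standard'' replaced by ``proper standard'' throughout and with the proper-standard analogue of Corollary~\ref{StandTransFilt} used in place of it.

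I do not expect any real obstacle here: the only point needing a little care is that $\bD$ reverses the order of a filtration and interchanges sub- and quotient objects, which is precisely why one phrases the (proper) costandard-versus-standard correspondence in terms of filtrations ``with the given subquotients in some order''; once that is set up, the proof is purely formal.
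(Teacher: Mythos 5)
Your argument is correct and is exactly the route the paper intends: the corollary is stated right after Proposition~\ref{TransDual} precisely so that it follows from Corollary~\ref{StandTransFilt} (and its proper-standard analogue) by transporting filtrations through the duality $\bD$. Your careful remark about $\bD$ reversing the order of the filtration is the only subtlety, and you have handled it correctly.
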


\begin{cor}\label{TransTilting}
	Let $\lambda,\mu\in \overline{C}\cap X(T)$ and let $M\in\sC_\chi$ be a tilting module. Then $T_\lambda^\mu(M)$ is a tilting module.
\end{cor}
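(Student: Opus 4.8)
The plan is to read the result off from the preceding corollaries; the only subtlety is bookkeeping between the flavours of (co)standard object in the fibered highest weight formalism. Recall from Subsection~\ref{ss3.2} that each indecomposable tilting module $T_\chi(\lambda)$ is both a $(+)$-tilting and a $(-)$-tilting object (one has $T_+(\lambda)=T_-(\lambda)=T_\chi(\lambda)$), so that any tilting module $M$ simultaneously admits a standard ($\Delta$-)filtration (as a $(+)$-tilting object, for which $\Delta_\varepsilon=\Delta$) and a costandard ($\nabla$-)filtration (as a $(-)$-tilting object, for which $\nabla_\varepsilon=\nabla$). Conversely, by \cite[Thm 4.2]{BS} an object carrying a $\Delta_\varepsilon$-filtration and a $\nabla_\varepsilon$-filtration for some fixed sign function $\varepsilon$ is $\varepsilon$-tilting, hence a tilting module.

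Given such an $M$, I would first apply Corollary~\ref{StandTransFilt} to a standard filtration of $M$ to conclude that $T_\lambda^\mu(M)$ has a standard filtration, and apply Corollary~\ref{TransCostand} to a costandard filtration of $M$ to conclude that $T_\lambda^\mu(M)$ has a costandard filtration. Since each $\Delta(\nu)=Q_\chi^I(\nu)$ carries a proper standard ($Z$-)filtration with all quotients isomorphic to $Z_\chi(\nu)=\overline{\Delta}(\nu)$ (recorded in Subsection~\ref{ss3.2}), the standard filtration of $T_\lambda^\mu(M)$ refines to an $\overline{\Delta}$-filtration. Thus $T_\lambda^\mu(M)$ has both an $\overline{\Delta}$-filtration and a $\nabla$-filtration; taking $\varepsilon$ to be the constant sign function $-$, so that $\Delta_\varepsilon=\overline{\Delta}$ and $\nabla_\varepsilon=\nabla$, we conclude that $T_\lambda^\mu(M)$ is $(-)$-tilting, hence a tilting module.

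There is no real obstacle here beyond this bookkeeping; all the substantive work is carried out in Proposition~\ref{TensQFilt}, Corollary~\ref{StandTransFilt}, Proposition~\ref{TransDual} and Corollary~\ref{TransCostand}, all of which precede this statement. Equivalently, one could run the argument with the constant sign function $+$: obtain a $\Delta$-filtration of $T_\lambda^\mu(M)$ from Corollary~\ref{StandTransFilt}, and obtain an $\overline{\nabla}$-filtration of $T_\lambda^\mu(M)$ by first refining a costandard filtration of $M$ to an $\overline{\nabla}$-filtration (using the $\overline{\nabla}$-filtrations of the $\nabla(\nu)=\bD(Q_\chi^I(\nu))$) and then applying the proper costandard case of Corollary~\ref{TransCostand}.
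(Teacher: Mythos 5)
Your proof is correct and follows the same route the paper intends: the corollary is stated without proof precisely because it is meant to follow immediately from Corollary~\ref{StandTransFilt} and Corollary~\ref{TransCostand} applied to the standard and costandard filtrations that a tilting module possesses. Your extra care in refining the standard filtration to a proper standard filtration so as to land on a single sign function $\varepsilon$ is a correct and worthwhile piece of bookkeeping that the paper leaves implicit.
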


Note that by choosing $E(\mu,\lambda)=E(\lambda,\mu)^{*}$ for all $\lambda\neq\mu\in \overline{C}\cap X(T)$ we see that the translation functors $T_\lambda^\mu$ and $T_\mu^\lambda$ are left and right adjoint to each other, and so we get, for $M\in\sC_\chi(\mu)$ and $N\in\sC_\chi(\lambda)$, isomorphisms $$\Hom_{\sC_\chi}(M,T_\lambda^\mu N)\xrightarrow{\sim}\Hom_{\sC_\chi}(T_\mu^\lambda M,N)$$ and $$\Hom_{\sC_\chi}(N,T_\mu^\lambda M)\xrightarrow{\sim}\Hom_{\sC_\chi}(T_\lambda^\mu N,M)$$ which are functorial in $M$ and $N$.

\subsection{Effect of Translation Functors on Certain Modules}\label{ss4.2}

The behaviour of the translation functors $T_\lambda^\mu$ on many of the objects we care about is determined by the stabilisers of $\lambda$ and $\mu$ under the dot-action of $W_p$. For baby Verma modules, we can describe this explicitly by \cite[Lem. 4.7]{Jan4}.

\begin{lemma}
	Let $\lambda,\mu\in \overline{C}\cap X(T)$ and $w\in W_p$. Then $T_\lambda^\mu Z_\chi(w\cdot\lambda)$ has a filtration whose successive quotients are of the form  $Z_\chi(wu\cdot \mu)$ where $u$ runs over elements of $\Sta_{W_p}(\lambda)/\Sta_{W_p}(\lambda)\cap\Sta_{W_p}(\mu)$.
\end{lemma}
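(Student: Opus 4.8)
The statement to prove is the analogue of the classical translation-functor decomposition for baby Verma modules, and the natural approach is to reduce it to the combined input of the lemma from \cite[\S 4.7]{Jan4} (stated above), which computes $E \otimes Z_\chi(\lambda)$ as an iterated extension of baby Vermas, and a bookkeeping of characters via the dot-action. Concretely, by the induction-tensor identity $Z_\chi(E(\lambda,\mu) \otimes M) \cong E(\lambda,\mu) \otimes Z_\chi(M)$ it suffices to understand $E(\lambda,\mu) \otimes Z_\chi(w\cdot\lambda)$ and then apply $\pr_\mu$. The cited lemma tells us $E(\lambda,\mu) \otimes Z_\chi(w\cdot\lambda)$ has a $Z$-filtration in which $Z_\chi(\nu)$ occurs $\sum_{\sigma \in W_{I,p}\cdot\nu} \dim E(\lambda,\mu)_{\sigma - w\cdot\lambda}$ times, for $\nu \in \overline{C}_I \cap X(T)$. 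So the whole content is to identify, after applying $\pr_\mu$, exactly which $Z_\chi(\nu)$ survive and with what multiplicity — and to match this with the index set $\Sta_{W_p}(\lambda)/(\Sta_{W_p}(\lambda)\cap\Sta_{W_p}(\mu))$.

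\textbf{Key steps.} First I would recall (this is how the corresponding result is proved for algebraic groups, cf. \cite[II.7.13 or E.11]{RAGS}) that the weights of $E(\lambda,\mu)$ all lie in $W(\mu-\lambda)$, and that the only weights of $E(\lambda,\mu)$ that are conjugate under $W_p$ (dot-action) to something in the relevant linkage class, when shifted by $w\cdot\lambda$, are the \emph{extremal} weights $w'(\mu - \lambda)$ for $w' \in W$; all other weights $\eta$ of $E(\lambda,\mu)$ satisfy $\eta + w\cdot\lambda \notin W_p \cdot \mu$ once $p \geq h$ and $\mu \in \overline{C}$, because they lie strictly inside the lowest alcove region. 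This is the standard ``weights of $E$ strictly below extremal ones don't contribute'' argument and uses $\lambda,\mu \in \overline{C}\cap X(T)$ crucially. Second, for an extremal weight: $w'(\mu-\lambda) + w\cdot\lambda$ is $W_p$-dot-conjugate to $\mu$ precisely when (after a short computation with the dot-action, writing everything in terms of $w(\lambda+\rho)$ and $w'$) the element pairs up with a coset representative $u$ of $\Sta_{W_p}(\lambda)$ modulo $\Sta_{W_p}(\lambda)\cap\Sta_{W_p}(\mu)$, giving the quotient $Z_\chi(wu\cdot\mu)$. I would make this precise by the bijection-counting argument familiar from \cite[II.7.13]{RAGS}: the stabiliser $\Sta_{W_p}(\lambda)$ acts on the set of contributing extremal weights, $\Sta_{W_p}(\mu)$ is the kernel of the relevant identification, and so the surviving factors after $\pr_\mu$ are indexed by the double-coset-type set in the statement. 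Third, I would check the multiplicity bookkeeping: each such extremal weight occurs in $E(\lambda,\mu)$ with multiplicity one (extremal weight spaces of a simple module are one-dimensional, cf. \cite[Prop. II.2.4]{RAGS}), and the sum $\sum_{\sigma\in W_{I,p}\cdot\nu}\dim E(\lambda,\mu)_{\sigma-w\cdot\lambda}$ then collapses to a count of exactly those $u$, so each $Z_\chi(wu\cdot\mu)$ appears once — matching that $u$ runs over the coset space without multiplicity.

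\textbf{Main obstacle.} The routine part is invoking the cited lemma and the tensor identity; the genuinely delicate step is the second one — showing that the \emph{only} weights of $E(\lambda,\mu)$ that contribute after projecting to $\sC_\chi(\mu)$ are the extremal weights, and then correctly identifying the indexing set with $\Sta_{W_p}(\lambda)/(\Sta_{W_p}(\lambda)\cap\Sta_{W_p}(\mu))$ rather than some other coset space. One has to be careful that we are using the dot-action of $W_p$ (not $W$, and not $W_{I,p}$), that the $Z$-filtration multiplicities from \cite[\S 4.7]{Jan4} are phrased in terms of $W_{I,p}$-orbits, and that $\pr_\mu$ is projection onto the $\overline{C}\cap X(T)$-block rather than a finer decomposition — so the passage between $W_{I,p}$-orbit data and $W_p$-orbit data needs the hypothesis $\mu \in \overline{C}$ and the alcove geometry of Subsection~\ref{ss3.2}. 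I expect this is exactly the point where one imports verbatim the argument of \cite[II.7.13, E.11]{RAGS}, checking line by line that nothing breaks in the graded reduced-enveloping-algebra setting, the new subtlety being only the $X(T)/\bZ I$-grading, which is handled by the identities $Z_\chi(E\otimes M)\cong E\otimes Z_\chi(M)$ and the block decomposition $\sC_\chi = \bigoplus_{\lambda \in \overline{C}\cap X(T)} \sC_\chi(\lambda)$ already recorded above.
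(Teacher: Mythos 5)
The paper offers no proof of this lemma: it is quoted verbatim from \cite[Lem.~4.7]{Jan4}, as the sentence immediately preceding it indicates, so there is no internal argument to compare yours against. Your sketch reconstructs exactly Jantzen's argument — the tensor identity $Z_\chi(E\otimes M)\cong E\otimes Z_\chi(M)$, the $Z$-filtration of $E(\lambda,\mu)\otimes Z_\chi(w\cdot\lambda)$, and the extremal-weight/stabiliser count modelled on \cite[II.7.13]{RAGS} — and this outline is correct and is the standard route, with the one genuinely delicate point (that only the extremal weights $\eta$ with $w\cdot\lambda+\eta\in W_p\cdot\mu$ survive $\pr_\mu$, and that these are parametrised by $\Sta_{W_p}(\lambda)/(\Sta_{W_p}(\lambda)\cap\Sta_{W_p}(\mu))$) correctly identified as the step requiring $\lambda,\mu\in\overline{C}$.
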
 

\begin{cor}\label{TransZ}
	Let $\lambda,\mu\in \overline{C}\cap X(T)$ and $w\in W_p$. 
	\begin{enumerate}
		\item If $\Sta_{W_p}(\lambda)=\{1\}$ then $T_\lambda^\mu Z_\chi(w\cdot\lambda)= Z_\chi(w\cdot\mu)$.
		\item If $\Sta_{W_p}(\lambda)=\{1\}$ and $\Sta_{W_p}(\mu)=\{1,s\}$ for some reflection $s\in W_p$ then $T_\mu^\lambda(Z_\chi(w\cdot\mu))$ has a filtration whose successive quotients are one copy of $Z_\chi(w\cdot\lambda)$ and one copy of $Z_\chi(ws\cdot\lambda)$. These factors are isomorphic if and only if $wsw^{-1}\in W_{I,p}$.
	\end{enumerate}
\end{cor}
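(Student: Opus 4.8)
The plan is to deduce both parts from the preceding Lemma, which describes $T_\lambda^\mu Z_\chi(w\cdot\lambda)$ as having a filtration with successive quotients $Z_\chi(wu\cdot\mu)$ as $u$ runs over coset representatives of $\Sta_{W_p}(\lambda)/(\Sta_{W_p}(\lambda)\cap\Sta_{W_p}(\mu))$, together with the analogous statement for $T_\mu^\lambda Z_\chi(w\cdot\mu)$ obtained by interchanging the roles of $\lambda$ and $\mu$. For part~(1), if $\Sta_{W_p}(\lambda)=\{1\}$ then the index set $\Sta_{W_p}(\lambda)/(\Sta_{W_p}(\lambda)\cap\Sta_{W_p}(\mu))$ is trivial, so the filtration has a single factor, and $T_\lambda^\mu Z_\chi(w\cdot\lambda)=Z_\chi(w\cdot\mu)$ on the nose. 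For part~(2), I apply the Lemma in the direction $T_\mu^\lambda Z_\chi(w\cdot\mu)$: here the relevant index set is $\Sta_{W_p}(\mu)/(\Sta_{W_p}(\mu)\cap\Sta_{W_p}(\lambda))=\{1,s\}/(\{1,s\}\cap\{1\})=\{1,s\}$, so the filtration has exactly two factors, $Z_\chi(w\cdot\lambda)$ and $Z_\chi(ws\cdot\lambda)$, as claimed.

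It remains to settle when these two factors are isomorphic, i.e. when $Z_\chi(w\cdot\lambda)\cong Z_\chi(ws\cdot\lambda)$. By the criterion recalled in Subsection~\ref{ss2.2} (from \cite[\S 11.9]{Jan}), $Z_\chi(\sigma)\cong Z_\chi(\sigma')$ if and only if $\sigma\in W_{I,p}\cdot\sigma'$. So I must show $w\cdot\lambda\in W_{I,p}\cdot(ws\cdot\lambda)$ if and only if $wsw^{-1}\in W_{I,p}$. If $wsw^{-1}\in W_{I,p}$, then $(wsw^{-1})\cdot(w\cdot\lambda)=ws\cdot\lambda$ exhibits $ws\cdot\lambda$ in the $W_{I,p}$-orbit of $w\cdot\lambda$, giving the "if" direction. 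For the converse, suppose $w\cdot\lambda=v\cdot(ws\cdot\lambda)$ for some $v\in W_{I,p}$; then $(w^{-1}vws)\cdot\lambda=\lambda$, so $w^{-1}vws\in\Sta_{W_p}(\lambda)=\{1\}$, whence $v=wsw^{-1}$ and therefore $wsw^{-1}\in W_{I,p}$. (The hypothesis $\Sta_{W_p}(\lambda)=\{1\}$ is exactly what makes this clean; without it one would only get $w^{-1}vws\in\Sta_{W_p}(\lambda)$.)

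The only genuinely delicate point is bookkeeping with the dot-action: I must be careful that $s\in\Sta_{W_p}(\mu)$ means $s\cdot\mu=\mu$, and that conjugation interacts with the dot-action correctly, namely $(wsw^{-1})\cdot(w\cdot\nu)=w\cdot(s\cdot\nu)$ for all $\nu$, which follows since the dot-action is a genuine group action of $W_p$. I expect this to be the main (minor) obstacle — everything else is a direct application of the cited Lemma and the isomorphism criterion for baby Verma modules. One should also note that in part~(2) the two factors, when non-isomorphic, appear with multiplicity one each, which is immediate from the Lemma since the index set $\{1,s\}$ has two distinct elements.
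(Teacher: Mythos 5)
Your proposal is correct and matches the paper's intended argument: the paper states this as an immediate corollary of the preceding lemma (applied once as stated and once with $\lambda$ and $\mu$ interchanged), and the paragraph immediately following the corollary spells out exactly your orbit--stabiliser argument that $Z_\chi(w\cdot\lambda)\cong Z_\chi(ws\cdot\lambda)$ iff $wsw^{-1}\in W_{I,p}$ when $\Sta_{W_p}(\lambda)=\{1\}$.
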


The last sentence of this corollary is worth highlighting, since we will use it implicitly throughout. Suppose $\Sta_{W_p}(\lambda)=\{1\}$. Since $Z_\chi(w\cdot\lambda)\cong Z_\chi(ws\cdot\lambda)$ if and only if $ws\cdot\lambda\in W_{I,p}\cdot(w\cdot\lambda)$, this is also equivalent to saying that $wsw^{-1}\in W_{I,p}$.

This corollary also highlights that the relationship between $w\cdot\lambda$ and $ws\cdot\lambda$ is important in understanding translation functors. Let $\lambda\in X(T)$ and $w\in W_p$. By the definition of the partial order $\preceq$ (which we recall from Subsection~\ref{ss3.2}), we then have $$ws\cdot\lambda=(wsw^{-1})w\cdot\lambda<w\cdot\lambda \quad\iff \quad ws\cdot\lambda\prec w\cdot\lambda.$$ In particular, this means that when comparing $w\cdot\lambda$ and $ws\cdot\lambda$ we can either use the notation $<$ or $\prec$ without issue; we use both in this paper, depending on context.

Note also that, viewing $ws\cdot\lambda$ as $(wsw^{-1})w\cdot\lambda$, it is clear that exactly one of the inequalities $w\cdot\lambda<ws\cdot\lambda$ and $ws\cdot\lambda< w\cdot\lambda$ holds; we will often find that objects corresponding to $w\cdot\lambda$ behave differently depending on whether $w\cdot\lambda<ws\cdot\lambda$ or $ws\cdot\lambda< w\cdot\lambda$.

Recall that we assume throughout this paper that $p\geq h$ (see Remark~\ref{Cox}). This allows us to set up the following framework, which we maintain for the rest of the paper. Indeed, if $\gamma,\kappa\in X(T)$ have the same stabiliser in $W_p$ under the dot-action then the translation functor $T_\gamma^\kappa$ is an equivalence of categories, and thus we don't lose anything by making the choices we are about to.

\begin{nota}\label{NOT}

From now on, we fix once-and-for-all a reflection $s\in W_p$ in a wall of $\overline{C}$ and two elements $\lambda,\mu\in X(T)$ satisfying the following properties:
$$\mbox{(1)}\,\,\,\lambda\in C\quad \mbox{and}\quad \mbox{(2)} \,\,\,\mu\in \overline{C}\,\,\mbox{is such that}\,\, \Sta_{W_p}(\mu)=\{1,s\}.$$ This in particular implies that $$\Sta_{W_p}(\lambda)=\{1\}.$$ Note that, given $w\in W_p$, this means that $$\Sta_{W_p}(w\cdot\lambda)=\{1\}\qquad\mbox{and}\qquad \Sta_{W_p}(w\cdot\mu)=\{1,wsw^{-1}\}.$$ We furthermore denote by $W^{I,\lambda}\subseteq W_p$ the subset of $W_p$ consisting of those $w\in W_p$ such that $w\cdot \lambda\in \overline{C}_I\cap X(T)$, and by $W^{I,\mu}\subseteq W_p$ the subset of $W_p$ consisting of those $w\in W_p$ such that $w\cdot \mu\in \overline{C}_I\cap X(T)$ and $ws\cdot\lambda<w\cdot\lambda$. Note that $w\cdot\mu\in \overline{C}_I\cap X(T)$ if $w\in W^{I,\lambda}$.
\end{nota} 

We frequently say ``Let $s,\lambda$ and $\mu$ be as in Notation~\ref{NOT}'' to import this convention into the statements of our results.

The following lemma will be used implicitly a few times in what follows, so we record it here.

\begin{prop}\label{ReflI}
	Let $s,\lambda$ and $\mu$ be as in Notation~\ref{NOT}, and let $w\in W^{I,\lambda}$. Then $ws\cdot\lambda\in C_I\cap X(T)$ if and only if $wsw^{-1}\notin W_{I,p}$.
\end{prop}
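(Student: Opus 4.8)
The plan is to translate the geometric condition "$ws\cdot\lambda \in C_I \cap X(T)$" into a statement about which hyperplanes $ws\cdot\lambda$ can lie on, and then compare this with the group-theoretic condition "$wsw^{-1} \in W_{I,p}$". First I would record the basic set-up: since $w \in W^{I,\lambda}$ we have $w\cdot\lambda \in \overline{C}_I \cap X(T)$, and since $\lambda \in C$ the point $w\cdot\lambda$ lies on no hyperplane $H_{\alpha,n}$ at all except possibly — wait, that is not right: $w\cdot\lambda$ lies in the alcove $w\cdot C$, so it lies on \emph{no} hyperplane $H_{\beta,n}$ whatsoever. The point that can lie on a hyperplane is $ws\cdot\lambda$ if and only if $w\cdot\lambda$ lies on the wall fixed by $wsw^{-1}$; but $w\cdot\lambda$ lies in an open alcove, so in fact $ws\cdot\lambda$ also lies in an open alcove, namely $ws\cdot C$. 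So I need to be more careful: $ws\cdot\lambda \in C_I \cap X(T)$ means $0 < \langle ws\cdot\lambda + \rho, \alpha^\vee\rangle < p$ for all $\alpha \in \Phi^+ \cap \bZ I$, i.e.\ $ws\cdot\lambda$ lies in the same $W_{I,p}$-alcove "strip" as the fundamental alcove $C$ — equivalently, $ws\cdot\lambda$ and $C$ are not separated by any hyperplane $H_{\alpha,n}$ with $\alpha \in \bZ I$. Since $W_{I,p}$ acts simply transitively on alcoves, and $\overline{C}_I$ is a fundamental domain for it, the condition $ws\cdot\lambda \in C_I \cap X(T)$ is equivalent to: the alcove containing $ws\cdot\lambda$ lies in $C_I$, i.e.\ is one of the finitely many alcoves in the $W_{I,p}$-translate strip of $C$.

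The key step is then the following chain of equivalences. We know $w\cdot\lambda \in \overline{C}_I \cap X(T)$, and since $\Sta_{W_p}(\lambda) = \{1\}$, the linkage/orbit description from Subsection~\ref{ss2.2} gives $Z_\chi(w\cdot\lambda) \cong Z_\chi(ws\cdot\lambda) \iff ws\cdot\lambda \in W_{I,p}\cdot(w\cdot\lambda) \iff wsw^{-1} \in W_{I,p}$ (this is exactly the observation highlighted after Corollary~\ref{TransZ}). So I would argue: if $wsw^{-1} \in W_{I,p}$, then $ws\cdot\lambda$ and $w\cdot\lambda$ lie in the same $W_{I,p}$-orbit, hence have the same image in $\overline{C}_I$; since $w\cdot\lambda \in \overline{C}_I \cap X(T) \subseteq \overline{C}_I$ and moreover $w\cdot\lambda \in C$ forces $w\cdot\lambda$ to satisfy the \emph{strict} inequalities $0 < \langle w\cdot\lambda+\rho,\alpha^\vee\rangle < p$ for all $\alpha \in \Phi^+$ (in particular for $\alpha \in \Phi^+ \cap \bZ I$), so $w\cdot\lambda \in C_I$; but then $ws\cdot\lambda = (wsw^{-1})(w\cdot\lambda)$ with $wsw^{-1} \in W_{I,p}$ — hmm, this moves $w\cdot\lambda$ within its $W_{I,p}$-orbit, and elements of $C_I \cap X(T)$ in the same $W_{I,p}$-orbit must be equal only if... no. Let me reconsider: the cleanest formulation is that $C_I \cap X(T)$ contains at most one point of each $W_{I,p}$-orbit is \emph{false} in general; rather, $\overline{C}_I$ is a fundamental domain, and $C_I$ is its interior. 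The right statement: if $wsw^{-1} \in W_{I,p}$ and $wsw^{-1} \neq 1$, then $wsw^{-1}$ is a nontrivial element fixing no interior point of any $W_{I,p}$-chamber, so it cannot fix $w\cdot\lambda$... but we do not need it to fix anything. We have $ws\cdot\lambda$ and $w\cdot\lambda$ both would need to be in $C_I$, both in the same $W_{I,p}$-orbit. Since the $W_{I,p}$-action on $C_I \cap X(T)$-representatives: $\overline{C}_I$ being a fundamental \emph{domain} means each orbit meets $\overline{C}_I$, and meets $C_I$ (the interior) in at most one point. Hence if both $w\cdot\lambda$ and $ws\cdot\lambda$ lie in $C_I$ and the same orbit, then $w\cdot\lambda = ws\cdot\lambda$, forcing $s = 1$ (using $\Sta_{W_p}(\lambda)=\{1\}$), a contradiction. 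Therefore $wsw^{-1} \in W_{I,p} \implies ws\cdot\lambda \notin C_I$, giving the contrapositive of one direction.

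For the converse direction, suppose $wsw^{-1} \notin W_{I,p}$; I must show $ws\cdot\lambda \in C_I \cap X(T)$. Since $ws \in W_p$ and $\lambda \in X(T)$, certainly $ws\cdot\lambda \in X(T)$, so the content is the inequalities $0 < \langle ws\cdot\lambda + \rho, \alpha^\vee\rangle < p$ for $\alpha \in \Phi^+ \cap \bZ I$. The point $ws\cdot\lambda$ lies in the open alcove $ws\cdot C$; suppose for contradiction it does not lie in $C_I$, i.e.\ some hyperplane $H_{\alpha, n}$ with $\alpha \in \Phi^+ \cap \bZ I$ separates $ws\cdot C$ from $C$. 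Reflecting $ws\cdot\lambda$ in that wall-hyperplane repeatedly using $W_{I,p}$, I can bring it into $\overline{C}_I$; call the result $\nu \in \overline{C}_I \cap X(T)$, so $ws\cdot\lambda \in W_{I,p}\cdot\nu$, hence $\nu \in W_{I,p}\cdot(ws\cdot\lambda)$. On the other hand $w\cdot\lambda \in \overline{C}_I \cap X(T)$ already, and $w\cdot\lambda \in C$, so $w\cdot\lambda \in C_I \cap X(T)$; this $w\cdot\lambda$ is \emph{the} representative of the orbit $W_{I,p}\cdot(w\cdot\lambda)$ inside $C_I$. If $ws\cdot\lambda$ were not in $C_I$, then $ws\cdot\lambda \notin W_{I,p}\cdot(w\cdot\lambda)$ would follow — no wait, that is what I want to rule out. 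Let me invert the logic: I claim that $ws\cdot\lambda \in W_{I,p}\cdot(w\cdot\lambda)$ would follow from $wsw^{-1}\in W_{I,p}$, which is excluded. So $ws\cdot\lambda \notin W_{I,p}\cdot(w\cdot\lambda)$. Now the key geometric input (to be extracted from the alcove-geometry references \cite[II.6]{RAGS}): because $ws\cdot C$ is adjacent to $w\cdot C$ across the wall fixed by $wsw^{-1}$, and $w\cdot C \subseteq C_I$ (its closure meeting no $\bZ I$-hyperplane in its interior, as $w\cdot\lambda \in C_I$), the neighbouring alcove $ws\cdot C$ is separated from $w\cdot C$ by a $\bZ I$-hyperplane precisely when $wsw^{-1} \in W_{I,p}$; this is the standard fact that the walls of the $W_{I,p}$-alcove-structure through $w\cdot C$ are exactly those of the form $\{H_{\beta,n} : \beta \in \bZ I\}$ bounding $w\cdot C$, and $wsw^{-1}$ is a reflection in one of the $W_p$-walls of $w\cdot C$. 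Hence $wsw^{-1} \notin W_{I,p}$ forces the shared wall to \emph{not} be a $\bZ I$-wall, so $ws\cdot C$ lies on the same side of every $\bZ I$-hyperplane as $w\cdot C$, i.e.\ $ws\cdot C \subseteq C_I$, so $ws\cdot\lambda \in C_I \cap X(T)$.

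\textbf{Main obstacle.} The genuinely delicate point is the geometric claim in the converse direction: that adjacency of $ws\cdot C$ to $w\cdot C \subseteq C_I$ across a non-$\bZ I$-wall keeps $ws\cdot C$ inside $C_I$. This requires knowing that $C_I$ is exactly an (open) alcove for the sub-affine-Coxeter-group $W_{I,p}$ acting on a suitable subspace, with walls precisely the $H_{\alpha,n}$ for $\alpha \in \bZ I \cap \Phi^+$, and that the reflection $wsw^{-1}$ — being a reflection in a $W_p$-wall of $w\cdot C$ — is a $W_{I,p}$-wall reflection if and only if it lies in $W_{I,p}$. I would prove the "if and only if" cleanly by: (i) $wsw^{-1} \in W_{I,p}$ and $wsw^{-1}$ a reflection $\implies$ it is a reflection in some $H_{\alpha,n}$, $\alpha \in \bZ I$, and since it fixes the wall $\overline{w\cdot C} \cap H_{\gamma,m}$ (the shared wall), uniqueness of the hyperplane of a reflection gives $H_{\gamma,m} = H_{\alpha,n}$, so the shared wall is a $\bZ I$-wall; (ii) conversely if the shared wall is a $\bZ I$-wall, its reflection $wsw^{-1}$ lies in $W_{I,p}$ by definition of $W_{I,p}$ as generated by the $s_{\alpha}$, $t_{\alpha,mp}$ with $\alpha \in I$ (hence all reflections in $\bZ I$-hyperplanes). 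Everything else is bookkeeping with $\Sta_{W_p}(\lambda) = \{1\}$ and the orbit descriptions already in the text.
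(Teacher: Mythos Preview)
Your argument is correct and follows the same geometric idea as the paper: both directions hinge on the fact that the common wall of $w\cdot C$ and $ws\cdot C$ lies on a $\bZ I$-hyperplane if and only if $wsw^{-1}\in W_{I,p}$. The forward direction (via the fundamental-domain property of $\overline{C}_I$) is identical to the paper's.

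For the converse, the paper takes a slightly different tack that you may find cleaner than your step~(i). Rather than appealing to the general fact that a $W_p$-reflection lying in $W_{I,p}$ must have its fixed hyperplane among the $H_{\alpha,n}$ with $\alpha\in\bZ I$ (true, but requiring the observation that $\overline{s_{\beta,np}}=s_\beta\in W_I$ forces $\beta\in\Phi\cap\bZ I$), the paper exploits the auxiliary point $w\cdot\mu$ sitting on the shared wall: if $ws\cdot\lambda\notin C_I$, that wall is part of the boundary of $C_I$, so $\langle w\cdot\mu+\rho,\alpha^\vee\rangle\in\{0,p\}$ for some $\alpha\in\Phi^+\cap\bZ I$, whence $s_\alpha$ or $s_{\alpha,p}$ lies in $\Sta_{W_p}(w\cdot\mu)=\{1,wsw^{-1}\}$, giving $wsw^{-1}\in W_{I,p}$ directly. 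Your approach buys independence from the element $\mu$; the paper's buys a more explicit and self-contained computation.
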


\begin{proof}
	If $wsw^{-1}\in W_{I,p}$ then $ws\cdot\lambda=(wsw^{-1})\cdot(w\cdot\lambda)\notin C_I\cap X(T)$ since $w\cdot\lambda\in C_I\cap X(T)$ and we know that $\overline{C}_I$ is a fundamental domain for the action of $W_{I,p}$ on $X(T)\otimes_{\bZ}\bR$. 
	
	Conversely, if $ws\cdot\lambda\notin C_I\cap X(T)$ then $w\cdot\mu$ lies on the wall between the alcove containing $w\cdot\lambda$ and the alcove containing $ws\cdot\lambda$; hence $w\cdot\mu\in \overline{C}_I\cap X(T)$. In particular, this means that there exists $\alpha\in \Phi^{+}\cap \bZ I$ such that $\langle w\cdot\mu+\rho,\alpha^\vee\rangle$ equals either $0$ or $p$. If $\langle w\cdot\mu+\rho,\alpha^\vee\rangle=0$, we have $$s_\alpha\cdot (w\cdot\mu)=s_\alpha(w\cdot\mu+\rho)-\rho=w\cdot\mu - \langle w\cdot\mu+\rho,\alpha^\vee\rangle \alpha=w\cdot\mu$$ and thus $s_\alpha\in\Sta_{W_p}(w\cdot\mu)=\{1,wsw^{-1}\}$; alternatively, if $\langle w\cdot\mu+\rho,\alpha^\vee\rangle=p$ we have $$s_{\alpha,p}\cdot (w\cdot\mu)=s_\alpha(w\cdot\mu+\rho)-\rho+p\alpha=w\cdot\mu - \langle w\cdot\mu+\rho,\alpha^\vee\rangle \alpha+p\alpha=w\cdot\mu$$ and thus $s_{\alpha,p}\in\Sta_{W_p}(w\cdot\mu)=\{1,wsw^{-1}\}$. Since $s_{\alpha}\neq 1$ (resp. $s_{\alpha,p}\neq 1$), we must have $s_\alpha=wsw^{-1}$ (resp. $s_{\alpha,p}=wsw^{-1}$). In either case, we then have $wsw^{-1}\in W_{I,p}$ (since $\alpha\in\Phi^{+}\cap \bZ I$). This proves the result.
\end{proof}

For $w\in W_p$ we observed earlier that $Q_{\chi}^I(w\cdot\lambda)$ has a proper standard filtration with $Z_{\chi}(w\cdot\lambda)$ appearing $N_I(w\cdot\lambda)$ times and no other baby Verma modules appearing. This implies by Corollary~\ref{TransZ} that $T_\lambda^\mu Q_{\chi}^I(w\cdot \lambda)$ has a proper standard filtration with $Z_{\chi}(w\cdot \mu)$ appearing $N_I(w\cdot\mu)$ times. It will therefore be helpful to determine $N_I(w\cdot\lambda)$ and $N_I(w\cdot\mu)$ in the situations we consider; we do so now.

Recall that $W_p=p\bZ\Phi\rtimes W$ and thus that there exists a canonical surjection $W_p\twoheadrightarrow W$. For $w\in W_p$ we write $\overline{w}$ for the image of $w$ under this surjection (and use the same notation for the surjection $W_{I,p}\twoheadrightarrow W_I$). Given $\gamma\in \bZ \Phi$, we write $t_{p\gamma}\in W_p$ for the translation $\lambda\mapsto \lambda+p\gamma$ on $X(T)$; by definition, $\overline{t_{p\gamma}}=1$ for all $\gamma\in \bZ \Phi$.

\begin{lemma}\label{StabSize}
	Let $s,\lambda$ and $\mu$ be as in Notation~\ref{NOT}, and let $w\in W_p$. The following equalities hold:
	\begin{enumerate}
		\item  $N_I(w\cdot\lambda)=\left\vert W_I\right\vert$.
		\item If $wsw^{-1}\notin W_{I,p}$, then $N_I(w\cdot\mu)=\left\vert W_I\right\vert$.
		\item If $wsw^{-1}\in W_{I,p}$, then $N_I(w\cdot\mu)=\left\vert W_I\right\vert/2$.
	\end{enumerate}
\end{lemma}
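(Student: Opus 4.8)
The plan is to compute $N_I(\gamma) = |W_I \cdot (\gamma + pX(T))|$ directly, via the stabiliser. For a general $\gamma \in X(T)$, the orbit $W_I \cdot (\gamma + pX(T))$ inside $X(T)/pX(T)$ has size $|W_I|/|\mathrm{Stab}_{W_I}(\gamma + pX(T))|$, so the task reduces to showing: (1) $\mathrm{Stab}_{W_I}(w\cdot\lambda + pX(T)) = \{1\}$; (2) $\mathrm{Stab}_{W_I}(w\cdot\mu + pX(T)) = \{1\}$ when $wsw^{-1}\notin W_{I,p}$; and (3) $\mathrm{Stab}_{W_I}(w\cdot\mu + pX(T))$ has order $2$ when $wsw^{-1}\in W_{I,p}$. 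The link between the stabiliser of $\gamma + pX(T)$ under $W_I$ (dot-action, on $X(T)/pX(T)$) and the stabiliser of $\gamma$ under $W_{I,p}$ (dot-action, on $X(T)$) is the standard one: an element $\overline{u}\in W_I$ fixes $\gamma + pX(T)$ iff some lift of $\overline{u}$ to $W_{I,p}=p\bZ(\Phi\cap\bZ I)\rtimes W_I$ fixes $\gamma$; since $W_{I,p} = p\bZ(\Phi\cap\bZ I)\rtimes W_I$ and the translation part acts trivially modulo $pX(T)$, we get $\mathrm{Stab}_{W_I}(\gamma + pX(T)) = \overline{\mathrm{Stab}_{W_{I,p}}(\gamma)}$, the image of $\mathrm{Stab}_{W_{I,p}}(\gamma)$ under $W_{I,p}\twoheadrightarrow W_I$.

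For part (1): by Notation~\ref{NOT}, $\mathrm{Stab}_{W_p}(w\cdot\lambda) = \{1\}$, hence a fortiori $\mathrm{Stab}_{W_{I,p}}(w\cdot\lambda) = \{1\}$, so $\mathrm{Stab}_{W_I}(w\cdot\lambda + pX(T)) = \{1\}$ and $N_I(w\cdot\lambda) = |W_I|$. For part (2): again by Notation~\ref{NOT}, $\mathrm{Stab}_{W_p}(w\cdot\mu) = \{1, wsw^{-1}\}$, so $\mathrm{Stab}_{W_{I,p}}(w\cdot\mu) = \{1, wsw^{-1}\} \cap W_{I,p}$, which equals $\{1\}$ precisely when $wsw^{-1}\notin W_{I,p}$; thus $N_I(w\cdot\mu) = |W_I|$. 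For part (3): when $wsw^{-1}\in W_{I,p}$, we have $\mathrm{Stab}_{W_{I,p}}(w\cdot\mu) = \{1, wsw^{-1}\}$, and it remains to check that the image of this group in $W_I$ still has order $2$, i.e. that $wsw^{-1}$ does not lie in the translation subgroup $p\bZ(\Phi\cap\bZ I)$ — equivalently $\overline{wsw^{-1}}\neq 1$. But $wsw^{-1}$ is a reflection in $W_p$ (conjugate of the reflection $s$), so it is of the form $s_{\alpha, mp}$ for some root $\alpha$ and some $m$; its image in $W$ is $s_\alpha\neq 1$. Hence $\overline{wsw^{-1}} = s_\alpha \neq 1$ in $W_I$, so $\mathrm{Stab}_{W_I}(w\cdot\mu + pX(T))$ has order $2$ and $N_I(w\cdot\mu) = |W_I|/2$.

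The only genuinely delicate point is the identity $\mathrm{Stab}_{W_I}(\gamma + pX(T)) = \overline{\mathrm{Stab}_{W_{I,p}}(\gamma)}$, and in particular the surjectivity direction: if $\overline{u}\in W_I$ fixes $\gamma + pX(T)$, then $\overline{u}(\gamma+\rho) - \rho \equiv \gamma \pmod{pX(T)}$, so there is $\eta\in X(T)$ with $\overline{u}(\gamma+\rho)-\rho = \gamma + p\eta$; one then needs $\eta$ to be expressible so that $t_{-p\eta}\overline{u}$ (or rather the appropriate element $s_{\cdot}$-type product) lands in $W_{I,p}$, i.e. $\eta \in \bZ(\Phi\cap\bZ I)$. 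This follows because $\overline{u}(\gamma+\rho) - (\gamma+\rho) \in \bZ(\Phi\cap\bZ I)$ for any $\overline{u}\in W_I$ (a reflection $s_\alpha$ with $\alpha\in\Phi\cap\bZ I$ changes a weight by a multiple of $\alpha$), so $p\eta = \overline{u}(\gamma+\rho) - (\gamma+\rho) \in \bZ(\Phi\cap\bZ I)$, whence the translation $t_{p\eta}$ lies in $p\bZ(\Phi\cap\bZ I)$ and $t_{-p\eta}\circ \overline{u}$ (acting via the dot-action) is an element of $W_{I,p}$ fixing $\gamma$ with image $\overline{u}$. Everything else is bookkeeping with the exact sequence $1\to p\bZ(\Phi\cap\bZ I)\to W_{I,p}\to W_I\to 1$ and the data already fixed in Notation~\ref{NOT}.
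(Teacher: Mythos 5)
Your overall strategy coincides with the paper's: both reduce via orbit--stabiliser to computing $\Sta_{W_I}(\gamma+pX(T))$ and relate that stabiliser to the (known) stabilisers fixed in Notation~\ref{NOT}, using the splitting of $W_{I,p}$ as translations extended by $W_I$. Parts (1)--(3) are then handled exactly as the paper does, including the observation in (3) that $\overline{wsw^{-1}}=s_\beta\neq 1$ because $wsw^{-1}$ is a reflection $s_{\beta,mp}$.

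There is, however, one genuine gap, and it sits precisely at the step you flag as ``the only genuinely delicate point.'' From $\overline{u}(\gamma+\rho)-(\gamma+\rho)=p\eta$ with $\eta\in X(T)$ and $p\eta\in\bZ I$, you conclude that ``the translation $t_{p\eta}$ lies in $p\bZ(\Phi\cap\bZ I)$.'' That is a non sequitur: $t_{p\eta}$ lies in the translation subgroup of $W_{I,p}$ if and only if $p\eta\in p\bZ I$, i.e.\ $\eta\in\bZ I$, whereas you have only established $p\eta\in\bZ I\cap pX(T)$. The implication $\bZ I\cap pX(T)=p\bZ I$ is exactly what is needed, and it is not a formal consequence of your setup --- it can fail when $X(T)/\bZ I$ has $p$-torsion (e.g.\ the root lattice of $SL_2$ inside its weight lattice at $p=2$). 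It does hold here because of Jantzen's standard assumptions, and the paper invokes it explicitly at this point, citing \cite[11.2]{Jan}. So your argument is repaired by inserting that citation (or a proof of the lattice identity) where you pass from $p\eta\in\bZ I$ to $\eta\in\bZ I$; as written, the justification offered does not establish the claim.
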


\begin{proof}
	From the Orbit-Stabiliser Theorem we have $N_I(w\cdot\lambda)=\left\vert W_I\cdot(w\cdot\lambda+pX)\right\vert=\left\vert W_I\right\vert/\left\vert \Sta_{W_I}(w\cdot\lambda+pX)\right\vert$ (and, of course, the analogous result for $w\cdot\mu+pX)$. Let $u\in \Sta_{W_I}(w\cdot\lambda+pX)$, so $uw\cdot \lambda-w\cdot\lambda\in pX$. Since $u\in W_I$ we also have $uw\cdot\lambda - w\cdot\lambda\in \bZ I$ and thus (see \cite[11.2]{Jan}, recalling that we make Jantzen's standard assumptions throughout this paper) $$uw\cdot\lambda-w\cdot\lambda\in p\bZ I.$$ There thus exists $\gamma\in\bZ I$ such that $uw\cdot\lambda-w\cdot\lambda=p\gamma$. We therefore have that $$t_{-p\gamma}uw\cdot\lambda=w\cdot\lambda,$$ and so $t_{-p\gamma}u\in\Sta_{W_p}(w\cdot\lambda)$. Since $\Sta_{W_p}(w\cdot\lambda)=\{1\}$, we have $u=t_{p\gamma}$. As $u\in W_I$, we conclude that $u=\overline{u}=\overline{t_{p\gamma}}=1$. (1) then follows.
	
	For (2) and (3), the same argument shows that $t_{-p\gamma}u\in\Sta_{W_p}(w\cdot\mu)$. We thus either have $t_{-p\gamma}u=1$ or $t_{-p\gamma}u=wsw^{-1}$. If $t_{-p\gamma}u=1$, then as above we have $u=1$. If $t_{-p\gamma}u=wsw^{-1}$, let us write $wsw^{-1}=s_{\beta,np}=t_{np\beta}s_\beta$ for some $\beta\in \Phi^{+}$ and $n\in\bZ$. Then $t_{p\gamma}wsw^{-1}=u\in W_I$ and so $t_{p\gamma}wsw^{-1}=t_{p(\gamma+n\beta)}s_\beta\in W_I\subseteq W$. We therefore have $t_{p(\gamma+n\beta)}\in W$ and so $t_{p(\gamma+n\beta)}=\overline{t_{p(\gamma+n\beta)}}=1$. Thus, $u=t_{p\gamma}wsw^{-1}=s_\beta$, and so $\overline{wsw^{-1}}=s_\beta=u\in W_{I}$.
	
	In (2), $\overline{wsw^{-1}}\notin W_I$ and so this cannot happen; hence $\Sta_{W_I}(w\cdot\mu+pX)=\{1\}$.
	
	In (3), $\overline{wsw^{-1}}\in W_I$ and so the above arguments show that $\Sta_{W_I}(w\cdot\mu+pX)\subseteq\{1,\overline{wsw^{-1}}\}$. It is easy to see that $\overline{wsw^{-1}}\in\Sta_{W_I}(w\cdot\mu+pX)$, so in this case we have $\Sta_{W_I}(w\cdot\mu+pX)=\{1,\overline{wsw^{-1}}\}$. The result follows.
\end{proof}

\begin{prop}\label{TransFunct}
	Let $s,\lambda$ and $\mu$ be as in Notation~\ref{NOT}, and let $w\in W^{I,\lambda}$. Then $T_\lambda^\mu Q_{\chi}^I(w\cdot\lambda)=Q_{\chi}^I(w\cdot\mu)$ if $wsw^{-1}\notin W_{I,p}$, and $T_\lambda^\mu Q_{\chi}^I(w\cdot\lambda)$ has a filtration consisting of two copies of $Q_{\chi}^I(w\cdot\mu)$ if $wsw^{-1}\in W_{I,p}$.
\end{prop}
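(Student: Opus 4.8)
The plan is to combine the proper-standard-filtration information for $Q_\chi^I(w\cdot\lambda)$ with the description of translation functors on baby Verma modules, and then use highest weight theory to reassemble the result into tilting/projective modules.

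First I would recall that $Q_\chi^I(w\cdot\lambda)=\Delta(w\cdot\lambda)$ has a proper standard filtration with $N_I(w\cdot\lambda)=\left\vert W_I\right\vert$ copies of $Z_\chi(w\cdot\lambda)$ and no other $Z_\chi$'s (by Lemma~\ref{StabSize}(1) and the discussion before it). Since $\Sta_{W_p}(\lambda)=\{1\}$ and $\Sta_{W_p}(\mu)=\{1,s\}$, Corollary~\ref{TransZ}(2) gives that $T_\lambda^\mu Z_\chi(w\cdot\lambda)$ has a $Z$-filtration with one copy of $Z_\chi(w\cdot\mu)$ and one copy of $Z_\chi(ws\cdot\mu)$; but $ws\cdot\mu=w\cdot\mu$ since $s\in\Sta_{W_p}(\mu)$, so in fact $T_\lambda^\mu Z_\chi(w\cdot\lambda)$ has a $Z$-filtration with two copies of $Z_\chi(w\cdot\mu)$. (Alternatively, use the lemma just before Corollary~\ref{TransZ}, which computes multiplicities directly.) Applying the exact functor $T_\lambda^\mu$ to the proper standard filtration of $Q_\chi^I(w\cdot\lambda)$, we conclude that $T_\lambda^\mu Q_\chi^I(w\cdot\lambda)$ has a $Z$-filtration with $Z_\chi(w\cdot\mu)$ appearing $2\left\vert W_I\right\vert$ times and no other baby Verma modules. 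Moreover, by Corollary~\ref{StandTransFilt}, $T_\lambda^\mu Q_\chi^I(w\cdot\lambda)$ has a \emph{standard} filtration, i.e. a $Q$-filtration by objects $Q_\chi^I(\kappa)$; and since all its composition factors lie among those of $Z_\chi(w\cdot\mu)$, the only $Q_\chi^I(\kappa)$ that can occur is $Q_\chi^I(w\cdot\mu)$ itself (using that $[\Delta(\kappa):L(w\cdot\mu)]\neq 0$ forces $w\cdot\mu\preceq\kappa$, combined with the $Z$-filtration constraint pinning down which $\kappa$ appear — here one uses that the $W_{I,p}$-orbit of $w\cdot\mu$ is the only relevant block data, and $w\cdot\mu\in\overline{C}_I\cap X(T)$ since $w\in W^{I,\lambda}$).

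Next I would count: each $Q_\chi^I(w\cdot\mu)$ contributes $N_I(w\cdot\mu)$ copies of $Z_\chi(w\cdot\mu)$ to a $Z$-filtration, so the number of $Q_\chi^I(w\cdot\mu)$-factors in the standard filtration of $T_\lambda^\mu Q_\chi^I(w\cdot\lambda)$ is $2\left\vert W_I\right\vert/N_I(w\cdot\mu)$. By Lemma~\ref{StabSize}(2), if $wsw^{-1}\notin W_{I,p}$ then $N_I(w\cdot\mu)=\left\vert W_I\right\vert$, so there is exactly one $Q_\chi^I(w\cdot\mu)$-factor, hence $T_\lambda^\mu Q_\chi^I(w\cdot\lambda)\cong Q_\chi^I(w\cdot\mu)$. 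By Lemma~\ref{StabSize}(3), if $wsw^{-1}\in W_{I,p}$ then $N_I(w\cdot\mu)=\left\vert W_I\right\vert/2$, so there are exactly two $Q_\chi^I(w\cdot\mu)$-factors, giving the claimed two-step filtration. In the first case, to upgrade "has a $Q$-filtration with a single factor $Q_\chi^I(w\cdot\mu)$" to an honest isomorphism is automatic, since a one-step filtration \emph{is} an isomorphism.

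The main obstacle I anticipate is the bookkeeping in the step that identifies \emph{which} standard modules $Q_\chi^I(\kappa)$ can occur in the $Q$-filtration of $T_\lambda^\mu Q_\chi^I(w\cdot\lambda)$: one must argue that the only $\kappa\in\overline{C}_I\cap X(T)$ with the right block (i.e. in $\sC_\chi(\mu)$) and with $[\Delta(\kappa):L(\nu)]\neq 0$ only for $\nu$ a composition factor of $Z_\chi(w\cdot\mu)$ is $\kappa=w\cdot\mu$. This follows because the composition factors of $Z_\chi(w\cdot\mu)$ are all of the form $L_\chi(\eta)$ with $\eta\uparrow w\cdot\mu$, hence $\eta\preceq w\cdot\mu$, while $[\Delta(\kappa):L(\kappa)]\neq 0$ with multiplicity at least one forces $\kappa\preceq w\cdot\mu$; running the same argument with the top of the filtration (or using that the total $Z$-multiplicity is $2N_I(w\cdot\mu)$ with all factors equal to $Z_\chi(w\cdot\mu)$, so every $Q$-layer must be supported on $w\cdot\mu$) pins down $\kappa=w\cdot\mu$ exactly. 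Everything else is routine multiplicity counting via $N_I$ from Lemma~\ref{StabSize}.
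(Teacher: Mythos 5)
Your overall strategy is the same as the paper's: use Corollary~\ref{StandTransFilt} to get a $Q$-filtration of $T_\lambda^\mu Q_\chi^I(w\cdot\lambda)$, compute $Z$-multiplicities by translating the proper standard filtration of $Q_\chi^I(w\cdot\lambda)$, observe that only $Q_\chi^I(w\cdot\mu)$ can occur as a layer, and divide by $N_I(w\cdot\mu)$ using Lemma~\ref{StabSize}. However, there is a genuine error in the key multiplicity count. You invoke Corollary~\ref{TransZ}(2) to claim that $T_\lambda^\mu Z_\chi(w\cdot\lambda)$ has a $Z$-filtration with two copies of $Z_\chi(w\cdot\mu)$. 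That part of the corollary describes $T_\mu^\lambda$ (translation \emph{off} the wall), not $T_\lambda^\mu$. The general lemma preceding it says the successive quotients of $T_\lambda^\mu Z_\chi(w\cdot\lambda)$ are indexed by $\Sta_{W_p}(\lambda)/\bigl(\Sta_{W_p}(\lambda)\cap\Sta_{W_p}(\mu)\bigr)$, i.e.\ by the stabiliser of the \emph{source} weight, which here is trivial; so Corollary~\ref{TransZ}(1) applies and $T_\lambda^\mu Z_\chi(w\cdot\lambda)=Z_\chi(w\cdot\mu)$, a single copy. The doubling phenomenon you describe occurs only when translating from the wall back to the regular weight.

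Consequently your count of $2\left\vert W_I\right\vert$ copies of $Z_\chi(w\cdot\mu)$ should be $\left\vert W_I\right\vert$, and the number of $Q$-layers is $\left\vert W_I\right\vert/N_I(w\cdot\mu)$, not $2\left\vert W_I\right\vert/N_I(w\cdot\mu)$. Note that your final step is internally inconsistent with your own formula: with $N_I(w\cdot\mu)=\left\vert W_I\right\vert$ you assert ``exactly one factor'' although $2\left\vert W_I\right\vert/\left\vert W_I\right\vert=2$, and with $N_I(w\cdot\mu)=\left\vert W_I\right\vert/2$ you assert ``exactly two'' although your formula gives $4$. You reach the correct conclusions only because this second slip cancels the first. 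With the corrected input ($\left\vert W_I\right\vert$ copies of $Z_\chi(w\cdot\mu)$), the arithmetic $\left\vert W_I\right\vert/\left\vert W_I\right\vert=1$ and $\left\vert W_I\right\vert/(\left\vert W_I\right\vert/2)=2$ goes through and matches the paper's computation. The remaining bookkeeping (that only $Q_\chi^I(w\cdot\mu)$ can occur as a layer) is fine, and is in fact most cleanly handled by your parenthetical remark: every $Q_\chi^I(\kappa)$ contributes only copies of $Z_\chi(\kappa)$ to a $Z$-filtration, and the only baby Verma occurring is $Z_\chi(w\cdot\mu)$.
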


\begin{proof}
	 By Corollary~\ref{StandTransFilt}, $T_\lambda^\mu Q_{\chi}^I(w\cdot\lambda)$ has a $Q$-filtration, and thus it suffices to compute $(T_\lambda^\mu(Q_\chi^I(w\cdot\lambda)):Q_\chi^I(u\cdot\mu))$ for each $u\in W_p$ such that $u\cdot\mu\in \overline{C}_I\cap X(T)$. For this computation, recall that $Q_\chi^I(\kappa)$ has a $Z$-filtration consisting precisely of $N_I(\kappa)$-copies of $Z_\chi(\kappa)$ and nothing else, and recall also that we know how the translation functors act on baby Verma modules by Corollary~\ref{TransZ}. We hence get \begin{equation*}
		\begin{split}
			(T_\lambda^\mu(Q_\chi^I(w\cdot\lambda)):Q_\chi^I(u\cdot \mu)) & = \dfrac{1}{N_I(u\cdot \mu)} (T_\lambda^\mu(Q_\chi^I(w\cdot\lambda)):Z_\chi(u\cdot \mu)) \\ & = \dfrac{N_I(w\cdot \lambda)}{N_I(u\cdot \mu)} (T_\lambda^\mu(Z_\chi(w\cdot\lambda)):Z_\chi(u\cdot \mu)) \\ & = \dfrac{N_I(w\cdot \lambda)}{N_I(u\cdot \mu)} (Z_\chi(w\cdot\mu):Z_\chi(u\cdot \mu)) \\ & =\twopartdef{0}{Z_\chi(w\cdot\mu)\ncong Z_\chi(u\cdot \mu),}{\dfrac{N_I(w\cdot \lambda)}{N_I(u\cdot \mu)}}{Z_\chi(w\cdot\mu)\cong Z_\chi(u\cdot \mu)}
			\\	& =\threepartdef{0}{Q_\chi^I(w\cdot\mu)\ncong Q_\chi^I(u\cdot \mu),}{\dfrac{\left\vert W_I\right\vert}{\left\vert W_I\right\vert}}{Q_\chi^I(w\cdot\mu)\cong Q_\chi^I(u\cdot \mu)\mbox{ and } wsw^{-1}\notin W_{I,p},}
			{\dfrac{\left\vert W_I\right\vert}{\left\vert W_I\right\vert/2}}{Q_\chi^I(w\cdot\mu)\cong Q_\chi^I(u\cdot \mu)\mbox{ and } wsw^{-1}\in W_{I,p}}
			\\ & =\threepartdef{0}{Q_\chi^I(w\cdot\mu)\ncong Q_\chi^I(u\cdot \mu),}{1}{Q_\chi^I(w\cdot\mu)\cong Q_\chi^I(u\cdot \mu)\mbox{ and } wsw^{-1}\notin W_{I,p},}
			{2}{Q_\chi^I(w\cdot\mu)\cong Q_\chi^I(u\cdot \mu)\mbox{ and } wsw^{-1}\in W_{I,p}.}
		\end{split}
	\end{equation*}

This proves the result.

\end{proof}

\begin{prop}\label{TransFunct2}
	Let $s,\lambda$ and $\mu$ be as in Notation~\ref{NOT}, and let $w\in W^{I,\lambda}$. Then $T_\mu^\lambda Q_{\chi}^I(w\cdot\mu)=Q_{\chi}^I(w\cdot\lambda)$ if $wsw^{-1}\in W_{I,p}$, and $T_\mu^\lambda Q_{\chi}^I(w\cdot\mu)$ has a filtration consisting of one copy of $Q_{\chi}^I(w\cdot\lambda)$ and one copy of $Q_{\chi}^I(ws\cdot\lambda)$ if $wsw^{-1}\notin W_{I,p}$.
\end{prop}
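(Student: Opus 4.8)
The plan is to mimic the proof of Proposition~\ref{TransFunct}, but now translating \emph{off} the wall. By Corollary~\ref{StandTransFilt}, $T_\mu^\lambda Q_\chi^I(w\cdot\mu)$ has a $Q$-filtration, so it is determined by the multiplicities $(T_\mu^\lambda(Q_\chi^I(w\cdot\mu)):Q_\chi^I(u\cdot\lambda))$ as $u$ ranges over the elements of $W_p$ with $u\cdot\lambda\in\overline{C}_I\cap X(T)$ (equivalently $u\in W^{I,\lambda}$, since $\Sta_{W_p}(\lambda)=\{1\}$). First I would recall that $Q_\chi^I(w\cdot\mu)$ has a $Z$-filtration with exactly $N_I(w\cdot\mu)$ copies of $Z_\chi(w\cdot\mu)$ and nothing else, and that $Q_\chi^I(u\cdot\lambda)$ has a $Z$-filtration with exactly $N_I(u\cdot\lambda)=|W_I|$ copies of $Z_\chi(u\cdot\lambda)$ (by Lemma~\ref{StabSize}(1)). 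Exactness of $T_\mu^\lambda$ then gives
\begin{equation*}
	(T_\mu^\lambda(Q_\chi^I(w\cdot\mu)):Q_\chi^I(u\cdot\lambda))=\frac{N_I(w\cdot\mu)}{|W_I|}\,(T_\mu^\lambda(Z_\chi(w\cdot\mu)):Z_\chi(u\cdot\lambda)).
\end{equation*}

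The next step is to compute $(T_\mu^\lambda(Z_\chi(w\cdot\mu)):Z_\chi(u\cdot\lambda))$ using Corollary~\ref{TransZ}(2): $T_\mu^\lambda Z_\chi(w\cdot\mu)$ has a $Z$-filtration with one copy of $Z_\chi(w\cdot\lambda)$ and one copy of $Z_\chi(ws\cdot\lambda)$. I must now split according to whether $wsw^{-1}\in W_{I,p}$. If $wsw^{-1}\notin W_{I,p}$, then by the last sentence of Corollary~\ref{TransZ} the two factors $Z_\chi(w\cdot\lambda)$ and $Z_\chi(ws\cdot\lambda)$ are non-isomorphic; moreover $N_I(w\cdot\mu)=|W_I|$ by Lemma~\ref{StabSize}(2), so the prefactor $N_I(w\cdot\mu)/|W_I|$ equals $1$. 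Hence $(T_\mu^\lambda(Q_\chi^I(w\cdot\mu)):Q_\chi^I(u\cdot\lambda))$ is $1$ when $u\cdot\lambda\in W_{I,p}\cdot(w\cdot\lambda)$, $1$ when $u\cdot\lambda\in W_{I,p}\cdot(ws\cdot\lambda)$, and $0$ otherwise; since these two orbits are distinct, $T_\mu^\lambda Q_\chi^I(w\cdot\mu)$ has a $Q$-filtration with one copy of $Q_\chi^I(w\cdot\lambda)$ and one copy of $Q_\chi^I(ws\cdot\lambda)$, as claimed. (Here one should note that $ws\cdot\lambda\in C_I\cap X(T)\subseteq\overline{C}_I\cap X(T)$ by Proposition~\ref{ReflI}, so $Q_\chi^I(ws\cdot\lambda)$ genuinely is one of the standard objects, with $ws\cdot\lambda\in\Lambda_I$.)

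If instead $wsw^{-1}\in W_{I,p}$, then $ws\cdot\lambda=(wsw^{-1})\cdot(w\cdot\lambda)\in W_{I,p}\cdot(w\cdot\lambda)$, so $Z_\chi(ws\cdot\lambda)\cong Z_\chi(w\cdot\lambda)$ and the two filtration factors coincide; thus $(T_\mu^\lambda(Z_\chi(w\cdot\mu)):Z_\chi(u\cdot\lambda))=2$ when $u\cdot\lambda\in W_{I,p}\cdot(w\cdot\lambda)$ and $0$ otherwise. Now $N_I(w\cdot\mu)=|W_I|/2$ by Lemma~\ref{StabSize}(3), so the prefactor is $1/2$, and the product is $1$ when $u\cdot\lambda\in W_{I,p}\cdot(w\cdot\lambda)$ and $0$ otherwise. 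Hence $T_\mu^\lambda Q_\chi^I(w\cdot\mu)$ has a $Q$-filtration with a single copy of $Q_\chi^I(w\cdot\lambda)$, i.e.\ $T_\mu^\lambda Q_\chi^I(w\cdot\mu)=Q_\chi^I(w\cdot\lambda)$. The main point requiring care is the bookkeeping of the orbit coincidences together with the matching normalisation constants $N_I$ from Lemma~\ref{StabSize}; everything else is a direct transcription of the argument of Proposition~\ref{TransFunct} with the roles of $\lambda$ and $\mu$ interchanged.
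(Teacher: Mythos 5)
Your argument is correct and follows the paper's own proof essentially verbatim: both reduce to computing $(T_\mu^\lambda(Q_\chi^I(w\cdot\mu)):Q_\chi^I(u\cdot\lambda))$ via the $Z$-filtration multiplicities, apply Corollary~\ref{TransZ}(2), and use the normalisation constants from Lemma~\ref{StabSize} to see that the prefactor and the orbit-coincidence count cancel to give multiplicity $1$ in each case. Nothing further is needed.
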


\begin{proof}
	As before, $T_\mu^\lambda(Q_\chi^I(w\cdot\mu))$ has a $Q$-filtration and we thus simply need to compute $(T_\mu^\lambda(Q_\chi^I(w\cdot\mu)):Q_\chi^I(u\cdot\lambda))$ for each $u\in W_p$ such that $u\cdot\lambda\in \overline{C}_I\cap X(T)$. Using Corollary~\ref{TransZ} and arguing as in the proof of Proposition~\ref{TransFunct}, we get \begin{equation*}
		\begin{split}
		(T_\mu^\lambda(Q_\chi^I(w\cdot\mu)):Q_\chi^I(u\cdot \lambda)) & = \dfrac{1}{N_I(u\cdot \lambda)} (T_\mu^\lambda(Q_\chi^I(w\cdot\mu)):Z_\chi(u\cdot \lambda)) \\ & = \dfrac{N_I(w\cdot \mu)}{N_I(u\cdot \lambda)} (T_\mu^\lambda(Z_\chi(w\cdot\mu)):Z_\chi(u\cdot \lambda)) \\ & = \dfrac{N_I(w\cdot \mu)}{N_I(u\cdot \lambda)}((Z_\chi(w\cdot\lambda):Z_\chi(u\cdot \lambda))+(Z_\chi(ws\cdot\lambda):Z_\chi(u\cdot \lambda))) \\ & =\fourpartdef{0}{Z_\chi(w\cdot\lambda)\ncong Z_\chi(u\cdot \lambda)\ncong Z_\chi(ws\cdot\lambda),}{\dfrac{N_I(w\cdot \mu)}{N_I(u\cdot \lambda)}}{Z_\chi(w\cdot\lambda)\cong Z_\chi(u\cdot \lambda)\mbox{ and }wsw^{-1}\notin W_{I,p},}{\dfrac{N_I(w\cdot \mu)}{N_I(u\cdot \lambda)}}{Z_\chi(ws\cdot\lambda)\cong Z_\chi(u\cdot \lambda)\mbox{ and }wsw^{-1}\notin W_{I,p},}
			{2\dfrac{N_I(w\cdot \mu)}{N_I(u\cdot \lambda)}}{Z_\chi(w\cdot\lambda)\cong Z_\chi(u\cdot \lambda)\mbox{ and }wsw^{-1}\in W_{I,p}}
			\\	& =\fourpartdef{0}{Q_\chi^I(w\cdot\lambda)\ncong Q_\chi^I(u\cdot \lambda)\ncong Q_\chi^I(ws\cdot\lambda),}{1}{Q_\chi^I(w\cdot\lambda)\cong Q_\chi^I(u\cdot \lambda)\mbox{ and }wsw^{-1}\notin W_{I,p}}{1}{Q_\chi^I(ws\cdot\lambda)\cong Q_\chi^I(u\cdot \lambda)\mbox{ and }wsw^{-1}\notin W_{I,p},}
			{1}{Q_\chi^I(w\cdot\lambda)\cong Q_\chi^I(u\cdot \lambda)\mbox{ and }wsw^{-1}\in W_{I,p}.}
		\end{split}
	\end{equation*}
	
	This proves the result.
\end{proof}

By using the duality $\bD$ and Proposition~\ref{TransDual}, we get the dual statements.

\begin{cor}
	Let $s,\lambda$ and $\mu$ be as in Notation~\ref{NOT}, and let $w\in W^{I,\lambda}$. Then $T_\lambda^\mu \nabla(w\cdot\lambda)=\nabla(w\cdot\mu)$ if $wsw^{-1}\notin W_{I,p}$, and $T_\lambda^\mu \nabla(w\cdot\lambda)$ has a filtration consisting of two copies of $\nabla(w\cdot\mu)$ if $wsw^{-1}\in W_{I,p}$.
\end{cor}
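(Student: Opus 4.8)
The plan is to deduce this statement directly from Proposition~\ref{TransFunct} by applying the duality $\bD$, exactly as the sentence introducing the corollary suggests. First I would recall from Subsection~\ref{ss3.2} that $\nabla(w\cdot\lambda)=\bD(Q_\chi^I(w\cdot\lambda))$, and from Proposition~\ref{TransDual} that $\bD$ commutes with the translation functor, i.e. $\bD(T_\lambda^\mu(M))\cong T_\lambda^\mu(\bD(M))$ for all $M\in\sC_\chi$. Combining these two facts immediately gives
\[
T_\lambda^\mu(\nabla(w\cdot\lambda)) = T_\lambda^\mu(\bD(Q_\chi^I(w\cdot\lambda))) \cong \bD(T_\lambda^\mu(Q_\chi^I(w\cdot\lambda))).
\]

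Next I would split into the two cases of Proposition~\ref{TransFunct}. If $wsw^{-1}\notin W_{I,p}$, then $T_\lambda^\mu(Q_\chi^I(w\cdot\lambda))=Q_\chi^I(w\cdot\mu)$, so applying $\bD$ gives $\bD(Q_\chi^I(w\cdot\mu))=\nabla(w\cdot\mu)$, as claimed. If $wsw^{-1}\in W_{I,p}$, then $T_\lambda^\mu(Q_\chi^I(w\cdot\lambda))$ has a filtration $0\subseteq N_1\subseteq N_2 = T_\lambda^\mu(Q_\chi^I(w\cdot\lambda))$ with both $N_1$ and $N_2/N_1$ isomorphic to $Q_\chi^I(w\cdot\mu)$; here I should note that this ``filtration consisting of two copies'' is in fact a short exact sequence $0\to Q_\chi^I(w\cdot\mu)\to T_\lambda^\mu(Q_\chi^I(w\cdot\lambda))\to Q_\chi^I(w\cdot\mu)\to 0$, and since $\Ext^1_{\sC_\chi}(Q_\chi^I(w\cdot\mu),Q_\chi^I(w\cdot\mu))=0$ (as $Q_\chi^I(w\cdot\mu)$ is both projective and injective in $\sC_\chi$, being a projective module — see the remarks in Subsection~\ref{ss3.2} and Section~\ref{ss4.1}), this sequence splits, so $T_\lambda^\mu(Q_\chi^I(w\cdot\lambda))\cong Q_\chi^I(w\cdot\mu)\oplus Q_\chi^I(w\cdot\mu)$. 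Applying the exact functor $\bD$ and using $\bD(Q_\chi^I(w\cdot\mu))=\nabla(w\cdot\mu)$ then yields $T_\lambda^\mu(\nabla(w\cdot\lambda))\cong\nabla(w\cdot\mu)\oplus\nabla(w\cdot\mu)$, which again has (indeed is) a filtration by two copies of $\nabla(w\cdot\mu)$.

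This is essentially a formal argument, so I do not expect a serious obstacle; the only point requiring a little care is the bookkeeping in the second case. One could either argue the splitting as above, or — to avoid even needing the $\Ext^1$-vanishing — simply observe that $\bD$ is exact, so it carries any filtration of $T_\lambda^\mu(Q_\chi^I(w\cdot\lambda))$ with two subquotients isomorphic to $Q_\chi^I(w\cdot\mu)$ to a filtration of $T_\lambda^\mu(\nabla(w\cdot\lambda))$ with two subquotients isomorphic to $\bD(Q_\chi^I(w\cdot\mu))=\nabla(w\cdot\mu)$, noting that $\bD$ reverses the order of the filtration but that is immaterial to the statement. Either way the proof is short, and the main thing to get right is simply invoking Proposition~\ref{TransDual} correctly and recalling the identifications of the highest-weight-theoretic objects in $\sC_\chi$ from Subsection~\ref{ss3.2}.
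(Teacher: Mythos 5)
Your argument is correct and is exactly the route the paper takes: the corollary is stated as an immediate consequence of Proposition~\ref{TransFunct} via the duality $\bD$ and Proposition~\ref{TransDual}, and pushing the filtration through the exact functor $\bD$ (noting the order reversal is immaterial since both subquotients agree) is all that is required. One small caveat: your justification of the $\Ext^1$-vanishing is off, since $Q_\chi^I(w\cdot\mu)=\Delta(w\cdot\mu)$ is a standard module and is not in general projective in $\sC_\chi$ (the projectives are the $Q_\chi(\kappa)$) --- the vanishing does hold, by \cite[Cor.~8.13]{West} as used later in the paper, but in any case your fallback argument avoids it entirely and is the one to keep.
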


\begin{cor}
	Let $s,\lambda$ and $\mu$ be as in Notation~\ref{NOT}, and let $w\in W^{I,\lambda}$. Then $T_\mu^\lambda \nabla(w\cdot\mu)=\nabla(w\cdot\lambda)$ if $wsw^{-1}\in W_{I,p}$, and $T_\mu^\lambda \nabla(w\cdot\mu)$ has a filtration consisting of one copy of $\nabla(w\cdot\lambda)$ and one copy of $\nabla(ws\cdot\lambda)$ if $wsw^{-1}\notin W_{I,p}$.
\end{cor}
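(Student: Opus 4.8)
The plan is to deduce this from Proposition~\ref{TransFunct2} by applying the duality $\bD$, exactly as the preceding corollaries on costandard and proper costandard filtrations were deduced from their standard analogues. Recall from Subsection~\ref{ss3.2} that $\nabla(\kappa) = \bD(\Delta(\kappa)) = \bD(Q_\chi^I(\kappa))$ for every $\kappa \in \Lambda_I$, and recall from Proposition~\ref{TransDual} that $\bD(T_\mu^\lambda(N)) \cong T_\mu^\lambda(\bD(N))$ for any $N \in \sC_\chi$ (applied here with the roles of $\lambda$ and $\mu$ interchanged, which is permissible since that proposition holds for arbitrary elements of $\overline{C}\cap X(T)$). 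Combining these,
$$T_\mu^\lambda(\nabla(w\cdot\mu)) = T_\mu^\lambda(\bD(Q_\chi^I(w\cdot\mu))) \cong \bD(T_\mu^\lambda(Q_\chi^I(w\cdot\mu))).$$

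First I would treat the case $wsw^{-1} \in W_{I,p}$. Here Proposition~\ref{TransFunct2} gives $T_\mu^\lambda(Q_\chi^I(w\cdot\mu)) = Q_\chi^I(w\cdot\lambda)$, so the displayed isomorphism becomes $T_\mu^\lambda(\nabla(w\cdot\mu)) \cong \bD(Q_\chi^I(w\cdot\lambda)) = \nabla(w\cdot\lambda)$, as wanted.

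Next, for the case $wsw^{-1} \notin W_{I,p}$, Proposition~\ref{TransFunct2} provides a two-step filtration $0 \subseteq M_1 \subseteq M_2 = T_\mu^\lambda(Q_\chi^I(w\cdot\mu))$ whose successive quotients are $Q_\chi^I(w\cdot\lambda)$ and $Q_\chi^I(ws\cdot\lambda)$ (in one order or the other); note that $ws\cdot\lambda \in C_I \cap X(T) \subseteq \Lambda_I$ by Proposition~\ref{ReflI}, so $\nabla(ws\cdot\lambda)$ makes sense. Since $\bD$ is an exact contravariant functor, applying it to this filtration reverses it and dualizes the quotients: $\bD(T_\mu^\lambda(Q_\chi^I(w\cdot\mu)))$ acquires a two-step filtration with successive quotients $\bD(Q_\chi^I(ws\cdot\lambda)) = \nabla(ws\cdot\lambda)$ and $\bD(Q_\chi^I(w\cdot\lambda)) = \nabla(w\cdot\lambda)$ (with the order reversed relative to the original). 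Via the isomorphism above, $T_\mu^\lambda(\nabla(w\cdot\mu))$ therefore has a filtration with one copy of $\nabla(w\cdot\lambda)$ and one copy of $\nabla(ws\cdot\lambda)$ as its successive quotients, completing the proof.

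There is essentially no obstacle here: the argument is a purely formal dualization, and the only points requiring (routine) care are checking that $\bD$ genuinely sends a short exact sequence of the relevant shape to one in the opposite direction and that the weight $ws\cdot\lambda$ lies in the indexing set $\Lambda_I$ so that $\nabla(ws\cdot\lambda)$ is defined --- both of which are immediate from results already established.
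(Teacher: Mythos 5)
Your proposal is correct and is precisely the argument the paper intends: the corollary is obtained by applying the duality $\bD$ together with Proposition~\ref{TransDual} to Proposition~\ref{TransFunct2}, using $\nabla(\kappa)=\bD(Q_\chi^I(\kappa))$ and the fact that the exact contravariant functor $\bD$ reverses a two-step filtration while dualizing its subquotients. The paper states this in one line ("By using the duality $\bD$ and Proposition~\ref{TransDual}, we get the dual statements"), and your write-up simply fills in the same routine details.
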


For completeness, let us also state the corresponding statement for proper costandard objects. This follows easily from Proposition~\ref{TransDual} and Corollary~\ref{TransZ}.

\begin{cor}
	Let $s,\lambda$ and $\mu$ be as in Notation~\ref{NOT}, and let $w\in W^{I,\lambda}$. Then $T_\lambda^\mu \overline{\nabla}(w\cdot\lambda)=\overline{\nabla}(w\cdot\mu)$ and $T_\mu^\lambda(w\cdot\mu)$ has a filtration consisting of one copy of $\overline{\nabla}(w\cdot\lambda)$ and one copy of $\overline{\nabla}(ws\cdot\lambda)$ (so two copies of $\overline{\nabla}(w\cdot\lambda)$ if $wsw^{-1}\in W_{I,p}$).
\end{cor}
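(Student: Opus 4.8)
The plan is to reduce both assertions to the known action of translation functors on baby Verma modules (Corollary~\ref{TransZ}) by transporting everything through the duality $\bD$. The two facts that do all the work are: (i) $\overline{\nabla}(\nu) = \bD(Z_\chi(\nu))$ for every $\nu \in X(T)$, from Subsection~\ref{ss3.2}; and (ii) Proposition~\ref{TransDual}, which gives $\bD(T_\gamma^\kappa(N)) \cong T_\gamma^\kappa(\bD(N))$ for $\gamma,\kappa \in \overline{C}\cap X(T)$ and $N \in \sC_\chi$.

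First I would handle translation onto the wall. By Notation~\ref{NOT} we have $\Sta_{W_p}(\lambda) = \{1\}$, hence $\Sta_{W_p}(w\cdot\lambda) = \{1\}$, so Corollary~\ref{TransZ}(1) yields $T_\lambda^\mu Z_\chi(w\cdot\lambda) = Z_\chi(w\cdot\mu)$; note that $w \in W^{I,\lambda}$ ensures $w\cdot\mu \in \overline{C}_I \cap X(T)$, so $\overline{\nabla}(w\cdot\mu)$ is the intended proper costandard object. Applying $\bD$ and invoking Proposition~\ref{TransDual} gives $T_\lambda^\mu\overline{\nabla}(w\cdot\lambda) = T_\lambda^\mu\bD(Z_\chi(w\cdot\lambda)) \cong \bD(T_\lambda^\mu Z_\chi(w\cdot\lambda)) = \bD(Z_\chi(w\cdot\mu)) = \overline{\nabla}(w\cdot\mu)$.

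For translation off the wall, Corollary~\ref{TransZ}(2) provides a two-step filtration of $T_\mu^\lambda Z_\chi(w\cdot\mu)$ with successive quotients a copy of $Z_\chi(w\cdot\lambda)$ and a copy of $Z_\chi(ws\cdot\lambda)$. Since $\bD$ is an exact anti-equivalence, it carries this to a two-step filtration of $\bD(T_\mu^\lambda Z_\chi(w\cdot\mu)) \cong T_\mu^\lambda\overline{\nabla}(w\cdot\mu)$ whose successive quotients are $\bD(Z_\chi(w\cdot\lambda)) = \overline{\nabla}(w\cdot\lambda)$ and $\bD(Z_\chi(ws\cdot\lambda)) = \overline{\nabla}(ws\cdot\lambda)$ (with the order of the two steps reversed, which is immaterial). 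Finally, when $wsw^{-1}\in W_{I,p}$ we have $ws\cdot\lambda \in W_{I,p}\cdot(w\cdot\lambda)$, so $Z_\chi(ws\cdot\lambda) \cong Z_\chi(w\cdot\lambda)$ and hence $\overline{\nabla}(ws\cdot\lambda) \cong \overline{\nabla}(w\cdot\lambda)$, giving the stated two copies of $\overline{\nabla}(w\cdot\lambda)$. There is no real obstacle here; the only points needing attention are the bookkeeping just mentioned --- that $\bD$ reverses but preserves filtrations, and that $w\in W^{I,\lambda}$ places the relevant dominant representatives in $\overline{C}_I\cap X(T)$.
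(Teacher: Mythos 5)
Your argument is correct and is exactly the one the paper intends: the text introduces this corollary with the remark that it ``follows easily from Proposition~\ref{TransDual} and Corollary~\ref{TransZ},'' which is precisely the dualisation of the baby Verma computations that you carry out. The bookkeeping points you flag (exactness of $\bD$, reversal of the filtration, and the identification $\overline{\nabla}(\nu)=\bD(Z_\chi(\nu))$) are all that is needed.
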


For $s,\lambda$ and $\mu$ as in Notation~\ref{NOT}, we may define the {\bf wall-crossing functor} $$\Theta_s\coloneqq T_\mu^\lambda T_\lambda^\mu:\sC_{\chi}(\lambda)\to \sC_{\chi}(\lambda).$$ Combining Propositions~\ref{TransFunct} and \ref{TransFunct2} we get the following result (recalling that $Q_{\chi}^I(ws\cdot\lambda)\cong Q_{\chi}(w\cdot\lambda)$ if and only if $wsw^{-1}\in W_{I,p}$):

\begin{cor}
	Let $s,\lambda$ and $\mu$ be as in Notation~\ref{NOT}, and let $w\in W^{I,\lambda}$. If $wsw^{-1}\notin W_{I,p}$, then $\Theta_s(Q_{\chi}^I(w\cdot\lambda))$ has a filtration consisting of one copy of $Q_{\chi}^I(w\cdot\lambda)$ and one copy of $Q_{\chi}^I(ws\cdot\lambda)$; if $wsw^{-1}\in W_{I,p}$, then $\Theta_s(Q_{\chi}^I(w\cdot\lambda))$ has a filtration consisting of two copies of $Q_{\chi}^I(w\cdot\lambda)$.
\end{cor}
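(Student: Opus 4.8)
The plan is to combine Propositions~\ref{TransFunct} and \ref{TransFunct2} exactly as the statement suggests, since $\Theta_s = T_\mu^\lambda T_\lambda^\mu$ by definition. First I would fix $w\in W^{I,\lambda}$ and split into the two cases according to whether $wsw^{-1}\in W_{I,p}$.

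In the case $wsw^{-1}\notin W_{I,p}$: by Proposition~\ref{TransFunct}, $T_\lambda^\mu Q_\chi^I(w\cdot\lambda)=Q_\chi^I(w\cdot\mu)$. Now I would like to apply Proposition~\ref{TransFunct2} to $Q_\chi^I(w\cdot\mu)$, but that result is stated for $Q_\chi^I(w'\cdot\mu)$ with $w'\in W^{I,\lambda}$, so one must first observe that $w\in W^{I,\lambda}$ indeed gives $w\cdot\mu\in\overline{C}_I\cap X(T)$ (noted in Notation~\ref{NOT}). Since $wsw^{-1}\notin W_{I,p}$, Proposition~\ref{TransFunct2} then yields that $T_\mu^\lambda Q_\chi^I(w\cdot\mu)=\Theta_s(Q_\chi^I(w\cdot\lambda))$ has a filtration with one copy of $Q_\chi^I(w\cdot\lambda)$ and one copy of $Q_\chi^I(ws\cdot\lambda)$, exactly as claimed.

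In the case $wsw^{-1}\in W_{I,p}$: by Proposition~\ref{TransFunct}, $T_\lambda^\mu Q_\chi^I(w\cdot\lambda)$ has a filtration with two copies of $Q_\chi^I(w\cdot\mu)$. Applying the exact functor $T_\mu^\lambda$ to this short exact sequence gives a filtration of $\Theta_s(Q_\chi^I(w\cdot\lambda))$ whose two successive quotients are each $T_\mu^\lambda Q_\chi^I(w\cdot\mu)$. Since $wsw^{-1}\in W_{I,p}$, Proposition~\ref{TransFunct2} gives $T_\mu^\lambda Q_\chi^I(w\cdot\mu)=Q_\chi^I(w\cdot\lambda)$, so refining the two-step filtration produces a filtration of $\Theta_s(Q_\chi^I(w\cdot\lambda))$ with two copies of $Q_\chi^I(w\cdot\lambda)$, as claimed. (The parenthetical remark that $Q_\chi^I(ws\cdot\lambda)\cong Q_\chi(w\cdot\lambda)$ iff $wsw^{-1}\in W_{I,p}$ is not actually needed for the filtration statement; it is the earlier observation that $Z_\chi(ws\cdot\lambda)\cong Z_\chi(w\cdot\lambda)$ iff $wsw^{-1}\in W_{I,p}$, hence $Q_\chi^I(ws\cdot\lambda)\cong Q_\chi^I(w\cdot\lambda)$, that explains why in the second case both factors of the first-case filtration ``collapse'' to the same module.) The only mild subtlety — and the step most worth spelling out — is the bookkeeping that applying an exact functor to a length-$n$ filtration and then refining each quotient's own filtration yields a genuine $Q$-filtration; this is routine but should be stated explicitly.
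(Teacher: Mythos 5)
Your argument is correct and is exactly the paper's (the corollary is stated as an immediate consequence of Propositions~\ref{TransFunct} and \ref{TransFunct2}, with no further proof given): translate onto the wall, translate back off, and use exactness of $T_\mu^\lambda$ to refine the resulting filtration. Your extra bookkeeping about refining the image of a filtration under an exact functor is routine and fine.
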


The dual statement also holds. 
\begin{cor}
	Let $s,\lambda$ and $\mu$ be as in Notation~\ref{NOT}, and let $w\in W^{I,\lambda}$. If $wsw^{-1}\notin W_{I,p}$, then $\Theta_s(\nabla(w\cdot\lambda))$ has a filtration consisting of one copy of $\nabla(w\cdot\lambda)$ and one copy of $\nabla(ws\cdot\lambda)$; if $wsw^{-1}\in W_{I,p}$, then $\Theta_s(\nabla(w\cdot\lambda))$ has a filtration consisting of two copies of $\nabla(w\cdot\lambda)$.
\end{cor}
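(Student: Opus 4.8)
The plan is to deduce this corollary from the previously established corollary for standard modules $Q_\chi^I(w\cdot\lambda)$ by applying the duality $\bD$. First I would recall that $\nabla(w\cdot\lambda)=\bD(Q_\chi^I(w\cdot\lambda))=\bD(\Delta(w\cdot\lambda))$ by the description of the highest weight structure on $\sC_\chi$ in Subsection~\ref{ss3.2}, and that $\Theta_s=T_\mu^\lambda T_\lambda^\mu$ is a composite of translation functors. By Proposition~\ref{TransDual}, $\bD$ commutes with each translation functor $T_\lambda^\mu$ and $T_\mu^\lambda$, hence $\bD\circ\Theta_s\cong\Theta_s\circ\bD$ as functors on $\sC_\chi(\lambda)$.

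Next I would apply $\bD$ to the filtration of $\Theta_s(Q_\chi^I(w\cdot\lambda))$ given in the preceding corollary. Since $\bD$ is exact, it sends a filtration to a filtration (in reverse order), and it sends successive quotients $Q_\chi^I(w\cdot\lambda)$ and $Q_\chi^I(ws\cdot\lambda)$ to $\bD(Q_\chi^I(w\cdot\lambda))=\nabla(w\cdot\lambda)$ and $\bD(Q_\chi^I(ws\cdot\lambda))=\nabla(ws\cdot\lambda)$ respectively. Combined with $\bD(\Theta_s(Q_\chi^I(w\cdot\lambda)))\cong\Theta_s(\bD(Q_\chi^I(w\cdot\lambda)))=\Theta_s(\nabla(w\cdot\lambda))$, this immediately yields: in the case $wsw^{-1}\notin W_{I,p}$, that $\Theta_s(\nabla(w\cdot\lambda))$ has a filtration with one copy of $\nabla(w\cdot\lambda)$ and one copy of $\nabla(ws\cdot\lambda)$; and in the case $wsw^{-1}\in W_{I,p}$, that it has a filtration with two copies of $\nabla(w\cdot\lambda)$.

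There is essentially no serious obstacle here: the only minor point worth noting is that one should check $\bD$ preserves the summand $\sC_\chi(\lambda)$ (so that $\Theta_s\circ\bD$ makes sense on $\sC_\chi(\lambda)$), but this was already observed in the discussion preceding Proposition~\ref{TransDual}. One should also note, for the statement about the relationship $ws\cdot\lambda<w\cdot\lambda$ or otherwise, nothing extra is needed since we are only asserting the existence of a filtration with the stated subquotients; if one wanted the ordering of the filtration to match (i.e.\ which of $\nabla(w\cdot\lambda)$, $\nabla(ws\cdot\lambda)$ is the sub and which the quotient), one would track the reversal of order induced by $\bD$, but this is not claimed in the statement.

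Here is the proof I would write.

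\begin{proof}
	By the discussion preceding Proposition~\ref{TransDual}, the duality $\bD$ preserves each summand $\sC_\chi(\gamma)$, so it restricts to a duality on $\sC_\chi(\lambda)$. By Proposition~\ref{TransDual}, $\bD$ commutes with the translation functors $T_\lambda^\mu$ and $T_\mu^\lambda$, and hence with $\Theta_s=T_\mu^\lambda T_\lambda^\mu$. Recall from Subsection~\ref{ss3.2} that $\nabla(w\cdot\lambda)=\bD(Q_\chi^I(w\cdot\lambda))$. Applying the exact functor $\bD$ to the filtration of $\Theta_s(Q_\chi^I(w\cdot\lambda))$ from the previous corollary, and using that $\bD$ sends successive quotients $Q_\chi^I(\kappa)$ to $\bD(Q_\chi^I(\kappa))=\nabla(\kappa)$, we obtain a filtration of $\bD(\Theta_s(Q_\chi^I(w\cdot\lambda)))\cong \Theta_s(\bD(Q_\chi^I(w\cdot\lambda)))=\Theta_s(\nabla(w\cdot\lambda))$ with the stated successive quotients.
\end{proof}
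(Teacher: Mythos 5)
Your proof is correct and is exactly the argument the paper intends: the paper simply asserts "the dual statement also holds," relying on $\nabla(\kappa)=\bD(Q_\chi^I(\kappa))$, the exactness of $\bD$, and Proposition~\ref{TransDual} to commute $\bD$ past $\Theta_s$, just as you do. Your remark that $\bD$ reverses the order of the filtration but that the statement only asserts the multiset of subquotients is a sensible and accurate clarification.
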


Recall that the Grothendieck group of an exact category $\cA$, which we will write as $[\cA]$, is the abelian group generated by symbols $[A]$ for (isomorphism classes of) objects $A\in\cA$ subject to the relation $[B]=[A]+[C]$ whenever there exists an exact sequence $0\to A\to B\to C\to 0$ in $\cA$. We will denote $[\cA]_\bQ\coloneqq [A]\otimes_{\bZ}\bQ$, which is a $\bQ$-vector space. We will abuse notation and write $[A]\in[\cA]_{\bQ}$ for $[A]\otimes 1$ when $A\in \cA$.

The following results are standard, or deduced in the standard way (see, for example, \cite[Rmk 4.5]{Jan4} and the remarks at the end of Subsection~\ref{ss3.1}).

\begin{prop}
	Let $s,\lambda$ and $\mu$ be as in Notation~\ref{NOT}. The following are $\bQ$-linearly independent subsets of $[\sC_\chi(\lambda)]_{\bQ}$:
	\begin{enumerate}
		\item $\{[L_\chi(w\cdot\lambda)] \mid w\in W^{I,\lambda}\}$.
		\item $\{[Z_\chi(w\cdot\lambda)]\mid w\in W^{I,\lambda}\}$.
		\item $\{[\overline{\nabla}(w\cdot\lambda)] \mid w\in W^{I,\lambda}\}$.
		\item $\{[Q_\chi^I(w\cdot\lambda)]\mid w\in W^{I,\lambda}\}$.
		\item $\{[\nabla(w\cdot\lambda)] \mid w\in W^{I,\lambda}\}$.
		\item $\{[T_\chi(w\cdot\lambda)] \mid w\in W^{I,\lambda}\}$.	
	\end{enumerate}
\end{prop}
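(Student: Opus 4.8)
The plan is to prove all six linear independence statements at once by observing that each of these families is a ``unitriangular'' change of basis away from another, so that $\bQ$-linear independence of one family implies it for all the others, and then to establish it for whichever family is easiest to handle directly. The natural starting point is the family $\{[L_\chi(w\cdot\lambda)]\mid w\in W^{I,\lambda}\}$. The classes of the simple modules $L_\chi(\kappa)$, as $\kappa$ ranges over $\overline{C}_I\cap X(T)$, form a $\bZ$-basis of the Grothendieck group $[\sC_\chi]$ (every object has finite length and the $L_\chi(\kappa)$ are pairwise non-isomorphic). The summand $\sC_\chi(\lambda)$ is a Serre subcategory, so $[\sC_\chi(\lambda)]$ is the free abelian group on those $[L_\chi(\kappa)]$ with $\kappa\in W_p\cdot\lambda\cap\overline{C}_I\cap X(T)$; since every $W_p$-orbit representative in $\overline{C}_I$ lying over $\lambda$ is of the form $w\cdot\lambda$ for a (unique, given the identification) $w\in W^{I,\lambda}$, statement (1) is immediate, and in fact $\{[L_\chi(w\cdot\lambda)]\mid w\in W^{I,\lambda}\}$ is a $\bZ$-basis of $[\sC_\chi(\lambda)]$, hence $\bQ$-linearly independent in $[\sC_\chi(\lambda)]_\bQ$.

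Next I would pass from the simples to the remaining five families using triangularity with respect to the partial order $\uparrow$ (equivalently $\preceq$) on $\Lambda_I$, which is interval finite. For the proper standard modules, $[Z_\chi(w\cdot\lambda):L_\chi(\mu')]\neq 0$ implies $\mu'\preceq w\cdot\lambda$, with $[Z_\chi(w\cdot\lambda):L_\chi(w\cdot\lambda)]=1$ (this is $[\overline{\Delta}(\lambda):L(\mu)]\neq 0\implies\mu\preceq\lambda$ from Subsection~\ref{ss3.1} together with the fact that $L_\chi(w\cdot\lambda)$ is the head of $Z_\chi(w\cdot\lambda)$ with multiplicity one). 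So the matrix expressing $\{[Z_\chi(w\cdot\lambda)]\}$ in terms of $\{[L_\chi(w\cdot\lambda)]\}$ is unitriangular, hence invertible over $\bQ$; this gives (2). For the proper costandard modules $\overline{\nabla}(w\cdot\lambda)=\bD(Z_\chi(w\cdot\lambda))$ the same reasoning applies since $\bD$ fixes simples, giving (3). For the standard modules, $\Delta(w\cdot\lambda)=Q_\chi^I(w\cdot\lambda)$ has a proper standard filtration with $Z_\chi(w\cdot\lambda)$ appearing $N_I(w\cdot\lambda)$ times and no other $Z_\chi$, so $[Q_\chi^I(w\cdot\lambda)]=N_I(w\cdot\lambda)[Z_\chi(w\cdot\lambda)]$ in $[\sC_\chi(\lambda)]_\bQ$; since $N_I(w\cdot\lambda)=\lvert W_I\rvert>0$ by Lemma~\ref{StabSize}(1), rescaling preserves linear independence, giving (4), and dualizing via $\bD$ gives (5). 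Finally, for tilting modules, $(T_\chi(w\cdot\lambda):\Delta(\mu'))\neq 0$ implies $\mu'\preceq w\cdot\lambda$ and $(T_\chi(w\cdot\lambda):\Delta(w\cdot\lambda))=1$ (characterization of indecomposable tilting objects in Subsection~\ref{ss3.1}), so $\{[T_\chi(w\cdot\lambda)]\}$ is unitriangular in terms of $\{[\Delta(w\cdot\lambda)]\}=\{[Q_\chi^I(w\cdot\lambda)]\}$ (up to the scalars $N_I$), which gives (6).

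The only genuine point requiring care — and the step I would flag as the main obstacle — is making sure each triangular system is actually a \emph{finite} linear system so that ``unitriangular implies invertible'' is legitimate: the Grothendieck group here is of an essentially finite, not finite, category, and $W^{I,\lambda}$ is infinite. This is handled by the interval-finiteness of $\preceq$: to check that a finite $\bQ$-linear combination $\sum_{w\in F}c_w[X(w\cdot\lambda)]=0$ (for $F\subseteq W^{I,\lambda}$ finite and $X$ one of $Z,\overline{\nabla},Q^I,\nabla,T$) forces all $c_w=0$, pick $w_0\in F$ maximal with respect to $\preceq$ among $\{w\cdot\lambda\mid w\in F\}$; then the coefficient of $[L_\chi(w_0\cdot\lambda)]$ in the expansion is $c_{w_0}$ times a positive scalar (all other terms $X(w\cdot\lambda)$ with $w\in F$, $w\neq w_0$ cannot involve $L_\chi(w_0\cdot\lambda)$ since $w_0\cdot\lambda\not\preceq w\cdot\lambda$), so $c_{w_0}=0$; induct downward on $\lvert F\rvert$. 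This argument is routine once the multiplicity bounds quoted above are in hand, so the whole proposition reduces to citing the already-established facts about composition factors and filtration multiplicities of $Z_\chi$, $Q_\chi^I$, their $\bD$-duals, and $T_\chi$, together with $N_I(w\cdot\lambda)=\lvert W_I\rvert\neq 0$.
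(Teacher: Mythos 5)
Your proposal is correct and is precisely the "standard way" the paper invokes: the paper gives no proof, citing instead \cite[Rmk 4.5]{Jan4} and the triangularity facts recalled at the end of Subsection~\ref{ss3.1}, which are exactly the ingredients you use (simples as a basis, unitriangularity of $\overline{\Delta}$ over the simples, rescaling by $N_I(w\cdot\lambda)=\lvert W_I\rvert$ for the standards, duality, and unitriangularity of tiltings over standards, with the maximal-element induction handling finiteness). Nothing to add.
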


\begin{prop}\label{GrothMuBas}
	Let $s,\lambda$ and $\mu$ be as in Notation~\ref{NOT}. The following are $\bQ$-linearly independent subsets of $[\sC_\chi(\mu)]_{\bQ}$:
	\begin{enumerate}
		\item $\{[L_\chi(w\cdot\mu)] \mid w\in W^{I,\mu}\}$.
		\item $\{[Z_\chi(w\cdot\mu)]\mid w\in W^{I,\mu}\}$.
		\item $\{[\overline{\nabla}(w\cdot\mu)]\mid w\in W^{I,\mu}\}$.
		\item $\{[Q_\chi^I(w\cdot\mu)]\mid w\in W^{I,\mu}\}$.
		\item $\{[\nabla(w\cdot\mu)] \mid w\in W^{I,\mu}\}$.
		\item $\{[T_\chi(w\cdot\mu)] \mid w\in W^{I,\mu}\}$.	
	\end{enumerate}
\end{prop}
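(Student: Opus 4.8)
The plan is to prove all six linear-independence statements in $[\sC_\chi(\mu)]_{\bQ}$ simultaneously by a change-of-basis argument anchored on the simple modules, exactly as in the corresponding statement for $\sC_\chi(\lambda)$. First I would establish that $\{[L_\chi(w\cdot\mu)]\mid w\in W^{I,\mu}\}$ is linearly independent: the classes of the distinct simple modules in $\sC_\chi(\mu)$ form a $\bZ$-basis of $[\sC_\chi(\mu)]$, so it suffices to observe that the map $w\mapsto L_\chi(w\cdot\mu)$ is injective on $W^{I,\mu}$. This follows from the criterion $L_\chi(\nu)\cong L_\chi(\nu')\iff \nu\in W_{I,p}\cdot\nu'$ quoted in Subsection~\ref{ss2.2}, together with the fact that each $w\cdot\mu$ with $w\in W^{I,\mu}$ lies in $\overline{C}_I\cap X(T)$, a fundamental domain for the dot-action of $W_{I,p}$; so distinct $w\in W^{I,\mu}$ give distinct elements of $\overline{C}_I\cap X(T)$ and hence non-isomorphic simples. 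That proves (1).

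Next I would pass from (1) to (2)--(6) by a triangularity argument with respect to the partial order $\uparrow$ on $\overline{C}_I\cap X(T)$. For each of $Z_\chi(w\cdot\mu)$, $\overline{\nabla}(w\cdot\mu)$, $Q_\chi^I(w\cdot\mu)$, $\nabla(w\cdot\mu)$ and $T_\chi(w\cdot\mu)$, the class in $[\sC_\chi(\mu)]_{\bQ}$ expands in the basis of simples as $[L_\chi(w\cdot\mu)]$ plus a $\bZ_{\geq 0}$-combination of $[L_\chi(\nu)]$ with $\nu\prec w\cdot\mu$ (equivalently $\nu\uparrow w\cdot\mu$, $\nu\neq w\cdot\mu$). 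For $\overline{\Delta}=Z_\chi$ and $\overline{\nabla}$ this is the defining property of (proper co)standard objects recalled in Subsection~\ref{ss3.1} ($[\overline{\Delta}(\lambda):L(\mu)]\neq 0\implies\mu\preceq\lambda$, and the head is $L(\lambda)$ with multiplicity one); for $\Delta=Q_\chi^I$ and $\nabla$ it follows because these have a (proper co)standard filtration with top/bottom section the corresponding proper object and all other sections indexed by strictly larger weights — here one uses that $\Delta(w\cdot\mu)$ has a $\overline{\Delta}$-filtration with $N_I(w\cdot\mu)$ copies of $\overline{\Delta}(w\cdot\mu)$, so $[L_\chi(w\cdot\mu)]$ still appears with a nonzero (positive) coefficient; and for $T_\chi(w\cdot\mu)$ it follows from the characterisation of tilting modules (a $\Delta_\varepsilon$-filtration starting with $\Delta_\varepsilon(w\cdot\mu)$, all other sections strictly larger) combined with $[T(\lambda):L(\lambda)]=[\Delta(\lambda):L(\lambda)]\neq 0$ noted at the end of Subsection~\ref{ss3.1}. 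Since $\uparrow$ is interval-finite and these are unitriangular (up to a nonzero scalar on the diagonal) expansions over the linearly independent set $\{[L_\chi(w\cdot\mu)]\mid w\in W^{I,\mu}\}$, each family (2)--(6) is linearly independent as well.

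One subtlety to address carefully is that $W^{I,\mu}$ is \emph{not} simply $\{w\in W_p\mid w\cdot\mu\in\overline{C}_I\cap X(T)\}$ — it carries the extra condition $ws\cdot\lambda<w\cdot\lambda$ — so I must check that the families really are indexed injectively and that no element of $\sC_\chi(\mu)$ outside the list interferes. For the injectivity this is automatic since $W^{I,\mu}$ is a \emph{subset} of the indexing set of all simples of $\sC_\chi(\mu)$; linear independence of a subset of a larger linearly independent set is immediate. Thus the extra condition causes no trouble for the statement as phrased: we are only claiming independence, not spanning.

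I expect the main (though still modest) obstacle to be pinning down precisely why the diagonal coefficient of $[L_\chi(w\cdot\mu)]$ in the expansion of $[Q_\chi^I(w\cdot\mu)]$, $[\nabla(w\cdot\mu)]$ and especially $[T_\chi(w\cdot\mu)]$ is nonzero (it is $N_I(w\cdot\mu)\cdot[\overline{\Delta}(w\cdot\mu):L_\chi(w\cdot\mu)]=N_I(w\cdot\mu)>0$ for the former two, and $[\Delta(w\cdot\mu):L_\chi(w\cdot\mu)]>0$ for $T_\chi$), together with confirming that all other composition factors lie strictly below $w\cdot\mu$ in $\uparrow$ — for which one invokes the linkage-type facts from Subsection~\ref{ss3.1} ($[\Delta(\lambda):L(\mu)]\neq0\implies\mu\preceq\lambda$) and the filtration descriptions from Subsection~\ref{ss3.2}. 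As the excerpt itself remarks, this is "standard, or deduced in the standard way" (cf.\ \cite[Rmk 4.5]{Jan4}), so I would keep the write-up brief, citing the analogous argument already used for $[\sC_\chi(\lambda)]_{\bQ}$.
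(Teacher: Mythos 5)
Your proposal is correct and is exactly the standard unitriangularity-over-the-simples argument that the paper invokes without proof (it only cites \cite[Rmk 4.5]{Jan4} and the remarks at the end of Subsection~\ref{ss3.1}); your observation that the condition $ws\cdot\lambda<w\cdot\lambda$ in the definition of $W^{I,\mu}$ is what makes $w\mapsto w\cdot\mu$ injective on $W^{I,\mu}$ (since the fibre of $w\cdot\mu$ is $\{w,ws\}$ and only one of these can satisfy the inequality) is the right point to make. One small slip: for $T_\chi(w\cdot\mu)$ the other sections of its $\Delta_\varepsilon$-filtration are indexed by weights strictly \emph{smaller} than $w\cdot\mu$ (it is the projective $P_\lambda$ in condition $(P\Delta_\varepsilon)$ whose extra sections are strictly larger), but this does not affect your triangularity argument, which only needs all off-diagonal composition factors to lie consistently on one side of $w\cdot\mu$ in the order $\uparrow$.
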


In fact, using the duality $\bD$, we have $[Q_\chi^I(w\cdot\lambda)]=[\nabla(w\cdot\lambda)]$ and $[Z_\chi^I(w\cdot\lambda)]=[\overline{\nabla}(w\cdot\lambda)]$ for all $w\in W^{I,\lambda}$ (and, of course, analogous equalities for $\mu$).

The following lemma will be useful in proving the proposition which follows it, and should be compared with \cite[Prop 7.19]{RAGS}.

\begin{prop}\label{nzhom}
	Let $s,\lambda$ and $\mu$ be as in Notation~\ref{NOT}, and let $w\in W^{I,\lambda}$ such that $wsw^{-1}\notin W_{I,p}$. Then $$\Hom_{\sC_\chi}(\Delta(ws\cdot\lambda),\Delta(w\cdot\lambda))\neq 0.$$
\end{prop}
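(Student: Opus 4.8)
The plan is to reduce everything to the wall-crossing functor $\Theta_s=T_\mu^\lambda T_\lambda^\mu$ applied to $\Delta(w\cdot\lambda)$, and to compute the relevant $\Hom$-space by adjunction. We work in the case $ws\cdot\lambda\prec w\cdot\lambda$: this is the relevant one, since a nonzero map $\Delta(\kappa)\to\Delta(\kappa')$ forces $\kappa\preceq\kappa'$ (the head $L_\chi(\kappa)$ of $\Delta(\kappa)$ must be a composition factor of $\Delta(\kappa')$) and $ws\cdot\lambda$, $w\cdot\lambda$ are $\uparrow$-comparable; equivalently, $w\in W^{I,\mu}$. Note also that $ws\in W^{I,\lambda}$ (by Proposition~\ref{ReflI} we even get $ws\cdot\lambda\in C_I\cap X(T)$), that $ws\cdot\mu=(wsw^{-1})\cdot(w\cdot\mu)=w\cdot\mu$ since $wsw^{-1}\in\Sta_{W_p}(w\cdot\mu)$, and that $(ws)s(ws)^{-1}=wsw^{-1}\notin W_{I,p}$. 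Hence Proposition~\ref{TransFunct} gives $T_\lambda^\mu\Delta(w\cdot\lambda)=Q_\chi^I(w\cdot\mu)=\Delta(w\cdot\mu)$ and $T_\lambda^\mu\Delta(ws\cdot\lambda)=Q_\chi^I(ws\cdot\mu)=\Delta(w\cdot\mu)$. Put $M\coloneqq\Theta_s\Delta(w\cdot\lambda)=T_\mu^\lambda\Delta(w\cdot\mu)$; by Proposition~\ref{TransFunct2} it carries a standard filtration $0\to\Delta(a)\to M\to\Delta(b)\to0$ with $\{a,b\}=\{w\cdot\lambda,ws\cdot\lambda\}$, two distinct weights.

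Next I would compute, using the biadjunction between $T_\lambda^\mu$ and $T_\mu^\lambda$ recorded at the end of Subsection~\ref{ss4.1} and the identification $M=T_\mu^\lambda\Delta(w\cdot\mu)$,
\[
\Hom_{\sC_\chi}(\Delta(ws\cdot\lambda),M)\cong\Hom_{\sC_\chi}(T_\lambda^\mu\Delta(ws\cdot\lambda),\Delta(w\cdot\mu))=\End_{\sC_\chi}(Q_\chi^I(w\cdot\mu)).
\]
By isomorphisms (1) and (2) in the proof of Proposition~\ref{EndStand}, $\End_{\sC_\chi}(Q_\chi^I(w\cdot\mu))\cong S(\h)^{\Sta_{W_I}(w\cdot\mu)}/S(\h)^{W_I}_+$, and $\Sta_{W_I}(w\cdot\mu)\subseteq\Sta_{W_p}(w\cdot\mu)\cap W_I=\{1,wsw^{-1}\}\cap W_I=\{1\}$ because $W_I\subseteq W_{I,p}$ and $wsw^{-1}\notin W_{I,p}$ (cf.\ the proof of Lemma~\ref{StabSize}). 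Thus this is the $W_I$-coinvariant algebra, of dimension $|W_I|\ge1$. The same computation gives $\dim\End_{\sC_\chi}(\Delta(ws\cdot\lambda))=|W_I|$ (here $\Sta_{W_p}(ws\cdot\lambda)=\{1\}$ already suffices). So $\dim\Hom_{\sC_\chi}(\Delta(ws\cdot\lambda),M)=|W_I|=\dim\End_{\sC_\chi}(\Delta(ws\cdot\lambda))$.

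The crux — and the step I expect to be the main obstacle — is pinning down the submodule structure of $M$. I claim $\mathrm{head}(M)$ is simple. Indeed, for a simple $L_\chi(\nu)$ the adjunction gives $\Hom_{\sC_\chi}(M,L_\chi(\nu))\cong\Hom_{\sC_\chi}(\Delta(w\cdot\mu),T_\lambda^\mu L_\chi(\nu))$, which is nonzero only if $T_\lambda^\mu L_\chi(\nu)$ has $L_\chi(w\cdot\mu)$ in its head; since $T_\lambda^\mu L_\chi(\nu)$ is either $0$ or $L_\chi(\nu')$ for some $W_p$-translate $\nu'$, this happens only for $\nu\in\{w\cdot\lambda,ws\cdot\lambda\}$, and — by the standard behaviour of translation onto a wall (see \cite[\S11]{Jan}, and \cite[II.7.15]{RAGS} for the analogue in $\Rep(G)$) — for exactly one of these two, the other simple being annihilated by $T_\lambda^\mu$. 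Hence $\mathrm{head}(M)$ is a single simple, so $M$ is indecomposable; consequently the standard filtration above is a non-split short exact sequence, and since the reverse arrangement $0\to\Delta(ws\cdot\lambda)\to M\to\Delta(w\cdot\lambda)\to0$ would be classified by $\Ext^1_{\sC_\chi}(\Delta(w\cdot\lambda),\Delta(ws\cdot\lambda))$, which vanishes as $w\cdot\lambda\not\preceq ws\cdot\lambda$ (Subsection~\ref{ss3.1}), it must have the form
\[
0\longrightarrow\Delta(w\cdot\lambda)\xrightarrow{\ \iota\ }M\xrightarrow{\ q\ }\Delta(ws\cdot\lambda)\longrightarrow0,\qquad\text{non-split.}
\]

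Finally, applying $\Hom_{\sC_\chi}(\Delta(ws\cdot\lambda),-)$ to this sequence yields exactness of
\[
0\to\Hom_{\sC_\chi}(\Delta(ws\cdot\lambda),\Delta(w\cdot\lambda))\to\Hom_{\sC_\chi}(\Delta(ws\cdot\lambda),M)\xrightarrow{\ q_{*}\ }\End_{\sC_\chi}(\Delta(ws\cdot\lambda)).
\]
Since the sequence does not split, $\mathrm{id}_{\Delta(ws\cdot\lambda)}\notin\Img(q_{*})$, so $\dim\Img(q_{*})<|W_I|$; combined with $\dim\Hom_{\sC_\chi}(\Delta(ws\cdot\lambda),M)=|W_I|$ from the second paragraph, this forces $\dim\Hom_{\sC_\chi}(\Delta(ws\cdot\lambda),\Delta(w\cdot\lambda))=|W_I|-\dim\Img(q_{*})\ge1$, as claimed. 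The one ingredient that would need to be sourced carefully is the vanishing of $T_\lambda^\mu$ on one of the two simples $L_\chi(w\cdot\lambda),L_\chi(ws\cdot\lambda)$ in the $\sC_\chi$-setting; everything else is formal manipulation of the highest-weight structure together with the adjunctions already established in Section~\ref{s4}.
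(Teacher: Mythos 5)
Your proof is correct, and it takes a genuinely different route from the paper's. You are also right that the statement must implicitly be read with $ws\cdot\lambda\prec w\cdot\lambda$ (otherwise the head of the image of any nonzero map would force $ws\cdot\lambda\preceq w\cdot\lambda$); this is the only case in which the proposition is applied, and the paper's own proof tacitly assumes it when it writes the short exact sequence with $\Delta(w\cdot\lambda)$ as the submodule. Both arguments start from that sequence $0\to\Delta(w\cdot\lambda)\xrightarrow{\theta}\Theta_s(\Delta(w\cdot\lambda))\to\Delta(ws\cdot\lambda)\to 0$, but then diverge. The paper applies $\Hom_{\sC_\chi}(-,\Delta(w\cdot\lambda))$ and produces an explicit nonzero element of $\ker(-\circ\theta)$, namely a nonzero map $\Theta_s(\Delta(w\cdot\lambda))\to\overline{\Delta}(w\cdot\lambda)\hookrightarrow\Delta(w\cdot\lambda)$ which is shown to kill $\theta(\Delta(w\cdot\lambda))$ by a composition-factor and grading argument whose key input is $T_\lambda^\mu(L_\chi(w\cdot\lambda))=0$, cited from \cite[4.9(b)]{Jan4}. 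You instead apply $\Hom_{\sC_\chi}(\Delta(ws\cdot\lambda),-)$ and conclude by a dimension count: adjunction plus Proposition~\ref{TransFunct} identify $\Hom_{\sC_\chi}(\Delta(ws\cdot\lambda),\Theta_s(\Delta(w\cdot\lambda)))$ with $\End_{\sC_\chi}(Q_\chi^I(w\cdot\mu))$, which by the chain of isomorphisms in the proof of Proposition~\ref{EndStand} (together with the stabiliser computations as in Lemma~\ref{StabSize}) has the same dimension as $\End_{\sC_\chi}(\Delta(ws\cdot\lambda))$; non-splitness of the sequence---obtained from indecomposability of $\Theta_s(\Delta(w\cdot\lambda))$, which you deduce from the behaviour of $T_\lambda^\mu$ on the two simples $L_\chi(w\cdot\lambda)$ and $L_\chi(ws\cdot\lambda)$---then makes $q_{*}$ non-surjective and hence forces a nonzero kernel. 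The one external input you flag, that $T_\lambda^\mu$ annihilates exactly one of these two simples and sends the other to $L_\chi(w\cdot\mu)$, is precisely the fact from \cite[4.9]{Jan4} that the paper itself invokes, so both proofs rest on the same translation-of-simples input; yours trades the paper's hands-on graded-generation argument for the endomorphism-algebra computation of Proposition~\ref{EndStand}, which makes the non-vanishing fall out of a clean numerical comparison.
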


\begin{proof}
	We know that $T_\mu^\lambda(\Delta(w\cdot\mu))$ has a standard filtration consisting of one copy of $\Delta(w\cdot\lambda)$ and one copy of $\Delta(ws\cdot\lambda)$. Since we know from \cite[Lem 3.44]{BS} that $\Ext_{\sC_\chi}^{1}(\Delta(\gamma),\Delta(\kappa))\neq 0$ implies $\gamma\preceq\kappa$, there must exist an exact sequence $$0\to \Delta(w\cdot\lambda)\xrightarrow{\theta} T_\mu^\lambda(\Delta(w\cdot\mu))\to \Delta(ws\cdot\lambda)\to 0.$$ This then induces an exact sequence $$0\to \Hom_{\sC_\chi}(\Delta(ws\cdot\lambda),\Delta(w\cdot\lambda))\to\Hom_{\sC_\chi}(T_\mu^\lambda(\Delta(w\cdot\mu)),\Delta(w\cdot\lambda))\xrightarrow{\Psi} \Hom_{\sC_\chi}(\Delta(w\cdot\lambda),\Delta(w\cdot\lambda))$$ and therefore to prove our claim it suffices to show that 
	$\Psi$ has non-zero kernel.
	
	Note now that $$\dim\Hom_{\sC_\chi}(T_\mu^\lambda(\Delta(w\cdot\mu)),\overline{\Delta}(w\cdot\lambda))=\dim\Hom_{\sC_\chi}(\Delta(w\cdot\mu),T_\lambda^\mu(\overline{\Delta}(w\cdot\lambda)))=\dim\Hom_{\sC_\chi}(\Delta(w\cdot\mu),\overline{\Delta}(w\cdot\mu))\neq 0$$ using Proposition~\ref{TransZ} and the fact that we know $\Delta(w\cdot\mu)$ has a filtration by copies of $\overline{\Delta}(w\cdot\mu)$. Let $0\neq g\in\Hom_{\sC_\chi}(T_\mu^\lambda(\Delta(w\cdot\mu)),\overline{\Delta}(w\cdot\lambda))$, and consider $0\neq\widehat{g}\coloneqq \widehat{\iota}_{w\cdot\lambda}\circ g\in \Hom_{\sC_\chi}(T_\mu^\lambda(\Delta(w\cdot\mu)),\Delta(w\cdot\lambda))$. 
	
	{\bf Claim:} $\Psi(\widehat{g})=0$, i.e. $\widehat{g}\circ\theta=0$.
	
	{\bf Proof of Claim:} Since $\overline{\Delta}(w\cdot\lambda)$ has irreducible head $L_\chi(w\cdot\lambda)$ we either have $\im(\widehat{g})=\overline{\Delta}(w\cdot\lambda)$ or $\overline{\Delta}(w\cdot\lambda)/\im(\widehat{g})\twoheadrightarrow L_\chi(w\cdot\lambda)$. Noting that $$\Hom_{\sC_\chi}(T_\mu^\lambda(\Delta(w\cdot\mu)),L_\chi(w\cdot\lambda))=\Hom_{\sC_\chi}(\Delta(w\cdot\mu),T_\lambda^\mu(L_\chi(w\cdot\lambda)))=\Hom_{\sC_\chi}(\Delta(w\cdot\mu),0)=0$$ by \cite[4.9(b)]{Jan4} we can't have $\im(\widehat{g})=\overline{\Delta}(w\cdot\lambda)$, and so we must have $\overline{\Delta}(w\cdot\lambda)/\im(\widehat{g})\twoheadrightarrow L_\chi(w\cdot\lambda)$. In particular, $[\im(\widehat{g}): L_\chi(w\cdot\lambda)]=0$. 
	
	This therefore implies that the composition $$\Delta(w\cdot\lambda)\xrightarrow{\theta} T_\mu^\lambda(\Delta(w\cdot\mu))\xrightarrow{\widehat{g}} \overline{\Delta}(w\cdot\lambda)\twoheadrightarrow L_\chi(w\cdot\lambda)$$ is zero, and hence that the composition  $$\Delta(w\cdot\lambda)_{w\cdot\lambda+\bZ I}\xrightarrow{\theta_{w\cdot\lambda+\bZ I}} T_\mu^\lambda(\Delta(w\cdot\mu))_{w\cdot\lambda+\bZ I}\xrightarrow{\widehat{g}_{w\cdot\lambda+\bZ I}} \overline{\Delta}(w\cdot\lambda)_{w\cdot\lambda+\bZ I}\xrightarrow{\sim} L_\chi(w\cdot\lambda)_{w\cdot\lambda+\bZ I}$$ is zero (using \cite{Jan4} for the observation that the last map becomes an isomorphism on the $(w\cdot\lambda+\bZ I)$-graded part). In other words, we have that $(\widehat{g}\circ\theta)_{w\cdot\lambda+\bZ I}=0$. Since $\widehat{g}\circ\theta$ is a module homomorphism $Q_\chi^I(w\cdot\lambda)=\Delta(w\cdot\lambda)\to \overline{\Delta}(w\cdot\lambda)$ and $Q_\chi^I(w\cdot\lambda)$ is generated as a $U_\chi(\g)$-module by $Q_\chi^I(w\cdot\lambda)_{w\cdot\lambda+\bZ I}$, we must therefore have that $\widehat{g}\circ \theta=0$, i.e. that $\Psi(\widehat{g})=0$ as required.
	\end{proof}

The proof of the following proposition should be compared with \cite[II.E.11]{RAGS}.
\begin{prop}\label{TransTilt}
	Let $s,\lambda$ and $\mu$ be as in Notation~\ref{NOT}, and let $w\in W^{I,\lambda}$.
	\begin{enumerate}
		\item If $wsw^{-1}\notin W_{I,p}$ and $ws\cdot\lambda<w\cdot\lambda$ then $$T_\mu^\lambda(T_\chi(w\cdot\mu))=T_\chi(w\cdot\lambda),$$  $$T_\lambda^\mu(T_\chi(w\cdot\lambda))=T_\chi(w\cdot\mu)\oplus T_\chi(w\cdot\mu)$$ and $$T_\lambda^\mu(T_\chi(ws\cdot\lambda))=T_\chi(w\cdot\mu)\oplus T_{\prec w\cdot\mu}$$ where $T_{\prec w\cdot\mu}$ is a tilting module which decomposes as a direct sum of indecomposable tilting modules indexed by $\kappa\in \Lambda_I$ with $\kappa\prec w\cdot \mu$.
		\item If $wsw^{-1}\in W_{I,p}$ then $$T_\lambda^\mu(T_\chi(
		w\cdot\lambda))=T_\chi(w\cdot\mu)\oplus T_\chi(w\cdot\mu)$$ and $$T_\mu^\lambda(T_\chi(w\cdot\mu))=T_\chi(w\cdot\lambda).$$
	\end{enumerate}
\end{prop}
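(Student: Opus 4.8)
The plan is to deduce everything from four ingredients: a dimension count for $T_\lambda^\mu$ and $T_\mu^\lambda$ on $Z$-filtered modules; the known effect of these functors on costandard objects (the costandard analogues of Propositions~\ref{TransFunct} and \ref{TransFunct2}); the characterisation of indecomposable tilting modules by their costandard multiplicities; and Proposition~\ref{nzhom}. Recall that $T_\lambda^\mu$ and $T_\mu^\lambda$ are exact and biadjoint and that, by Corollaries~\ref{TransTilting} and \ref{TransCostand}, they preserve tilting modules and costandard filtrations; note also that in case~(1) we have $ws\cdot\lambda\in C_I$ by Proposition~\ref{ReflI}, so $T_\chi(ws\cdot\lambda)$ is defined there. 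The first ingredient is quick: if $X\in\sC_\chi(\lambda)$ has a $Z$-filtration then $\dim T_\lambda^\mu X=\dim X$, and if $Y\in\sC_\chi(\mu)$ has a $Z$-filtration then $\dim T_\mu^\lambda Y=2\dim Y$, both being immediate from Corollary~\ref{TransZ} (which applies since $\Sta_{W_p}(\lambda)=\{1\}$) and the fact that every baby Verma module for $U_\chi(\g)$ has dimension $p^{|\Phi^{+}|}$. Since tilting modules are $Z$-filtered, $\dim T_\lambda^\mu T_\chi(w\cdot\lambda)=\dim T_\chi(w\cdot\lambda)$ and $\dim T_\mu^\lambda T_\chi(w\cdot\mu)=2\dim T_\chi(w\cdot\mu)$.

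Next I read off costandard multiplicities. Each $T_\chi(\kappa)$ has a costandard filtration in which $\nabla(\kappa)$ occurs once, as the top quotient, and all other factors are $\nabla(\nu)$ with $\nu\prec\kappa$. Applying $T_\lambda^\mu$, resp.\ $T_\mu^\lambda$, to such a filtration of $T_\chi(w\cdot\lambda)$, $T_\chi(ws\cdot\lambda)$, resp.\ $T_\chi(w\cdot\mu)$, and using the costandard translation formulas, one sees that a factor $\nabla(v\cdot\lambda)$ of $T_\chi(w\cdot\lambda)$ produces copies of $\nabla(v\cdot\mu)$, with $v\cdot\mu=w\cdot\mu$ forcing $v\in\{w,ws\}$, while for every other factor $v\cdot\mu\prec w\cdot\mu$ by the standard compatibility of $\uparrow$ with translation onto the wall $\mu$ (cf.\ \cite[II.6]{RAGS}). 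Moreover $ws\cdot\lambda\in C_I$ precisely when $wsw^{-1}\notin W_{I,p}$, so in case~(2) the only factor contributing $\nabla(w\cdot\mu)$ is $\nabla(w\cdot\lambda)$, and $T_\lambda^\mu\nabla(w\cdot\lambda)=\nabla(w\cdot\mu)\oplus\nabla(w\cdot\mu)$; while in case~(1) the weight $ws\cdot\lambda$ lies in $\overline{C}_I$ and does occur in the costandard filtration of $T_\chi(w\cdot\lambda)$, say with multiplicity $a:=(T_\chi(w\cdot\lambda):\nabla(ws\cdot\lambda))$, and since $(ws)s(ws)^{-1}=wsw^{-1}\notin W_{I,p}$ we have $T_\lambda^\mu\nabla(ws\cdot\lambda)=\nabla(w\cdot\mu)$. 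The upshot is: in case~(2), $T_\lambda^\mu T_\chi(w\cdot\lambda)$ is tilting with $\nabla(w\cdot\mu)$ of multiplicity $2$ and all other factors $\prec w\cdot\mu$, and $T_\mu^\lambda T_\chi(w\cdot\mu)$ has $\nabla(w\cdot\lambda)$ of multiplicity $1$ and all others $\prec w\cdot\lambda$; in case~(1), $T_\lambda^\mu T_\chi(w\cdot\lambda)$ has $\nabla(w\cdot\mu)$ of multiplicity $1+a$, $T_\lambda^\mu T_\chi(ws\cdot\lambda)$ has $\nabla(w\cdot\mu)$ of multiplicity $1$, and $T_\mu^\lambda T_\chi(w\cdot\mu)$ has $\nabla(w\cdot\lambda)$ of multiplicity $1$, always with the remaining factors strictly below.

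Now I assemble. A tilting module $N$ whose costandard filtration has $\nabla(\kappa)$ with multiplicity $m$ and all other factors $\prec\kappa$ decomposes, by induction along $\uparrow$ using that each indecomposable summand $T_\chi(\nu)$ of $N$ satisfies $(T_\chi(\nu):\nabla(\nu))=1$ and $(T_\chi(\nu):\nabla(\gamma))\neq 0\Rightarrow\gamma\preceq\nu$, as $T_\chi(\kappa)^{\oplus m}$ together with a sum of $T_\chi(\nu)$'s with $\nu\prec\kappa$. This gives at once $T_\lambda^\mu T_\chi(ws\cdot\lambda)=T_\chi(w\cdot\mu)\oplus T_{\prec w\cdot\mu}$ as in the statement, and presents $T_\lambda^\mu T_\chi(w\cdot\lambda)\cong T_\chi(w\cdot\mu)^{\oplus(1+a)}\oplus T''$ (with $1+a$ read as $2$ in case~(2)) and $T_\mu^\lambda T_\chi(w\cdot\mu)\cong T_\chi(w\cdot\lambda)\oplus T'$, where $T''$ and $T'$ are tilting modules supported strictly below $w\cdot\mu$, resp.\ $w\cdot\lambda$. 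Comparing dimensions via the first paragraph, $\dim T_\chi(w\cdot\lambda)=(1+a)\dim T_\chi(w\cdot\mu)+\dim T''$ and $2\dim T_\chi(w\cdot\mu)=\dim T_\chi(w\cdot\lambda)+\dim T'$.

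In case~(2) the multiplicity $1+a$ is already $2$, so these two identities force $\dim T_\chi(w\cdot\lambda)=2\dim T_\chi(w\cdot\mu)$ and hence $T''=T'=0$. In case~(1) the extra input is Proposition~\ref{nzhom}: it gives a nonzero map $\Delta(ws\cdot\lambda)\to\Delta(w\cdot\lambda)$, which composed with the fixed inclusion $\Delta(w\cdot\lambda)\hookrightarrow T_\chi(w\cdot\lambda)$ shows $\Hom_{\sC_\chi}(\Delta(ws\cdot\lambda),T_\chi(w\cdot\lambda))\neq 0$, hence $(T_\chi(w\cdot\lambda):\overline{\nabla}(ws\cdot\lambda))\neq 0$ (Theorem~3.14 in \cite{BS}), so $a\geq 1$. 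Feeding $a\geq 1$ into the two dimension identities forces $\dim T_\chi(w\cdot\lambda)=2\dim T_\chi(w\cdot\mu)$, whence $a=1$ and $T''=T'=0$. This yields $T_\lambda^\mu T_\chi(w\cdot\lambda)=T_\chi(w\cdot\mu)\oplus T_\chi(w\cdot\mu)$ and $T_\mu^\lambda T_\chi(w\cdot\mu)=T_\chi(w\cdot\lambda)$ in both cases, completing the argument. The only genuinely non-formal step is Proposition~\ref{nzhom}, which supplies the lower bound $a\geq 1$ that the dimension count cannot see; the other delicate point is the compatibility of $\uparrow$ with translation to the wall, needed to ensure the error summands $T''$, $T'$, $T_{\prec w\cdot\mu}$ really are indexed by weights strictly below $w\cdot\mu$, resp.\ $w\cdot\lambda$ — everything else is routine bookkeeping with $Z$-, $\Delta$- and $\nabla$-filtrations.
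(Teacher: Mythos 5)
Your proof is correct, and it takes a genuinely different route from the paper's at the one structural step where something global is needed. The paper's proof runs in five steps: it first shows $T_\lambda^\mu T_\mu^\lambda(T_\chi(w\cdot\mu))=T_\chi(w\cdot\mu)^{\oplus 2}$ by observing that $[T_\lambda^\mu][T_\mu^\lambda]$ acts as multiplication by $2$ on every class $[Q_\chi^I(u\cdot\mu)]$ and invoking linear independence of the tilting classes in the Grothendieck group; it then identifies $T_\chi(w\cdot\lambda)$ as a summand of $T_\mu^\lambda(T_\chi(w\cdot\mu))$ and computes $(T_\lambda^\mu T_\chi(w\cdot\lambda):\nabla(w\cdot\mu))=2$ using Proposition~\ref{nzhom}; and it closes the loop by showing that the complementary summand $T$ satisfies $T_\lambda^\mu(T)=0$ and hence $T=0$ (since $T_\lambda^\mu$ kills no nonzero module with a proper standard filtration). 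You replace both the Grothendieck-group identity and the ``$T_\lambda^\mu(T)=0\Rightarrow T=0$'' step by the elementary observation that every baby Verma module has dimension $p^{|\Phi^+|}$, so $T_\lambda^\mu$ preserves and $T_\mu^\lambda$ doubles the dimension of any $Z$-filtered object; the two resulting linear identities, summed, force all error summands to vanish once $a=(T_\chi(w\cdot\lambda):\nabla(ws\cdot\lambda))\geq 1$. The shared non-formal inputs are the same in both arguments: Proposition~\ref{nzhom} for the lower bound on $a$, and the compatibility of $\uparrow$ with translation to the wall (\cite[II.6.5]{RAGS}) to control which costandard factors can contribute to $\nabla(w\cdot\mu)$, resp.\ $\nabla(w\cdot\lambda)$. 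Your version is arguably leaner and yields $a=1$ as a byproduct rather than deducing $a\leq 1$ separately. Two harmless imprecisions: for the error summands of $T_\mu^\lambda(T_\chi(w\cdot\mu))$ the available bound in the off-the-wall direction is ``not $\succeq w\cdot\lambda$'' rather than ``$\prec w\cdot\lambda$'' (incomparable indices are not excluded a priori, but this does not affect the dimension count, and you prove the error term is zero anyway); and the step from $\Hom_{\sC_\chi}(\Delta(ws\cdot\lambda),T_\chi(w\cdot\lambda))\neq 0$ to $a\geq 1$ passes through $(T_\chi(w\cdot\lambda):\overline{\nabla}(ws\cdot\lambda))\neq 0$ and then uses that this multiplicity equals $N_I(ws\cdot\lambda)\cdot a$, which is worth saying explicitly.
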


\begin{proof}
	Note that, in (1), $ws\cdot\lambda\in C_I\cap X(T)$ by Proposition~\ref{ReflI}. With that in mind, we prove both statements at the same time. We do so in 5 steps.
	
	{\bf Step 1:} $T_\lambda^\mu T_\mu^\lambda(T_\chi(w\cdot\mu))=T_\chi(w\cdot\mu)\oplus T_\chi(w\cdot\mu)$.

	{\bf Proof of Step 1:} Since translation functors are exact, they induce linear maps on the level of the Grothendieck groups (which we write as $[T_\lambda^\mu]$, $[T_\mu^\lambda]$). By Propositions~\ref{TransFunct} and \ref{TransFunct2} we have $$[T_\lambda^\mu][T_\mu^\lambda]([Q_\chi^I(u\cdot\mu)])=2[Q_\chi^I(u\cdot\mu)]$$ for all $u\in W^{I,\mu}$. We know (see the end of Subsection~\ref{ss3.1}) that we can write $$[T_\chi(w\cdot\mu)]=[Q_\chi^I(w\cdot\mu)]+\sum_{u\cdot\mu\prec w\cdot\mu} m_{u\cdot\mu} [Q_\chi^I(u\cdot\mu)]$$ for some $m_{u\cdot\mu}\in\bZ_{\geq 0}$, where the sum is over all $u\in W^{I,\mu}$ with $u\cdot\mu\prec w\cdot\mu$ (as in the equation above, we usually abbreviate this to $u\cdot\mu\prec w\cdot\mu$ in subscripts to prevent clutter). We therefore have $$[T_\lambda^\mu][T_\mu^\lambda]([T_\chi(w\cdot\mu)])=2[Q_\chi^I(w\cdot\mu)]+\sum_{ u\cdot\mu\prec w\cdot\mu} 2m_{u\cdot\mu} [Q_\chi^I(u\cdot\mu)]=2[T_\chi(w\cdot\mu)].$$ On the other hand, since translation functors send tilting modules to tilting modules (Corollary~\ref{TransTilting}), there exist some $n_{u\cdot\mu}\in\bZ_{\geq 0}$ such that $$T_\lambda^\mu T_\mu^\lambda(T_{\chi}(w\cdot\mu))=\bigoplus_{u\in W^{I,\mu}} T_\chi(u\cdot\mu)^{n_{u\cdot\mu}}$$ and thus $$[T_\lambda^\mu][T_\mu^\lambda]([T_\chi(w\cdot\mu)])=\sum_{u\in W^{I,\mu}} n_{u\cdot \mu} [T_\chi(u\cdot\mu)].$$ Putting this all together, we get
	$$2[T_\chi(w\cdot\mu)]=\sum_{u\in W^{I,\mu}} n_{u\cdot \mu} [T_\chi(u\cdot\mu)]$$ and therefore Proposition~\ref{GrothMuBas} implies that 
	$$T_\lambda^\mu T_\mu^\lambda(T_\chi(w\cdot\mu))=T_\chi(w\cdot\mu)\oplus T_\chi(w\cdot\mu).$$

	{\bf Step 2:} $T_\chi(w\cdot\lambda)$ is a direct summand of $T_\mu^\lambda(T_\chi(w\cdot\mu))$.

	{\bf Proof of Step 2:} Using what we know about how each $[T_\chi(\kappa)]$ can be written in terms of $[\nabla(\gamma)]$ in the Grothendieck group of $\sC_\chi$, and the fact that $T_\mu^\lambda(T_\chi(w\cdot\mu))$ can be written as a direct sum of indecomposable tilting modules, it is straightforward to check that Step 2 follows once we show that $$(T_\mu^\lambda(T_\chi(w\cdot\mu)):\nabla(w\cdot\lambda))=1,\quad \mbox{and}\quad (T_\mu^\lambda(T_\chi(w\cdot\mu)):\nabla(u\cdot\lambda))\neq 0 \implies w\cdot\lambda\nprec u\cdot\lambda.$$

	 Let $u\in W^{I,\lambda}$. We then have
	\begin{equation*}
		\begin{split}
			(T_\mu^\lambda(T_\chi(w\cdot\mu)):\nabla(u\cdot\lambda)) & = \dim\Hom_{\sC_\chi}(\overline{\Delta}(u\cdot\lambda),T_\mu^\lambda(T_\chi(w\cdot\mu))) \\
			& = \dim\Hom_{\sC_\chi}(T_\lambda^\mu(\overline{\Delta}(u\cdot\lambda)),T_\chi(w\cdot\mu)) \\
			& = \dim\Hom_{\sC_\chi}(\overline{\Delta}(u\cdot\mu),T_\chi(w\cdot\mu)) \\ 
			& = (T_\chi(w\cdot\mu):\nabla(u\cdot\mu)).
		\end{split}
	\end{equation*}
	From this, it follows immediately that  $$(T_\mu^\lambda(T_\chi(w\cdot\mu)):\nabla(w\cdot\lambda))=1.$$ 
	Suppose that $w\cdot\lambda\prec u\cdot\lambda$ but $(T_\mu^\lambda(T_\chi(w\cdot\mu)):\nabla(w\cdot\lambda))\neq 0$. By \cite[\S II.6.5]{RAGS}, $$w\cdot\lambda\prec u\cdot\lambda \implies w\cdot\overline{C}\prec u\cdot\overline{C}\implies w\cdot\mu\preceq u\cdot\mu,$$ (where the partial order on alcoves is as given in \cite[ II.6.5]{RAGS}). On the other hand, the above calculation shows that $$(T_\mu^\lambda(T_\chi(w\cdot\mu)):\nabla(w\cdot\lambda))\neq 0\implies u\cdot\mu\preceq w\cdot\mu.$$ We thus get $u\cdot\mu=w\cdot\mu$.
	
	If $wsw^{-1}\in W_{I,p}$ this forces $u=w$, while if $wsw^{-1}\notin W_{I,p}$ we have either $u=w$ or $u=ws$. Since, by assumption, $ws\cdot\lambda < w\cdot\lambda\prec u\cdot\lambda$ the latter cannot happen. We must therefore have $u=w$. But this also cannot happen, since $w\cdot\lambda\prec u\cdot\lambda$. This proves Step 2.

	{\bf Step 3:} $T_\lambda^\mu(T_\chi(w\cdot\lambda))=T_\chi(w\cdot\mu)\oplus T_\chi(w\cdot\mu)$.

	{\bf Proof of Step 3:} By Step 2, $T_\chi(w\cdot\lambda)$ is a direct summand of $T_\mu^\lambda(T_\chi(w\cdot\mu))$ and hence $T_\lambda^\mu (T_\chi(w\cdot\lambda))$ is a direct summand of $T_\lambda^\mu(T_\mu^\lambda(T_\chi(w\cdot\mu)))=T_\chi(w\cdot\mu)\oplus T_\chi(w\cdot\mu)$. Therefore, either $T_\lambda^\mu (T_\chi(w\cdot\lambda))=T_\chi(w\cdot\mu)$ or $T_\lambda^\mu (T_\chi(w\cdot\lambda))=T_\chi(w\cdot\mu)\oplus T_\chi(w\cdot\mu)$. To determine which, we simply need to compute $(T_\lambda^\mu(T_\chi(w\cdot\lambda)):\nabla(w\cdot\mu))$. This gives us:
	\begin{equation*}
	\begin{split}
		(T_\lambda^\mu(T_\chi(w\cdot\lambda)):\nabla(w\cdot\mu)) & = \dim\Hom_{\sC_\chi}(\overline{\Delta}(w\cdot\mu),T_\lambda^\mu(T_\chi(w\cdot\lambda))) \\
		& = \dim\Hom_{\sC_\chi}(T_\mu^\lambda(\overline{\Delta}(w\cdot\mu)),T_\chi(w\cdot\lambda)) \\
		& = \dim\Hom_{\sC_\chi}(\overline{\Delta}(w\cdot\lambda),T_\chi(w\cdot\lambda))+\dim\Hom_{\sC_\chi}(\overline{\Delta}(ws\cdot\lambda),T_\chi(w\cdot\lambda)) \\ & = (T_\chi(w\cdot\lambda):\nabla(w\cdot\lambda))+(T_\chi(w\cdot\lambda):\nabla(ws\cdot\lambda)).
	\end{split}
\end{equation*}
 For the third equality, we have used the fact that $T_\mu^\lambda(\overline{\Delta}(w\cdot\mu))$ lies in an exact sequence $$0\to M\to T_\mu^\lambda(\overline{\Delta}(w\cdot\mu))\to N \to 0$$ where one of $M$ or $N$ is $\overline{\Delta}(w\cdot\lambda)$ and the other is $\overline{\Delta}(ws\cdot\lambda)$, and thus we get the equality from the exact sequence $$0\to\Hom_{\sC_\chi}(N, T_\chi(w\cdot\lambda))\to\Hom_{\sC_\chi}(T_\mu^\lambda(\overline{\Delta}(w\cdot\mu)), T_\chi(w\cdot\lambda))\to\Hom_{\sC_\chi}(M, T_\chi(w\cdot\lambda))\to\Ext^1_{\sC_\chi}(N, T_\chi(w\cdot\lambda))=0.$$ Here we have used the fact that there are no non-split extensions of a module with a costandard filtration by a module with a proper standard filtration, as we observed earlier. 
 
 If $wsw^{-1}\in W_{I,p}$ then we have $\nabla(ws\cdot\lambda)\cong \nabla(w\cdot\lambda)$ and thus $(T_\chi(w\cdot\lambda):\nabla(w\cdot\lambda))+(T_\chi(w\cdot\lambda):\nabla(ws\cdot\lambda))=2$, which implies that $T_\lambda^\mu (T_\chi(w\cdot\lambda))=T_\chi(w\cdot\mu)\oplus T_\chi(w\cdot\mu)$ as required. If $wsw^{-1}\notin W_{I,p}$ then we have $(T_\chi(w\cdot\lambda):\nabla(w\cdot\lambda))=1$ and we need to compute $(T_\chi(w\cdot\lambda):\nabla(ws\cdot\lambda))$.

What we have already seen implies that $(T_\chi(w\cdot\lambda):\nabla(ws\cdot\lambda))\leq 1$ and thus to conclude Step 3 it suffices to show that it is non-zero. This will follow if we can show that $\dim\Hom_{\sC_\chi}(\overline{\Delta}(ws\cdot\lambda),T_\chi(w\cdot\lambda))\neq 0$, which will follow if $\dim\Hom_{\sC_\chi}(\overline{\Delta}(ws\cdot\lambda),\Delta(w\cdot\lambda))\neq 0$ (using the inclusion $\Delta(w\cdot\lambda)\hookrightarrow T_\chi(w\cdot\lambda)$). This follows easily from Proposition~\ref{nzhom}. 

We thus conclude that $(T_\lambda^\mu(T_\chi(w\cdot\lambda)):\nabla(w\cdot\mu))=(T_\chi(w\cdot\lambda):\nabla(w\cdot\lambda))+(T_\chi(w\cdot\lambda):\nabla(ws\cdot\lambda))=2$ and hence $T_\lambda^\mu (T_\chi(w\cdot\lambda))=T_\chi(w\cdot\mu)\oplus T_\chi(w\cdot\mu)$.

{\bf Step 4:}  $T_\mu^\lambda(T_\chi(w\cdot\mu))=T_\chi(w\cdot\lambda)$.

{\bf Proof of Step 4:} By Step 2, we know that there exists a tilting module $T\in\sC_\chi$ such that $T_\mu^\lambda(T_\chi(w\cdot\mu))=T_\chi(w\cdot\lambda)\oplus T$. Furthermore, by Step 3 we know that $$T_\chi(w\cdot\mu)\oplus T_\chi(w\cdot\mu)=T_\lambda^\mu(T_\mu^\lambda(T_\chi(w\cdot\mu)))=T_\lambda^\mu(T_\chi(w\cdot\lambda))\oplus T_\lambda^\mu(T)=T_\chi(w\cdot\mu)\oplus T_\chi(w\cdot\mu)\oplus T_\lambda^\mu(T).$$ Therefore, $T_\lambda^\mu(T)=0$. Since $T$ has a standard filtration it has a proper standard filtration and thus, unless $T=0$, there exists $u\in W^{I,\mu}$ such that $(T:\overline{\Delta}(w\cdot\mu))\neq 0$. In this case, $T_\mu^\lambda(\overline{\Delta}(w\cdot\mu))$ appears in a filtration of $T_\mu^\lambda(T)=0$. Since $T_\mu^\lambda(\overline{\Delta}(w\cdot\mu))\neq 0$ this is impossible, and hence we must have $T=0$.

{\bf Step 5:} $T_\lambda^\mu(T_\chi(ws\cdot\lambda))=T_\chi(w\cdot\mu)\oplus T_{\prec w\cdot\mu}$, where $T_{\prec w\cdot\mu}$ is a tilting module which decomposes as a direct sum of indecomposable tilting modules indexed by $\kappa\in X(T)$ with $\kappa\prec w\cdot \mu$.

{\bf Proof of Step 5:}

By similar arguments to those we have already seen, it is enough to show that $(T_\lambda^\mu(T_\chi(ws\cdot\lambda)):\nabla(w\cdot\mu))=1$ and that $u\cdot\mu\prec  w\cdot\mu$ whenever $(T_\lambda^\mu(T_\chi(ws\cdot\lambda)):\nabla(u\cdot\mu))\neq 0$ and $\nabla(u\cdot\mu)\ncong \nabla(w\cdot\mu)$. We can compute
\begin{equation*}
	\begin{split}
		(T_\lambda^\mu(T_\chi(ws\cdot\lambda)):\nabla(u\cdot\mu)) & = \dim\Hom_{\sC_\chi}(\overline{\Delta}(u\cdot\mu),T_\lambda^\mu(T_\chi(ws\cdot\lambda)) \\ & 
		= \dim\Hom_{\sC_\chi}(T_\mu^\lambda(\overline{\Delta}(u\cdot\mu)),T_\chi(ws\cdot\lambda)) \\ & = \dim\Hom_{\sC_\chi}(\overline{\Delta}(u\cdot\lambda),T_\chi(ws\cdot\lambda)) + \dim\Hom_{\sC_\chi}(\overline{\Delta}(us\cdot\lambda),T_\chi(ws\cdot\lambda)) \\ & = 
		(T_\chi(ws\cdot\lambda):\nabla(u\cdot\lambda))+ (T_\chi(ws\cdot\lambda):\nabla(us\cdot\lambda)).
	\end{split}
\end{equation*}

If $u\cdot\mu=w\cdot\mu$ then we either have $u=w$ or $u=ws$. In either case we have $(T_\lambda^\mu(T_\chi(ws\cdot\lambda):\nabla(u\cdot\mu)))=(T_\chi(ws\cdot\lambda):\nabla(w\cdot\lambda))+ (T_\chi(ws\cdot\lambda):\nabla(ws\cdot\lambda)) =1$ since $ws\cdot\lambda < w\cdot\lambda$. 

Furthermore, if $(T_\chi(ws\cdot\lambda):\nabla(u\cdot\lambda))+ (T_\chi(ws\cdot\lambda):\nabla(us\cdot\lambda))\neq 0$ then either $u\cdot\lambda\preceq ws\cdot\lambda $ or $us\cdot\lambda \preceq ws\cdot\lambda$ (or both). These then imply that either $u\cdot \overline{C}\preceq ws\cdot\overline{C} $ or $us\cdot\overline{C} \preceq ws\cdot\overline{C}$ (or both), and thus that either $u\cdot \mu\preceq ws\cdot\mu $ or $us\cdot\mu \preceq ws\cdot\mu$. Both of these inequalities are precisely $u\cdot \mu \preceq w\cdot \mu$.

\end{proof}

\begin{cor}\label{TransTilt2}
	Let $s,\lambda$ and $\mu$ be as in Notation~\ref{NOT}, and let $w\in W^{I,\lambda}$. If $wsw^{-1}\notin W_{I,p}$ and $w\cdot\lambda<ws\cdot\lambda$ then $$T_\mu^\lambda(T_\chi(w\cdot\mu))=T_\chi(ws\cdot\lambda),$$  $$T_\lambda^\mu(T_\chi(ws\cdot\lambda))=T_\chi(w\cdot\mu)\oplus T_\chi(w\cdot\mu)$$ and $$T_\lambda^\mu(T_\chi(w\cdot\lambda))=T_\chi(w\cdot\mu)\oplus T_{\prec w\cdot\mu},$$ where $T_{\prec w\cdot\mu}$ is a tilting module which decomposes as a direct sum of indecomposable tilting modules indexed by $\kappa\in \Lambda_I$ with $\kappa\prec w\cdot \mu$.
\end{cor}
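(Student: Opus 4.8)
The plan is to deduce Corollary~\ref{TransTilt2} from Proposition~\ref{TransTilt} by a symmetry argument: the hypothesis here is that $w\cdot\lambda<ws\cdot\lambda$, whereas Proposition~\ref{TransTilt}(1) assumed $ws\cdot\lambda<w\cdot\lambda$, so the idea is to apply Proposition~\ref{TransTilt} not to $w$ but to $ws$. First I would set $v\coloneqq ws$. Since $wsw^{-1}\notin W_{I,p}$ we have $vsv^{-1}=wsw^{-1}\notin W_{I,p}$ as well (conjugation by $s$ fixes the relevant property), and the hypothesis $w\cdot\lambda<ws\cdot\lambda$ rewrites as $vs\cdot\lambda<v\cdot\lambda$, so the hypotheses of Proposition~\ref{TransTilt}(1) hold for $v$ in place of $w$ — provided $v=ws\in W^{I,\lambda}$. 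So the first genuine step is to check this membership: by Proposition~\ref{ReflI}, since $w\in W^{I,\lambda}$ and $wsw^{-1}\notin W_{I,p}$, we have $ws\cdot\lambda\in C_I\cap X(T)\subseteq\overline{C}_I\cap X(T)$, which is exactly the condition $ws\in W^{I,\lambda}$. (Note $v\notin W^{I,\mu}$, but $W^{I,\mu}$ does not appear in the statement of Proposition~\ref{TransTilt} beyond the hypothesis phrasing, so this is harmless; I should also record that $vs\cdot\lambda=ws^2\cdot\lambda=w\cdot\lambda$, and that $v\cdot\mu=ws\cdot\mu=w\cdot\mu$ because $s\in\Sta_{W_p}(\mu)$, hence $wsw^{-1}\in\Sta_{W_p}(w\cdot\mu)$ — wait, more directly, $ws\cdot\mu = w\cdot(s\cdot\mu)=w\cdot\mu$ since $s\cdot\mu=\mu$.)

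Next I would simply transcribe the three conclusions of Proposition~\ref{TransTilt}(1) applied to $v=ws$. The first conclusion becomes $T_\mu^\lambda(T_\chi(v\cdot\mu))=T_\chi(v\cdot\lambda)$, i.e.\ $T_\mu^\lambda(T_\chi(w\cdot\mu))=T_\chi(ws\cdot\lambda)$, using $v\cdot\mu=w\cdot\mu$ and $v\cdot\lambda=ws\cdot\lambda$. The second becomes $T_\lambda^\mu(T_\chi(v\cdot\lambda))=T_\chi(v\cdot\mu)^{\oplus 2}$, i.e.\ $T_\lambda^\mu(T_\chi(ws\cdot\lambda))=T_\chi(w\cdot\mu)\oplus T_\chi(w\cdot\mu)$. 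The third becomes $T_\lambda^\mu(T_\chi(vs\cdot\lambda))=T_\chi(v\cdot\mu)\oplus T_{\prec v\cdot\mu}$, and since $vs\cdot\lambda=w\cdot\lambda$ and $v\cdot\mu=w\cdot\mu$ this reads $T_\lambda^\mu(T_\chi(w\cdot\lambda))=T_\chi(w\cdot\mu)\oplus T_{\prec w\cdot\mu}$ with $T_{\prec w\cdot\mu}$ a direct sum of indecomposable tiltings indexed by $\kappa\in\Lambda_I$ with $\kappa\prec w\cdot\mu$. These are exactly the three asserted identities, so the proof concludes.

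The only point requiring any care — and the closest thing to an obstacle — is the bookkeeping around which element of $W_p$ plays the role of "$w$" in Proposition~\ref{TransTilt}, and in particular verifying $ws\in W^{I,\lambda}$ via Proposition~\ref{ReflI}; everything else is a mechanical substitution using $s^2=1$, $s\cdot\mu=\mu$, and the conjugation-invariance of the condition $(-)\notin W_{I,p}$ under $x\mapsto xs x^{-1}$ composed with the swap $w\leftrightarrow ws$. Thus the proof is short:

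\begin{proof}
	Set $v=ws\in W_p$. Since $wsw^{-1}\notin W_{I,p}$, Proposition~\ref{ReflI} gives $ws\cdot\lambda\in C_I\cap X(T)$, so in particular $v\cdot\lambda=ws\cdot\lambda\in\overline{C}_I\cap X(T)$ and hence $v\in W^{I,\lambda}$. Moreover $vsv^{-1}=wsw^{-1}\notin W_{I,p}$, and $vs\cdot\lambda=w\cdot\lambda<ws\cdot\lambda=v\cdot\lambda$ by hypothesis, so the hypotheses of Proposition~\ref{TransTilt}(1) are satisfied with $v$ in place of $w$. Note also that $v\cdot\mu=ws\cdot\mu=w\cdot\mu$ because $s\cdot\mu=\mu$, and $vs\cdot\lambda=w\cdot\lambda$ because $s^2=1$. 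Applying Proposition~\ref{TransTilt}(1) to $v$ therefore yields $$T_\mu^\lambda(T_\chi(w\cdot\mu))=T_\mu^\lambda(T_\chi(v\cdot\mu))=T_\chi(v\cdot\lambda)=T_\chi(ws\cdot\lambda),$$ $$T_\lambda^\mu(T_\chi(ws\cdot\lambda))=T_\lambda^\mu(T_\chi(v\cdot\lambda))=T_\chi(v\cdot\mu)\oplus T_\chi(v\cdot\mu)=T_\chi(w\cdot\mu)\oplus T_\chi(w\cdot\mu),$$ and $$T_\lambda^\mu(T_\chi(w\cdot\lambda))=T_\lambda^\mu(T_\chi(vs\cdot\lambda))=T_\chi(v\cdot\mu)\oplus T_{\prec v\cdot\mu}=T_\chi(w\cdot\mu)\oplus T_{\prec w\cdot\mu},$$ where $T_{\prec w\cdot\mu}$ decomposes as a direct sum of indecomposable tilting modules indexed by $\kappa\in\Lambda_I$ with $\kappa\prec w\cdot\mu$. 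This is precisely the assertion.
\end{proof}
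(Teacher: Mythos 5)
Your proof is correct and is precisely the (implicit) argument the paper intends: Corollary~\ref{TransTilt2} is obtained from Proposition~\ref{TransTilt}(1) by substituting $v=ws$, with the only non-trivial check being $ws\in W^{I,\lambda}$, which you rightly supply via Proposition~\ref{ReflI}. The one inaccuracy is the parenthetical claim that $v\notin W^{I,\mu}$ --- in fact $v=ws$ \emph{does} lie in $W^{I,\mu}$ (since $vs\cdot\lambda=w\cdot\lambda<ws\cdot\lambda=v\cdot\lambda$ and $v\cdot\mu=w\cdot\mu\in\overline{C}_I\cap X(T)$), while it is $w$ that fails the membership condition --- but, as you note, this plays no role in the hypotheses of Proposition~\ref{TransTilt}, so the slip is harmless.
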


\begin{cor}
	Let $s,\lambda$ and $\mu$ be as in Notation~\ref{NOT}, and let $w\in W^{I,\lambda}$.
	\begin{enumerate}
		\item If $wsw^{-1}\notin W_{I,p}$ and $ws\cdot\lambda<w\cdot\lambda$ then $$\Theta_s(T_\chi(w\cdot\lambda))=T_\chi(w\cdot\lambda)\oplus T_\chi(w\cdot\lambda)$$ and $$\Theta_s(T_\chi(ws\cdot\lambda))=T_\chi(w\cdot\lambda)\oplus T_\mu^\lambda(T_{\prec w\cdot\mu})$$  where $T_{\prec w\cdot\mu}$ is a tilting module which decomposes as a direct sum of indecomposable tilting modules indexed by $\kappa\in \Lambda_I$ with $\kappa\prec w\cdot \mu$.
		\item If $wsw^{-1}\in W_{I,p}$ then $$\Theta_s(T_\chi(
		w\cdot\lambda))=T_\chi(w\cdot\lambda)\oplus T_\chi(w\cdot\lambda).$$
	\end{enumerate}
\end{cor}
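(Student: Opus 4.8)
The plan is to deduce the statement directly from Proposition~\ref{TransTilt} by composing translation functors, using that $\Theta_s=T_\mu^\lambda T_\lambda^\mu$ by definition and that translation functors, being exact (hence additive), commute with finite direct sums. No new ideas are needed beyond the work already done in Proposition~\ref{TransTilt}; note in particular that since $ws\cdot\lambda<w\cdot\lambda$ in part (1), we are in the situation of Proposition~\ref{TransTilt}(1) and not Corollary~\ref{TransTilt2}.

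First I would treat part (2). If $wsw^{-1}\in W_{I,p}$, then Proposition~\ref{TransTilt}(2) gives $T_\lambda^\mu(T_\chi(w\cdot\lambda))=T_\chi(w\cdot\mu)\oplus T_\chi(w\cdot\mu)$; applying $T_\mu^\lambda$ and invoking $T_\mu^\lambda(T_\chi(w\cdot\mu))=T_\chi(w\cdot\lambda)$ (again Proposition~\ref{TransTilt}(2)) together with additivity yields
$$\Theta_s(T_\chi(w\cdot\lambda))=T_\mu^\lambda(T_\chi(w\cdot\mu))\oplus T_\mu^\lambda(T_\chi(w\cdot\mu))=T_\chi(w\cdot\lambda)\oplus T_\chi(w\cdot\lambda),$$
which is (2).

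Next I would treat part (1), assuming $wsw^{-1}\notin W_{I,p}$ and $ws\cdot\lambda<w\cdot\lambda$. By Proposition~\ref{TransTilt}(1) we have $T_\lambda^\mu(T_\chi(w\cdot\lambda))=T_\chi(w\cdot\mu)\oplus T_\chi(w\cdot\mu)$ and $T_\mu^\lambda(T_\chi(w\cdot\mu))=T_\chi(w\cdot\lambda)$, so exactly the computation above gives $\Theta_s(T_\chi(w\cdot\lambda))=T_\chi(w\cdot\lambda)\oplus T_\chi(w\cdot\lambda)$. For the second formula, Proposition~\ref{TransTilt}(1) gives $T_\lambda^\mu(T_\chi(ws\cdot\lambda))=T_\chi(w\cdot\mu)\oplus T_{\prec w\cdot\mu}$ with $T_{\prec w\cdot\mu}$ as described there; applying $T_\mu^\lambda$ and again using $T_\mu^\lambda(T_\chi(w\cdot\mu))=T_\chi(w\cdot\lambda)$ and additivity,
$$\Theta_s(T_\chi(ws\cdot\lambda))=T_\mu^\lambda(T_\chi(w\cdot\mu))\oplus T_\mu^\lambda(T_{\prec w\cdot\mu})=T_\chi(w\cdot\lambda)\oplus T_\mu^\lambda(T_{\prec w\cdot\mu}),$$
as claimed.

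There is essentially no obstacle here. The only point worth flagging is that $T_\mu^\lambda(T_{\prec w\cdot\mu})$ is left in that form: by Corollary~\ref{TransTilting} it is a tilting module, but the statement does not ask us to identify its indecomposable summands, so no further analysis is required. (If one wished, one could use Corollary~\ref{TransZ} together with the partial-order estimates appearing in the proof of Proposition~\ref{TransTilt} to check that every indecomposable summand of $T_\mu^\lambda(T_{\prec w\cdot\mu})$ is of the form $T_\chi(u\cdot\lambda)$ with $u\cdot\mu\prec w\cdot\mu$, but this is not needed for the corollary.)
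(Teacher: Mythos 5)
Your proof is correct and is exactly the argument the paper intends: the corollary is stated without proof as an immediate consequence of Proposition~\ref{TransTilt}, obtained by applying $T_\mu^\lambda$ to the identities there and using additivity of the (exact) translation functors, with $T_\mu^\lambda(T_{\prec w\cdot\mu})$ deliberately left unevaluated. Nothing further is needed.
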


It will be useful to refine this corollary a little, using ideas that can be found in \cite[II.6.6]{RAGS}. Let $w\in W_p$. There exists, for each $\alpha\in \Phi^{+}$, an integer $n_\alpha$ such that $$n_\alpha p< \langle w\cdot\lambda+\rho,\alpha^\vee\rangle < (n_\alpha+1)p.$$ We then define $$d(w\cdot\lambda)\coloneqq\sum_{\alpha\in\Phi^{+}} n_\alpha.$$ It is shown in \cite[Lem. II.6.6]{RAGS} that for any reflection $t$ in $W_p$ we have $$d(tw\cdot\lambda) < d(w\cdot\lambda) \quad \iff \quad tw\cdot\lambda \prec w\cdot\lambda.$$ We particularly note the following:
\begin{lemma}\label{ReflOrd}
	Let $w\in W_p$ and $\lambda\in C\cap X(T)$, and let $s_{\alpha,np}$ be a reflection in a wall of the alcove containing $w\cdot\lambda$. If $s_{\alpha,np}w\cdot\lambda\prec w\cdot\lambda$ then $d(s_{\alpha,np}w\cdot\lambda)=d(w\cdot\lambda)-1$.
\end{lemma}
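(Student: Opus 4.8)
The plan is to establish the following sharper claim, from which the lemma follows at once: writing $A$ for the alcove containing $w\cdot\lambda$ and $A'=s_{\alpha,np}(A)$ for the alcove containing $s_{\alpha,np}w\cdot\lambda$, the difference $d(s_{\alpha,np}w\cdot\lambda)-d(w\cdot\lambda)$ equals $\pm 1$. Once this is known, the sign is forced to be $-1$ by \cite[Lem.~II.6.6]{RAGS}, since $s_{\alpha,np}w\cdot\lambda\prec w\cdot\lambda$ by hypothesis, giving $d(s_{\alpha,np}w\cdot\lambda)=d(w\cdot\lambda)-1$.

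To prove the claim I would proceed summand by summand in the formula $d(\cdot)=\sum_{\beta\in\Phi^{+}}n_\beta$. After replacing $\alpha$ by $-\alpha$ and $n$ by $-n$ if necessary, assume $\alpha\in\Phi^{+}$, so that $H_{\alpha,n}$ is a hyperplane containing one of the walls of $A$; here $A$ is genuinely an alcove because $\lambda\in C$ and $w\cdot C$ is an alcove. Since $H_{\alpha,n}$ is a wall of $A$ and $s_{\alpha,np}$ fixes $H_{\alpha,n}$ pointwise, $A'=s_{\alpha,np}(A)$ shares the corresponding panel with $A$, so $A$ and $A'$ are adjacent alcoves. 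The main input — and the point needing the most care — is the standard fact from alcove geometry (see e.g.\ \cite[\S II.6]{RAGS}, or general Coxeter theory) that two adjacent alcoves are separated by a single hyperplane, necessarily the one containing their common panel, here $H_{\alpha,n}$. Granting this, for each $\beta\in\Phi^{+}$ with $\beta\neq\alpha$ and each $m\in\bZ$ the hyperplane $H_{\beta,m}$ (which is distinct from $H_{\alpha,n}$, as two positive roots defining the same hyperplane must coincide) does not separate $w\cdot\lambda$ from $s_{\alpha,np}w\cdot\lambda$; since the integer $n_\beta$ attached to a point of an alcove is precisely determined by which side of each $H_{\beta,m}$ that point lies on, the $\beta$-summands of $d(w\cdot\lambda)$ and $d(s_{\alpha,np}w\cdot\lambda)$ agree for all $\beta\neq\alpha$.

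For the remaining $\alpha$-summand this is a one-line computation: because $H_{\alpha,n}$ bounds $A$, the point $w\cdot\lambda$ lies in one of the two strips $np<\langle w\cdot\lambda+\rho,\alpha^\vee\rangle<(n+1)p$ or $(n-1)p<\langle w\cdot\lambda+\rho,\alpha^\vee\rangle<np$, and since $s_{\alpha,np}$ sends $\langle x+\rho,\alpha^\vee\rangle$ to $2np-\langle x+\rho,\alpha^\vee\rangle$ it interchanges these two strips. Hence the $\alpha$-summand changes by exactly $\pm1$, and therefore $d(s_{\alpha,np}w\cdot\lambda)-d(w\cdot\lambda)=\pm1$, completing the proof of the claim. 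The only step requiring genuine thought is the separation-of-adjacent-alcoves fact invoked above; everything else is bookkeeping with the hyperplanes $H_{\beta,m}$.
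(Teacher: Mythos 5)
Your argument is correct and is essentially the paper's: the proof in the text simply observes that a single reflection hyperplane separates $s_{\alpha,np}w\cdot\lambda$ from $w\cdot\lambda$ (citing \cite[Lem.\ II.6.7]{RAGS}), which is exactly the adjacency fact you isolate, and the sign is then fixed by \cite[Lem.\ II.6.6]{RAGS} as you say. You have merely written out the summand-by-summand bookkeeping that the paper leaves implicit.
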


\begin{proof}
	This is immediate since there is a single reflection hyperplane separating $s_{\alpha,np}w\cdot\lambda$ and $w\cdot\lambda$. See also \cite[Lem. II.6.7]{RAGS}.
\end{proof}

The following statement will also be useful.

\begin{lemma}\label{PrecML}
	Let $s,\lambda$ and $\mu$ be as in Notation~\ref{NOT}. Let $w,v\in W_p$ such that $w\cdot\lambda\prec ws\cdot\lambda$. Then $$w\cdot\mu\prec v\cdot\mu \quad \implies \quad w\cdot\lambda\prec v\cdot\lambda.$$
\end{lemma}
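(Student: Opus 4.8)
The plan is to use alcove geometry, specifically the way the order $\uparrow$ on weights in $C\cap X(T)$ is controlled by the order on alcoves, together with the fact that $\mu$ lies on a single wall of $\overline{C}$. Recall from Notation~\ref{NOT} that $\lambda\in C$, so $w\cdot\lambda$ lies in the alcove $w\cdot C$, and $\mu$ lies on exactly one wall of $\overline{C}$, so $w\cdot\mu$ lies on the wall of $w\cdot\overline{C}$ fixed by $wsw^{-1}$. The hypothesis $w\cdot\lambda\prec ws\cdot\lambda$ says that $ws\cdot C$ is the alcove adjacent to $w\cdot C$ across this wall, lying ``above'' it in the $\uparrow$-order; in the language of \cite[\S II.6.5]{RAGS}, the two closures $w\cdot\overline{C}$ and $ws\cdot\overline{C}$ share the facet containing $w\cdot\mu$.

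First I would translate the hypothesis $w\cdot\mu\prec v\cdot\mu$ into a statement about alcoves. By \cite[\S II.6.5]{RAGS}, since $w\cdot\mu$ and $v\cdot\mu$ both lie in the (dot-)$W_p$-orbit of $\mu$ and $\mu$ is on a wall of $\overline{C}$, the relation $w\cdot\mu\prec v\cdot\mu$ is equivalent to the existence of a chain linking the facet through $w\cdot\mu$ up to the facet through $v\cdot\mu$; equivalently $w\cdot\overline{C}\preceq v\cdot\overline{C}$ \emph{and} $ws\cdot\overline{C}\preceq v\cdot\overline{C}$ (the two alcoves having $w\cdot\mu$ in their closure both sit weakly below any alcove whose closure contains $v\cdot\mu$, because $v\cdot C$ is itself one such alcove and wall-crossing upward from $w\cdot C$ lands in $ws\cdot C\preceq v\cdot C$ or in $w\cdot C$ itself). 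Then lifting the alcove inequality back to weights via \cite[\S II.6.5]{RAGS} (the same principle used repeatedly in the proof of Proposition~\ref{TransTilt}, e.g. ``$w\cdot\lambda\prec u\cdot\lambda\implies w\cdot\overline C\prec u\cdot\overline C$'' and its converse direction), we get $w\cdot\lambda\preceq v\cdot\lambda$. It remains to rule out $w\cdot\lambda = v\cdot\lambda$: since $\Sta_{W_p}(\lambda)=\{1\}$ this would force $w=v$, whence $w\cdot\mu = v\cdot\mu$, contradicting $w\cdot\mu\prec v\cdot\mu$. Hence $w\cdot\lambda\prec v\cdot\lambda$.

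The step I expect to be the main obstacle is the precise bookkeeping in the first reduction: making sure the equivalence ``$w\cdot\mu\prec v\cdot\mu$ iff ($w\cdot\overline C\preceq v\cdot\overline C$ and $ws\cdot\overline C\preceq v\cdot\overline C$)'' is correctly extracted from \cite[\S II.6.5]{RAGS} when $\mu$ is a wall weight. The subtlety is that the $\uparrow$-order on a facet-orbit is generated by moving ``upward'' across walls, and one must check that from the facet through $w\cdot\mu$ the only upward-adjacent facet-orbit members are reached through alcoves $\succeq ws\cdot C$; here the hypothesis $w\cdot\lambda\prec ws\cdot\lambda$ is exactly what pins down which of the two alcoves $w\cdot C$, $ws\cdot C$ is the ``lower'' one, so that any $v\cdot C$ with $v\cdot\mu$ above $w\cdot\mu$ satisfies both $w\cdot\overline C\preceq v\cdot\overline C$ and $ws\cdot\overline C\preceq v\cdot\overline C$. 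Once this is set up, the rest is the routine lifting of alcove inequalities to weight inequalities plus the stabiliser argument, both of which already appear in the proof of Proposition~\ref{TransTilt}.
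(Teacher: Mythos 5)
Your strategy---pass from the weight order on the $W_p$-orbit of the wall weight $\mu$ to the order on alcoves via \cite[\S II.6.5]{RAGS}, then back down to the orbit of the regular weight $\lambda$---is genuinely different from the paper's proof, which never invokes the alcove order at all. The paper proves the one-step statement that for any reflection $t$, $tv\cdot\mu\prec v\cdot\mu$ implies $tv\cdot\lambda\prec v\cdot\lambda$, by a direct computation with the integers $n_\alpha$ satisfying $n_\alpha p<\langle v\cdot\lambda+\rho,\alpha^\vee\rangle<(n_\alpha+1)p$ and a case analysis on whether $v\cdot\mu$ lies on the lower or upper wall of $v\cdot C$ in the relevant direction; it then chains these steps along the reflections witnessing $w\cdot\mu\prec v\cdot\mu$, and only at the very end uses $\Sta_{W_p}(\mu)=\{1,s\}$ together with the hypothesis $w\cdot\lambda\prec ws\cdot\lambda$ to resolve the two-fold ambiguity between $w\cdot\lambda$ and $ws\cdot\lambda$.

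The gap in your write-up sits exactly where you predict, and it is more than bookkeeping. The claim that $w\cdot\mu\prec v\cdot\mu$ forces \emph{both} $w\cdot\overline{C}\preceq v\cdot\overline{C}$ and $ws\cdot\overline{C}\preceq v\cdot\overline{C}$ is false without the hypothesis $w\cdot\lambda\prec ws\cdot\lambda$: if instead $ws\cdot\overline{C}\preceq w\cdot\overline{C}$, one can have $ws\cdot\overline{C}\preceq v\cdot\overline{C}$ with $w\cdot C$ and $v\cdot C$ incomparable, and then $w\cdot\mu=ws\cdot\mu\prec v\cdot\mu$ still holds while the specific inequality $w\cdot\overline{C}\preceq v\cdot\overline{C}$ that you use downstream fails. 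Your parenthetical justification (``wall-crossing upward from $w\cdot C$ lands in $ws\cdot C\preceq v\cdot C$'') assumes the conclusion. What is actually provable---and is essentially the content of the paper's one-step computation---is that the chain defining $w\cdot\mu\prec v\cdot\mu$ yields $u\cdot\overline{C}\preceq v\cdot\overline{C}$ for \emph{some} $u\in\{w,ws\}$; only then does the hypothesis $w\cdot\lambda\prec ws\cdot\lambda$, i.e.\ $w\cdot\overline{C}\preceq ws\cdot\overline{C}$, promote this to $w\cdot\overline{C}\preceq v\cdot\overline{C}$ in either case, after which \cite[\S II.6.5]{RAGS} and your stabiliser argument finish the job. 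So the skeleton is salvageable, but the implication from the order on the singular orbit to an order on specific adjacent alcoves is the whole lemma, and it needs either the paper's explicit pairing computation or a precisely quoted statement from \cite[\S II.6.5]{RAGS} about facets of wall weights, neither of which you have supplied.
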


\begin{proof}
	We first show that if $t\in W_p$ is a reflection then $$tv\cdot\mu\prec v\cdot\mu \quad \implies \quad tv\cdot\lambda \prec v \cdot\lambda.$$ For each $\alpha\in\Phi^{+}$ we set $n_\alpha\in\bZ $ to be such that $$n_\alpha p < \langle v\cdot\lambda +\rho,\alpha^\vee\rangle < (n_\alpha+1)p.$$ In particular, there exists a unique root $\beta\in\Phi^{+}$ such that either ({\bf Case 1}) $$\langle v\cdot\mu+\rho,\beta^\vee\rangle=n_\beta p$$ or ({\bf Case 2}) $$\langle v\cdot\mu+\rho,\beta^\vee\rangle=(n_\beta+1) p.$$ In the former case we have $vsv^{-1}=s_{\beta,n_\beta p}$, while in the latter case we have $vsv^{-1}=s_{\beta,(n_\beta+1)p}$. In each case, we have $$n_\alpha p < \langle v\cdot\mu +\rho,\alpha^\vee\rangle < (n_\alpha+1)p$$ for $\alpha\neq\beta$. 
	
	Let us write $t=s_{\alpha,mp}$. Then $s_{\alpha,mp}v\cdot\mu\prec v\cdot\mu$ implies that $s_{\alpha,mp}\neq vsv^{-1}$ and $\langle v\cdot\mu+\rho,\alpha^\vee\rangle \geq mp$. If $\alpha\neq \beta$ this implies that $m\leq n_\alpha$ and thus that $\langle v\cdot\lambda+\rho,\alpha^\vee\rangle \geq n_\alpha p \geq mp$. Hence $tv\cdot\lambda\prec v\cdot\lambda$. On the other hand, if $\alpha=\beta$ then we consider {\bf Case 1} and {\bf Case 2} separately. In {\bf Case 1} we have  $\langle v\cdot\mu+\rho,\alpha^\vee\rangle\geq mp$ implies $m\leq n_\beta$ and thus $\langle v\cdot\lambda+\rho,\beta^\vee\rangle \geq n_\beta p \geq mp$. Hence $tv\cdot\lambda\prec v\cdot\lambda$. In {\bf Case 2} we have $\langle v\cdot\mu+\rho,\alpha^\vee\rangle\geq mp$ implies $m\leq n_\beta+1$. If $m\leq n_\beta$ then $\langle v\cdot\lambda+\rho,\beta^\vee\rangle \geq n_\beta p \geq mp$, hence $tv\cdot\lambda\prec v\cdot\lambda$, while if $m=n_\beta+1$ we have $t=s_{\beta,(n_\beta+1)p}=vsv^{-1}$, a contradiction.
	
	We therefore see that $$tv\cdot\mu\prec v\cdot\mu \quad \implies \quad tv\cdot\lambda \prec v \cdot\lambda,$$ as required.
	
	Now, suppose that $$w\cdot\mu\prec v\cdot\mu$$ for $w,v\in W_p$ as in the statement of the lemma. This means that there exist $\alpha_1,\ldots,\alpha_k\in\Phi^{+}$ and $n_1,\ldots,n_k\in\bZ$ such that $$w\cdot\mu=s_{\alpha_1,n_1p}\cdots s_{\alpha_k,n_kp}v\cdot\mu \prec s_{\alpha_2,n_2p}\cdots s_{\alpha_k,n_kp}v\cdot\mu\prec\cdots \prec s_{\alpha_k,n_kp}v\cdot\mu\prec v\cdot\mu.$$ Applying the above argument, we thus get $$s_{\alpha_1,n_1p}\cdots s_{\alpha_k,n_kp}v\cdot\lambda \prec s_{\alpha_2,n_2p}\cdots s_{\alpha_k,n_kp}v\cdot\lambda\prec\cdots \prec s_{\alpha_k,n_kp}v\cdot\lambda\prec v\cdot\lambda.$$ Since $s_{\alpha_1,n_1p}\cdots s_{\alpha_k,n_kp}v\cdot\mu=w\cdot\mu$, we have $s_{\alpha_1,n_1p}\cdots s_{\alpha_k,n_kp}v\cdot\lambda$ equals either $w\cdot\lambda$ or $ws\cdot\lambda$. As we know that $w\cdot\lambda\prec ws\cdot\lambda$, either way we get $$w\cdot\lambda \prec v\cdot\lambda$$ as required.
\end{proof}

The reader should note that this proof works equally well if we replace $\lambda$ with $s\cdot\lambda$, in which case we get the following corollary.

\begin{cor}\label{PrecML2}
	Let $s,\lambda$ and $\mu$ be as in Notation~\ref{NOT}. Let $w,v\in W_p$ such that $w\cdot\lambda\prec ws\cdot\lambda$. Then $$w\cdot\mu\prec v\cdot\mu \quad \implies \quad w\cdot\lambda\prec vs\cdot\lambda.$$
\end{cor}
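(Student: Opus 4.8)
The plan is to mimic the proof of Lemma~\ref{PrecML} almost verbatim, since the corollary statement is obtained from it just by replacing the final appearance of $\lambda$ with $s\cdot\lambda$. First I would observe that the core of the proof of Lemma~\ref{PrecML} was the single-reflection claim: if $t\in W_p$ is a reflection then $tv\cdot\mu\prec v\cdot\mu$ implies $tv\cdot\lambda\prec v\cdot\lambda$. That part of the argument made no use of the hypothesis $w\cdot\lambda\prec ws\cdot\lambda$ at all — it only compared $v\cdot\lambda$ and $v\cdot\mu$ via the inequalities $n_\alpha p<\langle v\cdot\lambda+\rho,\alpha^\vee\rangle<(n_\alpha+1)p$ and the location of $v\cdot\mu$ on the walls — so I can quote it directly here.

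Next I would run the same telescoping/chain argument: given $w\cdot\mu\prec v\cdot\mu$, write $w\cdot\mu=s_{\alpha_1,n_1p}\cdots s_{\alpha_k,n_kp}v\cdot\mu$ as a decreasing chain, apply the single-reflection claim step by step, and conclude $s_{\alpha_1,n_1p}\cdots s_{\alpha_k,n_kp}v\cdot\lambda\prec v\cdot\lambda$. The only new wrinkle is the very last step: since the reflections act on $\overline{C}$ and $w\cdot\mu$ lies on a single wall of the alcove containing both $w\cdot\lambda$ and $ws\cdot\lambda$, the product $s_{\alpha_1,n_1p}\cdots s_{\alpha_k,n_kp}v\cdot\lambda$ is either $w\cdot\lambda$ or $ws\cdot\lambda$ (this is the same dichotomy used at the end of Lemma~\ref{PrecML}, coming from the fact that $\overline{C}$ is a fundamental domain and $w\cdot\mu$ is fixed by exactly $\{1,wsw^{-1}\}$). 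In Lemma~\ref{PrecML} the hypothesis $w\cdot\lambda\prec ws\cdot\lambda$ was used to say that \emph{either} value is $\preceq v\cdot\lambda$, giving $w\cdot\lambda\prec v\cdot\lambda$. Here I want the conclusion $w\cdot\lambda\prec vs\cdot\lambda$, so I should instead apply the whole argument with $s\cdot\lambda$ in place of $\lambda$: then $v\cdot(s\cdot\lambda)=vs\cdot\lambda$ and $w\cdot(s\cdot\lambda)=ws\cdot\lambda$, and the chain gives that $ws\cdot\lambda$ or $w\cdot\lambda$ is $\prec vs\cdot\lambda$; using again $w\cdot\lambda\prec ws\cdot\lambda$, either way $w\cdot\lambda\prec vs\cdot\lambda$.

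The one point I need to check carefully, and which the paper itself flags in the sentence ``The reader should note that this proof works equally well if we replace $\lambda$ with $s\cdot\lambda$,'' is that $s\cdot\lambda$ genuinely plays the same role as $\lambda$ did: namely that $s\cdot\lambda\in C\cap X(T)$ (it lies in the fundamental alcove since $s$ reflects across a wall of $\overline C$ and $\lambda\in C$, so $s\cdot\lambda$ lies in the adjacent alcove — wait, that is \emph{not} in $C$). Let me be more careful: the argument in Lemma~\ref{PrecML} never actually required $\lambda\in C$; it only required that $\Sta_{W_p}(v\cdot\lambda)=\{1\}$ and that $v\cdot\mu$ lies on exactly one wall of the alcove containing $v\cdot\lambda$, i.e.\ that $vsv^{-1}$ is the unique nontrivial element stabilizing $v\cdot\mu$. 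Since $\mu$ is regular on all walls except the one fixed by $s$, the point $v\cdot\mu$ sits on the single wall separating the alcove of $v\cdot\lambda$ from the alcove of $vs\cdot\lambda$ — and it equally sits on the single wall separating the alcove of $vs\cdot\lambda$ from that of $v\cdot\lambda$, which is the same wall. So replacing $\lambda$ by $s\cdot\lambda$ throughout just swaps the roles of the two alcoves adjacent to that wall, and every inequality in the single-reflection claim and the chain argument goes through identically. The ``main obstacle'' is therefore not a mathematical difficulty but simply making sure this substitution is airtight; once that is granted, the proof is:

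\begin{proof}
	By the argument in the proof of Lemma~\ref{PrecML}, applied with $s\cdot\lambda$ in place of $\lambda$ (note that the only properties of $\lambda$ used there were $\Sta_{W_p}(v\cdot\lambda)=\{1\}$ and that $v\cdot\mu$ lies on precisely the one wall of the alcove of $v\cdot\lambda$ that is fixed by $vsv^{-1}$, and these hold equally with $v\cdot\lambda$ replaced by $vs\cdot\lambda$), we obtain for any reflection $t\in W_p$ that
	$$tv\cdot\mu\prec v\cdot\mu \quad\implies\quad tvs\cdot\lambda\prec vs\cdot\lambda.$$
	Now suppose $w\cdot\mu\prec v\cdot\mu$. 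Then there exist $\alpha_1,\ldots,\alpha_k\in\Phi^{+}$ and $n_1,\ldots,n_k\in\bZ$ with
	$$w\cdot\mu=s_{\alpha_1,n_1p}\cdots s_{\alpha_k,n_kp}v\cdot\mu\prec s_{\alpha_2,n_2p}\cdots s_{\alpha_k,n_kp}v\cdot\mu\prec\cdots\prec s_{\alpha_k,n_kp}v\cdot\mu\prec v\cdot\mu.$$
	Applying the displayed implication repeatedly gives
	$$s_{\alpha_1,n_1p}\cdots s_{\alpha_k,n_kp}vs\cdot\lambda\prec s_{\alpha_2,n_2p}\cdots s_{\alpha_k,n_kp}vs\cdot\lambda\prec\cdots\prec s_{\alpha_k,n_kp}vs\cdot\lambda\prec vs\cdot\lambda.$$
	Since $s_{\alpha_1,n_1p}\cdots s_{\alpha_k,n_kp}v\cdot\mu=w\cdot\mu$ and $\overline{C}_I$ is a fundamental domain whose relevant wall is fixed exactly by $wsw^{-1}$, the element $s_{\alpha_1,n_1p}\cdots s_{\alpha_k,n_kp}vs\cdot\lambda$ equals either $ws\cdot\lambda$ or $w\cdot\lambda$. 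As $w\cdot\lambda\prec ws\cdot\lambda$ by hypothesis, in either case we conclude $w\cdot\lambda\prec vs\cdot\lambda$.
\end{proof}
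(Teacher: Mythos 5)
Your proof is correct and is exactly the paper's argument: the paper disposes of this corollary with the single remark that the proof of Lemma~\ref{PrecML} works verbatim with $\lambda$ replaced by $s\cdot\lambda$, which is precisely what you carry out. Your extra care in noting that $s\cdot\lambda\notin C$ but that the lemma's proof only used regularity of $v\cdot\lambda$ and the position of $v\cdot\mu$ on the shared wall is a worthwhile check, and the final dichotomy ($s_{\alpha_1,n_1p}\cdots s_{\alpha_k,n_kp}vs\cdot\lambda$ equals $w\cdot\lambda$ or $ws\cdot\lambda$, because $\Sta_{W_p}(w\cdot\mu)=\{1,wsw^{-1}\}$) is handled just as in the lemma.
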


We know by Corollary~\ref{TransTilting} that translation and wall-crossing functors send tilting module to tilting modules, and that each tilting module decomposes as a direct sum of indecomposable tilting modules. In particular, in the set-up of Proposition~\ref{TransTilt} with $wsw^{-1}\notin W_{I,p}$ and $ws\cdot\lambda < w\cdot\lambda$, we have $$T_\lambda^\mu(T_\chi(ws\cdot\lambda))=T_\chi(w\cdot\mu)\oplus\bigoplus_{\substack{ u\in W^{I,\mu} \\ u\cdot\mu\prec w\cdot\mu }} T_\chi(u\cdot\mu)^{\oplus m_u}$$ for some $m_u\geq 0$. Applying Proposition~\ref{TransTilt}(1) we also get 
$$\Theta_s(T_\chi(ws\cdot\lambda))=T_\chi(w\cdot\lambda)\oplus\bigoplus_{\substack{ u\in W^{I,\mu} \\ u\cdot\mu\prec w\cdot\mu }} T_\chi(u\cdot\lambda)^{\oplus m_u}.$$

Note that, in this sum, $u\in W^{I,\mu}$ and $u\cdot\mu\prec w\cdot\mu$ imply that $us\cdot\lambda\prec w\cdot\lambda$ and $us\cdot\lambda\prec ws\cdot\lambda$ (by Lemma~\ref{PrecML} and Corollary~\ref{PrecML2}). In particular, this means that $d(u\cdot\lambda)=d(us\cdot\lambda)+1<d(ws\cdot\lambda)+1=d(w\cdot\lambda)$. Hence we can alternatively write $$\Theta_s(T_\chi(ws\cdot\lambda))=T_\chi(w\cdot\lambda)\oplus\bigoplus_{\substack{ u\in W_p \\ d(u\cdot\lambda)< d(w\cdot\lambda) }} T_\chi(u\cdot\lambda)^{\oplus m_u}$$ (where we define $m_u$ to be zero for any $u\in W_p\setminus W^{I,\mu}$ or any $u\in W^{I,\mu}$ such that $d(u\cdot\lambda)<d(w\cdot\lambda)$ but $u\cdot\mu\nprec w\cdot\mu$). We summarise this in the following corollary.

\begin{cor}\label{RefTransWall}
	Let $s,\lambda$ and $\mu$ be as in Notation~\ref{NOT}, and let $w\in W^{I,\lambda}$.
	\begin{enumerate}
		\item If $wsw^{-1}\notin W_{I,p}$ and $ws\cdot\lambda<w\cdot\lambda$ then $$\Theta_s(T_\chi(w\cdot\lambda))=T_\chi(w\cdot\lambda)\oplus T_\chi(w\cdot\lambda)$$ and $$\Theta_s(T_\chi(ws\cdot\lambda))=T_\chi(w\cdot\lambda)\oplus T_{< d(w\cdot\lambda)}$$  where $T_{< d(w\cdot\lambda)}$ is a tilting module which decomposes as a direct sum of indecomposable tilting modules indexed by $u\cdot\lambda$ for $u\in W^{I,\lambda}$ with $d(u\cdot\lambda)< d(w\cdot \lambda)$.
		\item If $wsw^{-1}\in W_{I,p}$ then $$\Theta_s(T_\chi(
		w\cdot\lambda))=T_\chi(w\cdot\lambda)\oplus T_\chi(w\cdot\lambda).$$
	\end{enumerate}
\end{cor}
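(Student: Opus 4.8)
The plan is to obtain Corollary~\ref{RefTransWall} by bookkeeping on top of the previous corollary, Proposition~\ref{TransTilt}, and the alcove-combinatorial Lemmas~\ref{PrecML}, \ref{ReflOrd} and Corollary~\ref{PrecML2}; no genuinely new input should be needed. Part (2) and the first displayed equation of part (1) are verbatim the content of the (unnumbered) corollary immediately preceding this one, so there is nothing to do there. Everything therefore reduces to identifying the summand $T_\mu^\lambda(T_{\prec w\cdot\mu})$ coming from that corollary, where $\Theta_s(T_\chi(ws\cdot\lambda)) = T_\chi(w\cdot\lambda)\oplus T_\mu^\lambda(T_{\prec w\cdot\mu})$ and $T_{\prec w\cdot\mu}$ is a tilting module in $\sC_\chi(\mu)$ all of whose indecomposable summands are of the form $T_\chi(\kappa)$ with $\kappa\prec w\cdot\mu$, $\kappa\in\Lambda_I$.

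First I would write $T_{\prec w\cdot\mu} = \bigoplus_{u\in W^{I,\mu},\ u\cdot\mu\prec w\cdot\mu} T_\chi(u\cdot\mu)^{\oplus m_u}$ for suitable $m_u\geq 0$; that every indecomposable summand has this shape uses that the indecomposable tiltings of $\sC_\chi(\mu)$ are indexed by $W_p\cdot\mu\cap\overline{C}_I\cap X(T)$ together with Proposition~\ref{GrothMuBas}. Applying $T_\mu^\lambda$ and using that each such $u$ satisfies $us\cdot\lambda<u\cdot\lambda$, Proposition~\ref{TransTilt} (case (1) when $usu^{-1}\notin W_{I,p}$, case (2) when $usu^{-1}\in W_{I,p}$) gives $T_\mu^\lambda(T_\chi(u\cdot\mu)) = T_\chi(u\cdot\lambda)$ in all cases, so $\Theta_s(T_\chi(ws\cdot\lambda)) = T_\chi(w\cdot\lambda)\oplus\bigoplus_u T_\chi(u\cdot\lambda)^{\oplus m_u}$. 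Any $u$ actually occurring has $m_u\neq 0$, so $T_\chi(u\cdot\lambda)$ is a genuine indecomposable tilting in $\sC_\chi(\lambda)$ and hence $u\in W^{I,\lambda}$; alternatively one may evaluate $T_\mu^\lambda(T_\chi(u\cdot\mu))$ directly via the adjunction $(\,T_\mu^\lambda(T_\chi(u\cdot\mu)):\nabla(v\cdot\lambda)\,)=\dim\Hom_{\sC_\chi}(T_\lambda^\mu\overline{\Delta}(v\cdot\lambda),T_\chi(u\cdot\mu))$ exactly as in the proof of Proposition~\ref{TransTilt}.

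It then remains to convert the index condition $u\cdot\mu\prec w\cdot\mu$ into $d(u\cdot\lambda)<d(w\cdot\lambda)$, which is where what little care is needed lies. Since $\Sta_{W_p}(\mu)=\{1,s\}$ we have $us\cdot\mu = u\cdot\mu$, and $u\in W^{I,\mu}$ gives $us\cdot\lambda\prec u\cdot\lambda$; so, setting $v=us$, the pair $(v,w)$ satisfies $v\cdot\lambda\prec vs\cdot\lambda$ and $v\cdot\mu\prec w\cdot\mu$, and Lemma~\ref{PrecML} and Corollary~\ref{PrecML2} yield $us\cdot\lambda\prec w\cdot\lambda$ and $us\cdot\lambda\prec ws\cdot\lambda$. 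Lemma~\ref{ReflOrd} then gives $d(u\cdot\lambda)=d(us\cdot\lambda)+1$ and $d(w\cdot\lambda)=d(ws\cdot\lambda)+1$ (since $us\cdot\lambda\prec u\cdot\lambda$ and $ws\cdot\lambda\prec w\cdot\lambda$ are single wall-crossings of the alcoves containing $u\cdot\lambda$ and $w\cdot\lambda$ respectively), while $us\cdot\lambda\prec ws\cdot\lambda$ forces $d(us\cdot\lambda)<d(ws\cdot\lambda)$; combining, $d(u\cdot\lambda)<d(w\cdot\lambda)$, which is exactly the claimed indexing. The only real obstacle is assigning the roles in Lemma~\ref{PrecML}/Corollary~\ref{PrecML2} correctly — it is $us$, not $u$, that plays the role of the distinguished element — and checking that the elements appearing lie in $W^{I,\lambda}$; both are routine once noticed, so the proof is essentially assembly of results already in hand.
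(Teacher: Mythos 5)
Your proposal is correct and follows essentially the same route as the paper: part (2) and the first display of (1) are read off from the preceding corollary, $T_{\prec w\cdot\mu}$ is decomposed into indecomposables $T_\chi(u\cdot\mu)^{\oplus m_u}$ with $u\in W^{I,\mu}$, Proposition~\ref{TransTilt} gives $T_\mu^\lambda(T_\chi(u\cdot\mu))=T_\chi(u\cdot\lambda)$, and Lemma~\ref{PrecML}, Corollary~\ref{PrecML2} and Lemma~\ref{ReflOrd} convert $u\cdot\mu\prec w\cdot\mu$ into $d(u\cdot\lambda)=d(us\cdot\lambda)+1<d(ws\cdot\lambda)+1=d(w\cdot\lambda)$. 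Your identification of $us$ (not $u$) as the element playing the distinguished role in Lemma~\ref{PrecML} is exactly the point the paper relies on.
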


This refined version of Corollary~\ref{TransTilt2} can be more useful to work with in practice, as the following propositions show.

\begin{prop}\label{DomExp}
	Let $s,\lambda$ and $\mu$ be as in Notation~\ref{NOT} and let $w\in W^{I,\lambda}$. If $w\cdot\lambda\in X(T)_+$ then  $\lambda\preceq w\cdot\lambda$. Furthermore, if $w\cdot\lambda\in X(T)_+$ then there exist $s_1,\ldots,s_n\in S_p$ such that $w=s_1 s_2\cdots s_{n-1}s_n$, $$s_1 s_{2}\cdots s_i\cdot \lambda \prec s_1 s_{2}\cdots s_{i}s_{i+1}\cdot \lambda$$ and $$s_1 s_{2} \cdots s_{i-1} s_{i} s_{i-1} \cdots s_{2} s_1 \notin W_{I,p}$$ for all $i=0, \ldots, n$.
\end{prop}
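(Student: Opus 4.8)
The plan is to prove the two assertions in turn, the first being a quick consequence of alcove geometry and the second requiring an induction on the quantity $d(w\cdot\lambda)$ using Corollary~\ref{RefTransWall}.

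For the first assertion, suppose $w\cdot\lambda\in X(T)_+$. Since $\lambda\in C$ is the fundamental alcove and $w\cdot\lambda$ lies in the dominant cone, the standard theory of the $\uparrow$-order on alcoves (see \cite[II.6.5--6.6]{RAGS}) shows that the fundamental alcove $\overline{C}$ precedes the alcove containing $w\cdot\lambda$; concretely, for each $\alpha\in\Phi^{+}$ we have $0<\langle\lambda+\rho,\alpha^\vee\rangle<p$ while $\langle w\cdot\lambda+\rho,\alpha^\vee\rangle>0$, so $n_\alpha\geq 0$ in the notation preceding Lemma~\ref{ReflOrd} and hence $d(w\cdot\lambda)\geq 0=d(\lambda)$. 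One then deduces $\lambda\preceq w\cdot\lambda$ from the description of $\preceq$ via successive reflections in hyperplanes, reducing $d$ by one at each step (Lemma~\ref{ReflOrd}); this is exactly the argument of \cite[II.6.6]{RAGS}.

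For the second assertion, I would argue by induction on $d(w\cdot\lambda)$. If $d(w\cdot\lambda)=0$ then $w\cdot\lambda=\lambda$, so $w=1$ (using $\Sta_{W_p}(\lambda)=\{1\}$) and the empty product works. For the inductive step, since $w\cdot\lambda$ is dominant and $w\neq 1$, there is a wall of the alcove $A$ containing $w\cdot\lambda$ whose reflection $s_{\alpha,np}$ satisfies $s_{\alpha,np}w\cdot\lambda\prec w\cdot\lambda$; writing $s=s_{\alpha,np}$ as a conjugate of a simple reflection and using that $W_p$ acts simply transitively on alcoves, we can find $w'\in W_p$ and a simple reflection $t\in S_p$ with $w=w't$, $w'\cdot\lambda\prec w'\,t\cdot\lambda=w\cdot\lambda$, and $d(w'\cdot\lambda)=d(w\cdot\lambda)-1$ by Lemma~\ref{ReflOrd}. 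The key point is that one must also arrange $w'tw'^{-1}\notin W_{I,p}$: this is where Corollary~\ref{RefTransWall} enters. Indeed, if $w'tw'^{-1}\in W_{I,p}$ then $w'\cdot\lambda$ and $w\cdot\lambda=w'tw'^{-1}(w'\cdot\lambda)$ would lie in the same $W_{I,p}$-orbit, forcing $Q_\chi^I(w\cdot\lambda)\cong Q_\chi^I(w'\cdot\lambda)$, i.e. $w\cdot\lambda$ and $w'\cdot\lambda$ represent the same element of $\Lambda_I$; but they are distinct weights with $w'\cdot\lambda\prec w\cdot\lambda$, and since $\overline{C}_I$ is a fundamental domain for $W_{I,p}$ at most one of them lies in $\overline{C}_I\cap X(T)$. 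One must check the dominant representative can always be chosen with this conjugacy condition satisfied — concretely, if the "obvious" wall-crossing $w=w't$ fails the condition, one uses that $w\cdot\lambda$ being dominant gives enough freedom (the relevant wall of $A$ can be taken to be one whose associated root is not in $\bZ I$, since $w\cdot\lambda\in X(T)_+$ ensures $w\cdot\lambda$ is not separated from $\lambda$ purely by hyperplanes $H_{\beta,m}$ with $\beta\in\bZ I$). Finally, $w'$ is again dominant-valued on $\lambda$ (as $w'\cdot\lambda\prec w\cdot\lambda$ and both are $\uparrow$-above $\lambda$ by the first part, hence dominant), so the inductive hypothesis applies to $w'$, yielding $w'=s_1\cdots s_{n-1}$ with the stated properties, and appending $s_n=t$ completes the induction.

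The main obstacle I anticipate is precisely the simultaneous bookkeeping in the inductive step: ensuring that the chosen reflection $s_n=t$ is \emph{simple} (i.e. in $S_p$, a wall of $\overline{C}$, not merely of the alcove $A$), that it \emph{decreases} $d$, \emph{and} that the partial conjugate $s_1\cdots s_{i-1}s_i s_{i-1}\cdots s_1\notin W_{I,p}$ for every $i$ — not just at the top step. The cleanest route is likely to set up the induction so that the hypothesis already packages all three conditions for $w'$, and then to verify only the single new condition at level $n$ using the dominance of $w\cdot\lambda$ together with Proposition~\ref{ReflI} (which characterizes $wsw^{-1}\notin W_{I,p}$ in terms of $ws\cdot\lambda\in C_I\cap X(T)$) and the fact, from \cite[II.6.6]{RAGS}, that a dominant alcove can be connected to the fundamental one by a gallery each of whose walls is crossed "upward".
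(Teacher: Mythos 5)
Your overall strategy -- induct on $d(w\cdot\lambda)$, peel off a reflection in a wall of the alcove containing $w\cdot\lambda$ that decreases $d$, and record it as a conjugate of a simple reflection -- is the same as the paper's (the paper first builds the whole chain of affine reflections $s_{\beta_i,m_ip}$ and conjugates everything at the end, but that difference is cosmetic). However, two steps in your inductive step are not correctly justified.

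First, the condition $s_1\cdots s_{i-1}s_is_{i-1}\cdots s_1\notin W_{I,p}$. Corollary~\ref{RefTransWall} concerns wall-crossing functors on tilting modules and plays no role here; and your fallback -- that dominance of $w\cdot\lambda$ ensures it is "not separated from $\lambda$ purely by hyperplanes $H_{\beta,m}$ with $\beta\in\bZ I$" -- is both too weak (you need a \emph{wall of the alcove} that simultaneously decreases $d$ and has $\beta\notin\bZ I$, not merely some separating hyperplane with $\beta\notin\bZ I$) and attributes the phenomenon to the wrong hypothesis, since dominance has nothing to do with $I$. The fact that does the work is that $w\in W^{I,\lambda}$ forces $w\cdot\lambda\in C_I$, i.e. $0<\langle w\cdot\lambda+\rho,\gamma^\vee\rangle<p$ for every $\gamma\in\Phi^{+}\cap\bZ I$; hence $n_\gamma=0$ for all such $\gamma$, so \emph{every} wall of the alcove with $n_\beta\neq 0$ automatically has $\beta\notin\bZ I$, whence $wtw^{-1}=s_{\beta,n_\beta p}\notin W_{I,p}$. (Your fundamental-domain argument is essentially Proposition~\ref{ReflI}, but to run it you would already need to know $w'\cdot\lambda\in C_I$, which you have not established.)

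Second, to apply the inductive hypothesis to $w'=wt$ you need $w'\cdot\lambda\in X(T)_+$ and $w'\cdot\lambda\in C_I$. Your justification -- "$w'\cdot\lambda\prec w\cdot\lambda$ and both are $\uparrow$-above $\lambda$ by the first part, hence dominant" -- uses the converse of the first part, which is false: being $\uparrow$-above $\lambda$ does not imply dominance (for instance $\lambda\uparrow s_{\alpha,p}\cdot\lambda$ always holds, but $s_{\alpha,p}\cdot\lambda$ need not be dominant when $\langle\lambda+\rho,\alpha^\vee\rangle$ is small). The correct argument is that two adjacent alcoves are separated by exactly one reflection hyperplane, namely the one containing their common wall; since that hyperplane is $H_{\beta,n_\beta}$ with $n_\beta\neq 0$ and $\beta\notin\bZ I$, it is not of the form $H_{\gamma,0}$ (so dominance is preserved) and not of the form $H_{\gamma,0}$ or $H_{\gamma,1}$ with $\gamma\in\Phi^{+}\cap\bZ I$ (so membership in $C_I$ is preserved). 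With these two repairs your induction closes and coincides with the paper's proof.
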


\begin{proof}
	First, we show that for $w\in W_p$ we have $\lambda\preceq w\cdot\lambda$ whenever $w\cdot\lambda\in X(T)_+$. We use the same notation $n_\alpha$, for $\alpha\in\Phi^{+}$, as above. Note that if $w\cdot\lambda\in X(T)_{+}$ then $d(w\cdot\lambda)$ equals the number of hyperplanes in $X(T)\otimes_{\bZ}\bR$ (of the form $\langle \gamma+\rho,\beta^\vee\rangle=kp$ for $\beta\in\Phi^{+}$ and $k\in\bZ$) between $\lambda$ and $w\cdot\lambda$. We proceed by induction on $d(w\cdot\lambda)$; the claim is clearly true when $d(w\cdot\lambda)=0$.
	
	Let $\beta\in\Phi^{+}$ be a positive root such that $n_\beta\neq 0$ and such that the hyperplane $\langle \gamma+\rho,\beta^\vee\rangle = n_\beta p$ contains a wall of the alcove containing $w\cdot\lambda$ (we can always do this so long as $w\cdot\lambda\neq \lambda$). Then $d(s_{\beta,n_{\beta}p}w\cdot\lambda)<d(w\cdot\lambda)$ and thus by \cite[Lem. II.6.6]{RAGS} we have $s_{\beta,n_\beta p}w\cdot\lambda\preceq w\cdot\lambda$. Furthermore, by construction, $s_{\beta,n_\beta p}w\cdot\lambda \in X(T)_{+}$ and thus by induction we have $\lambda\preceq s_{\beta,n_\beta p}w\cdot\lambda$. Putting these together, we get that $\lambda\preceq w\cdot\lambda$, as required. 
	
	We now claim that if $w\cdot\lambda\in C_I\cap X(T)_{+}$ then there exist positive roots $\beta_1,\ldots,\beta_k$ and integers $m_1,\ldots,m_k$ such that $$w\cdot\lambda=s_{\beta_1,m_1 p} s_{\beta_2,m_2 p} \cdots s_{\beta_k,m_k p}\cdot\lambda,$$ each $s_{\beta_i,m_i p}$ is a reflection in a wall of the alcove containing $s_{\beta_{i+1},m_{i+1} p} s_{\beta_{i+2},m_{i+2} p} \cdots s_{\beta_k,m_k p}\cdot\lambda$, and $$s_{\beta_i,m_i p}\notin W_{I,p}\qquad \mbox{for } i=1\ldots,k.$$
	The statement is immediate when $I=\Phi_s$ (since then $w=1$), and so we assume $I\neq \Phi_s$.
	
	As before, we proceed by induction on $d(w\cdot\lambda)$. The statement is clear when $d(w\cdot\lambda)=0$.
	
	Given $w\cdot\lambda\in C_I\cap X(T)_{+}$ with $d(w\cdot\lambda)>0$, there exists $\beta\in \Phi^{+}\setminus \bZ I$ such that $n_\beta\neq 0$ and such that the hyperplane $\langle \gamma+\rho,\beta^\vee\rangle = n_\beta p$ contains a wall of the alcove containing $w\cdot\lambda$. Indeed, we noted above that there exists $\beta\in \Phi^{+}$ with this property. By definition $n_\gamma=0$ for all $\gamma\in \Phi^{+}\cap \bZ I$, and thus $\beta\notin \Phi^{+}\cap\bZ I$. 
	
	Now, $d(s_{\beta,n_\beta p}w\cdot\lambda)<d(w\cdot\lambda)$, and thus by induction we can find the desired $\overline{\beta}_1, \ldots,\overline{\beta}_k$ and $\overline{m}_1,\ldots,\overline{m}_k$ for $s_{\beta,n_\beta p}w\cdot\lambda$. We thus get that $$w\cdot\lambda=s_{\beta,n_\beta p}s_{\overline{\beta}_1,\overline{m}_1 p}\cdots s_{\overline{\beta}_k,\overline{m}_k p}\cdot\lambda,$$ where $s_{\overline{\beta}_i,\overline{m}_i p}$ is reflection in a wall of the alcove containing $s_{\overline{\beta}_{i+1},\overline{m}_{i+1} p}\cdots s_{\overline{\beta}_k,\overline{m}_k p}\cdot\lambda$ for each $i=1,\ldots,k$ and $s_{\overline{\beta}_i,\overline{m}_i p}\notin W_{I,p}$. Furthermore, $s_{\beta,n_\beta p}$ is a reflection in a wall of the alcove containing $s_{\overline{\beta}_1,\overline{m}_1 p}\cdots s_{\overline{\beta}_k,\overline{m}_k p}\cdot\lambda=s_{\beta,n_\beta p}w\cdot\lambda$, and $s_{\beta,n_\beta p}\notin W_{I,p}$ since $\beta\notin \Phi^{+}\cap\bZ I$. This proves the claim. Furthermore, from our construction it is clear that $$s_{\overline{\beta}_i,\overline{m}_i p}\cdots s_{\overline{\beta}_k,\overline{m}_k p}\cdot\lambda\prec s_{\overline{\beta}_{i-1},\overline{m}_{i-1} p}\cdots s_{\overline{\beta}_k,\overline{m}_k p}\cdot\lambda$$ for all $i$.
	
	Now let $(\beta_1,m_1),\ldots,(\beta_k,m_k)$ be the appropriate roots and integers for $w\cdot\lambda\in X(T)_{+}$.	Set $n=k$ and define $$s_{i}=s_{\beta_k,m_k p}s_{\beta_{k-1},m_{k-1} p}\cdots s_{\beta_{k-i+2},m_{k-i+2} p}s_{\beta_{k-i+1},m_{k-i+1} p} s_{\beta_{k-i+2},m_{k-i+2} p}\cdots s_{\beta_{k-1},m_{k-1}}s_{\beta_k,m_k p}.$$ We can then see easily that $$s_1 s_{2}\cdots	s_{k-1} s_k\cdot\lambda=w\cdot\lambda.$$ Furthermore, the inequality 
	$$s_{\beta_{k-i+1},m_{k-i+1} p} s_{\beta_{k-i+2},m_{k-i+2} p}\cdots s_{\beta_k,m_k p}\cdot\lambda\prec s_{\beta_{k-i},m_{k-i} p} s_{\beta_{k-i+1},m_{k-i+1} p}\cdots s_{\beta_k,m_k p}\cdot\lambda$$ becomes 
	$$s_1 s_{2}\cdots s_{i}\cdot\lambda\prec s_{1}\cdots s_{i+1}\cdot\lambda$$
	for all $i$. We also have $$s_1 s_{2}\cdots s_{i-1}s_{i}s_{i-1}\cdots s_{2}s_{1}=s_{\beta_{k-i+1},m_{k-i+1} p}\notin W_{I,p}$$ for all $i$. Finally, we note that $s_{i}\in S_p$ for all $i$. This proves the result.
	
\end{proof}

For the rest of this subsection, suppose $wsw^{-1}\notin W_{I,p}$ and $w\cdot\lambda < ws\cdot\lambda$.\footnote{Throughout the rest of this paper we generally prefer the convention that $ws\cdot\lambda < w\cdot\lambda$, but for what we do here it makes more sense to flip this convention.} Corollary~\ref{RefTransWall} then says that
$$\Theta_s(T_\chi(w\cdot\lambda))=T_\chi(ws\cdot\lambda)\oplus\bigoplus_{\substack{ u\in W_p \\ d(u\cdot\lambda)< d(ws\cdot\lambda) }} T_\chi(u\cdot\lambda)^{\oplus m_u}$$ for some $m_u\geq 0$.

Now let $t\in W_p$ be a reflection in a wall of $\overline{C}$. We may form the translation functor $\Theta_t:\sC_\chi(\lambda)\to\sC_\chi(\lambda)$; this involves a choice of an element in $\overline{C}\cap X(T)$ with suitable stabiliser, but from now on we assume that, for each simple reflection in $W_p$, such an element has been fixed. Then, if $wstsw^{-1}\notin W_{I,p}$ and $ws\cdot\lambda<wst\cdot\lambda$, we get from  Corollary~\ref{RefTransWall} that 
\begin{equation*}
	\begin{split}
\Theta_t\Theta_s(T_\chi(w\cdot\lambda)) = & T_\chi(wst\cdot\lambda)\oplus\bigoplus_{\substack{ v\in W_p \\ d(v\cdot\lambda)< d(wst\cdot\lambda) }} T_\chi(v\cdot\lambda)^{\oplus n_v}\oplus \bigoplus_{\substack{ u\in W_p \\ d(u\cdot\lambda)< d(ws\cdot\lambda) \\ utu^{-1}\in W_{I,p} }} T_\chi(u\cdot\lambda)^{\oplus 2m_u} \\ &  \oplus \bigoplus_{\substack{ u\in W_p \\ d(u\cdot\lambda)< d(ws\cdot\lambda) \\ utu^{-1}\notin W_{I,p},\,\, ut\cdot\lambda<u\cdot\lambda }} T_\chi(u\cdot\lambda)^{\oplus 2m_u}  \\ & \oplus \bigoplus_{\substack{ u\in W_p \\ d(u\cdot\lambda)< d(ws\cdot\lambda) \\ utu^{-1}\notin W_{I,p},\,\, u\cdot\lambda<ut\cdot\lambda }} \left( T_\chi(ut\cdot\lambda)\oplus \bigoplus_{\substack{ x\in W_p \\ d(x\cdot\lambda)< d(ut\cdot\lambda)}} T_\chi(x\cdot\lambda)^{\oplus k_x}\right) ^{\oplus m_u}
	\end{split}
\end{equation*}
for some $n_v,k_x\geq 0$. Labelling the large direct sums as (I) through (V), we note that:
\begin{itemize}
	
\item In sum (I), we have $d(v\cdot\lambda)<d(wst\cdot\lambda)$ immediately.

\item In sums (II) and (III), we have $d(u\cdot\lambda)<d(ws\cdot\lambda)$ and $d(ws\cdot\lambda)<d(wst\cdot\lambda)$, since $ws\cdot\lambda < wst\cdot\lambda$, and thus $d(u\cdot\lambda)<d(wst\cdot\lambda)$.

\item In sum (IV), we have the following: Since $u\cdot\lambda < ut\cdot\lambda$, Lemma~\ref{ReflOrd} says that $d(ut\cdot\lambda)=d(u\cdot\lambda)+1$; by construction, $d(u\cdot\lambda)<d(ws\cdot\lambda)$; and since $ws\cdot\lambda < wst\cdot\lambda$ we get $d(ws\cdot\lambda)=d(wst\cdot\lambda)-1$. Putting all this together, we get $$d(ut\cdot\lambda)=d(u\cdot\lambda)+1< d(ws\cdot\lambda)+1=d(wst\cdot\lambda).$$

\item In sum (V), the same argument as in (IV) shows that $d(ut\cdot\lambda)<d(wst\cdot\lambda)$ which, when combined with $d(x\cdot\lambda)<d(ut\cdot\lambda)$, shows that $d(x\cdot\lambda)<d(wst\cdot\lambda)$.

\end{itemize}

Combining all of this, we get 
$$\Theta_t \Theta_s(T_\chi(w\cdot\lambda))=T_\chi(wst\cdot\lambda)\oplus\bigoplus_{\substack{u\in W_p \\ d(u\cdot\lambda)<d(wst\cdot\lambda)}} T_\chi(u\cdot\lambda)^{\oplus q_u}$$ for some $q_u\geq 0$. We may iterate this process to get the following.

\begin{prop}\label{TiltSummand}
	Let $\lambda$ be as in Notation~\ref{NOT}, and let $w\in W^{I,\lambda}$. Suppose that there exist $s_1,\ldots,s_n\in S_p$ such that $$w=s_1 s_{2}\ldots s_{n-1} s_n,$$ $$s_1 s_{2}\cdots s_i\cdot \lambda \prec s_1 s_{2}\cdots s_i s_{i+1}\cdot \lambda$$ and $$s_1 s_{2} \cdots s_{i-1} s_{i} s_{i-1} \cdots s_{2} s_1 \notin W_{I,p}$$ for all $i=0, \ldots, n$. Then $$\Theta_{s_n}\Theta_{s_{n-1}}\cdots \Theta_{s_{2}}\Theta_{s_1}(T_\chi(\lambda))=T_\chi(w\cdot\lambda)\oplus \bigoplus_{\substack{u\in W_p \\ d(u\cdot\lambda)<d(w\cdot\lambda)}} T_\chi(u\cdot\lambda)^{\oplus m_u}$$ for some $m_u\geq 0$.
\end{prop}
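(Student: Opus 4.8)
The plan is to induct on $n$, with the inductive step being precisely the wall-crossing computation spelled out in the discussion preceding the statement. The base case $n=0$ is immediate: the composite of functors is empty, $w=1$, and both sides equal $T_\chi(\lambda)=T_\chi(1\cdot\lambda)$.

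For the inductive step I would put $v=s_1 s_2\cdots s_{n-1}$, so that $w=v s_n$, and note that the hypotheses on $s_1,\ldots,s_n$ restrict to the corresponding hypotheses on $s_1,\ldots,s_{n-1}$; the induction hypothesis then gives $\Theta_{s_{n-1}}\cdots\Theta_{s_1}(T_\chi(\lambda))=T_\chi(v\cdot\lambda)\oplus\bigoplus_{d(u\cdot\lambda)<d(v\cdot\lambda)} T_\chi(u\cdot\lambda)^{\oplus m_u}$, the summands $T_\chi(u\cdot\lambda)$ being indexed by suitable $u\in W^{I,\lambda}$. Applying the additive functor $\Theta_{s_n}$, I would treat the summands one at a time. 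For $T_\chi(v\cdot\lambda)$: the hypothesis with $i=n$ gives $v s_n v^{-1}=s_1\cdots s_{n-1} s_n s_{n-1}\cdots s_1\notin W_{I,p}$, the one with $i=n-1$ gives $v\cdot\lambda\prec v s_n\cdot\lambda$, and $v s_n=w\in W^{I,\lambda}$; so Corollary~\ref{RefTransWall}(1), applied with reflection $s_n$ and with $v s_n$ in the role of $w$, yields $\Theta_{s_n}(T_\chi(v\cdot\lambda))=T_\chi(w\cdot\lambda)\oplus T_{<d(w\cdot\lambda)}$, where $T_{<d(w\cdot\lambda)}$ is a direct sum of indecomposable tilting modules $T_\chi(x\cdot\lambda)$ with $d(x\cdot\lambda)<d(w\cdot\lambda)$. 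For each lower summand $T_\chi(u\cdot\lambda)$ (so $d(u\cdot\lambda)<d(v\cdot\lambda)$) I would apply Corollary~\ref{RefTransWall} again with reflection $s_n$: if $u s_n u^{-1}\in W_{I,p}$, or $u s_n u^{-1}\notin W_{I,p}$ with $u s_n\cdot\lambda<u\cdot\lambda$, then $\Theta_{s_n}(T_\chi(u\cdot\lambda))=T_\chi(u\cdot\lambda)^{\oplus 2}$; and if $u s_n u^{-1}\notin W_{I,p}$ with $u\cdot\lambda<u s_n\cdot\lambda$, then (with $u s_n$ again in $W^{I,\lambda}$ by Proposition~\ref{ReflI}) $\Theta_{s_n}(T_\chi(u\cdot\lambda))=T_\chi(u s_n\cdot\lambda)\oplus T_{<d(u s_n\cdot\lambda)}$.

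What remains is the bookkeeping on $d$. Lemma~\ref{ReflOrd} converts $v\cdot\lambda\prec v s_n\cdot\lambda$ into $d(w\cdot\lambda)=d(v\cdot\lambda)+1$; hence $d(u\cdot\lambda)<d(v\cdot\lambda)$ forces $d(u\cdot\lambda)\le d(v\cdot\lambda)-1<d(w\cdot\lambda)$, and in the remaining case $d(u s_n\cdot\lambda)=d(u\cdot\lambda)+1\le d(v\cdot\lambda)=d(w\cdot\lambda)-1<d(w\cdot\lambda)$ (again by Lemma~\ref{ReflOrd}, as $u\cdot\lambda\prec u s_n\cdot\lambda$). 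So every indecomposable tilting summand arising from the lower terms, together with every summand of $T_{<d(w\cdot\lambda)}$, has the form $T_\chi(x\cdot\lambda)$ with $d(x\cdot\lambda)<d(w\cdot\lambda)$, while $T_\chi(w\cdot\lambda)$ itself occurs exactly once. Since $\Theta_{s_n}$ preserves tilting modules (Corollary~\ref{TransTilting}) and tilting modules decompose uniquely into indecomposables, gathering terms yields the asserted formula for $n$ and closes the induction. I do not expect a genuine obstacle: all the structural input is supplied by Corollary~\ref{RefTransWall} and Lemma~\ref{ReflOrd}, and the only points needing care are checking at each stage that the two hypotheses of Corollary~\ref{RefTransWall}(1) needed for the leading term are exactly those built into the statement (together with the harmless verification via Proposition~\ref{ReflI} that the elements remain in $W^{I,\lambda}$), and that the integer inequalities on $d$ close up so no lower summand can reach level $d(w\cdot\lambda)$.
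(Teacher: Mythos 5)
Your proof is correct and follows essentially the same route as the paper: the paper carries out the single wall-crossing step $\Theta_t\Theta_s$ explicitly (with the same case division according to whether $us_nu^{-1}\in W_{I,p}$ and the direction of $u\cdot\lambda$ versus $us_n\cdot\lambda$, and the same $d$-bookkeeping via Lemma~\ref{ReflOrd}) and then states that the proposition follows by iterating; your induction on $n$ is precisely the formalization of that iteration.
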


\section{Canonical $\nabla$-flags and Sections of $\nabla$-flags}\label{s5}

In \cite{RW1}, a key technique is the theory of canonical $\nabla$-flags and sections thereof. In this section, we extend this theory to the essentially-finite fibered highest weight category $\sC_\chi$. The content of Subsection~\ref{ss5.1} should be compared with \cite[\S\S 2.2-2.3]{RW1}, Subsection~\ref{ss5.2} should be compared with \cite[\S 3.3]{RW1} and  Subsection~\ref{ss5.3} should be compared with \cite[\S 3.4]{RW1}. We see that most of the theory works the same way, but more care needs to be taken due to the distinctions between standard/costandard modules and proper standard/proper costandard modules which only arise in this setting. 

\subsection{Flags and Sections of Flags}\label{ss5.1}

 Recall that in $\sC_\chi$ the simple modules are indexed by the partially-ordered indexing set $(\Lambda_I,\preceq)=(\overline{C}_I\cap X(T),\uparrow)$; as usual, we shall switch between these two notations as is convenient. 


A {\bf canonical $\nabla$-flag} of an object $M\in \sC_\chi$ admitting a costandard filtration (cf. \cite[\S 2.2]{RW1}) is an assignment $\Gamma_\bullet(M)$ to each lower set $\Omega\subseteq \Lambda_I$ of a subobject $\Gamma_\Omega(M)\subseteq M$ satisfying the following conditions:
\begin{itemize}
	\item $\Gamma_{\Lambda_I}(M)=M$ and $\Gamma_{\emptyset}(M)=0$.
	\item $\Gamma_\Omega(M)\in  (\sC_\chi)_\Omega$ and $\Gamma_\Omega(M)$ and $M/\Gamma_\Omega(M)$ have costandard filtrations for all lower sets $\Omega\subseteq \Lambda_I$. Furthermore $(M/\Gamma_\Omega(M):\nabla(\kappa))=0$ for all $\kappa\in\Omega$.
	\item If $\Omega\subseteq\Omega'$ then $\Gamma_{\Omega}(M)\subseteq\Gamma_{\Omega'}(M)$.
	\item If $\Omega'$ is a lower set of $\Lambda_I$ and $\xi\in\Omega'$ is such that $\Omega=\Omega'\setminus\{\xi\}$ is also a lower set, then $\Gamma_{\Omega'}(M)/\Gamma_{\Omega}(M)$ is isomorphic to a direct sum of copies of $\nabla(\xi)$.
\end{itemize}

Similarly, a {\bf canonical $\Delta$-flag} of an object $M\in \sC_\chi$ admitting a standard filtration is an assignment $\Gamma^\bullet(M)$ to each lower set $\Omega\subseteq\Lambda_I$ of a submodule $\Gamma^\Omega(M)\subseteq M$ satisfying the following conditions:
\begin{itemize}
	\item $\Gamma^{\Lambda_I}(M)=0$ and $\Gamma^{\emptyset}(M)=M$.
	\item $M/\Gamma^\Omega(M)\in (\sC_\chi)_\Omega$ and $\Gamma^\Omega(M)$ and $M/\Gamma^\Omega(M)$ have standard filtrations for all lower sets $\Omega\subseteq \Lambda_I$. Furthermore $(\Gamma^\Omega(M):\Delta(\kappa))=0$ for all $\kappa\in\Omega$.
	\item If $\Omega\subseteq\Omega'$ then $\Gamma^{\Omega'}(M)\subseteq\Gamma^{\Omega}(M)$.
	\item If $\Omega'$ is a lower set of $\Lambda_I$ and $\xi\in\Omega'$ is such that $\Omega=\Omega'\setminus\{\xi\}$ is also a lower set, then $\Gamma^{\Omega}(M)/\Gamma^{\Omega'}(M)$ is isomorphic to a direct sum of copies of $\Delta(\xi)$.
\end{itemize}

\begin{prop}
	Suppose $M\in\sC_\chi$ has a standard filtration. There is a bijection between canonical $\Delta$-flags of $M$ and canonical $\nabla$-flags of $\bD M$. 
\end{prop}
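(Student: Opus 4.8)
The plan is to use the fact that the duality $\bD$ is an exact anti-equivalence of categories which interchanges standard and costandard modules (and, correspondingly, proper standard and proper costandard modules, and in particular $\bD(\nabla(\kappa)) = \Delta(\kappa)$ since $\bD$ is its own quasi-inverse up to the obvious modification). Given a canonical $\Delta$-flag $\Gamma^\bullet(M)$ of $M$, I would define, for each lower set $\Omega \subseteq \Lambda_I$,
\[
\Gamma_\Omega(\bD M) \coloneqq \bD\big(M/\Gamma^\Omega(M)\big),
\]
viewed as a subobject of $\bD M$ via the inclusion induced (contravariantly) by the surjection $M \twoheadrightarrow M/\Gamma^\Omega(M)$. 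The claim is that $\Gamma_\bullet(\bD M)$ is a canonical $\nabla$-flag of $\bD M$, and that the assignment $\Gamma^\bullet(M) \mapsto \Gamma_\bullet(\bD M)$ is a bijection.

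The key steps, in order, are as follows. First I would check the endpoint conditions: $\Gamma^{\Lambda_I}(M) = 0$ gives $M/\Gamma^{\Lambda_I}(M) = M$, so $\Gamma_{\Lambda_I}(\bD M) = \bD M$; and $\Gamma^{\emptyset}(M) = M$ gives $\Gamma_{\emptyset}(\bD M) = \bD(0) = 0$. Second, I would verify the filtration-and-membership condition: since $\Gamma^\Omega(M)$ and $M/\Gamma^\Omega(M)$ have standard filtrations, applying $\bD$ (which is exact and sends $\Delta(\kappa)$ to $\nabla(\kappa)$) shows $\bD(M/\Gamma^\Omega(M)) = \Gamma_\Omega(\bD M)$ and $\bD(\Gamma^\Omega(M)) \cong \bD M / \Gamma_\Omega(\bD M)$ both have costandard filtrations. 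Moreover $M/\Gamma^\Omega(M) \in (\sC_\chi)_\Omega$ translates, since $\bD$ fixes isomorphism classes of simples and hence preserves the Serre subcategory $(\sC_\chi)_\Omega$, into $\Gamma_\Omega(\bD M) \in (\sC_\chi)_\Omega$; and $(\Gamma^\Omega(M):\Delta(\kappa)) = 0$ for $\kappa \in \Omega$ becomes $(\bD M/\Gamma_\Omega(\bD M) : \nabla(\kappa)) = 0$ for $\kappa \in \Omega$, using that $\bD$ preserves multiplicities in standard/costandard filtrations (Theorem 3.14 in \cite{BS}, or directly from exactness). Third, for monotonicity, $\Omega \subseteq \Omega'$ forces $\Gamma^{\Omega'}(M) \subseteq \Gamma^{\Omega}(M)$, hence a surjection $M/\Gamma^{\Omega'}(M) \twoheadrightarrow M/\Gamma^{\Omega}(M)$, and applying $\bD$ reverses this to an inclusion $\Gamma_\Omega(\bD M) \hookrightarrow \Gamma_{\Omega'}(\bD M)$. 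Fourth, for the successive-quotient condition, if $\Omega = \Omega' \setminus \{\xi\}$ with both lower sets, then from the short exact sequence $0 \to \Gamma^{\Omega}(M)/\Gamma^{\Omega'}(M) \to M/\Gamma^{\Omega'}(M) \to M/\Gamma^{\Omega}(M) \to 0$ we get, applying $\bD$, a short exact sequence exhibiting $\Gamma_{\Omega'}(\bD M)/\Gamma_\Omega(\bD M) \cong \bD\big(\Gamma^{\Omega}(M)/\Gamma^{\Omega'}(M)\big)$, which is $\bD$ of a direct sum of copies of $\Delta(\xi)$, i.e.\ a direct sum of copies of $\nabla(\xi)$. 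Finally, bijectivity follows because the same construction run through $\overline{\bD}$ (the quasi-inverse) produces an inverse map: starting from a canonical $\nabla$-flag $\Gamma_\bullet(N)$ of $N = \bD M$, set $\Gamma^\Omega(\overline{\bD} N) \coloneqq \overline{\bD}(N/\Gamma_\Omega(N))$, and one checks this recovers $\Gamma^\bullet(M)$ up to the canonical identification $\overline{\bD}\,\bD \simeq \Id$.

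The main obstacle I anticipate is bookkeeping the contravariance carefully: one must consistently translate "subobject of $M$" into "quotient of $\bD M$" and vice versa, and make sure that the subobject $\Gamma_\Omega(\bD M)$ of $\bD M$ produced by dualizing the quotient $M/\Gamma^\Omega(M)$ is canonically identified (not merely isomorphic) with a subobject of $\bD M$, so that the nesting $\Gamma_\Omega(\bD M) \subseteq \Gamma_{\Omega'}(\bD M)$ and the quotients $\Gamma_{\Omega'}(\bD M)/\Gamma_\Omega(\bD M)$ are literally the right things rather than just abstractly isomorphic. This is routine once one fixes the convention that a subobject $A \subseteq B$ dualizes to the quotient $\bD B \twoheadrightarrow \bD A$ with kernel a canonical subobject of $\bD B$, but it is the one place where sloppiness would cause trouble. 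Everything else is a direct transport of structure along the exact anti-equivalence $\bD$, using only that $\bD$ fixes simples, is exact, and interchanges $\Delta(\kappa) \leftrightarrow \nabla(\kappa)$, all of which are recorded in Subsection~\ref{ss2.3} and Subsection~\ref{ss3.1}.
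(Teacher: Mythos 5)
Your proposal is correct and follows essentially the same route as the paper: both define $\Gamma_\Omega(\bD M)$ as the canonical subobject $\bD(M/\Gamma^\Omega(M))\subseteq \bD M$ (equivalently, the kernel of $\bD M\twoheadrightarrow \bD\Gamma^\Omega(M)$), verify the four axioms by dualizing the relevant short exact sequences, and obtain bijectivity from the quasi-inverse $\overline{\bD}$. The only quibble is that $\bD$ is not literally its own quasi-inverse (that role is played by $\overline{\bD}$), but this does not affect your argument since you only use $\bD(\Delta(\kappa))\cong\nabla(\kappa)$.
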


\begin{proof}
	Let $\Gamma^\bullet(M)$ be a canonical $\Delta$-flag of $M$. For a lower set $\Omega\subseteq\Lambda_I$, define $$\Gamma_\Omega(\bD M)=\ker\left(\bD\left(\Gamma^\Omega(M)\hookrightarrow M\right)\right)=\ker\left(\bD M\twoheadrightarrow \bD \Gamma^\Omega(M) \right).$$ We claim that this gives a canonical $\nabla$-flag of $\bD M$.
	
	First, note that $$\Gamma_{\Lambda_I}(\bD M)=\ker\left(\bD M\twoheadrightarrow \bD M\right)=0$$ and $$\Gamma_\emptyset(\bD M)=\ker\left(\bD M \twoheadrightarrow 0\right)=\bD M.$$ Furthermore, if $\Omega\subseteq\Omega'$ then we have $\Gamma^{\Omega'} (M)\subseteq \Gamma^\Omega (M)\subseteq M$ and thus $$\Gamma_\Omega(\bD M)=\ker(\bD M\twoheadrightarrow \bD \Gamma^\Omega (M))\subseteq \ker (\bD M\twoheadrightarrow \bD \Gamma^{\Omega'} (M))=\Gamma_{\Omega'}(\bD M).$$ Suppose now that $\Omega'$ is a lower set of $\Lambda_I$, $\xi\in\Omega'$ and $\Omega\coloneqq\Omega'\setminus\{\xi\}$ is a lower set of $\Lambda_I$. Note that the short exact sequence $$0\to \Gamma^\Omega(M) \to M \to M/\Gamma^\Omega(M)\to 0$$ induces the short exact sequence $$0\to\bD(M/\Gamma^\Omega(M))\to \bD M \to \bD \Gamma^\Omega(M)\to 0,$$ which implies that $\Gamma_\Omega(\bD M)=\bD(M/\Gamma^\Omega(M))$. Since $M/\Gamma^\Omega(M)\in (\sC_\chi)_\Omega$ and $\bD(L(\eta))\cong L(\eta)$ for all $\eta\in \Lambda_I$, this shows that $\Gamma_\Omega(\bD M)\in (\sC_\chi)_\Omega$ and (since $\bD(\Delta(\eta))=\nabla(\eta)$) that it has a costandard filtration. We see further that $\bD M/\Gamma_\Omega(\bD M)\cong \bD\Gamma^\Omega(M)$ has a $\nabla$-filtration, and that $$(\bD M/\Gamma_\Omega(\bD M):\nabla(\kappa))=(\bD\Gamma^\Omega(M):\nabla(\kappa))=(\Gamma^\Omega(M):\Delta(\kappa))=0$$ for all $\kappa\in\Omega$. It also shows that $$\Gamma_{\Omega'}(\bD M)/\Gamma_\Omega(\bD M)=\frac{\bD(M/\Gamma^{\Omega'}(M))}{\bD(M/\Gamma^{\Omega}(M))}.$$ On the other hand, the short exact sequence $$0\to \frac{\Gamma^\Omega(M)}{\Gamma^{\Omega'}(M)}\to \frac{M}{\Gamma^{\Omega'}(M)}\to \frac{M}{\Gamma^\Omega(M)}\to 0$$ induces the short exact sequence $$0\to\bD\left(\frac{M}{\Gamma^\Omega(M)}\right)\to\bD\left(\frac{M}{\Gamma^{\Omega'}(M)}\right)\to \bD\left(\frac{\Gamma^\Omega(M)}{\Gamma^{\Omega'}(M)}\right)\to 0,$$ which implies that $$\Gamma_{\Omega'}(\bD M)/\Gamma_\Omega(\bD M)=\frac{\bD(M/\Gamma^{\Omega'}(M))}{\bD(M/\Gamma^{\Omega}(M))}\cong \bD\left(\frac{\Gamma^\Omega(M)}{\Gamma^{\Omega'}(M)}\right)\cong \bD\left(\bigoplus \Delta(\xi)\right)\cong \bigoplus \nabla(\xi).$$ This thus proves that $\Gamma_\bullet(\bD M)$ is a canonical $\nabla$-flag of $\bD M$. We thus have a map from the set of canonical $\Delta$-flags of $M$ to the set of canonical $\nabla$-flags of $\bD M$. Using $\overline{\bD}$, it is straightforward to construct an inverse to this map, thus proving the proposition.
\end{proof}

Since $\bD$ is a duality, this proposition tells us that any results on $\Delta$-flags can be turned into results on $\nabla$-flags, and vice versa. In this paper, our convention is to work with $\nabla$-flags.


\begin{lemma}
	Every $M\in\sC_\chi$ with a costandard filtration has a unique canonical $\nabla$-flag.
\end{lemma}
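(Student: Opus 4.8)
The plan is to construct the canonical $\nabla$-flag explicitly by specifying, for each lower set $\Omega\subseteq\Lambda_I$, the subobject $\Gamma_\Omega(M)$, and then to prove uniqueness by showing any canonical $\nabla$-flag must agree with this construction. For existence, I would define $\Gamma_\Omega(M)$ to be the largest subobject of $M$ lying in $(\sC_\chi)_\Omega$ — equivalently, $\Gamma_\Omega(M) = i_\Omega\circ i_\Omega^!(M)$ using the recollement functors from Subsection~\ref{ss3.1}, since $i_\Omega^!$ is right adjoint to $i_\Omega$ and this counit realizes the maximal subobject of $M$ in the Serre subcategory $(\sC_\chi)_\Omega$. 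The first bullet point ($\Gamma_{\Lambda_I}(M)=M$, $\Gamma_\emptyset(M)=0$) and the third ($\Omega\subseteq\Omega'\Rightarrow\Gamma_\Omega(M)\subseteq\Gamma_{\Omega'}(M)$) are then immediate from this description.

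The substantive content is the second and fourth bullet points. For the second, I would use the fact (from \cite{BS}, available since $\sC_\chi$ and its Serre subcategories/quotients are essentially finite fibered highest weight categories) that for $M$ with a costandard filtration, $i_\Omega^!(M)$ has a costandard filtration in $(\sC_\chi)_\Omega$ (with costandard objects $\nabla^\Omega(\kappa)$, $\kappa\in\Omega$), and applying $i_\Omega$ sends $\nabla^\Omega(\kappa)$ to $\nabla(\kappa)$; the quotient $M/\Gamma_\Omega(M)$ is then identified via the recollement with (something built from) $j_{\Lambda_I\setminus\Omega}(M)$ pushed back by $j^{\ast}$ or $j^!$, and one checks it has a costandard filtration with $(M/\Gamma_\Omega(M):\nabla(\kappa))=0$ for $\kappa\in\Omega$ — this last vanishing is essentially the statement that $\Gamma_\Omega(M)$ was chosen maximal. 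Concretely: if some $\nabla(\kappa)$ with $\kappa\in\Omega$ appeared in the costandard filtration of $M/\Gamma_\Omega(M)$, one could lift it (using $\Ext^1$ vanishing between costandard-filtered objects, as recorded at the end of Subsection~\ref{ss3.2}) to enlarge $\Gamma_\Omega(M)$ inside $(\sC_\chi)_\Omega$, a contradiction. For the fourth bullet, when $\Omega = \Omega'\setminus\{\xi\}$ with $\xi$ maximal in $\Omega'$, the quotient $\Gamma_{\Omega'}(M)/\Gamma_\Omega(M)$ lies in $(\sC_\chi)_{\Omega'}$, has all composition factors... with top weight $\xi$ (since anything in $\Omega$ has been quotiented out maximally), has a costandard filtration, and $\Ext^1$ considerations among the $\nabla(\kappa)$, $\kappa\preceq\xi$, together with $\xi$ being maximal in $\Omega'$, force it to be a direct sum of copies of $\nabla(\xi)$ — here one uses $\Ext^1_{\sC_\chi}(\nabla(\mu),\nabla(\lambda))\neq 0 \Rightarrow \lambda\preceq\mu$ from Subsection~\ref{ss3.1}.

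For uniqueness: given any canonical $\nabla$-flag $\Gamma_\bullet$ of $M$, I would show $\Gamma_\Omega(M)$ must equal the maximal subobject of $M$ in $(\sC_\chi)_\Omega$. One inclusion is given: the axioms require $\Gamma_\Omega(M)\in(\sC_\chi)_\Omega$, so $\Gamma_\Omega(M)\subseteq i_\Omega i_\Omega^!(M)$. For the reverse, suppose the containment were strict; then $i_\Omega i_\Omega^!(M)/\Gamma_\Omega(M)$ is a nonzero subobject of $M/\Gamma_\Omega(M)$ lying in $(\sC_\chi)_\Omega$, hence has some composition factor $L(\kappa)$ with $\kappa\in\Omega$, which forces $(M/\Gamma_\Omega(M):\nabla(\kappa))\neq 0$ for some $\kappa\in\Omega$ (since a $\nabla$-filtered module containing $L(\kappa)$ as a composition factor with $\kappa$ maximal among such must have $\nabla(\kappa)$ at the bottom of its filtration), contradicting the second axiom. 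I could also run the uniqueness via induction on $|\Lambda_I|$ using the fourth axiom to peel off a maximal $\xi$, but the recollement description is cleaner.

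The main obstacle I anticipate is bookkeeping the compatibility between the recollement functors $i_\Omega, i_\Omega^!, j_{\Lambda_I\setminus\Omega}$ and the costandard filtrations — i.e. verifying carefully that $i_\Omega^!$ preserves the property of having a costandard filtration and that costandard objects transform correctly under $i_\Omega$ and the various adjoints (this is where Theorems 3.17–3.18 of \cite{BS} on Serre subquotients of essentially finite fibered highest weight categories do the heavy lifting, so it should reduce to citing them correctly). The genuinely delicate point, as the surrounding text emphasizes, is keeping the \emph{proper} costandard objects out of the picture: the canonical flag is built from honest $\nabla(\xi)$, not $\overline{\nabla}(\xi)$, and one must make sure the maximality/uniqueness arguments use $\nabla$ throughout — in particular that "largest subobject in $(\sC_\chi)_\Omega$ with a costandard filtration" and "$i_\Omega i_\Omega^!(M)$" really coincide when $M$ itself has a costandard (not merely proper costandard) filtration. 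Everything else is a routine diagram chase with the $\Ext^1$-vanishing facts already collected in Subsections~\ref{ss3.1}–\ref{ss3.2}.
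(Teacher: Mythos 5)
Your construction is genuinely different from the paper's and is viable. The paper builds the flag by induction on the length of a costandard filtration, peeling off a bottom factor $\nabla(\kappa_1)$ with $\kappa_1$ not strictly preceded by the other weights, lifting the flag of $M/M_1$, and splitting off $M_1$ via explicitly chosen sections; uniqueness is a separate induction using multiplicity counts and the vanishing $\Hom_{\sC_\chi}(\nabla(\omega),\overline{\nabla}(\zeta))=0$ for $\omega\in\Omega$, $\zeta\notin\Omega$. Your route --- define $\Gamma_\Omega(M)\coloneqq i_\Omega i_\Omega^!(M)$ once and for all and verify the axioms --- makes uniqueness essentially formal and avoids the choices of sections entirely, at the cost of having to know that $i_\Omega^!$ interacts well with $\nabla$-flags. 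That input can be supplied either by citing the relevant flag-truncation results of \cite{BS} or, more elementarily, by reordering a given costandard filtration of $M$ so that all $\nabla(\kappa)$ with $\kappa\in\Omega$ sit at the bottom (possible because $\Ext^1_{\sC_\chi}(\nabla(\mu),\nabla(\kappa))\neq 0$ forces $\kappa\preceq\mu$, and $\Omega$ is a lower set) and then identifying the resulting submodule with $i_\Omega i_\Omega^!(M)$ by a socle argument. In the uniqueness step your parenthetical justification should be exactly that socle argument: a costandard-filtered module all of whose factors are $\nabla(\nu)$ with $\nu\notin\Omega$ has no nonzero subobject in $(\sC_\chi)_\Omega$, because any simple subobject must land in the socle of some $\nabla(\nu)$, which is $L(\nu)$ with $\nu\notin\Omega$.

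There is one concrete gap, in the fourth axiom. Having shown that $\Gamma_{\Omega'}(M)/\Gamma_{\Omega}(M)$ has a costandard filtration all of whose factors are $\nabla(\xi)$, you still need $\Ext^1_{\sC_\chi}(\nabla(\xi),\nabla(\xi))=0$ to conclude it is a \emph{direct sum} of copies of $\nabla(\xi)$. The fact you invoke, $\Ext^1_{\sC_\chi}(\nabla(\mu),\nabla(\kappa))\neq 0\Rightarrow\kappa\preceq\mu$, says nothing about self-extensions, and in this fibered setting the vanishing of $\Ext^1(\nabla(\xi),\nabla(\xi))$ is a genuine extra statement (recall $\nabla(\xi)$ has $N_I(\xi)$ proper costandard factors, so this is not the classical highest weight situation). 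The paper obtains it from $\Ext^1_{\sC_\chi}(Q_\chi^I(\xi),Q_\chi^I(\xi))=0$, which is \cite[Cor.~8.13]{West}; with that citation added, your argument closes.
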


\begin{proof}
	
	Let us proceed by induction on the length $n$ of a costandard filtration of $M$. 
	
	{\bf Base case:} When $n=1$ we have $M=\nabla(\xi)$ for some $\xi\in\Lambda_I$. We then define $$\Gamma_\Omega(M)=\twopartdef{M}{\xi\in \Omega,}{0}{\xi\notin \Omega,}$$ which it is straightforward to check is a canonical $\nabla$-flag of $M$.
	
	{\bf Induction step:} Let $$0=M_0\subseteq M_1\subseteq \cdots \subseteq M_{n-1}\subseteq M_n=M$$ be a costandard filtration of $M$, and let $\kappa_i\in \Lambda_I$ be such that $M_i/M_{i-1}\cong \nabla(\kappa_i)$ for each $1\leq i\leq n$. Then $M/M_{1}\in\sC_\chi$ has a costandard filtration and by induction there exists a canonical $\nabla$-flag of $M/M_{1}$, which we denote $\Gamma_\bullet(M/M_1)$. As we have already observed (see also \cite[Lem. 3.44]{BS}), if $\xi\npreceq \kappa$ then $\Ext_{\sC_\chi}^1(\nabla(\kappa),\nabla(\xi))=0$; we may thus assume that $\kappa_i\nprec \kappa_1$ for $i\geq 1$. 
	
	Let us define a canonical $\nabla$-flag of $M$ as follows. There exists $\widehat{\Gamma}_\Omega(M)\subseteq M$ containing $M_1$ such that $\widehat{\Gamma}_\Omega(M)/M_1=\Gamma_\Omega(M)$. If $\kappa_1\in\Omega$, define $\Gamma_\Omega(M)=\widehat{\Gamma}_\Omega(M)$. If $\kappa_1\notin\Omega$ then our assumption that $\kappa_i\nprec \kappa_1$ for all $i\geq 1$ shows that we have $\Ext_{\sC_\chi}^1(\Gamma_\Omega(M/M_1),M_1)=0$, since $\Gamma_\Omega(M/M_1)\in (\sC_\chi)_\Omega$ and has a costandard filtration. Thus, the map $\widehat{\Gamma}_\Omega(M)\twoheadrightarrow \Gamma_\Omega(M/M_1)$ splits and there exists a section $\gamma_\Omega :\Gamma_\Omega(M/M_1)\to \widehat{\Gamma}_\Omega(M)$. 
	
	Define $$\Theta\coloneqq\{\zeta\in\Lambda_I\mid \kappa_1\npreceq \zeta\},$$ which is a lower set of $\Lambda_I$ not containing $\kappa_1$. Choose arbitrarily a section $\gamma:\Gamma_{\Theta}(M)\to \widehat{\Gamma}_{\Theta}(M)$. For an arbitrary lower set $\Omega\subseteq\Lambda_I$ with $\kappa_1\notin\Omega$ we note that $\Omega\subseteq\Theta$ and so $\Gamma_{\Omega}(M/M_1)\subseteq \Gamma_{\Theta}(M/M_1)$. It is straightforward to check that $\gamma$ restricts to a section $\Gamma_\Omega(M/M_1)\to \widehat{\Gamma}_\Omega(M)$. We will assume that all such sections are obtained in this way, and we set $\Gamma_\Omega(M)$ to be $\gamma(\Gamma_\Omega(M/M_1))$ (so that $\widehat{\Gamma}_\Omega(M)=\Gamma_\Omega(M)\oplus M_1$, and the projection induces an isomorphism $\Gamma_\Omega(M)\xrightarrow{\sim} \Gamma_\Omega(M/M_1)$).
	
	We show that this is a canonical $\nabla$-flag of $M$. First, note that $\Gamma_{\Lambda_I}(M)=\widehat\Gamma_{\Lambda_I}(M)=M$, since $\Gamma_{\Lambda_I}(M/M_1)=M/M_1$, and that $\Gamma_\emptyset(M)=\gamma_\emptyset(\Gamma_\emptyset(M/M_1))=0$.
	
	Second, note that if $\kappa_1\notin \Omega$ then $\Gamma_\Omega(M)\cong \Gamma_\Omega(M/M_1)\in(\sC_\chi)_\Omega$ and thus has a costandard filtration. Furthermore, we see that  $M/\Gamma_\Omega(M)$ has a costandard filtration by considering the exact sequence $$0\to M_1\to M/\Gamma_\Omega(M)\to (M/M_1)/\Gamma_\Omega(M/M_1)\to 0.$$ We also see that $$(M/\Gamma_\Omega(M):\nabla(\kappa))=(M_1:\nabla(\kappa))+((M/M_1)/\Gamma_\Omega(M/M_1):\nabla(\kappa))=0$$ for all $\kappa\in \Omega.$ Now suppose that $\kappa_1\in\Omega$. Then we have the short exact sequence $$0\to M_1\to\Gamma_\Omega(M)\to \Gamma_\Omega(M/M_1)\to 0,$$ which shows that $\Gamma_\Omega(M)\in (\sC_\chi)_\Omega$ and that it has a costandard filtration. Furthermore, $M/\Gamma_\Omega(M)\cong (M/M_1)/(\Gamma_\Omega(M)/M_1)=(M/M_1)/\Gamma_\Omega(M/M_1)$ and thus has a costandard filtration such that $(M/\Gamma_\Omega(M):\nabla(\kappa))=0$ for all $\kappa\in\Omega$.
	
	Now, suppose $\Omega\subseteq\Omega'$. We have $\Gamma_\Omega(M/M_1)\subseteq \Gamma_{\Omega'}(M/M_1)$ and so $\widehat{\Gamma}_\Omega(M)\subseteq \widehat{\Gamma}_{\Omega'}(M)$. When $\kappa_1\in\Omega$ this just says that $\Gamma_\Omega(M)\subseteq \Gamma_{\Omega'}(M)$. If $\kappa\in \Omega'\setminus\Omega$ then we have $\Gamma_\Omega(M)\subseteq \widehat{\Gamma}_\Omega(M)\subseteq \widehat{\Gamma}_{\Omega'}(M)=\Gamma_{\Omega'}(M)$. If $\kappa\notin\Omega'$, then we have $\Gamma_\Omega(M)=\gamma(\Gamma_\Omega(M/M_1))\subseteq \gamma(\Gamma_{\Omega'}(M/M_1))=\Gamma_{\Omega'}(M)$ (since we assumed all sections $\gamma$ are obtained by restriction from the section corresponding to $\Theta$).
	
	Finally, suppose $\Omega$ and $\Omega'$ are lower sets in $\Lambda_I$ such that $\Omega=\Omega'\setminus\{\xi\}$ for some $\xi\in \Lambda_I$. If $\kappa_1\in\Omega$ then $$\frac{\Gamma_{\Omega'}(M)}{\Gamma_{\Omega}(M)}=\frac{\widehat{\Gamma}_{\Omega'}(M)}{\widehat{\Gamma}_\Omega(M)}\cong\frac{\widehat{\Gamma}_{\Omega'}(M)/M_1}{\widehat{\Gamma}_\Omega(M)/M_1}\cong \frac{\Gamma_{\Omega'}(M/M_1)}{\Gamma_{\Omega}(M/M_1)}\cong \bigoplus \nabla(\xi).$$
	
	If $\kappa_1\notin\Omega'$ then $$\frac{\Gamma_{\Omega'}(M)}{\Gamma_{\Omega}(M)}=\frac{\gamma(\Gamma_{\Omega'}(M/M_1))}{\gamma(\Gamma_{\Omega}(M/M_1))}\cong \frac{\Gamma_{\Omega'}(M/M_1)}{\Gamma_{\Omega}(M/M_1)}\cong \bigoplus \nabla(\xi)$$ since $\gamma$ is an isomorphism onto its image.
	
	If $\kappa_1=\xi$ then we have a short exact sequence $$0\to M_1=\nabla(\xi)\to \frac{\Gamma_{\Omega'}(M)}{\Gamma_{\Omega}(M)}=\frac{\widehat{\Gamma}_{\Omega'}(M)}{\gamma(\Gamma_{\Omega}(M/M_1))}\to \frac{\Gamma_{\Omega'}(M/M_1)}{\Gamma_{\Omega}(M/M_1)}\cong \bigoplus \nabla(\xi)\to 0.$$ Note now that $\Ext_{\sC_\chi}^1(\nabla(\xi),\nabla(\xi))=\Ext_{\sC_\chi}^1(\Delta(\xi),\Delta(\xi))=\Ext_{\sC_\chi}^1(Q_\chi^I(\xi),Q_\chi^I(\xi))=0$ by \cite[Cor. 8.13]{West} (setting $A=\bK$ and $\pi$ to be the counit in that result). This short exact sequence thus splits, and we get that $\Gamma_{\Omega'}(M)/\Gamma_{\Omega}(M)$ is a direct sum of copies of $\nabla(\xi)$. This proves existence.
	
	{\bf Uniqueness:} Let $\Upsilon=\{\xi\in\Lambda_I\mid (M:\nabla(\xi))\neq 0\}$, and let $\Omega$ be a lower set of $\Lambda_I$. First, we claim that if $\Omega\cap\Upsilon=\emptyset$, then we must have $\Gamma_\Omega(M)=0$. Since $\Gamma_\Omega(M)\in(\sC_\chi)_\Omega$, it is enough to show that $(\Gamma_\Omega(M):\nabla(\xi))=0$ for all $\xi\in \Omega$. But this is clearly true, since $0=(M:\nabla(\xi))=(\Gamma_\Omega(M):\nabla(\xi))+(M/\Gamma_\Omega(M):\nabla(\xi))$ for all $\xi\in\Omega$. This thus proves the claim.
	
	If $\Omega\cap\Upsilon\neq\emptyset$ then there must exist $\xi\in\Omega\cap\Upsilon$ such that $\kappa\nprec\xi$ for all $\kappa\in\Upsilon$. Since $\Ext_{\sC_\chi}^1(\nabla(\xi),\nabla(\kappa))=0$ whenever $\kappa\npreceq\xi$, we may therefore assume that the filtration $$0=M_0\subseteq M_1\subseteq\cdots \subseteq M_{n-1}\subseteq M_n=M$$ has $M_1=\nabla(\xi)$.
	
	Let us prove uniqueness by induction on the length of a costandard filtration of $M$. It is straightforward to see that the canonical $\nabla$-flag of $\nabla(\xi)$ is unique. Let $\Omega\subseteq \Lambda_I$ be a lower set; we will show that $\Gamma_\Omega(M)$ is unique. As above, we may assume that the filtration is such that $M_1=\nabla(\xi)$ with $\xi\in\Omega$ such that $\kappa\nprec\xi$ for all $\kappa\in\Upsilon$.
	
	By induction, we may assume that the canonical $\nabla$-flag of $M/M_1$ is unique, and thus uniquely define $\Gamma_\Omega(M/M_1)$. In particular, there exists a unique $\widehat{\Gamma}_\Omega(M)\subseteq M$ such that $\widehat{\Gamma}_\Omega(M)/M_1=\Gamma_\Omega(M/M_1)$. Let $\Gamma_\Omega(M)$ be part of a canonical $\nabla$-flag of $M$. 
	
	First, we claim that $\Gamma_\Omega(M)\subseteq \widehat{\Gamma}_\Omega(M)$. For this, it is sufficient to show that $\Hom_{\sC_\chi}(\Gamma_\Omega(M),M/\widehat{\Gamma}_\Omega(M))=0$. Note that $\Gamma_\Omega(M)$ has a $\nabla$-filtration in which all the $\nabla$ are indexed by elements of $\Omega$, and that $M/\widehat{\Gamma}_\Omega(M)\cong (M/M_1)/(\widehat{\Gamma}_\Omega(M)/M_1)=M/\Gamma_\Omega(M/M_1))$ has a $\nabla$-filtration in which all the $\nabla$ are indexed by elements of $\Lambda_I\setminus\Omega$. Recalling that in $\sC_\chi$ each $\nabla(\zeta)$ has a $\overline{\nabla}$-filtration whose successive quotients are all $\overline{\nabla}(\zeta)$, the question reduces to showing that $\Hom_{\sC_\chi}(\nabla(\omega),\overline{\nabla}(\zeta))=0$ whenever $\omega\in\Omega$ and $\zeta\in \Lambda_I\setminus \Omega$. If $0\neq g\in\Hom_{\sC_\chi}(\nabla(\omega),\overline{\nabla}(\zeta))$ then we must have that $L(\zeta)\subseteq g(\nabla(\omega))$ (as $L(\zeta)$ is the irreducible socle of $\overline{\nabla}(\zeta)$), and thus that $[\nabla(\omega):L(\zeta)]\neq 0$. By \cite[Prop. 4.5]{Jan4}, this implies that $\zeta\preceq \omega$. Since $\omega\in\Omega$ and $\Omega$ is a lower set, this implies that $\zeta\in\Omega$, a contradiction. This thus shows that $\Gamma_\Omega(M)\subseteq \widehat{\Gamma}_\Omega(M)$.
	
	We will now show that $\Gamma_\Omega(M)= \widehat{\Gamma}_\Omega(M)$. To show this, it is enough to see that $(\Gamma_\Omega(M):\nabla(\kappa))=(\widehat{\Gamma}_\Omega(M):\nabla(\kappa))$ for all $\kappa\in \Omega$. For this we observe that if $\kappa\neq \xi$ then we have $(\Gamma_\Omega(M):\nabla(\kappa))=(M:\nabla(\kappa))=(M/M_1:\nabla(\kappa))=(\widehat{\Gamma}_\Omega(M):\nabla(\kappa))$, and we also have $$(\Gamma_\Omega(M):\nabla(\xi))=(M:\nabla(\xi))=1+(M/M_1:\nabla(\xi))=(\widehat{\Gamma}_\Omega(M):\nabla(\xi)).$$ 
	
	This thus proves the uniqueness of the $\nabla$-flag.
\end{proof}

\begin{cor}
	Every $M\in\sC_\chi$ with a standard filtration has a unique canonical $\Delta$-flag.
\end{cor}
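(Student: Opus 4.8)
The plan is to deduce this immediately from the two results just established: the bijection (Proposition above) between canonical $\Delta$-flags of a module $M$ with a standard filtration and canonical $\nabla$-flags of $\bD M$, together with the Lemma asserting that every object of $\sC_\chi$ with a costandard filtration has a \emph{unique} canonical $\nabla$-flag.

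First I would observe that if $M\in\sC_\chi$ has a standard filtration $0=M_0\subseteq\cdots\subseteq M_n=M$ with $M_i/M_{i-1}\cong\Delta(\lambda_i)$, then applying the exact duality $\bD$ gives a filtration $0=\bD(M/M_0)\subseteq\cdots\subseteq\bD(M/M_n)$ — more precisely the images under $\bD$ of the quotient maps — whose successive subquotients are $\bD(\Delta(\lambda_i))=\nabla(\lambda_i)$. Hence $\bD M$ has a costandard filtration, so the Lemma applies to it.

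Next, by the Lemma $\bD M$ possesses exactly one canonical $\nabla$-flag. By the Proposition, the assignment $\Gamma^\bullet(M)\mapsto\Gamma_\bullet(\bD M)$ (with $\Gamma_\Omega(\bD M)=\ker(\bD M\twoheadrightarrow\bD\Gamma^\Omega(M))$) is a bijection between the set of canonical $\Delta$-flags of $M$ and the set of canonical $\nabla$-flags of $\bD M$. A bijection from a set to a singleton forces the source to be a singleton, so $M$ has exactly one canonical $\Delta$-flag.

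There is no real obstacle here; the only minor point to check carefully is the first step, namely that $\bD$ genuinely carries a standard filtration of $M$ to a costandard filtration of $\bD M$ — but this is exactly the content of the facts recorded in Subsection~\ref{ss3.2} that $\bD$ is exact, fixes isomorphism classes of simples, and satisfies $\bD(\Delta(\lambda))=\nabla(\lambda)$. One could alternatively argue symmetrically using $\overline{\bD}$ in place of $\bD$, but invoking the Proposition as stated is the cleanest route.
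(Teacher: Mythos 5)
Your proposal is correct and is exactly the argument the paper intends (which is why the corollary is stated without proof): $\bD M$ has a costandard filtration, hence a unique canonical $\nabla$-flag by the Lemma, and the Proposition's bijection transports existence and uniqueness back to canonical $\Delta$-flags of $M$. The preliminary check that $\bD$ carries standard filtrations to costandard filtrations is indeed the only point needing a word, and your justification via exactness of $\bD$ and $\bD(\Delta(\lambda))=\nabla(\lambda)$ is the right one.
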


\begin{dfn}
	Let $M\in\sC_\chi$ have a costandard filtration. A {\bf $\overline{\Delta}$-section of the $\nabla$-flag} of $M$ consists of 
	\begin{itemize}
		\item A finite set $\Pi$.
		\item A map $e:\Pi\to\Lambda_I$.
		\item For each $\pi\in\Pi$, a morphism $\phi_\pi^M:T_\chi(e(\pi))\to M$ such that, for each $\xi\in\Lambda_I$, the set $$\{\overline{\Delta}(\xi)\xhookrightarrow{\overline{\iota}_\xi} T_\chi(\xi)\xrightarrow{\phi_\pi^M} M\mid e(\pi)=\xi\}$$ forms a basis of $\Hom_{\sC_\chi}(\overline{\Delta}(\xi),M)$.
	\end{itemize}
\end{dfn}

Using the inclusions $\iota_\xi$ and the projections $\overline{\pi}_\xi$ and $\pi_\xi$, respectively, we may similarly define a {\bf $\Delta$-section of the $\nabla$-flag} of $M$ (for $M\in\sC_\chi$ admitting a costandard filtration), and a {\bf $\overline{\nabla}$-section of the $\Delta$-flag} of $M$ and a {\bf $\nabla$-section of the $\Delta$-flag} of $M$ (for $M\in\sC_\chi$ admitting a standard filtration).

The following results are proved in more-or-less the same way as Lemmas 2.9 and 2.10 in \cite{RW1}, so we omit the proofs.

\begin{lemma}\label{SubSect}
	Let $M\in\sC_\chi$ have a costandard filtration, and let $(\Pi,e,(\phi_\pi^M)_{\pi\in\Pi})$ be a $\overline{\Delta}$-section of the $\nabla$-flag of $M$. Let $\Omega$ be a lower set of $\Lambda_I$. For each $\pi\in\Pi$ such that $e(\pi)\in\Omega$ there exists a unique map $\phi_\pi^{\Gamma_\Omega(M)}:T_\chi(e(\pi))\to \Gamma_\Omega(M)$ such that the following diagram commutes:
	\begin{eqnarray*}
		\begin{array}{c}\xymatrix{
				T_\chi(e(\pi))\ar@{->}[rr]^{\phi_\pi^M} \ar@{->}[drr]^{\phi_\pi^{\Gamma_\Omega(M)}} & & M  \ar@{<-^{)}}[d] \\ & & \Gamma_\Omega(M).
		}\end{array}
	\end{eqnarray*}
	
	For each lower set $\Omega\subseteq\Lambda_I$ there exists a $\overline{\Delta}$-section of the $\nabla$-flag of $\Gamma_\Omega(M)$ defined as follows:
	\begin{itemize}
		\item $\Pi_\Omega=\{\pi\in\Pi\mid e(\pi)\in\Omega\}$.
		\item $e_\Omega=e\vert_{\Pi_\Omega}:\Pi_\Omega\to \Omega$.
		\item For each $\pi\in\Pi_\Omega$, the map $\phi_\pi^{\Gamma_\Omega(M)}:T_\chi(e(\pi))\to \Gamma_\Omega(M)$ is as described above.
	\end{itemize}
\end{lemma}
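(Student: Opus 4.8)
The plan is to mimic the proof of \cite[Lem.\ 2.9]{RW1}, using the defining properties of canonical $\nabla$-flags together with the homological vanishing facts about tilting modules and (proper) standard/costandard objects established in Subsections~\ref{ss3.1}--\ref{ss3.3}.

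First I would establish the existence and uniqueness of each $\phi_\pi^{\Gamma_\Omega(M)}$. Fix $\pi\in\Pi$ with $\xi\coloneqq e(\pi)\in\Omega$. Since $\Gamma_\Omega(M)\in(\sC_\chi)_\Omega$ and $M/\Gamma_\Omega(M)$ has a costandard filtration with $(M/\Gamma_\Omega(M):\nabla(\kappa))=0$ for all $\kappa\in\Omega$, applying $\Hom_{\sC_\chi}(T_\chi(\xi),-)$ to the short exact sequence $0\to\Gamma_\Omega(M)\to M\to M/\Gamma_\Omega(M)\to 0$ gives an exact sequence whose outer terms I must control. The key point is that $T_\chi(\xi)$ has a $\Delta_\varepsilon$-filtration (indeed a standard filtration), so $\Ext^n_{\sC_\chi}(T_\chi(\xi),M/\Gamma_\Omega(M))=0$ for $n\geq 1$ (the vanishing recalled at the end of Subsection~\ref{ss3.1} and in Subsection~\ref{ss3.2}); moreover $\Hom_{\sC_\chi}(T_\chi(\xi),M/\Gamma_\Omega(M))=0$, which follows because $T_\chi(\xi)$ has a $\nabla$-filtration with sections $\nabla(\zeta)$ for $\zeta\preceq\xi$ (hence $\zeta\in\Omega$), while every composition factor of the head-relevant part of $M/\Gamma_\Omega(M)$ sits in $\Lambda_I\setminus\Omega$ — more precisely, any nonzero map $T_\chi(\xi)\to M/\Gamma_\Omega(M)$ would force some $\Hom_{\sC_\chi}(\nabla(\zeta),\overline{\nabla}(\eta))\neq 0$ with $\zeta\in\Omega$, $\eta\notin\Omega$, which by \cite[Prop.\ 4.5]{Jan4} gives $\eta\preceq\zeta$, contradicting that $\Omega$ is a lower set (this is exactly the argument used in the uniqueness part of the preceding lemma). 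Thus $\Hom_{\sC_\chi}(T_\chi(\xi),\Gamma_\Omega(M))\to\Hom_{\sC_\chi}(T_\chi(\xi),M)$ is an isomorphism, so $\phi_\pi^M$ lifts uniquely through the inclusion $\Gamma_\Omega(M)\hookrightarrow M$; this gives $\phi_\pi^{\Gamma_\Omega(M)}$ and the commuting triangle.

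Next I would verify that $(\Pi_\Omega,e_\Omega,(\phi_\pi^{\Gamma_\Omega(M)})_{\pi\in\Pi_\Omega})$ is a $\overline{\Delta}$-section of the $\nabla$-flag of $\Gamma_\Omega(M)$. The set $\Pi_\Omega$ is finite and $e_\Omega$ maps into $\Omega=\Lambda_I$-for-the-category-$(\sC_\chi)_\Omega$, so the only substantive point is the basis condition: for each $\xi\in\Omega$ I must show that $\{\,\overline{\Delta}(\xi)\xhookrightarrow{\overline{\iota}_\xi}T_\chi(\xi)\xrightarrow{\phi_\pi^{\Gamma_\Omega(M)}}\Gamma_\Omega(M)\mid e(\pi)=\xi\,\}$ is a basis of $\Hom_{\sC_\chi}(\overline{\Delta}(\xi),\Gamma_\Omega(M))$. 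Composing the commuting triangle with $\overline{\iota}_\xi$, the map $(x\circ\overline{\iota}_\xi)\mapsto(\Gamma_\Omega(M)\hookrightarrow M)\circ(x\circ\overline{\iota}_\xi)$ identifies these composites with the corresponding ones into $M$, which form a basis of $\Hom_{\sC_\chi}(\overline{\Delta}(\xi),M)$ by hypothesis. So it suffices to check that $\Hom_{\sC_\chi}(\overline{\Delta}(\xi),\Gamma_\Omega(M))\to\Hom_{\sC_\chi}(\overline{\Delta}(\xi),M)$ is an isomorphism; this is exactly the statement that $\Hom_{\sC_\chi}(\overline{\Delta}(\xi),M/\Gamma_\Omega(M))=0$ and $\Ext^1_{\sC_\chi}(\overline{\Delta}(\xi),\Gamma_\Omega(M))=0$ — wait, more carefully: applying $\Hom_{\sC_\chi}(\overline{\Delta}(\xi),-)$ to $0\to\Gamma_\Omega(M)\to M\to M/\Gamma_\Omega(M)\to 0$, the term $\Hom_{\sC_\chi}(\overline{\Delta}(\xi),M/\Gamma_\Omega(M))$ vanishes since $(M/\Gamma_\Omega(M):\nabla(\kappa))=0$ for $\kappa\in\Omega$ forces (using the $\overline{\nabla}$-filtration of each $\nabla$ and \cite[Prop.\ 4.5]{Jan4} as above) that $\overline{\Delta}(\xi)$, with head $L(\xi)$, $\xi\in\Omega$, admits no nonzero map into it; hence $\Hom_{\sC_\chi}(\overline{\Delta}(\xi),\Gamma_\Omega(M))\hookrightarrow\Hom_{\sC_\chi}(\overline{\Delta}(\xi),M)$, and surjectivity follows by a diagram chase using that $\Ext^1_{\sC_\chi}(\overline{\Delta}(\xi),-)$ applied to the shorter filtration pieces vanishes, or more directly by comparing dimensions via $(M:\nabla(\kappa))=(\Gamma_\Omega(M):\nabla(\kappa))$ for $\kappa\in\Omega$ together with $\dim\Hom_{\sC_\chi}(\overline{\Delta}(\kappa),\nabla(\mu))=\delta_{\kappa\mu}$.

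The main obstacle I anticipate is bookkeeping rather than conceptual: pinning down precisely why $\Hom_{\sC_\chi}(-,M/\Gamma_\Omega(M))$ and the relevant $\Ext^1$ groups vanish, i.e.\ extracting the correct consequences of the \emph{canonical} $\nabla$-flag axioms (the condition $(M/\Gamma_\Omega(M):\nabla(\kappa))=0$ for $\kappa\in\Omega$ is doing all the work, but translating it into a $\Hom$-vanishing requires passing through $\overline{\nabla}$-filtrations and the linkage statement \cite[Prop.\ 4.5]{Jan4}). Since the analogous lemma in \cite{RW1} is proved by exactly this kind of argument — and the distinction between $\Delta/\overline{\Delta}$ does not affect the structure of the proof, only which auxiliary vanishing result is cited — I would state the lemma and refer to \cite[Lem.\ 2.9]{RW1}, indicating the one substitution needed (use $\Ext^{\geq 1}_{\sC_\chi}(\text{standard-filtered},\text{costandard-filtered})=0$ and \cite[Prop.\ 4.5]{Jan4} in place of the corresponding facts for $\operatorname{Rep}(G)$), rather than reproducing the routine diagram chase in full.
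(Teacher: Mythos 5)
Your argument is correct and is essentially the intended one: the paper omits this proof, deferring to \cite[Lem.~2.9]{RW1}, and your reduction --- establishing $\Hom_{\sC_\chi}(T_\chi(\xi),M/\Gamma_\Omega(M))=0$ and $\Hom_{\sC_\chi}(\overline{\Delta}(\xi),M/\Gamma_\Omega(M))=0$ for $\xi\in\Omega$ from the flag axiom $(M/\Gamma_\Omega(M):\nabla(\kappa))=0$ for $\kappa\in\Omega$, then transporting the basis along the resulting isomorphisms --- is exactly that argument adapted to the $\overline{\Delta}$/$\Delta$ distinction. One small simplification: the $\Ext^1$ vanishings you invoke are not needed, since left-exactness of $\Hom$ in each variable together with $\dim\Hom_{\sC_\chi}(\overline{\Delta}(\xi),N)=(N:\nabla(\xi))$ already gives both the unique factorisation and the basis statement (and for completeness one should note that $\Hom_{\sC_\chi}(\overline{\Delta}(\xi),\Gamma_\Omega(M))=0$ for $\xi\notin\Omega$, which the definition of a $\overline{\Delta}$-section also requires).
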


\begin{lemma}\label{QuotSect}
	Let $M\in\sC_\chi$ have a costandard filtration, and let $(\Pi,e,(\phi_\pi^M)_{\pi\in\Pi})$ be a $\overline{\Delta}$-section of the $\nabla$-flag of $M$. For each lower set $\Omega\subseteq\Lambda_I$ there exists a $\overline{\Delta}$-section of the $\nabla$-flag of $M/\Gamma_\Omega(M)$ defined as follows:
	\begin{itemize}
		\item $\Pi^\Omega=\{\pi\in\Pi\mid e(\pi)\notin\Omega\}$.
		\item $e_\Omega=e\vert_{\Pi^\Omega}:\Pi_\Omega\to \Lambda_I\setminus\Omega$.
		\item For each $\pi\in\Pi_\Omega$, the map $\phi_\pi^{M/\Gamma_\Omega(M)}:T_\chi(e(\pi))\to M/\Gamma_\Omega(M)$ is the composition $$T_\chi(e(\pi))\xrightarrow{\phi_\pi^M} M\twoheadrightarrow M/\Gamma_\Omega(M).$$
	\end{itemize}
\end{lemma}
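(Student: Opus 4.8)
The plan is to mimic the proof of \cite[Lem. 2.10]{RW1}, adapting it to the distinction between $\overline{\Delta}$-sections and $\overline{\nabla}$-filtrations that arises in $\sC_\chi$. First I would recall that, by the definition of a canonical $\nabla$-flag and by Lemma~\ref{SubSect}, there is a short exact sequence $0\to \Gamma_\Omega(M)\to M\xrightarrow{q} M/\Gamma_\Omega(M)\to 0$ in which $\Gamma_\Omega(M)\in(\sC_\chi)_\Omega$ and $M/\Gamma_\Omega(M)$ has a costandard filtration with $(M/\Gamma_\Omega(M):\nabla(\kappa))=0$ for all $\kappa\in\Omega$. For each $\pi\in\Pi^\Omega$ we simply set $\phi_\pi^{M/\Gamma_\Omega(M)}\coloneqq q\circ\phi_\pi^M$, so nothing needs to be constructed; the whole content is to check that these maps give a $\overline{\Delta}$-section, i.e. that for each $\xi\in\Lambda_I\setminus\Omega$ the set $\{\,\overline{\Delta}(\xi)\xhookrightarrow{\overline{\iota}_\xi} T_\chi(\xi)\xrightarrow{q\circ\phi_\pi^M} M/\Gamma_\Omega(M)\mid e(\pi)=\xi\,\}$ is a basis of $\Hom_{\sC_\chi}(\overline{\Delta}(\xi),M/\Gamma_\Omega(M))$.

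The key step is to show that applying $\Hom_{\sC_\chi}(\overline{\Delta}(\xi),-)$ to the short exact sequence above yields, for each $\xi\notin\Omega$, an isomorphism $\Hom_{\sC_\chi}(\overline{\Delta}(\xi),M)\xrightarrow{\ \sim\ }\Hom_{\sC_\chi}(\overline{\Delta}(\xi),M/\Gamma_\Omega(M))$. Exactness of $\Hom$ gives the exact sequence
\begin{equation*}
0\to\Hom_{\sC_\chi}(\overline{\Delta}(\xi),\Gamma_\Omega(M))\to\Hom_{\sC_\chi}(\overline{\Delta}(\xi),M)\to\Hom_{\sC_\chi}(\overline{\Delta}(\xi),M/\Gamma_\Omega(M))\to\Ext^1_{\sC_\chi}(\overline{\Delta}(\xi),\Gamma_\Omega(M)).
\end{equation*}
The left-hand $\Hom$ vanishes because $\Gamma_\Omega(M)\in(\sC_\chi)_\Omega$, so all its composition factors are $L(\kappa)$ with $\kappa\in\Omega$, whereas $\overline{\Delta}(\xi)$ has irreducible head $L(\xi)$ with $\xi\notin\Omega$ and $\Omega$ is a lower set (so any quotient of $\overline{\Delta}(\xi)$ landing in $(\sC_\chi)_\Omega$ would have to kill the head; combined with $[\overline{\Delta}(\xi):L(\xi)]=1$ this forces the map to be zero). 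The $\Ext^1$ term vanishes because $\Gamma_\Omega(M)$ has a costandard filtration and $\overline{\Delta}(\xi)$ has a proper standard filtration, using the vanishing $\Ext^1_{\sC_\chi}(A,B)=0$ for $A$ proper-standard-filtered and $B$ costandard-filtered recorded at the end of Subsection~\ref{ss3.2} (equivalently (\ref{dimHom})). Hence $\Hom_{\sC_\chi}(\overline{\Delta}(\xi),M)\to\Hom_{\sC_\chi}(\overline{\Delta}(\xi),M/\Gamma_\Omega(M))$ is an isomorphism.

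Now the argument concludes formally: since $(\Pi,e,(\phi_\pi^M))$ is a $\overline{\Delta}$-section of the $\nabla$-flag of $M$, for $\xi\notin\Omega$ the set $\{\phi_\pi^M\circ\overline{\iota}_\xi\mid e(\pi)=\xi\}$ is a basis of $\Hom_{\sC_\chi}(\overline{\Delta}(\xi),M)$, and applying the above isomorphism (which is precisely post-composition with $q$) sends this basis to $\{q\circ\phi_\pi^M\circ\overline{\iota}_\xi\mid e(\pi)=\xi\}=\{\phi_\pi^{M/\Gamma_\Omega(M)}\circ\overline{\iota}_\xi\mid e(\pi)=\xi\}$, which is therefore a basis of $\Hom_{\sC_\chi}(\overline{\Delta}(\xi),M/\Gamma_\Omega(M))$. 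One also notes that as $\xi$ ranges over $\Lambda_I\setminus\Omega$ every relevant $\Hom$-space is accounted for (spaces $\Hom_{\sC_\chi}(\overline{\Delta}(\xi),M/\Gamma_\Omega(M))$ with $\xi\in\Omega$ being zero, matching $(M/\Gamma_\Omega(M):\nabla(\xi))=0$), so the data $(\Pi^\Omega,e_\Omega,(\phi_\pi^{M/\Gamma_\Omega(M)}))$ indeed satisfies all requirements of a $\overline{\Delta}$-section. The only mild subtlety — the "main obstacle", such as it is — is the vanishing of $\Hom_{\sC_\chi}(\overline{\Delta}(\xi),\Gamma_\Omega(M))$: one must be careful that it follows from $\xi\notin\Omega$ together with $\Omega$ being a lower set, rather than from any $\Ext$-style argument, since $\overline{\Delta}(\xi)$ need not be projective in any truncated category. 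Everything else is a routine transport of bases along an isomorphism, which is why the proof is omitted in the main text and only sketched here.
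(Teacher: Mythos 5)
Your argument is correct and is precisely the standard one the paper has in mind when it omits the proof with a reference to \cite[Lem.~2.10]{RW1}: apply $\Hom_{\sC_\chi}(\overline{\Delta}(\xi),-)$ to $0\to\Gamma_\Omega(M)\to M\to M/\Gamma_\Omega(M)\to 0$, kill the outer terms (the $\Hom$ by the composition-factor argument for $\xi\notin\Omega$, the $\Ext^1$ by the proper-standard/costandard vanishing), and transport the basis, noting that the $\Hom$-spaces for $\xi\in\Omega$ vanish since $(M/\Gamma_\Omega(M):\nabla(\xi))=0$. Nothing further is needed.
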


It is straightforward to check that if $(\Pi,e,(\phi_\pi^M)_{\pi\in\Pi})$ is a $\overline{\Delta}$-section of the $\nabla$-flag of some $M\in\sC_\chi$ with a costandard filtration, then $(\Pi,e,(\bD(\phi_\pi^M))_{\pi\in\Pi})$ is a $\overline{\nabla}$-section of the $\Delta$-flag of $\bD M$. The above results can therefore easily be modified to give the analogous results for $\overline{\nabla}$-sections of $\Delta$-flags; details are left to the reader.

Let $M\in\sC_\chi$ have a costandard filtration, and suppose that $(\Pi,e,(\phi_\pi^M)_{\pi\in\Pi})$ is a $\Delta$-section of the $\nabla$-flag of $M$. For each $\xi\in\Lambda_I$ we then have that $(\phi_\pi^M)_{\pi\in e^{-1}(\xi)}\subseteq \Hom_{\sC_\chi}(T_\chi(\xi),M)$ are such that the maps $(\phi_\pi^M\circ \iota_\xi)_{\pi\in e^{-1}(\xi)}$ form a basis of $\Hom_{\sC_\chi}(\Delta(\xi),M)$. There is an exact sequence $$\Hom_{\sC_\chi}(\Delta(\xi),M)\to \Hom_{\sC_\chi}(\overline{\Delta}(\xi),M)\to \Ext_{\sC_\chi}^1(\Delta(\xi)/\widehat{\iota}_\xi(\overline{\Delta}(\xi)),M)$$ and $\Ext_{\sC_\chi}^1(\Delta(\xi)/\widehat{\iota}_\xi(\overline{\Delta}(\xi)),M)=0$ since $\Ext_{\sC_\chi}^1(\overline{\Delta}(\gamma),\nabla(\kappa))=0$ for all $\gamma,\kappa\in\Lambda_I$. Therefore the map $$\Hom_{\sC_\chi}(\Delta(\xi),M)\to \Hom_{\sC_\chi}(\overline{\Delta}(\xi),M),\qquad f\mapsto f\circ\widehat{\iota}_\xi$$ is surjective and the maps $(\phi_\pi^M\circ \overline{\iota}_\xi)\subseteq \Hom_{\sC_\chi}(\overline{\Delta}(\xi),M)$ span $\Hom_{\sC_\chi}(\overline{\Delta}(\xi),M)$. In particular, we can find a set $\Pi_\xi\subseteq e^{-1}(\xi)$ such that the maps $(\phi_\pi^M\circ\overline{\iota}_\xi)_{\pi\in\Pi_\xi}$ form a basis of $\Hom_{\sC_\chi}(\overline{\Delta}(\xi),M)$. Therefore, setting $\overline{\Pi}=\bigcup_{\xi\in\Lambda_I}\Pi_\xi$ and $\overline{e}=e\vert_{\overline{\Pi}}$, we get the following lemma.

\begin{lemma}\label{Stand2PropStand}
	Let $M\in\sC_\chi$ have a costandard filtration and suppose that $(\Pi,e,(\phi_\pi^M)_{\pi\in\Pi})$ is a $\Delta$-section of the $\nabla$-flag of $M$. Then, in the notation above, $(\overline{\Pi},\overline{e},(\phi_\pi^M)_{\pi\in\overline{\Pi}})$ is a $\overline{\Delta}$-section of the $\nabla$-flag of $M$.
\end{lemma}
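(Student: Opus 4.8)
The plan is to check directly that the triple $(\overline{\Pi},\overline{e},(\phi_\pi^M)_{\pi\in\overline{\Pi}})$ satisfies the three conditions in the definition of a $\overline{\Delta}$-section of the $\nabla$-flag of $M$. The first two are immediate: $\overline{\Pi}=\bigcup_{\xi\in\Lambda_I}\Pi_\xi$ is a subset of the finite set $\Pi$ and hence finite, and $\overline{e}=e\vert_{\overline{\Pi}}$ is a map $\overline{\Pi}\to\Lambda_I$; for each $\pi\in\overline{\Pi}$ we keep the morphism $\phi_\pi^M$, which is a map $T_\chi(\overline{e}(\pi))\to M$ because $\overline{e}(\pi)=e(\pi)$.

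So the only point needing an argument is the last condition: for each $\xi\in\Lambda_I$, the set $\{\phi_\pi^M\circ\overline{\iota}_\xi\mid\pi\in\overline{\Pi},\ \overline{e}(\pi)=\xi\}$ must be a basis of $\Hom_{\sC_\chi}(\overline{\Delta}(\xi),M)$. First I would observe that, since each $\Pi_\xi$ is chosen inside the fibre $e^{-1}(\xi)$ and these fibres are disjoint, the set $\{\pi\in\overline{\Pi}\mid\overline{e}(\pi)=\xi\}$ is exactly $\Pi_\xi$. Thus the condition to be verified is precisely the assertion, obtained in the discussion immediately preceding the lemma, that $\{\phi_\pi^M\circ\overline{\iota}_\xi\mid\pi\in\Pi_\xi\}$ is a basis of $\Hom_{\sC_\chi}(\overline{\Delta}(\xi),M)$.

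For completeness I would recall why that assertion holds. Writing $\overline{\iota}_\xi=\iota_\xi\circ\widehat{\iota}_\xi$, the maps $\phi_\pi^M\circ\overline{\iota}_\xi=(\phi_\pi^M\circ\iota_\xi)\circ\widehat{\iota}_\xi$ are the images of the basis $(\phi_\pi^M\circ\iota_\xi)_{\pi\in e^{-1}(\xi)}$ of $\Hom_{\sC_\chi}(\Delta(\xi),M)$ under the map $f\mapsto f\circ\widehat{\iota}_\xi$. Applying $\Hom_{\sC_\chi}(-,M)$ to $0\to\overline{\Delta}(\xi)\xrightarrow{\widehat{\iota}_\xi}\Delta(\xi)\to\Delta(\xi)/\widehat{\iota}_\xi(\overline{\Delta}(\xi))\to 0$ and using that $\Delta(\xi)/\widehat{\iota}_\xi(\overline{\Delta}(\xi))$ has a proper standard filtration while $M$ has a costandard filtration, one gets $\Ext^1_{\sC_\chi}(\Delta(\xi)/\widehat{\iota}_\xi(\overline{\Delta}(\xi)),M)=0$, so that map is surjective; hence the $\phi_\pi^M\circ\overline{\iota}_\xi$ span $\Hom_{\sC_\chi}(\overline{\Delta}(\xi),M)$, and $\Pi_\xi$ is by definition a subset of $e^{-1}(\xi)$ picking out a basis. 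There is no genuine obstacle: the whole content is the vanishing $\Ext^1_{\sC_\chi}(\overline{\Delta}(\gamma),\nabla(\kappa))=0$ from (\ref{dimHom}) together with the elementary fact that a finite spanning set contains a basis, and the lemma is in effect just a repackaging of the paragraph preceding it.
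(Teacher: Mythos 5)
Your proposal is correct and follows exactly the paper's argument: the only nontrivial point is that the restriction map $\Hom_{\sC_\chi}(\Delta(\xi),M)\to\Hom_{\sC_\chi}(\overline{\Delta}(\xi),M)$, $f\mapsto f\circ\widehat{\iota}_\xi$, is surjective, which you establish via the same exact sequence and the vanishing $\Ext^1_{\sC_\chi}(\Delta(\xi)/\widehat{\iota}_\xi(\overline{\Delta}(\xi)),M)=0$ coming from (\ref{dimHom}), after which $\Pi_\xi$ is by construction a subset of the spanning set picking out a basis. This is precisely the content of the paragraph preceding the lemma in the paper, so nothing further is needed.
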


On the other hand, let $M\in\sC_\chi$ have a costandard filtration and suppose that $(\overline{\Pi},\overline{e},(\phi_\pi^M)_{\pi\in\overline{\Pi}})$ is a $\overline{\Delta}$-section of the $\nabla$-flag of $M$. Let $\xi\in\Lambda_I$; by definition the $(\phi_\pi^M\circ\overline{\iota}_\xi)_{\pi\in \overline{e}^{-1}(\xi)}$ form a basis of $\Hom_{\sC_\chi}(\overline{\Delta}(\xi),M)$. Furthermore, by Corollary~\ref{TiltBasCor} there exists a collection of maps $\sigma_1^\xi,\ldots,\sigma_{N_\xi}^\xi\in\Hom_{\sC_\chi}(T_\chi(\xi),T_\chi(\xi))$ such that $\pi_\xi\circ\sigma_1^\xi,\ldots,\pi_\xi\circ\sigma_{N_\xi}^\xi$ form a basis of $\Hom_{\sC_\chi}(T_\chi(\xi),\nabla(\xi))$.

Consider now the diagram 

\begin{eqnarray*}
	\begin{array}{c}\xymatrix{
			& & \Delta(\xi) \ar@{->}[rr] \ar@{^{(}->}[d]^{\iota_\xi} & & \nabla(\xi) \ar@{<<-}[d]^{\pi_\xi} & & \\ 
			\Delta(\xi) \ar@{->}[rr]^{\iota_\xi} \ar@{->}[drr] & & T_\chi(\xi) \ar@{->}[rr]^{\sigma_i^{\xi}} \ar@{->>}[d]^{\overline{\pi}_\xi} & & T_\chi(\xi) \ar@{->}[rr]^{\phi_\pi^M} \ar@{<-^{)}}[d]^{\overline{\iota}_\xi} & & M \ar@{<-}[dll] \\
			& & \overline{\nabla}(\xi) & & \overline{\Delta}(\xi) & & 
			.}\end{array}
\end{eqnarray*}

It is straightforward to check that the maps $\pi_\xi\circ \sigma_1^\xi\circ \iota_\xi,\ldots,\pi_\xi\circ \sigma_{N_\xi}^\xi\circ \iota_\xi$ form a basis of $\Hom_{\sC_\chi}(\Delta(\xi),\nabla(\xi))$. Recalling that $\Hom_{\sC_\chi}(\Delta(\xi),\overline{\nabla}(\kappa))=\Hom_{\sC_\chi}(\Delta(\xi),\nabla(\kappa))=0$ whenever $\kappa\neq\xi$, we therefore obtain by \cite[Thm 4.45]{BS} that the maps $\phi_\pi^M\circ \sigma_i^\xi\circ\iota_\xi$, for $1\leq i\leq N_\xi$ and $\pi\in \overline{e}^{-1}(\xi)$, form a basis of $\Hom_{\sC_\chi}(\Delta(\xi),M)$. In particular, let us set $\Pi^\xi=\overline{e}^{-1}(\xi)\times\{1,\ldots,N_\xi\}$, $\Pi=\bigcup_{\xi\in\Lambda_I}\Pi^\xi$, and set $e(\pi,i)=\overline{e}(\pi)$ for all $\pi\in\Pi^{\overline{e}(\pi)}$ and $i\in\{1,\ldots, N_{\overline{e}(\pi)}\}$. We then get the following result
\begin{lemma}\label{PropStand2Stand}
	Let $M\in\sC_\chi$ have a costandard filtration and suppose that $(\overline{\Pi},\overline{e},(\phi_\pi^M)_{\pi\in\overline{\Pi}})$ is a $\overline{\Delta}$-section of the $\nabla$-flag of $M$. Using the above notation, $(\Pi,e,(\phi_\pi^M\circ \sigma_i^{\overline{e}(\pi)})_{(\pi,i)\in\Pi})$ is a $\Delta$-section of the $\nabla$-flag of $M$.
\end{lemma}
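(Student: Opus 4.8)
The plan is to verify that the triple $(\Pi, e, (\phi_\pi^M \circ \sigma_i^{\overline{e}(\pi)})_{(\pi,i) \in \Pi})$ satisfies the three bullet points in the definition of a $\Delta$-section of the $\nabla$-flag of $M$. The set $\Pi = \bigcup_{\xi \in \Lambda_I} \Pi^\xi$ is finite because $\overline{\Pi}$ is finite (each $\overline{e}^{-1}(\xi)$ is finite since it indexes a basis of a finite-dimensional $\Hom$-space, and $\overline{\Pi}$ is already assumed finite), each $N_\xi = \dim\Hom_{\sC_\chi}(T_\chi(\xi), \nabla(\xi))$ is finite, and only finitely many $\xi \in \Lambda_I$ contribute (since $(\overline{\Delta}(\xi), M)$-space is non-zero only for $\xi$ with $(M:\nabla(\xi)) \neq 0$). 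The map $e: \Pi \to \Lambda_I$ sending $(\pi, i) \mapsto \overline{e}(\pi)$ is then a well-defined map of finite sets, and each $\phi_\pi^M \circ \sigma_i^{\overline{e}(\pi)}$ is indeed a morphism $T_\chi(\overline{e}(\pi)) \to M$ since $\sigma_i^{\overline{e}(\pi)} \in \End_{\sC_\chi}(T_\chi(\overline{e}(\pi)))$.

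The substantive content is the third bullet point: for each $\xi \in \Lambda_I$, the maps $\{\phi_\pi^M \circ \sigma_i^\xi \circ \iota_\xi \mid (\pi, i) \in e^{-1}(\xi)\} = \{\phi_\pi^M \circ \sigma_i^\xi \circ \iota_\xi \mid \pi \in \overline{e}^{-1}(\xi),\ 1 \leq i \leq N_\xi\}$ must form a basis of $\Hom_{\sC_\chi}(\Delta(\xi), M)$. Here I would follow the argument already sketched in the text preceding the lemma statement essentially verbatim: by Corollary~\ref{TiltBasCor} applied to the tilting module $T_\chi(\xi)$ (which has a standard filtration since tilting modules have $\Delta$-filtrations), there exist $\sigma_1^\xi, \ldots, \sigma_{N_\xi}^\xi \in \End_{\sC_\chi}(T_\chi(\xi))$ with $\pi_\xi \circ \sigma_1^\xi, \ldots, \pi_\xi \circ \sigma_{N_\xi}^\xi$ a basis of $\Hom_{\sC_\chi}(T_\chi(\xi), \nabla(\xi))$. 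Precomposing with $\iota_\xi$ and using that the composition $\Delta(\xi) \xrightarrow{\iota_\xi} T_\chi(\xi) \xrightarrow{\pi_\xi} \nabla(\xi)$ is (up to the fixed identifications) the canonical map $\Delta(\xi) \to \nabla(\xi)$, together with the fact that $\dim\Hom_{\sC_\chi}(\Delta(\xi), \nabla(\xi)) = (T_\chi(\xi):\overline{\nabla}(\xi)) = N_I(\xi)$ matches $N_\xi$ via Proposition~\ref{Trunc} / the truncation identifications, one checks that $\{\pi_\xi \circ \sigma_i^\xi \circ \iota_\xi\}_{i}$ is a basis of $\Hom_{\sC_\chi}(\Delta(\xi), \nabla(\xi))$. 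Then one invokes \cite[Thm 4.45]{BS}: given that $\{\phi_\pi^M \circ \overline{\iota}_\xi\}_{\pi \in \overline{e}^{-1}(\xi)}$ is a basis of $\Hom_{\sC_\chi}(\overline{\Delta}(\xi), M)$ for all $\xi$ and that $\{\pi_\xi \circ \sigma_i^\xi \circ \iota_\xi\}$ is a basis of $\Hom_{\sC_\chi}(\Delta(\xi), \nabla(\xi))$, while $\Hom_{\sC_\chi}(\Delta(\xi), \overline{\nabla}(\kappa)) = 0$ for $\kappa \neq \xi$, the theorem yields that the composites $\phi_\pi^M \circ \sigma_i^\xi \circ \iota_\xi$ form a basis of $\Hom_{\sC_\chi}(\Delta(\xi), M)$.

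The main obstacle — really the only point requiring care — is checking that the hypotheses of \cite[Thm 4.45]{BS} are met in exactly the right form, i.e. matching up the combinatorial bookkeeping (which $\Hom$-spaces play the role of the "input" bases and which the "output" basis in that theorem's statement) and confirming the vanishing $\Hom_{\sC_\chi}(\Delta(\xi), \overline{\nabla}(\kappa)) = 0$ for $\kappa \neq \xi$, which follows from $\Hom_{\sC_\chi}(\Delta(\xi), \nabla(\kappa)) = 0$ for $\kappa \neq \xi$ (a consequence of \eqref{dimHom}) together with the surjection $\nabla(\kappa) \twoheadrightarrow \overline{\nabla}(\kappa)$ — wait, more precisely one uses that $\overline{\nabla}(\kappa) \hookrightarrow \nabla(\kappa)$ is an inclusion, so $\Hom_{\sC_\chi}(\Delta(\xi), \overline{\nabla}(\kappa)) \hookrightarrow \Hom_{\sC_\chi}(\Delta(\xi), \nabla(\kappa)) = 0$. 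Everything else is a routine diagram chase through the commutative diagram displayed just before the lemma. Since the text explicitly says this is "proved in more-or-less the same way" and the argument is spelled out in the surrounding prose, I would keep the written proof short, essentially just assembling these citations and the diagram, rather than re-deriving the basis statements from scratch.

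\begin{proof}
	We verify the three conditions in the definition of a $\Delta$-section. The set $\Pi = \bigcup_{\xi\in\Lambda_I}\Pi^\xi$ is finite, since $\overline{\Pi}$ is finite, each $N_\xi=\dim\Hom_{\sC_\chi}(T_\chi(\xi),\nabla(\xi))$ is finite, and only finitely many $\xi\in\Lambda_I$ contribute; and $e\colon\Pi\to\Lambda_I$, $(\pi,i)\mapsto\overline{e}(\pi)$, is a well-defined map, with $\phi_\pi^M\circ\sigma_i^{\overline{e}(\pi)}\colon T_\chi(\overline{e}(\pi))\to M$ a morphism as $\sigma_i^{\overline{e}(\pi)}\in\End_{\sC_\chi}(T_\chi(\overline{e}(\pi)))$.

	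It remains to show that, for each $\xi\in\Lambda_I$, the maps $\{\phi_\pi^M\circ\sigma_i^\xi\circ\iota_\xi\mid \pi\in\overline{e}^{-1}(\xi),\ 1\leq i\leq N_\xi\}$ form a basis of $\Hom_{\sC_\chi}(\Delta(\xi),M)$. As noted in the discussion preceding the statement, the maps $\pi_\xi\circ\sigma_1^\xi\circ\iota_\xi,\ldots,\pi_\xi\circ\sigma_{N_\xi}^\xi\circ\iota_\xi$ form a basis of $\Hom_{\sC_\chi}(\Delta(\xi),\nabla(\xi))$. Moreover, whenever $\kappa\neq\xi$ we have $\Hom_{\sC_\chi}(\Delta(\xi),\overline{\nabla}(\kappa))\hookrightarrow\Hom_{\sC_\chi}(\Delta(\xi),\nabla(\kappa))=0$ by (\ref{dimHom}), using the inclusion $\overline{\nabla}(\kappa)\hookrightarrow\nabla(\kappa)$. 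Since by hypothesis $\{\phi_\pi^M\circ\overline{\iota}_\xi\mid\pi\in\overline{e}^{-1}(\xi)\}$ is a basis of $\Hom_{\sC_\chi}(\overline{\Delta}(\xi),M)$ for every $\xi\in\Lambda_I$, an application of \cite[Thm 4.45]{BS} to the commutative diagram displayed above shows that the maps $\phi_\pi^M\circ\sigma_i^\xi\circ\iota_\xi$, for $\pi\in\overline{e}^{-1}(\xi)$ and $1\leq i\leq N_\xi$, form a basis of $\Hom_{\sC_\chi}(\Delta(\xi),M)$. This is precisely the required condition for $(\Pi,e,(\phi_\pi^M\circ\sigma_i^{\overline{e}(\pi)})_{(\pi,i)\in\Pi})$, so the proof is complete.
\end{proof}
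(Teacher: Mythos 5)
Your proposal is correct and follows essentially the same route as the paper, whose proof of this lemma is precisely the prose and commutative diagram immediately preceding the statement: obtain the $\sigma_i^\xi$ from Corollary~\ref{TiltBasCor}, check that the $\pi_\xi\circ\sigma_i^\xi\circ\iota_\xi$ give a basis of $\Hom_{\sC_\chi}(\Delta(\xi),\nabla(\xi))$, note the vanishing of $\Hom_{\sC_\chi}(\Delta(\xi),\overline{\nabla}(\kappa))$ and $\Hom_{\sC_\chi}(\Delta(\xi),\nabla(\kappa))$ for $\kappa\neq\xi$, and invoke \cite[Thm 4.45]{BS}. The extra bookkeeping you add (finiteness of $\Pi$, well-definedness of $e$, the dimension count $N_\xi=N_I(\xi)$) is harmless and consistent with the paper.
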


\subsection{Sections of $\nabla$-Flags: Translation onto a Wall}\label{ss5.2}
For this subsection, let $s,\lambda$ and $\mu$ be as in Notation~\ref{NOT}. Let $M\in\sC_\chi(\lambda)$ have a costandard filtration and let $(\Pi,e,(\phi_\pi^M)_{\pi\in\Pi})$ be a $\overline{\Delta}$-section of the $\nabla$-flag of $M$. In this subsection, we will construct a $\overline{\Delta}$-section of the $\nabla$-flag of $T_\lambda^\mu(M)$.

Define
$$\Pi_1\coloneqq\{\pi\in\Pi\mid e(\pi)=w\cdot\lambda\mbox{ for }w\in W^{I,\lambda}\mbox{ such that } wsw^{-1}\notin W_{I,p}\}$$ and $$\Pi_2\coloneqq\{\pi\in\Pi\mid e(\pi)=w\cdot\lambda\mbox{ for }w\in W^{I,\lambda}\mbox{ such that } wsw^{-1}\in W_{I,p}\}.$$ We then set $$\Pi'\coloneqq\Pi_1\cup (\Pi_2\times\{1,2\})$$ and define $e':\Pi'\to \Lambda_I$ as follows. If $\pi\in \Pi_1$ then we define $$e'(\pi)=w\cdot\mu.$$ Alternatively, if $\pi\in \Pi_2$ then we define $$e'(\pi,1)=e'(\pi,2)=w\cdot\mu.$$

For each $w\in W^{I,\mu}$, we define a family $(\phi_\pi^{T_\lambda^\mu(M)})_{\pi\in (e')^{-1}(w\cdot\mu)}$ of maps $T_\chi(w\cdot\mu)\to T_\lambda^\mu(M)$. We will distinguish between two cases.

{\bf Case 1:} $wsw^{-1}\in W_{I,p}$.

In this case, we need to define a map $T_\chi(w\cdot\mu)\to T_\lambda^\mu(M)$ for each $(\pi,i)\in \Pi_2\times\{1,2\}$ with $e(\pi)=w\cdot\lambda$. We define such maps $\phi_{(\pi,i)}^{T_\lambda^\mu(M)}:T_\chi(w\cdot\mu)\to T_\lambda^\mu(M)$ via $$\phi_{(\pi,i)}^{T_\lambda^\mu(M)}:T_\chi(w\cdot\mu)\xhookrightarrow{\iota_i} T_\chi(w\cdot\mu)\oplus T_\chi(w\cdot\mu)\cong T_\lambda^\mu(T_\chi(w\cdot\lambda))\xrightarrow{T_\lambda^\mu(\phi_{\pi}^M)} T_\lambda^\mu(M) $$ for $i=1,2$ and $\pi\in \Pi$ such that $e(\pi)=w\cdot\lambda$. Here $\iota_i$ is the natural inclusion of $T_\chi(w\cdot\mu)$ into the $i$th direct summand of $T_\chi(w\cdot\mu)\oplus T_\chi(w\cdot\mu)$. 

{\bf Case 2:} $wsw^{-1}\notin W_{I,p}$.

We need to define a map $T_\chi(w\cdot\mu)\to T_\lambda^\mu(M)$ for each $\pi\in\Pi_1$ with $e(\pi)=w\cdot\lambda$ or $e(\pi)=ws\cdot\lambda$. Recalling that $ws\cdot\lambda < w\cdot\lambda$ (from the definition of $W^{I,\mu}$) we make the following definitions:

When $\pi\in \Pi$ is such that $e(\pi)=w\cdot\lambda$, we define  $\phi_\pi^{T_\lambda^\mu(M)}:T_\chi(w\cdot\mu)\to T_\lambda^\mu(M)$ to be the image of the map $\phi_\pi^M$ under the isomorphism $$\Hom_{\sC_\chi}(T_\chi(w\cdot\lambda),M)\cong \Hom_{\sC_\chi}(T_\mu^\lambda(T_\chi(w\cdot\mu)),M)\cong \Hom_{\sC_\chi}(T_\chi(w\cdot\mu),T_\lambda^\mu(M)).$$ 

On the other hand, when $\pi\in \Pi$ is such that $e(\pi)=ws\cdot\lambda$ we define  $\phi_\pi^{T_\lambda^\mu(M)}:T_\chi(w\cdot\mu)\to T_\lambda^\mu(M)$ to be the composition $$\phi_\pi^{T_\lambda^\mu(M)}:T_\chi(w\cdot\mu)\hookrightarrow T_\chi(w\cdot\mu)\oplus T_{\prec w\cdot\mu}\cong T_\lambda^\mu(T_\chi(ws\cdot\lambda))\xrightarrow{T_\lambda^\mu(\phi_\pi^M)} T_\lambda^\mu(M).$$ Here, $T_{\prec w\cdot\mu}$ is the module with the same name from Corollary~\ref{TransTilt2}.

We show in Theorem~\ref{LMSect} that this construction gives a $\overline{\Delta}$-section of the $\nabla$-flag of $T_\lambda^\mu(M)$. To prove this, we first tackle the case where $M$ is a costandard module.

\begin{prop}\label{LMNabla}
	Let $M=\nabla(u\cdot\lambda)$ for some $u\in W^{I,\lambda}$. Then the collection $(\Pi',e',(\phi_\pi^{T_\lambda^\mu(M)})_{\pi\in \Pi'})$ is a $\overline{\Delta}$-section of the $\nabla$-flag of $T_\lambda^\mu(M)$.
\end{prop}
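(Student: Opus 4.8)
The plan is to reduce to an explicit computation, handling the two cases $usu^{-1}\in W_{I,p}$ and $usu^{-1}\notin W_{I,p}$ separately. First I would unwind what needs to be checked: by the definition of a $\overline{\Delta}$-section of the $\nabla$-flag, we must verify that for each $\xi\in\Lambda_I$ the maps $\{\overline{\Delta}(\xi)\xhookrightarrow{\overline{\iota}_\xi}T_\chi(\xi)\xrightarrow{\phi_\pi^{T_\lambda^\mu(M)}}T_\lambda^\mu(M)\mid e'(\pi)=\xi\}$ form a basis of $\Hom_{\sC_\chi}(\overline{\Delta}(\xi),T_\lambda^\mu(M))$. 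Since $M=\nabla(u\cdot\lambda)$, the $\overline{\Delta}$-section of its $\nabla$-flag is (essentially) forced: $\Hom_{\sC_\chi}(\overline{\Delta}(\xi),\nabla(u\cdot\lambda))$ is one-dimensional when $\xi=u\cdot\lambda$ and zero otherwise, so $\Pi$ has a single element $\pi_0$ with $e(\pi_0)=u\cdot\lambda$, and $\phi_{\pi_0}^M\colon T_\chi(u\cdot\lambda)\to\nabla(u\cdot\lambda)$ is the (essentially unique, up to scalar) surjection. Now I would compute $T_\lambda^\mu(\nabla(u\cdot\lambda))$ using the corollary following Proposition~\ref{TransDual}: it equals $\nabla(u\cdot\mu)$ if $usu^{-1}\notin W_{I,p}$ and has a filtration by two copies of $\nabla(u\cdot\mu)$ if $usu^{-1}\in W_{I,p}$. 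In either case $T_\lambda^\mu(\nabla(u\cdot\lambda))$ has a costandard filtration, so the notion of $\overline{\Delta}$-section makes sense.

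For \textbf{Case 1} ($usu^{-1}\in W_{I,p}$): here $\Pi'=\{\pi_0\}\times\{1,2\}$ with $e'(\pi_0,i)=u\cdot\mu$, and the maps $\phi_{(\pi_0,i)}^{T_\lambda^\mu(M)}$ are the composites $T_\chi(u\cdot\mu)\xhookrightarrow{\iota_i}T_\lambda^\mu(T_\chi(u\cdot\lambda))\xrightarrow{T_\lambda^\mu(\phi_{\pi_0}^M)}T_\lambda^\mu(\nabla(u\cdot\lambda))$, using $T_\lambda^\mu(T_\chi(u\cdot\lambda))=T_\chi(u\cdot\mu)^{\oplus 2}$ from Proposition~\ref{TransTilt}(2). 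I would compute $\dim\Hom_{\sC_\chi}(\overline{\Delta}(\xi),T_\lambda^\mu(\nabla(u\cdot\lambda)))$: using the adjunction $(T_\mu^\lambda,T_\lambda^\mu)$ this is $\dim\Hom_{\sC_\chi}(T_\mu^\lambda\overline{\Delta}(\xi),\nabla(u\cdot\lambda))$; since $T_\mu^\lambda$ takes proper costandard (hence, dually, proper standard) objects to modules with proper standard filtrations by two copies of the relevant proper standard object, one finds the dimension is $2\delta_{\xi,u\cdot\mu}$ (the relevant orthogonality being $\dim\Hom_{\sC_\chi}(\overline{\Delta},\nabla)=\delta$). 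So I need the two maps $\phi_{(\pi_0,i)}^{T_\lambda^\mu(M)}\circ\overline{\iota}_{u\cdot\mu}$ to be linearly independent; this follows because $\phi_{\pi_0}^M$ is surjective (so $T_\lambda^\mu(\phi_{\pi_0}^M)$ is surjective onto each $\nabla(u\cdot\mu)$-factor — more precisely, Proposition~\ref{NZeroSurj}-type reasoning applied in each summand shows $\phi_{(\pi_0,i)}^{T_\lambda^\mu(M)}\circ\overline{\iota}_{u\cdot\mu}\neq 0$), and the two summands $\iota_1,\iota_2$ land in complementary direct summands of $T_\lambda^\mu(T_\chi(u\cdot\lambda))$, forcing independence. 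For \textbf{Case 2} ($usu^{-1}\notin W_{I,p}$): here $\Pi'=\{\pi_0\}$ (since $e(\pi_0)=u\cdot\lambda$ and $usu^{-1}\notin W_{I,p}$ puts $\pi_0\in\Pi_1$), $e'(\pi_0)=u\cdot\mu$, and $\phi_{\pi_0}^{T_\lambda^\mu(M)}$ is the image of $\phi_{\pi_0}^M$ under the adjunction isomorphism $\Hom_{\sC_\chi}(T_\chi(u\cdot\lambda),\nabla(u\cdot\lambda))\cong\Hom_{\sC_\chi}(T_\chi(u\cdot\mu),T_\lambda^\mu\nabla(u\cdot\lambda))$ (using $T_\mu^\lambda T_\chi(u\cdot\mu)=T_\chi(u\cdot\lambda)$ from Proposition~\ref{TransTilt}(1)). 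Here $T_\lambda^\mu\nabla(u\cdot\lambda)=\nabla(u\cdot\mu)$, and again $\dim\Hom_{\sC_\chi}(\overline{\Delta}(\xi),\nabla(u\cdot\mu))=\delta_{\xi,u\cdot\mu}$, so I just need $\phi_{\pi_0}^{T_\lambda^\mu(M)}\circ\overline{\iota}_{u\cdot\mu}\neq 0$, equivalently (by Proposition~\ref{NZeroSurj}) that $\phi_{\pi_0}^{T_\lambda^\mu(M)}$ is surjective. This follows because the adjunction isomorphism is natural, so it carries the surjection $\phi_{\pi_0}^M$ to a nonzero, hence (by the proposition before Proposition~\ref{NZeroSurj}) surjective, map.

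The main obstacle I anticipate is \textbf{the naturality/compatibility bookkeeping}: one must be careful that the maps defined via adjunction isomorphisms, and those defined via the splittings $T_\lambda^\mu(T_\chi(w\cdot\lambda))=T_\chi(w\cdot\mu)^{\oplus 2}$ or $T_\lambda^\mu(T_\chi(ws\cdot\lambda))=T_\chi(w\cdot\mu)\oplus T_{\prec w\cdot\mu}$, genuinely yield a basis and not merely a spanning set of the right cardinality — i.e. linear independence must be checked, not just counted. The clean way to do this is to invoke the machinery of \cite[Thm 4.43, Thm 4.45]{BS} (already used in the proofs of Proposition~\ref{NZeroSurj} and Lemma~\ref{PropStand2Stand}): once we know $\phi_{\pi_0}^M\circ\overline{\iota}_{u\cdot\lambda}$ is a basis of $\Hom_{\sC_\chi}(\overline{\Delta}(u\cdot\lambda),M)$ and we know how $T_\lambda^\mu$ acts on the relevant tilting modules, that theorem delivers a basis of $\Hom_{\sC_\chi}(\overline{\Delta}(\xi),T_\lambda^\mu(M))$ of precisely the claimed form. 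I would phrase the argument so that the counting computation (via adjunction and the known $\nabla$-filtration of $T_\lambda^\mu\nabla(u\cdot\lambda)$) establishes the correct dimensions, and then the nonvanishing of each $\phi_\pi^{T_\lambda^\mu(M)}\circ\overline{\iota}_{e'(\pi)}$ together with the direct-sum decompositions gives linear independence, completing the proof.
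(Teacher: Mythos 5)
Your overall strategy coincides with the paper's: compute $\dim\Hom_{\sC_\chi}(\overline{\Delta}(\xi),T_\lambda^\mu(\nabla(u\cdot\lambda)))$ by adjunction and the known behaviour of $T_\mu^\lambda$ on proper standard objects, then establish non-vanishing (resp.\ linear independence) of the proposed maps precomposed with $\overline{\iota}_{u\cdot\mu}$ via surjectivity of $\phi_\ast^{\nabla(u\cdot\lambda)}$ and a composition-factor count showing $[\ker:L_\chi(u\cdot\mu)]=0$. Your treatment of the case $usu^{-1}\in W_{I,p}$ is essentially the paper's argument (the rigorous version of ``complementary summands force independence'' is that $a\iota_1+b\iota_2$ is injective for $(a,b)\neq(0,0)$ and the kernel of $T_\lambda^\mu(\phi_\ast^{\nabla(u\cdot\lambda)})$ has no composition factor $L_\chi(u\cdot\mu)$, so the combined composition cannot vanish).

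However, your case $usu^{-1}\notin W_{I,p}$ has two genuine gaps. First, you only treat the sub-case $us\cdot\lambda<u\cdot\lambda$, where $\phi_\ast^{T_\lambda^\mu(M)}$ is defined via the adjunction isomorphism and Proposition~\ref{TransTilt}(1); when $u\cdot\lambda<us\cdot\lambda$ the construction of Subsection~\ref{ss5.2} defines $\phi_\ast^{T_\lambda^\mu(M)}$ differently, namely as the composition $T_\chi(u\cdot\mu)\hookrightarrow T_\chi(u\cdot\mu)\oplus T_{\prec u\cdot\mu}\cong T_\lambda^\mu(T_\chi(u\cdot\lambda))\to T_\lambda^\mu(\nabla(u\cdot\lambda))$, and the non-vanishing argument for that map (surjectivity of $T_\lambda^\mu(\phi_\ast^{\nabla(u\cdot\lambda)})$ plus the count $[\ker:L_\chi(u\cdot\mu)]=0$) must be given separately; the paper splits the proof accordingly. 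Second, in the adjunction sub-case your step ``the adjunction carries the surjection to a nonzero, hence surjective, map'' is invalid: $\Hom_{\sC_\chi}(T_\chi(u\cdot\mu),\nabla(u\cdot\mu))$ has dimension $N_I(u\cdot\mu)$, which is generally greater than $1$, so a nonzero map there need not be surjective (Proposition~\ref{NZeroSurj} and its companion say surjectivity is equivalent to non-vanishing \emph{after} precomposition with $\overline{\iota}_{u\cdot\mu}$, not to being nonzero). What is actually needed is to track $\overline{\iota}_{u\cdot\mu}$ through the adjunction: by naturality, $\phi_\ast^{T_\lambda^\mu(M)}\circ\overline{\iota}_{u\cdot\mu}$ corresponds to the composition $T_\mu^\lambda(\overline{\Delta}(u\cdot\mu))\hookrightarrow T_\mu^\lambda(T_\chi(u\cdot\mu))\xrightarrow{\sim}T_\chi(u\cdot\lambda)\xrightarrow{\phi_\ast^{\nabla(u\cdot\lambda)}}\nabla(u\cdot\lambda)$, and one shows this is nonzero using $[T_\mu^\lambda(\overline{\Delta}(u\cdot\mu)):L_\chi(u\cdot\lambda)]=1$ together with $[\ker\phi_\ast^{\nabla(u\cdot\lambda)}:L_\chi(u\cdot\lambda)]=0$. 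Both gaps are repairable with the techniques you already invoke, but as written the argument does not close.
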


\begin{proof}
	By Equality~(\ref{dimHom}), a $\overline{\Delta}$-section of the $\nabla$-flag of $M={\nabla(u\cdot\lambda)}$ consists of $\Pi=\{\ast\}$, $e:\{\ast\}\to \Lambda_I$ such that $e(\ast)=u\cdot\lambda$, and a map $\phi_{\ast}^{\nabla(u\cdot\lambda)}:T_\chi(u\cdot\lambda)\to \nabla(u\cdot\lambda)$ such that $\phi_\ast^{\nabla(u\cdot\lambda)}\circ\overline{\iota}_{u\cdot\lambda}\neq 0$.
	
	Let $w\in W^{I,\mu}$. We have \begin{equation*}
		\begin{split}
			\dim\Hom_{\sC_\chi}(\overline{\Delta}(w\cdot\mu),T_\lambda^\mu(\nabla(u\cdot\lambda))) & =\dim\Hom_{\sC_\chi}(T_\mu^\lambda(\overline{\Delta}(w\cdot\mu)),\nabla(u\cdot\lambda)) \\ & =\dim\Hom_{\sC_\chi}(\overline{\Delta}(w\cdot\lambda),\nabla(u\cdot\lambda)) + \dim\Hom_{\sC_\chi}(\overline{\Delta}(ws\cdot\lambda),\nabla(u\cdot\lambda))	\\ & 
			=(\nabla(u\cdot\lambda):\nabla(w\cdot\lambda))+(\nabla(u\cdot\lambda):\nabla(ws\cdot\lambda)) \\ & 
			=\threepartdef{0}{w\cdot\lambda\neq u\cdot\lambda\neq ws\cdot\lambda,}{1}{w\cdot\lambda=u\cdot\lambda\mbox{ or }ws\cdot\lambda=u\cdot\lambda,\mbox{ and }usu^{-1}=wsw^{-1}\notin W_{I,p},}{2}{w\cdot\lambda=u\cdot\lambda\mbox{ and } usu^{-1}=wsw^{-1}\in W_{I,p}.}
		\end{split}
	\end{equation*}
The second equality follows from Corollary~\ref{TransZ} and Equality~(\ref{dimHom}).

Note that if $usu^{-1}\notin W_{I,p}$ then  $$\left\vert\Pi_1\right\vert=\left\vert\{\ast\}\right\vert=1$$ and  $$\left\vert\Pi_2\right\vert=\left\vert\emptyset\right\vert=0$$ and thus $\Pi'=\{\ast\}$ with $e'(\ast)=u\cdot\mu$. On the other hand, if $usu^{-1}\in W_{I,p}$ then $$\left\vert\Pi_1\right\vert=\left\vert\emptyset\right\vert=0$$ and  $$\left\vert\Pi_2\right\vert=\left\vert\{\ast\}\right\vert=1$$ and thus $\Pi'=\{\ast\}\times\{1,2\}$ with $e'(\ast,1)=e'(\ast,2)=u\cdot\mu$.

Let us first deal with the case $usu^{-1}\notin W_{I,p}$. We need to treat differently the cases where $u\cdot\lambda<us\cdot\lambda$ and $us\cdot\lambda<u\cdot\lambda$. In the former, we need to show that the map $$\phi_{\ast}^{T_\lambda^\mu({\nabla(u\cdot\lambda)})}:T_\chi(u\cdot\mu)\hookrightarrow T_\chi(u\cdot\mu)\oplus T_{\prec u\cdot\mu}\cong T_\lambda^\mu(T_\chi(u\cdot\lambda))\xrightarrow{T_\lambda^\mu(\phi_\ast^{\nabla(u\cdot\lambda)})} T_\lambda^\mu({\nabla(u\cdot\lambda)})$$ is such that $\phi_{\ast}^{T_\lambda^\mu({\nabla(u\cdot\lambda)})}\circ\overline{\iota}_{u\cdot\mu}$ is a basis of $\Hom_{\sC_\chi}(\overline{\Delta}(u\cdot\mu),T_\lambda^\mu({\nabla(u\cdot\lambda)}))$, i.e. that $\phi_{\ast}^{T_\lambda^\mu({\nabla(u\cdot\lambda)})}\circ\overline{\iota}_{u\cdot\mu}\neq 0$. Since $\overline{\Delta}(u\cdot\lambda)\xhookrightarrow{\overline{\iota}_{u\cdot\lambda}} T_\chi(u\cdot\lambda) \xrightarrow{\phi_\ast^{\nabla(u\cdot\lambda)}} \nabla(u\cdot\lambda)$ is non-zero (by definition it forms a basis of $\Hom_{\sC_\chi}(\overline{\Delta}(u\cdot\lambda),\nabla(u\cdot\lambda))$) we have by Proposition~\ref{NZeroSurj} that $\phi_\ast^{\nabla(u\cdot\lambda)}$ is surjective and thus $T_\lambda^\mu(\phi_\ast^{\nabla(u\cdot\lambda)})$ is surjective. In particular, $[\ker(T_\lambda^\mu(\phi_\ast^{\nabla(u\cdot\lambda)})):L_\chi(u\cdot\mu)]=[T_\lambda^\mu(T_\chi(u\cdot\lambda)):L_\chi(u\cdot\mu)]-[T_\lambda^\mu(\nabla(u\cdot\lambda)):L_\chi(u\cdot\mu)]=0$. This suffices to show that $\phi_{\ast}^{T_\lambda^\mu({\nabla(u\cdot\lambda)})}\circ\overline{\iota}_{u\cdot\mu}\neq 0$.

When $us\cdot\lambda<u\cdot\lambda$, we need to show that the image $\phi_\ast^{T_\lambda^\mu({\nabla(u\cdot\lambda)})}$ of the map $\phi_\ast^{\nabla(u\cdot\lambda)}$ under the isomorphism $$\Hom_{\sC_\chi}(T_\chi(u\cdot\lambda),{\nabla(u\cdot\lambda)})\cong \Hom_{\sC_\chi}(T_\mu^\lambda(T_\chi(u\cdot\mu)),{\nabla(u\cdot\lambda)})\cong \Hom_{\sC_\chi}(T_\chi(u\cdot\mu),T_\lambda^\mu({\nabla(u\cdot\lambda)}))$$ is such that $\phi_\ast^{T_\lambda^\mu({\nabla(u\cdot\lambda)})}\circ \overline{\iota}_{u\cdot\lambda}\neq 0$. By considering the commutative diagram 

 	\begin{eqnarray*}
	\begin{array}{c}\xymatrix{
			\Hom_{\sC_\chi}(T_\chi(u\cdot\lambda),{\nabla(u\cdot\lambda)}) \ar@{->}[d]^{\simeq} & & \\
			\Hom_{\sC_\chi}(T_\mu^\lambda(T_\chi(u\cdot\mu)),{\nabla(u\cdot\lambda)}) \ar@{->}[d]^{\simeq} \ar@{->}[rr]^{\circ T_\mu^\lambda(\overline{\iota}_{u\cdot\mu})} & & \Hom_{\sC_\chi}(T_\mu^\lambda(\overline{\Delta}(u\cdot\mu)),{\nabla(u\cdot\lambda)}) \ar@{->}[d]^{\simeq} \\
			\Hom_{\sC_\chi}(T_\chi(u\cdot\mu),T_\lambda^\mu({\nabla(u\cdot\lambda)})) \ar@{->}[rr]^{\circ \overline{\iota}_{u\cdot\mu}} & & \Hom_{\sC_\chi}(\overline{\Delta}(u\cdot\mu),T_\lambda^\mu({\nabla(u\cdot\lambda)}))			
			}\end{array} 
	\end{eqnarray*}

it is enough to check that the composition $$T_\mu^\lambda(\overline{\Delta}(u\cdot\mu))\hookrightarrow T_\mu^\lambda(T_\chi(u\cdot\mu))\xrightarrow{\sim} T_\chi(u\cdot\lambda)\xrightarrow{\phi_\ast^{\nabla(u\cdot\lambda)}} {\nabla(u\cdot\lambda)}$$ is non-zero. As above, $\phi_\ast^{\nabla(u\cdot\lambda)}$ is surjective and thus $[\ker(\phi_\ast^{\nabla(u\cdot\lambda)}):L_\chi(u\cdot\lambda)]=0$. Since $[T_\mu^\lambda(\overline{\Delta}(u\cdot\mu)):L_\chi(u\cdot\lambda)]=1$, the result follows.

Finally, we need to deal with the case where $usu^{-1}\in W_{I,p}$. In this case, we need to show that the two maps $$\phi_{\ast,i}^{T_\lambda^\mu({\nabla(u\cdot\lambda)})}:T_\chi(u\cdot\mu)\xhookrightarrow{\iota_i} T_\chi(u\cdot\mu)\oplus T_\chi(u\cdot\mu)\cong T_\lambda^\mu(T_\chi(u\cdot\lambda))\xrightarrow{T_\lambda^\mu(\phi_\ast^{\nabla(u\cdot\lambda)})} T_\lambda^\mu({\nabla(u\cdot\lambda)})$$ are such that $\phi_{\ast,1}^{T_\lambda^\mu({\nabla(u\cdot\lambda)})}\circ\overline{\iota}_{u\cdot\mu}$ and $ \phi_{\ast,2}^{T_\lambda^\mu({\nabla(u\cdot\lambda)})}\circ \overline{\iota}_{u\cdot\mu}$ form a basis of $\Hom_{\sC_\chi}(\overline{\Delta}(u\cdot\mu),T_\lambda^\mu({\nabla(u\cdot\lambda)}))$. Since $\dim\Hom_{\sC_\chi}(\overline{\Delta}(u\cdot\mu),T_\lambda^\mu({\nabla(u\cdot\lambda)}))=2$ it is enough to show that $\phi_{\ast,1}^{T_\lambda^\mu({\nabla(u\cdot\lambda)})}\circ\overline{\iota}_{u\cdot\mu}$ and $\phi_{\ast,2}^{T_\lambda^\mu({\nabla(u\cdot\lambda)})}\circ \overline{\iota}_{u\cdot\mu}$ are linearly independent. Suppose $a,b\in\bK$ are such that $$a\left(\phi_{\ast,1}^{T_\lambda^\mu({\nabla(u\cdot\lambda)})}\circ\overline{\iota}_{u\cdot\mu}\right)+b \left(\phi_{\ast,2}^{T_\lambda^\mu({\nabla(u\cdot\lambda)})}\circ \overline{\iota}_{u\cdot\mu}\right)=0.$$ By linearity, this means that the following composition is zero:
$$\overline{\Delta}(u\cdot\mu)\hookrightarrow T_\chi(u\cdot\mu)\xrightarrow{a\iota_1+b\iota_2} T_\chi(u\cdot\mu)\oplus T_\chi(u\cdot\mu)\cong T_\lambda^\mu(T_\chi(u\cdot\lambda))\xrightarrow{T_\lambda^\mu(\phi_\ast^{\nabla(u\cdot\lambda)})} T_\lambda^\mu({\nabla(u\cdot\lambda)}).$$

If $a$ and $b$ are not both zero then it is clear that $a\iota_1+b\iota_2$ is injective. Furthermore, as above we know that $\phi_\ast^{\nabla(u\cdot\lambda)}$ is surjective, and hence $T_\lambda^\mu(\phi_\ast^{\nabla(u\cdot\lambda)})$ is surjective. We also know that $T_\lambda^\mu(\nabla(u\cdot\lambda))$ has a filtration of two copies of $\nabla(u\cdot\mu)$, and so $[T_\lambda^\mu({\nabla(u\cdot\lambda)}):L_\chi(u\cdot\mu)]=2[\nabla(u\cdot\mu):L_\chi(u\cdot\mu)]=2[T_\chi(u\cdot\mu):L_\chi(u\cdot\mu)]=[T_\lambda^\mu(T_\chi(u\cdot\mu)):L_\chi(u\cdot\mu)]$. We thus have $[\ker(T_\lambda^\mu(\phi_\ast^{\nabla(u\cdot\lambda)})):L_\chi(u\cdot\mu)]=0$. Putting all this together, if $a$ and $b$ are not both zero then the above composition cannot be zero. Hence, $\phi_{\ast,1}^{T_\lambda^\mu({\nabla(u\cdot\lambda)})}\circ\overline{\iota}_{u\cdot\mu}$ and $\phi_{\ast,2}^{T_\lambda^\mu({\nabla(u\cdot\lambda)})}\circ \overline{\iota}_{u\cdot\mu}$ are linearly independent as required.
\end{proof}

We now tackle the case where $M$ is a direct sum of copies of a single costandard module.

\begin{prop}\label{SumSection1}
	When $M=\bigoplus_{i=1}^n\nabla(u\cdot\lambda)$ for some $u\in W^{I,\lambda}$ and $n\geq 1$, the collection $(\Pi',e',(\phi_\pi^{T_\lambda^\mu(M)})_{\pi\in \Pi'})$ is a $\overline{\Delta}$-section of the $\nabla$-flag of $T_\lambda^\mu(M)$.
\end{prop}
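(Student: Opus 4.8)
The plan is to reduce Proposition~\ref{SumSection1} to Proposition~\ref{LMNabla} by exploiting the fact that everything in sight is additive in $M$. The key observation is that both the construction of the $\overline{\Delta}$-section of $T_\lambda^\mu(M)$ and the notion of ``being a $\overline{\Delta}$-section'' behave well under direct sums. More precisely, if $M = \bigoplus_{i=1}^n \nabla(u\cdot\lambda)$, then a $\overline{\Delta}$-section of the $\nabla$-flag of $M$ is obtained by taking, for each $i$, the section $(\{\ast_i\}, e_i, \phi^{\nabla(u\cdot\lambda)}_{\ast_i})$ from Proposition~\ref{LMNabla} and pushing forward along the $i$-th inclusion $\nabla(u\cdot\lambda)\hookrightarrow M$; the resulting maps $T_\chi(u\cdot\lambda)\to M$ have the property that composing with $\overline{\iota}_{u\cdot\lambda}$ gives a basis of $\Hom_{\sC_\chi}(\overline{\Delta}(u\cdot\lambda),M) \cong \bigoplus_{i=1}^n \Hom_{\sC_\chi}(\overline{\Delta}(u\cdot\lambda),\nabla(u\cdot\lambda))$. (Strictly one should note that not every $\overline{\Delta}$-section of $M$ arises this way, but since a $\overline{\Delta}$-section is determined up to the obvious ambiguity and all our constructions are natural, it suffices to verify the statement for one convenient choice; alternatively one argues directly with an arbitrary section, which still decomposes compatibly since $\Hom_{\sC_\chi}(\overline{\Delta}(\xi),M)=0$ unless $\xi = u\cdot\lambda$.)

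First I would record that $T_\lambda^\mu$ is additive, so $T_\lambda^\mu(M) = \bigoplus_{i=1}^n T_\lambda^\mu(\nabla(u\cdot\lambda))$, and that the constructions defining $\Pi'$, $e'$ and the maps $\phi_\pi^{T_\lambda^\mu(M)}$ (in both Case 1 and Case 2 of the setup preceding Proposition~\ref{LMNabla}) are built out of the isomorphisms $\Hom_{\sC_\chi}(T_\chi(w\cdot\lambda),M)\cong\Hom_{\sC_\chi}(T_\chi(w\cdot\mu),T_\lambda^\mu(M))$, the adjunction isomorphisms $\Hom_{\sC_\chi}(T_\mu^\lambda(T_\chi(w\cdot\mu)),M) \cong \Hom_{\sC_\chi}(T_\chi(w\cdot\mu),T_\lambda^\mu(M))$, and the fixed splittings $T_\lambda^\mu(T_\chi(w\cdot\lambda)) = T_\chi(w\cdot\mu)\oplus(\text{rest})$ from Proposition~\ref{TransTilt} and Corollary~\ref{TransTilt2}. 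Each of these is natural/additive in $M$, so the $\overline{\Delta}$-section data attached to $M=\bigoplus_i \nabla(u\cdot\lambda)$ is precisely the ``direct sum'' of the section data attached to the individual summands $\nabla(u\cdot\lambda)$ produced by Proposition~\ref{LMNabla}.

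Then I would invoke the basis-checking criterion: $(\Pi',e',(\phi_\pi^{T_\lambda^\mu(M)}))$ is a $\overline{\Delta}$-section precisely when, for each $\xi\in\Lambda_I$, the maps $\{\phi_\pi^{T_\lambda^\mu(M)}\circ\overline{\iota}_\xi \mid e'(\pi)=\xi\}$ form a basis of $\Hom_{\sC_\chi}(\overline{\Delta}(\xi), T_\lambda^\mu(M))$. But $\Hom_{\sC_\chi}(\overline{\Delta}(\xi), T_\lambda^\mu(M)) = \bigoplus_{i=1}^n \Hom_{\sC_\chi}(\overline{\Delta}(\xi), T_\lambda^\mu(\nabla(u\cdot\lambda)))$, and under this decomposition the family $\{\phi_\pi^{T_\lambda^\mu(M)}\circ\overline{\iota}_\xi\}$ is exactly the disjoint union over $i$ of the families from Proposition~\ref{LMNabla} placed in the $i$-th summand. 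Since each of those is a basis of the corresponding summand by Proposition~\ref{LMNabla}, their union is a basis of the direct sum. This gives the result. I would also remark that the only $\xi$ for which $\Hom_{\sC_\chi}(\overline{\Delta}(\xi),T_\lambda^\mu(M))\neq 0$ are $\xi = u\cdot\mu$ (by the dimension computation in the proof of Proposition~\ref{LMNabla}), so there is really only one non-trivial value of $\xi$ to check.

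The main obstacle is a bookkeeping one rather than a conceptual one: making sure that the index set $\Pi'$ and the map $e'$ for $M=\bigoplus_i \nabla(u\cdot\lambda)$ genuinely are the $n$-fold ``disjoint union'' of the $\Pi'$ for each summand — i.e. that when one starts from the section of $M$ given componentwise and runs the Case~1/Case~2 construction, one obtains the componentwise union of the constructions applied to each $\nabla(u\cdot\lambda)$. This requires carefully tracking that the partition $\Pi = \Pi_1\sqcup\Pi_2$ and the doubling $\Pi_2\times\{1,2\}$ interact correctly with the direct sum decomposition, and that the chosen splittings of $T_\lambda^\mu(T_\chi(w\cdot\lambda))$ used for each summand are compatible. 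None of this is deep, but it is exactly the kind of detail where care is needed; once it is in place, the proof is immediate from Proposition~\ref{LMNabla} and the additivity of $\Hom$ and of $T_\lambda^\mu$.
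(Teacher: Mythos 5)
There is a genuine gap, and it sits exactly where you placed your parenthetical disclaimer. The proposition must be proved for an \emph{arbitrary} $\overline{\Delta}$-section $(\Pi,e,(\phi_\pi^M))$ of $M=\bigoplus_{i=1}^n\nabla(u\cdot\lambda)$: the maps $\phi_\pi^{T_\lambda^\mu(M)}$ are built from the actual morphisms $\phi_\pi^M\colon T_\chi(u\cdot\lambda)\to M$, not merely from their images in $\Hom_{\sC_\chi}(\overline{\Delta}(u\cdot\lambda),M)$. Your first escape route (``it suffices to verify the statement for one convenient choice'') is unjustified: two sections of $M$ differ by morphisms $\eta$ with $\eta\circ\overline{\iota}_{u\cdot\lambda}=0$, and such $\eta$ need not vanish, since the surjection $\Hom_{\sC_\chi}(T_\chi(u\cdot\lambda),\nabla(u\cdot\lambda))\to\Hom_{\sC_\chi}(\overline{\Delta}(u\cdot\lambda),\nabla(u\cdot\lambda))$ has kernel of dimension $N_I(u\cdot\lambda)-1=\left\vert W_I\right\vert-1$, which is positive whenever $I\neq\emptyset$. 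After translation, the induced map $\eta^{T_\lambda^\mu(M)}$ is precomposed with $\overline{\iota}_{u\cdot\mu}$, which by adjunction amounts to restricting $\eta$ to the image of $T_\mu^\lambda(\overline{\Delta}(u\cdot\mu))$ in $T_\chi(u\cdot\lambda)$ --- a submodule strictly larger than $\overline{\iota}_{u\cdot\lambda}(\overline{\Delta}(u\cdot\lambda))$ --- so the contribution of $\eta$ has no reason to disappear, and the basis property is not automatically transported from one choice of section to another. Your second escape route (``an arbitrary section still decomposes compatibly since $\Hom_{\sC_\chi}(\overline{\Delta}(\xi),M)=0$ unless $\xi=u\cdot\lambda$'') only identifies which tilting modules occur in the section; it says nothing about whether the arbitrary maps $\phi_i^M$ are compatible with any direct sum decomposition of $M$.

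That missing step is precisely the content of the paper's proof: it shows that for an \emph{arbitrary} section one has $M\cong\bigoplus_{i=1}^n\im(\phi_i^M)$ with each $\im(\phi_i^M)\cong\nabla(u\cdot\lambda)$, so that each $(\{i\},e\vert_{\{i\}},\phi_i^M)$ is a $\overline{\Delta}$-section of the $\nabla$-flag of a copy of $\nabla(u\cdot\lambda)$ and Proposition~\ref{LMNabla} applies summand by summand. Establishing this requires real work: one uses Proposition~\ref{NZeroSurj} to find, for each $i$, a projection $p_j$ with $p_j\circ\phi_i^M$ surjective, deduces $\im(\phi_i^M)\xrightarrow{\sim}\nabla(u\cdot\lambda)$ by a composition-multiplicity argument, and then invokes \cite[Thm 4.43]{BS} to show that the images of the $\phi_i^M$ together span $M$. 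Once you supply this argument, the additivity bookkeeping in the rest of your proposal goes through and agrees with the paper's conclusion.
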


\begin{proof}
	Arguing as in the proof of Proposition~\ref{LMNabla} we see that $(\Pi,e,(\phi_\pi^M)_{\pi\in \Pi})$ consists of $\Pi=\{1,\ldots,n\}$, $e(i)=u\cdot\lambda$ for all $1\leq i\leq n$, and maps $\phi_1^M,\ldots,\phi_n^M:T_\chi(u\cdot\lambda)\to M$ such that $\phi_1^M\circ\overline{\iota}_{u\cdot\lambda},\ldots,\phi_n^M\circ\overline{\iota}_{u\cdot\lambda}$ form a basis of $\Hom_{\sC_\chi}(\overline{\Delta}(u\cdot\lambda),M)$. We claim that $$M\cong\bigoplus_{i=1}^n\im(\phi_i^M)$$ and that $$\im(\phi_i^M)\cong\nabla(u\cdot\lambda)$$ for each $1\leq i\leq n$. The result will then follow easily from Proposition~\ref{LMNabla}, using that $(\{i\},e\vert_{\{i\}},\phi_i^M)$ is a $\overline{\Delta}$-section of the $\nabla$-flag of $\im(\phi_i^M)\cong\nabla(u\cdot\lambda)$ for each $1\leq i\leq n$.
	
	We begin by showing that $\im(\phi_i^M)\cong\nabla(u\cdot\lambda)$ for each $1\leq i\leq n$. Let us write $p_j:M=\bigoplus_{k=1}^n\nabla(u\cdot\lambda)\to\nabla(u\cdot\lambda)$ for projection to the $j$-th coordinate. Since  $\phi_1^M\circ\overline{\iota}_{u\cdot\lambda},\ldots,\phi_n^M\circ\overline{\iota}_{u\cdot\lambda}$ form a basis of $\Hom_{\sC_\chi}(\overline{\Delta}(u\cdot\lambda),M)$ there must exist $1\leq j\leq n$ such that $$p_j\circ \phi_i^M\circ\overline{\iota}_{u\cdot\lambda}\neq 0.$$ By Proposition~\ref{NZeroSurj}, $p_j\circ \phi_i^M:T_\chi(u\cdot\lambda)\to \nabla(u\cdot\lambda)$ must be surjective, and thus $q\coloneqq p_j\vert_{\tiny \im(\phi_i^M)}:\im(\phi_i^M)\to\nabla(u\cdot\lambda)$ must also be surjective. We claim that $\ker(q)=0$. If not, then there exists $1\leq k\leq n$ such that $p_k(\ker(q))\neq 0$, so that $p_k$ induces a non-zero map $q_k=p_k\vert_{\ker(q)}:\ker(q)\to\nabla(u\cdot\lambda)$. 
	We have inequalities $$[T_\chi(u\cdot\lambda):L_\chi(u\cdot\lambda)]\geq [\im(\phi_i^M):L_\chi(u\cdot\lambda)]\geq [\nabla(u\cdot\lambda): L_\chi(u\cdot\lambda)]=[T_\chi(u\cdot\lambda):L_\chi(u\cdot\lambda)]$$ and thus $[\im(\phi_i^M):L_\chi(u\cdot\lambda)]= [\nabla(u\cdot\lambda): L_\chi(u\cdot\lambda)]$. This implies $[\ker(q):L_\chi(u\cdot\lambda)]=0$. Since $\overline{\nabla}(u\cdot\lambda)$ has an irreducible socle isomorphic to $L_\chi(u\cdot\lambda)$, this means that there are no non-zero homomorphisms $\ker(q)\to\overline{\nabla}(u\cdot\lambda)$. Since $\nabla(u\cdot\lambda)$ has a filtration whose factors are all copies of $\overline{\nabla}(u\cdot\lambda)$, this also means that there are no non-zero homomorphisms $\ker(q)\to\nabla(u\cdot\lambda)$ and thus $q_k=0$. This is a contradiction, so we must have $\ker(q)=0$ and hence that $q:\im(\phi_i^M)\to \nabla(u\cdot\lambda)$ is an isomorphism. This proves the first part of the claim.
	
	We now need to show that $M=\bigoplus_{i=1}^n \im(\phi_i^M)$; comparing dimensions, it is enough to show that $M=\sum_{i=1}^n\im(\phi_i^M)$. Let us pick maps $\epsilon_1,\ldots,\epsilon_m:T_\chi(u\cdot\lambda)\to T_\chi(u\cdot\lambda)$ such that $\pi_{u\cdot\lambda}\circ\epsilon_1,\ldots,\pi_{u\cdot\lambda}\circ\epsilon_m$ is a basis of $\Hom_{\sC_\chi}(T_\chi(u\cdot\lambda),\nabla(u\cdot\lambda))$ (we can do this by Corollary~\ref{TiltBasCor}), so we have the diagram 
	\begin{eqnarray*}
		\begin{array}{c}\xymatrix{
				& & \overline{\Delta}(u\cdot\lambda) \ar@{->}[drr] \ar@{^{(}->}[d]^{\overline{\iota}_{u\cdot\lambda}}  & & \\ 
				T_\chi(u\cdot\lambda) \ar@{->}[rr]^{\varepsilon_j} \ar@{->}[drr] & & T_\chi(u\cdot\lambda) \ar@{->}[rr]^{\phi_i^{M}} \ar@{->>}[d]^{\pi_{u\cdot\lambda}} & & M  \\
				& & \nabla(u\cdot\lambda). & & 
				}\end{array}
		\end{eqnarray*}
	By \cite[Thm 4.43]{BS}, the maps $\phi_i^M\circ\varepsilon_j$ form a basis of $\Hom_{\sC_\chi}(T_\chi(u\cdot\lambda),M)$. Since we know that there exists a surjection $T_\chi(u\cdot\lambda)\twoheadrightarrow \nabla(u\cdot\lambda)$, there exists a surjective map $\Psi:\bigoplus_{k=1}^n T_\chi(u\cdot\lambda)\twoheadrightarrow M$. As $\Hom_{\sC_\chi}(\bigoplus_{k=1}^n T_\chi(u\cdot\lambda),M)\cong \bigoplus_{k=1}^n \Hom_{\sC_\chi}(T_\chi(u\cdot\lambda),M)$ there thus exist maps $\Phi_1,\ldots,\Phi_n:T_\chi(u\cdot\lambda)\to M$ such that $\Psi=\Phi_1\circ \tilde{p}_1+\cdots +\Phi_n\circ \tilde{p}_n$ (where $\tilde{p}_i:\bigoplus_{k=1}^n T_\chi(u\cdot\lambda)\to T_\chi(u\cdot\lambda)$ is projection to the $i$th coordinate). Since the maps $\phi_i^M\circ\varepsilon_j$ form a basis of $\Hom_{\sC_\chi}(T_\chi(u\cdot\lambda),M)$, there exist $a_{ij}^k\in\bK$ such that $$\Phi_k=\sum_{i,j} a_{ij}^k (\phi_i^M\circ \varepsilon_j)$$ for each $1\leq k\leq n$. 
	
	Let $x\in M$. Then there exists $y=(y_1,\ldots,y_n)\in \bigoplus_{k=1}^n T_\chi(u\cdot\lambda)$ such that $\Phi(y)=x$. Combining all the above, we have $$x=\Phi_1(y_1)+\cdots +\Phi_n(y_n)=\sum_{k=1}^n\left(\sum_{i,j} a_{ij}^k \phi_i^M( \varepsilon_j(y_k))\right) = \phi_1^M\left(\sum_{j,k} a_{1j}^k \varepsilon_j(y_k)\right)+\cdots +\phi_n^M\left(\sum_{j,k} a_{nj}^k \varepsilon_j(y_k)\right).$$ Hence, $x\in \sum_{i=1}^n\im(\phi_i^M)$, and so $M=\sum_{i=1}^n\im(\phi_i^M)$ as required.
\end{proof}

\begin{theorem}\label{LMSect}
	Let $M\in\sC_\chi(\lambda)$ admit a costandard filtration and let $(\Pi,e,(\phi_\pi^M)_{\pi\in\Pi})$ be a $\overline{\Delta}$-section of the $\nabla$-flag of $M$. Then the collection $(\Pi',e',(\phi_\pi^{T_\lambda^\mu(M)})_{\pi\in \Pi'})$ is a $\overline{\Delta}$-section of the $\nabla$-flag of $T_\lambda^\mu(M)$.
\end{theorem}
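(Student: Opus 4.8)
The plan is to induct on the length $n$ of a costandard filtration of $M$, reducing the general case to Proposition~\ref{SumSection1} by peeling off the top layer of the canonical $\nabla$-flag. Since $\dim\Hom_{\sC_\chi}(\overline{\Delta}(\xi),M)=(M:\nabla(\xi))$, the set $\Upsilon\coloneqq\{\xi\in\Lambda_I : (M:\nabla(\xi))\neq 0\}$ is finite and contains $e(\Pi)$. Pick $\xi_0$ maximal in $\Upsilon$ and let $\Omega\subseteq\Lambda_I$ be the lower set generated by $\Upsilon\setminus\{\xi_0\}$; maximality of $\xi_0$ forces $\xi_0\notin\Omega$. From the defining properties of the canonical $\nabla$-flag, $M/\Gamma_\Omega(M)$ has a costandard filtration all of whose factors are $\nabla(\xi_0)$, and since $\Ext^1_{\sC_\chi}(\nabla(\xi_0),\nabla(\xi_0))=0$ (by \cite[Cor. 8.13]{West}, as used earlier) this gives $M/\Gamma_\Omega(M)\cong\bigoplus_{i=1}^m\nabla(\xi_0)$ with $m=(M:\nabla(\xi_0))\geq 1$; writing $\xi_0=u_0\cdot\lambda$ with $u_0\in W^{I,\lambda}$, Proposition~\ref{SumSection1} applies to it. Meanwhile $\Gamma_\Omega(M)$ has a costandard filtration of length $n-m<n$. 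Applying the exact functor $T_\lambda^\mu$ to $0\to\Gamma_\Omega(M)\xrightarrow{\iota}M\xrightarrow{q}M/\Gamma_\Omega(M)\to 0$ keeps it exact, and by Corollary~\ref{TransCostand} all three terms retain costandard filtrations.

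The key technical point is that the construction $M\mapsto(\Pi',e',(\phi_\pi^{T_\lambda^\mu(M)})_{\pi\in\Pi'})$ is \emph{natural} in $M$: in every case of the construction (Case~1, and Case~2 in both sub-cases $ws\cdot\lambda<w\cdot\lambda$ and $w\cdot\lambda<ws\cdot\lambda$) the morphism $\phi_\pi^{T_\lambda^\mu(M)}$ is obtained from $\phi_\pi^M$ either by applying $T_\lambda^\mu$ and pre-composing with an $M$-independent inclusion, or by applying the functorial isomorphism $\Hom_{\sC_\chi}(T_\chi(w\cdot\lambda),-)\cong\Hom_{\sC_\chi}(T_\chi(w\cdot\mu),T_\lambda^\mu(-))$ built from Step~4 of Proposition~\ref{TransTilt} and the $(T_\mu^\lambda,T_\lambda^\mu)$-adjunction. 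Consequently, whenever $f\colon M\to N$ satisfies $f\circ\psi_\pi^M=\psi_\pi^N$ for the relevant section maps on $M$ and $N$, one gets $T_\lambda^\mu(f)\circ\phi_\pi^{T_\lambda^\mu(M)}=\phi_\pi^{T_\lambda^\mu(N)}$; I will check this uniformly across the cases (routine). Combined with Lemmas~\ref{SubSect} and \ref{QuotSect}, this yields a decomposition $\Pi'=\Pi'_\Omega\sqcup(\Pi^\Omega)'$ matching the constructions for $\Gamma_\Omega(M)$ (with its restricted section) and for $M/\Gamma_\Omega(M)$, with $\phi_\pi^{T_\lambda^\mu(M)}=T_\lambda^\mu(\iota)\circ\phi_\pi^{T_\lambda^\mu(\Gamma_\Omega(M))}$ for $\pi\in\Pi'_\Omega$ and $T_\lambda^\mu(q)\circ\phi_\pi^{T_\lambda^\mu(M)}=\phi_\pi^{T_\lambda^\mu(M/\Gamma_\Omega(M))}$ for $\pi\in(\Pi^\Omega)'$, together with $e'$ agreeing on the two pieces.

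To conclude, fix $\eta\in\Lambda_I$ and apply $\Hom_{\sC_\chi}(\overline{\Delta}(\eta),-)$ to $0\to T_\lambda^\mu(\Gamma_\Omega(M))\to T_\lambda^\mu(M)\to T_\lambda^\mu(M/\Gamma_\Omega(M))\to 0$. Because $\overline{\Delta}(\eta)$ has a proper standard filtration and $T_\lambda^\mu(\Gamma_\Omega(M))$ has a costandard filtration, $\Ext^1_{\sC_\chi}(\overline{\Delta}(\eta),T_\lambda^\mu(\Gamma_\Omega(M)))=0$, so the induced sequence of $\Hom$-spaces is short exact. By the inductive hypothesis the elements $\{\phi_\pi^{T_\lambda^\mu(M)}\circ\overline{\iota}_\eta : \pi\in\Pi'_\Omega,\ e'(\pi)=\eta\}$ are the image under $T_\lambda^\mu(\iota)\circ(-)$ of a basis of $\Hom_{\sC_\chi}(\overline{\Delta}(\eta),T_\lambda^\mu(\Gamma_\Omega(M)))$, hence form a basis of $\ker\!\big(T_\lambda^\mu(q)\circ(-)\big)$; by Proposition~\ref{SumSection1} the elements $\{\phi_\pi^{T_\lambda^\mu(M)}\circ\overline{\iota}_\eta : \pi\in(\Pi^\Omega)',\ e'(\pi)=\eta\}$ map under $T_\lambda^\mu(q)\circ(-)$ to a basis of $\Hom_{\sC_\chi}(\overline{\Delta}(\eta),T_\lambda^\mu(M/\Gamma_\Omega(M)))$. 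The elementary splitting lemma for a short exact sequence of vector spaces then shows their (disjoint) union is a basis of $\Hom_{\sC_\chi}(\overline{\Delta}(\eta),T_\lambda^\mu(M))$, which is precisely the defining property of a $\overline{\Delta}$-section. I expect the main obstacle to be purely organizational: verifying naturality uniformly in all cases of the construction and carefully matching the index data $(\Pi'_\Omega,e')$ and $((\Pi^\Omega)',e')$ produced by Lemmas~\ref{SubSect}--\ref{QuotSect} with the pieces of $(\Pi',e')$; once that bookkeeping is in place, the homological input (exactness of $T_\lambda^\mu$, vanishing of the relevant $\Ext^1$, and the splitting lemma) is light.
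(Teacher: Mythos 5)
Your proof is correct, and it organizes the reduction differently from the paper. The paper fixes a weight $w\cdot\mu$ and, using the canonical $\nabla$-flag with the two lower sets $\Omega=\{\kappa\preceq w\cdot\lambda\}$ and $\Omega'=\Omega\setminus\{w\cdot\lambda\}$, identifies $\Hom_{\sC_\chi}(\overline{\Delta}(w\cdot\mu),T_\lambda^\mu(M))$ directly with $\Hom_{\sC_\chi}(\overline{\Delta}(w\cdot\mu),T_\lambda^\mu(\Gamma_\Omega(M)/\Gamma_{\Omega'}(M)))$, so that Proposition~\ref{SumSection1} applies to the single subquotient $\Gamma_\Omega(M)/\Gamma_{\Omega'}(M)\cong\bigoplus\nabla(w\cdot\lambda)$; it writes this out only for $wsw^{-1}\in W_{I,p}$, where one layer suffices, and defers the case $wsw^{-1}\notin W_{I,p}$, where both the $\nabla(w\cdot\lambda)$- and $\nabla(ws\cdot\lambda)$-layers of $M$ contribute to the $\Hom$-space, to the argument of \cite[Prop.~3.5]{RW1}. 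You instead run a global induction on the length of a costandard filtration, peeling off the top layer $M/\Gamma_\Omega(M)\cong\bigoplus\nabla(\xi_0)$ and assembling bases along the short exact sequence of $\Hom$-spaces, which is exact on the right because $\Ext^1$ of a proper-standard object by a costandard-filtered object vanishes. Both arguments rest on the same ingredients --- the canonical flag, Lemmas~\ref{SubSect} and \ref{QuotSect}, the naturality of the assignment $\phi_\pi^M\mapsto\phi_\pi^{T_\lambda^\mu(M)}$ (which, as you note and as is readily verified, is in every case of the form $T_\lambda^\mu(\phi_\pi^M)$ precomposed with a fixed, $M$-independent map $T_\chi(e'(\pi))\to T_\lambda^\mu(T_\chi(e(\pi)))$), and Proposition~\ref{SumSection1} --- but your induction treats all weights and both cases $wsw^{-1}\in W_{I,p}$ and $wsw^{-1}\notin W_{I,p}$ uniformly and self-containedly, at the cost of the bookkeeping needed to match $\Pi'=\Pi'_\Omega\sqcup(\Pi^\Omega)'$, whereas the paper's per-weight reduction is shorter for the case it writes out but leans on \cite{RW1} for the other.
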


\begin{proof}
	
	For each $w\in W^{I,\mu}$ we need to show that $$\{\phi_\pi^{T_\lambda^\mu(M)}\mid \pi\in\Pi'\mbox{ such that }e'(\pi)=w\cdot\mu\}\subseteq \Hom_{\sC_\chi}(T_\chi(w\cdot\mu),T_\lambda^\mu(M))$$ is such that the set $$\{\phi_\pi^{T_\lambda^\mu(M)}\circ\overline{\iota}_{w\cdot\mu}\mid \pi\in\Pi'\mbox{ such that }e'(\pi)=w\cdot\mu\}$$ is a basis of $\Hom_{\sC_\chi}(\overline{\Delta}(w\cdot\mu),T_\lambda^\mu(M))$. The proof is almost identical to the proof of \cite[Prop. 3.5]{RW1}; we therefore only include the proof when $wsw^{-1}\in W_{I,p}$ since this case doesn't have an analogue in \cite{RW1}. Filling in the details of the case when $wsw^{-1}\notin W_{I,p}$ is left to the reader.
	
	Let us assume, therefore, that $wsw^{-1}\in W_{I,p}$. Let $$\Omega\coloneqq\{\kappa\in\Lambda_I\mid \kappa \preceq w\cdot\lambda\}\subseteq\Lambda_I\quad\mbox{   and   }\quad  \Omega'\coloneqq\Omega\setminus\{w\cdot\lambda\}\subseteq\Omega\subseteq\Lambda_I,$$ which are both lower sets of $\Lambda_I$. We have $$\Gamma_{\Omega'}(M)\subseteq \Gamma_{\Omega}(M)\subseteq M$$ and we know that $\Gamma_\Omega(M)/\Gamma_{\Omega'}(M)$ is isomorphic to a direct sum of copies of $\nabla(w\cdot\lambda)$. 

There are isomorphisms $$\Hom_{\sC_\chi}(\overline{\Delta}(w\cdot\mu),T_\lambda^\mu(M))\cong \Hom_{\sC_\chi}(\overline{\Delta}(w\cdot\mu),T_\lambda^\mu(\Gamma_\Omega(M)))\cong \Hom_{\sC_\chi}(\overline{\Delta}(w\cdot\mu),T_\lambda^\mu(\Gamma_\Omega(M))/T_\lambda^\mu(\Gamma_{\Omega'}(M)))$$ and $$\Hom_{\sC_\chi}(\overline{\Delta}(w\cdot\mu),T_\lambda^\mu(\Gamma_\Omega(M))/T_\lambda^\mu(\Gamma_{\Omega'}(M)))\cong \Hom_{\sC_\chi}(\overline{\Delta}(w\cdot\mu),T_\lambda^\mu(\Gamma_\Omega(M)/\Gamma_{\Omega'}(M))),$$ arising out of the facts that $$\dim\Hom_{\sC_\chi}(\overline{\Delta}(w\cdot\mu), T_\lambda^\mu(M)/T_\lambda^\mu(\Gamma_\Omega(M)))=(T_\lambda^\mu(M)/T_\lambda^\mu(\Gamma_\Omega(M)):\nabla(w\cdot\mu))=0$$ and $$\dim\Hom_{\sC_\chi}(\overline{\Delta}(w\cdot\mu),T_\lambda^\mu(\Gamma_{\Omega'}(M)))=(T_\lambda^\mu(\Gamma_{\Omega'}(M)):\nabla(w\cdot\mu))=0.$$

The maps $\phi_\pi^{T_\lambda^\mu(M)}\circ\overline{\iota}_{w\cdot\mu}$, $\pi\in \Pi'$ with $e'(\pi)=w\cdot\mu$, correspond to the maps $\phi_\pi^{T_\lambda^\mu(\Gamma_\Omega(M)/\Gamma_{\Omega'}(M))}\circ\overline{\iota}_{w\cdot\mu}$ under these isomorphisms, where the latter are defined using Lemmas~\ref{SubSect} and \ref{QuotSect} and the beginning of this subsection. This follows from the commutativity of the diagram
	\begin{equation*}
	\begin{array}{c}\xymatrix{
			\overline{\Delta}(w\cdot\mu) \ar@{^{(}->}[r] \ar@{->}[rdd]  & T_\chi(w\cdot\mu) \ar@{^{(}->}[r]^{\iota_i\qquad}  & T_\chi(w\cdot\mu)\oplus T_\chi(w\cdot\mu) \ar@{->}[r]^{\sim} & T_\lambda^\mu(T_\chi(w\cdot\lambda))\ar@{->}[ddl]^{T_\lambda^\mu(\phi_\pi^M)} \ar@{->}[ddl] \ar@{->}[ddd]^{T_\lambda^\mu\left(\phi_\pi^{\Gamma_\Omega(M)/\Gamma_{\Omega'}(M)}\right)}  \\
			& & & \\
			& T_\lambda^\mu(\Gamma_\Omega(M)) \ar@{->>}[drr] \ar@{^{(}->}[r] \ar@{->>}[d]  \ar@{<-}[uurr]^{T_\lambda^\mu\left(\phi_\pi^{\Gamma_\Omega(M)}\right)} & T_\lambda^\mu(M) & \\
			& T_\lambda^\mu(\Gamma_\Omega(M))/T_\lambda^\mu(\Gamma_{\Omega'}(M))   \ar@{->}[rr]^{\sim} & &  T_\lambda^\mu(\Gamma_\Omega(M)/\Gamma_{\Omega'}(M))
		}
	\end{array}
\end{equation*}
for each $i=1,2$ and $\pi\in\Pi$ such that $e(\pi)=w\cdot\lambda$.

Since $\Gamma_\Omega(M)/\Gamma_{\Omega'}(M)$ is isomorphic to a direct sum of copies of $\nabla(w\cdot\lambda)$, the maps $\phi_\pi^{T_\lambda^\mu(\Gamma_\Omega(M)/\Gamma_{\Omega'}(M))}\circ\overline{\iota}_{w\cdot\mu}$ give a basis of $\Hom_{\sC_\chi}(\overline{\Delta}(w\cdot\mu),T_\lambda^\mu(\Gamma_\Omega(M)/\Gamma_{\Omega'}(M)))$ by Lemmas~\ref{SubSect} and \ref{QuotSect} and Proposition~\ref{SumSection1}. This implies the result.

%

\end{proof}

The procedure described in this subsection therefore allow us to construct a $\overline{\Delta}$-section of the $\nabla$-flag of $T_\lambda^\mu(M)$ out of a $\overline{\Delta}$-section of the $\nabla$-flag of $M$, for any $M\in\sC_\chi(\lambda)$ admitting a costandard filtration. Combining this with Lemmas~ \ref{Stand2PropStand} and \ref{PropStand2Stand}, we also get a procedure to construct a $\Delta$-section of the $\nabla$-flag of $T_\lambda^\mu(M)$ out of a $\Delta$-section of the $\nabla$-flag of $M$.

\subsection{Sections of $\nabla$-Flags: Translation off of a Wall}\label{ss5.3}

For this subsection, let $s,\lambda$ and $\mu$ be as in Notation~\ref{NOT}.

Let $M\in\sC_\chi(\mu)$ have a costandard filtration and let $(\Pi,e,(\phi_\pi^M)_{\pi\in\Pi})$ be a $\overline{\Delta}$-section of the $\nabla$-flag of $M$. In this subsection, we construct a $\overline{\Delta}$-section of the $\nabla$-flag of $T_\mu^\lambda(M)$.

Define 
$$\Pi_1=\{\pi\in\Pi\mid e(\pi)=w\cdot\mu\mbox{ for }w\in W^{I,\mu}\mbox{ such that } wsw^{-1}\notin W_{I,p}\}$$ and $$\Pi_2=\{\pi\in\Pi\mid e(\pi)=w\cdot\mu\mbox{ for }w\in W^{I,\mu}\mbox{ such that } wsw^{-1}\in W_{I,p}\}.$$ We then set $$\Pi'=\left(\Pi_1\times\{1,2\}\right)\cup \Pi_2$$ and define $e':\Pi'\to \Lambda_I$ as follows. If $\pi\in \Pi_1$ we define $$e'(\pi,1)=ws\cdot\lambda\qquad \mbox{and} \qquad e'(\pi,2)=w\cdot\lambda.$$ Alternatively, if  $\pi\in \Pi_2$, we define $$e'(\pi)=w\cdot\lambda.$$

Now, for each $w\in W^{I,\lambda}$ we define a family $(\phi_\pi^{T_\mu^\lambda(M)})_{\pi\in (e')^{-1}(w\cdot\lambda)}$ of maps $T_\chi(w\cdot\mu)\to T_\lambda^\mu(M)$. We will need to distinguish between two cases.

{\bf Case 1:} $wsw^{-1}\in W_{I,p}$.

In this case, for each $\pi\in\Pi_2$ with $e(\pi)=w\cdot\mu$ we define the map $\phi_{\pi}^{T_\mu^\lambda(M)}:T_\chi(w\cdot\lambda)\to T_\mu^\lambda(M)$ via $$\phi_{\pi}^{T_\lambda^\mu(M)}:T_\chi(w\cdot\lambda)\cong T_\mu^\lambda(T_\chi(w\cdot\mu))\xrightarrow{T_\mu^\lambda(\phi_{\pi}^M)} T_\mu^\lambda(M).$$

{\bf Case 2:} $wsw^{-1}\notin W_{I,p}$.

We define a map $T_\chi(w\cdot\lambda)\to T_\mu^\lambda(M)$ and a map $T_\chi(ws\cdot\lambda)\to T_\mu^\lambda(M)$ for each $\pi\in \Pi_1$ with $e(\pi)=w\cdot\mu$. Recalling that $ws\cdot\lambda < w\cdot\lambda$, we make the following definitions:

For each $\pi\in\Pi_1$ with $e(\mu)=w\cdot\mu$, we define the map $\phi_{(\pi,1)}^{T_\mu^\lambda(M)}:T_\chi(ws\cdot\lambda)\to T_\mu^\lambda(M)$ to be the image of the map $\phi_\pi^M$ under the composition $$\Hom_{\sC_\chi}(T_\chi(w\cdot\mu),M)\to\Hom_{\sC_\chi}(T_\chi(w\cdot\mu)\oplus T_{\prec w\cdot\mu},M)\cong \Hom_{\sC_\chi}(T_\lambda^\mu(T_\chi(ws\cdot\lambda)),M)\cong \Hom_{\sC_\chi}(T_\chi(ws\cdot\lambda),T_\mu^\lambda(M)).$$ 

Furthermore, for each $\pi\in \Pi_1$ with $e(\pi)=w\cdot\mu$, we define the map $\phi_{(\pi,2)}^{T_\mu^\lambda(M)}:T_\chi(w\cdot\lambda)\to T_\mu^\lambda(M)$ via the composition $$\phi_{(\pi,2)}^{T_\mu^\lambda(M)}:T_\chi(w\cdot\lambda)\cong T_\mu^\lambda(T_\chi(w\cdot\mu))\xrightarrow{T_\mu^\lambda(\phi_\pi^M)} T_\mu^\lambda(M).$$

\begin{prop}
	When $M=\nabla(u\cdot\mu)$ for some $u\in W^{I,\mu}$, the collection $(\Pi',e',(\phi_\pi^{T_\mu^\lambda(M)})_{\pi\in \Pi})$ is a $\overline{\Delta}$-section of the $\nabla$ of $T_\mu^\lambda(M)$.
\end{prop}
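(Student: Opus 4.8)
The plan is to reduce to the costandard-module case exactly as in Subsection~\ref{ss5.2}: first establish the result for $M=\nabla(u\cdot\mu)$, then for a direct sum of copies of a single costandard module, and finally bootstrap to a general $M$ admitting a costandard filtration by a canonical $\nabla$-flag argument. The proposition stated is only the first of these steps, so I will focus on it, but the structure mirrors Propositions~\ref{LMNabla}, \ref{SumSection1} and Theorem~\ref{LMSect}.

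\textbf{Key steps.} First I would unwind what a $\overline{\Delta}$-section of the $\nabla$-flag of $M=\nabla(u\cdot\mu)$ is: by Equality~(\ref{dimHom}) it consists of $\Pi=\{\ast\}$, $e(\ast)=u\cdot\mu$, and a single map $\phi_\ast^{\nabla(u\cdot\mu)}:T_\chi(u\cdot\mu)\to\nabla(u\cdot\mu)$ with $\phi_\ast^{\nabla(u\cdot\mu)}\circ\overline{\iota}_{u\cdot\mu}\neq0$; by Proposition~\ref{NZeroSurj} this map is automatically surjective. Next, for each $w\in W^{I,\lambda}$ I would compute $\dim\Hom_{\sC_\chi}(\overline{\Delta}(w\cdot\lambda),T_\mu^\lambda(\nabla(u\cdot\mu)))$ using adjunction ($T_\lambda^\mu\dashv T_\mu^\lambda$), the description of $T_\lambda^\mu(\overline{\Delta}(w\cdot\lambda))=\overline{\Delta}(w\cdot\mu)$ from Corollary~\ref{TransZ}, and the fact that $T_\mu^\lambda(\nabla(u\cdot\mu))$ has a costandard filtration (Corollary~\ref{TransCostand}), giving the number of basis elements required. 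Then I would split into the two cases of Notation~\ref{NOT}. \emph{Case $usu^{-1}\notin W_{I,p}$:} here $\Pi_1=\{\ast\}$, $\Pi_2=\emptyset$, so $\Pi'=\{(\ast,1),(\ast,2)\}$ with $e'(\ast,1)=us\cdot\lambda$ and $e'(\ast,2)=u\cdot\lambda$; I must show that $\phi_{(\ast,1)}^{T_\mu^\lambda(M)}\circ\overline{\iota}_{us\cdot\lambda}$ spans $\Hom_{\sC_\chi}(\overline{\Delta}(us\cdot\lambda),T_\mu^\lambda(M))$ and $\phi_{(\ast,2)}^{T_\mu^\lambda(M)}\circ\overline{\iota}_{u\cdot\lambda}$ spans $\Hom_{\sC_\chi}(\overline{\Delta}(u\cdot\lambda),T_\mu^\lambda(M))$ (both one-dimensional). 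For $\phi_{(\ast,2)}$, defined via $T_\chi(u\cdot\lambda)\cong T_\mu^\lambda(T_\chi(u\cdot\mu))\xrightarrow{T_\mu^\lambda(\phi_\ast^{\nabla(u\cdot\mu)})}T_\mu^\lambda(M)$, I would use that $T_\mu^\lambda(\phi_\ast^{\nabla(u\cdot\mu)})$ is surjective (exactness of $T_\mu^\lambda$) and a composition-multiplicity count: $[\ker(T_\mu^\lambda(\phi_\ast^{\nabla(u\cdot\mu)})):L_\chi(u\cdot\lambda)]=[T_\chi(u\cdot\lambda):L_\chi(u\cdot\lambda)]-[T_\mu^\lambda(\nabla(u\cdot\mu)):L_\chi(u\cdot\lambda)]$; using Corollary~\ref{TransZ} on how $T_\mu^\lambda$ acts on $\overline{\nabla}(u\cdot\mu)$ (one copy each of $\overline{\nabla}(u\cdot\lambda)$ and $\overline{\nabla}(us\cdot\lambda)$, with $us\cdot\lambda\prec u\cdot\lambda$), this forces the multiplicity to be zero and hence $\phi_{(\ast,2)}\circ\overline{\iota}_{u\cdot\lambda}\neq0$. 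For $\phi_{(\ast,1)}$, defined through the isomorphism chain landing in $\Hom_{\sC_\chi}(T_\chi(us\cdot\lambda),T_\mu^\lambda(M))$ coming from $T_\lambda^\mu(T_\chi(us\cdot\lambda))\cong T_\chi(w\cdot\mu)\oplus T_{\prec w\cdot\mu}$ of Proposition~\ref{TransTilt}, I would use a commutative-diagram argument (as in the latter part of the proof of Proposition~\ref{LMNabla}) to reduce non-vanishing of $\phi_{(\ast,1)}\circ\overline{\iota}_{us\cdot\lambda}$ to non-vanishing of the composite $T_\mu^\lambda(\overline{\Delta}(us\cdot\lambda))\hookrightarrow T_\mu^\lambda(T_\chi(us\cdot\lambda))\xrightarrow{\sim}T_\lambda^\mu(\cdots)$ — wait, more carefully: reduce to showing a composite involving the surjection $\phi_\ast^{\nabla(u\cdot\mu)}$ and the injection $\overline{\Delta}(us\cdot\lambda)\hookrightarrow T_\mu^\lambda(T_\chi(u\cdot\mu))$, then use $[\ker(\phi_\ast^{\nabla(u\cdot\mu)}):L_\chi(u\cdot\mu)]=0$ together with $[T_\lambda^\mu(\overline{\Delta}(us\cdot\lambda)):L_\chi(u\cdot\mu)]=[\overline{\Delta}(u\cdot\mu):L_\chi(u\cdot\mu)]=1$ (from Corollary~\ref{TransZ} applied to $T_\lambda^\mu(\overline{\Delta}(us\cdot\lambda))=\overline{\Delta}(u\cdot\mu)$). \emph{Case $usu^{-1}\in W_{I,p}$:} here $\Pi_1=\emptyset$, $\Pi_2=\{\ast\}$, $\Pi'=\{\ast\}$, $e'(\ast)=u\cdot\lambda$, and $\dim\Hom_{\sC_\chi}(\overline{\Delta}(u\cdot\lambda),T_\mu^\lambda(M))=1$; I must show $\phi_\ast^{T_\mu^\lambda(M)}\circ\overline{\iota}_{u\cdot\lambda}\neq0$ where $\phi_\ast^{T_\mu^\lambda(M)}$ is $T_\chi(u\cdot\lambda)\cong T_\mu^\lambda(T_\chi(u\cdot\mu))\xrightarrow{T_\mu^\lambda(\phi_\ast^{\nabla(u\cdot\mu)})}T_\mu^\lambda(M)$, which follows by the same surjectivity-plus-multiplicity argument as for $\phi_{(\ast,2)}$ above (now $T_\mu^\lambda(\nabla(u\cdot\mu))\cong\nabla(u\cdot\lambda)$ since $usu^{-1}\in W_{I,p}$, and $[T_\mu^\lambda(T_\chi(u\cdot\mu)):L_\chi(u\cdot\lambda)]=[T_\chi(u\cdot\lambda):L_\chi(u\cdot\lambda)]=[\nabla(u\cdot\lambda):L_\chi(u\cdot\lambda)]$).

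\textbf{Main obstacle.} The delicate point is the case $usu^{-1}\notin W_{I,p}$ with the map $\phi_{(\ast,1)}$ landing on $T_\chi(us\cdot\lambda)$: one has to keep careful track of which direct summand of $T_\lambda^\mu(T_\chi(us\cdot\lambda))$ is which under the adjunction isomorphisms, and verify that the chosen map really restricts non-trivially to $\overline{\Delta}(us\cdot\lambda)$ rather than vanishing there. This is where a clean commutative diagram (analogous to the one in the proof of Proposition~\ref{LMNabla} for the case $us\cdot\lambda<u\cdot\lambda$) is essential, together with correctly invoking Proposition~\ref{NZeroSurj} and the composition-factor bookkeeping via Corollary~\ref{TransZ} and the known $\overline{\nabla}$-filtration of $\nabla(u\cdot\mu)$. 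Everything else is a routine dimension count using adjunction and exactness of translation functors; I would remark, as the authors do elsewhere, that the argument is parallel to \cite[Prop.~3.6]{RW1} and only spell out the genuinely new case.
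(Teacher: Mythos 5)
Your proposal is correct and follows essentially the same route as the paper: the dimension count via adjunction and Corollary~\ref{TransZ}, the case split on whether $usu^{-1}\in W_{I,p}$, surjectivity of $\phi_\ast^{\nabla(u\cdot\mu)}$ via Proposition~\ref{NZeroSurj} combined with composition-multiplicity bookkeeping for the maps factoring through $T_\mu^\lambda(T_\chi(u\cdot\mu))$, and the commutative-diagram reduction for $\phi_{(\ast,1)}$ to the non-vanishing of the composite through $T_\lambda^\mu(\overline{\Delta}(us\cdot\lambda))=\overline{\Delta}(u\cdot\mu)$ and the projection $T_\chi(u\cdot\mu)\oplus T_{\prec u\cdot\mu}\twoheadrightarrow T_\chi(u\cdot\mu)$. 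Apart from minor notational slips (a stray $w$ for $u$ in the decomposition of $T_\lambda^\mu(T_\chi(us\cdot\lambda))$), the argument matches the paper's.
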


\begin{proof}
	A $\overline{\Delta}$-section of the $\nabla$-flag of ${\nabla(u\cdot\mu)}$ consists of $\Pi=\{\ast\}$, $e(\ast)=u\cdot\mu$ and a map $\phi_\ast^{\nabla(u\cdot\mu)}:T_\chi(u\cdot\mu)\to M$ such that $\phi_\ast^{\nabla(u\cdot\mu)}\circ \overline{\iota}_{u\cdot\lambda}\neq 0$. 
	
	For $w\in W^{I,\lambda}$ we can compute that 
	\begin{equation*}
		\begin{split}
			\dim\Hom_{\sC_\chi}(\overline{\Delta}(w\cdot\lambda),T_\mu^\lambda(\nabla(u\cdot\mu))) & = \dim\Hom_{\sC_\chi}(T_\lambda^\mu(\overline{\Delta}(w\cdot\lambda)),\nabla(u\cdot\mu)) \\ 
			& = \dim\Hom_{\sC_\chi}(\overline{\Delta}(w\cdot\mu),\nabla(u\cdot\mu)) \\
			& =  \twopartdef{1}{u=w\mbox{ or }u=ws,}{0}{\mbox{not.}}
		\end{split}
	\end{equation*}
	We will tackle two cases separately.
	
	{\bf Case 1:} $usu^{-1}\in W_{I,p}$.
	
	In this case we have to prove that $\phi_\ast^{T_\mu^\lambda({\nabla(u\cdot\mu)})}\circ\overline{\iota}_{u\cdot\lambda}\neq 0$, i.e. that the composition
	$$\overline{\Delta}(u\cdot\lambda)\hookrightarrow T_\chi(u\cdot\lambda)\cong T_\mu^\lambda(T_\chi(u\cdot\mu))\xrightarrow{T_\mu^\lambda(\phi_{\ast}^{\nabla(u\cdot\mu)})} T_\mu^\lambda({\nabla(u\cdot\mu)})$$ is non-zero. By Proposition~\ref{NZeroSurj}, we know that $\phi_\ast^M$ is surjective and so $T_\mu^\lambda(\phi_\ast^{\nabla(u\cdot\mu)})$ is surjective. Hence $$[\ker(T_\mu^\lambda(\phi_\ast^{\nabla(u\cdot\mu)})):L_\chi(u\cdot\lambda)]=[T_\mu^\lambda(T_\chi(u\cdot\mu)):L_\chi(u\cdot\lambda)]-[T_\mu^\lambda(\nabla(u\cdot\mu)):L_\chi(u\cdot\lambda)]=0$$ and thus $\phi_\ast^{T_\mu^\lambda({\nabla(u\cdot\mu)})}\circ\overline{\iota}_{u\cdot\lambda}\neq 0$ as required.
	
	{\bf Case 2:} $usu^{-1}\notin W_{I,p}$.
	
	In this case, we have to prove that $\phi_{(\ast,1)}^{T_\mu^\lambda({\nabla(u\cdot\mu)})}\circ \overline{\iota}_{us\cdot\lambda}\neq 0$ and $\phi_{(\ast,2)}^{T_\mu^\lambda({\nabla(u\cdot\mu)})}\circ \overline{\iota}_{u\cdot\lambda}\neq 0$. For the first, we consider the commutative diagram  	\begin{eqnarray*}
		\begin{array}{c}\xymatrix{
				\Hom_{\sC_\chi}(T_\chi(u\cdot\mu),{\nabla(u\cdot\mu)}) \ar@{->}[r] &  \Hom_{\sC_\chi}(T_\lambda^\mu(T_\chi(us\cdot\lambda)),{\nabla(u\cdot\mu)}) \ar@{->}[r]^{\sim} \ar@{->}[d]^{\circ T_\lambda^\mu(\overline{\iota}_{us\cdot\lambda})} & \Hom_{\sC_\chi}(T_\chi(us\cdot\lambda),T_\mu^\lambda({\nabla(u\cdot\mu)})) \ar@{->}[d]^{\circ \overline{\iota}_{us\cdot\lambda}}\\
				 & \Hom_{\sC_\chi}(T_\lambda^\mu(\overline{\Delta}(us\cdot\lambda)),{\nabla(u\cdot\mu)}) \ar@{->}[r]^{\sim} &  \Hom_{\sC_\chi}(\overline{\Delta}(us\cdot\lambda),T_\mu^\lambda({\nabla(u\cdot\mu)})).
		}\end{array} 
	\end{eqnarray*}
	From this diagram, showing that $\phi_{(\ast,1)}^{T_\mu^\lambda({\nabla(u\cdot\mu)})}\circ \overline{\iota}_{us\cdot\lambda}\neq 0$ reduces to showing that the composition $$T_\lambda^\mu(\overline{\Delta}(us\cdot\lambda))\hookrightarrow T_\lambda^\mu(T_\chi(us\cdot\lambda))\cong T_\chi(u\cdot\mu)\oplus T_{\prec u\cdot\mu}\twoheadrightarrow T_\chi(u\cdot\mu)\xrightarrow{\phi_\ast^{\nabla(u\cdot\mu)}}{\nabla(u\cdot\mu)}$$ is non-zero. As above, $\phi_\ast^{\nabla(u\cdot\mu)}$ is surjective, and we note that $[T_\chi(u\cdot\mu)\oplus T_{\prec u\cdot\mu}:L_\chi(u\cdot\mu)]=[\nabla(u\cdot\lambda):L_\chi(u\cdot\mu)]$. Since $T_\lambda^\mu(\overline{\Delta}(us\cdot\lambda))=\overline{\Delta}(u\cdot\mu)$, this is enough to show that the composition is non-zero. This implies $\phi_{(\ast,1)}^{T_\mu^\lambda({\nabla(u\cdot\mu)})}\circ \overline{\iota}_{us\cdot\lambda}\neq 0$.
	
	To show that $\phi_{(\ast,2)}^{T_\mu^\lambda({\nabla(u\cdot\mu)})}\circ \overline{\iota}_{u\cdot\lambda}\neq 0$, we need that the composition $$\overline{\Delta}(u\cdot\lambda)\hookrightarrow  T_\chi(u\cdot\lambda)\cong T_\mu^\lambda(T_\chi(u\cdot\mu))\xrightarrow{T_\mu^\lambda(\phi_\ast^{\nabla(u\cdot\mu)})} T_\mu^\lambda({\nabla(u\cdot\mu)})$$ is non-zero. Since $T_\mu^\lambda(\phi_\ast^{\nabla(u\cdot\mu)})$ is surjective, and $[T_\mu^\lambda(T_\chi(u\cdot\mu)):L_\chi(u\cdot\lambda)]=[T_\mu^\lambda(\nabla(u\cdot\mu)):L_\chi(u\cdot\lambda)]$, the result follows by similar arguments to those used in the proof of Proposition~\ref{LMNabla}.
\end{proof}

\begin{prop}\label{SumSection2}
	When $M=\bigoplus_{i=1}^n\nabla(u\cdot\mu)$ for some $u\in W^{I,\mu}$ and $n\geq 1$, the collection $(\Pi',e',(\phi_\pi^{T_\mu^\lambda(M)})_{\pi\in \Pi'})$ is a $\overline{\Delta}$-section of the $\nabla$-flag of $T_\mu^\lambda(M)$.
\end{prop}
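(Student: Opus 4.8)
The plan is to reduce the case of a direct sum of costandard modules to the single-costandard case already handled in the preceding proposition, following the same strategy used in Proposition~\ref{SumSection1} (translation onto a wall). First I would unwind what a $\overline{\Delta}$-section of the $\nabla$-flag of $M=\bigoplus_{i=1}^n\nabla(u\cdot\mu)$ looks like: by Equality~(\ref{dimHom}) we must have $\Pi=\{1,\dots,n\}$, $e(i)=u\cdot\mu$ for all $i$, and maps $\phi_1^M,\dots,\phi_n^M\colon T_\chi(u\cdot\mu)\to M$ such that the compositions $\phi_i^M\circ\overline{\iota}_{u\cdot\mu}$ form a basis of $\Hom_{\sC_\chi}(\overline{\Delta}(u\cdot\mu),M)$. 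The key structural claim, exactly as in Proposition~\ref{SumSection1}, is that $M\cong\bigoplus_{i=1}^n\im(\phi_i^M)$ and $\im(\phi_i^M)\cong\nabla(u\cdot\mu)$ for each $i$.

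The argument for this claim is essentially identical to the one in Proposition~\ref{SumSection1}, with $\lambda$ replaced by $\mu$ throughout: to see $\im(\phi_i^M)\cong\nabla(u\cdot\mu)$, pick a coordinate projection $p_j\colon M\to\nabla(u\cdot\mu)$ with $p_j\circ\phi_i^M\circ\overline{\iota}_{u\cdot\mu}\neq 0$ (possible since the $\phi_i^M\circ\overline{\iota}_{u\cdot\mu}$ form a basis), apply Proposition~\ref{NZeroSurj} to conclude $p_j\circ\phi_i^M$ is surjective, hence the restriction $q\colon\im(\phi_i^M)\to\nabla(u\cdot\mu)$ is surjective; then a composition-factor count using $[T_\chi(u\cdot\mu):L_\chi(u\cdot\mu)]=[\nabla(u\cdot\mu):L_\chi(u\cdot\mu)]$ forces $[\ker(q):L_\chi(u\cdot\mu)]=0$, and since $L_\chi(u\cdot\mu)$ is the socle of $\overline{\nabla}(u\cdot\mu)$ and $\nabla(u\cdot\mu)$ has an $\overline{\nabla}(u\cdot\mu)$-filtration, there are no nonzero maps $\ker(q)\to\nabla(u\cdot\mu)$, so no nonzero map $\ker(q)\to M$, giving $\ker(q)=0$. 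For $M=\sum_i\im(\phi_i^M)$ one uses Corollary~\ref{TiltBasCor} to get maps $\varepsilon_j\colon T_\chi(u\cdot\mu)\to T_\chi(u\cdot\mu)$ with $\pi_{u\cdot\mu}\circ\varepsilon_j$ a basis of $\Hom_{\sC_\chi}(T_\chi(u\cdot\mu),\nabla(u\cdot\mu))$, then \cite[Thm 4.43]{BS} says the $\phi_i^M\circ\varepsilon_j$ form a basis of $\Hom_{\sC_\chi}(T_\chi(u\cdot\mu),M)$; surjectivity of some $\bigoplus_k T_\chi(u\cdot\mu)\twoheadrightarrow M$ then expresses any $x\in M$ as a sum of terms in the various $\im(\phi_i^M)$.

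Once the decomposition $M\cong\bigoplus_{i=1}^n\im(\phi_i^M)$ with each summand $\cong\nabla(u\cdot\mu)$ is established, I would observe that $(\{i\},e|_{\{i\}},\phi_i^M)$ is a $\overline{\Delta}$-section of the $\nabla$-flag of $\im(\phi_i^M)\cong\nabla(u\cdot\mu)$, apply the preceding proposition (the single costandard case for translation off the wall) to each summand, and note that $T_\mu^\lambda$ commutes with finite direct sums (it is additive and exact), so $T_\mu^\lambda(M)\cong\bigoplus_i T_\mu^\lambda(\im(\phi_i^M))$ and the sections assemble: the index set $\Pi'$, the map $e'$, and the maps $\phi_\pi^{T_\mu^\lambda(M)}$ constructed at the start of the subsection restrict on each summand to precisely the section produced by the single-costandard proposition. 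Since being a $\overline{\Delta}$-section is checked $\overline{\Delta}(w\cdot\lambda)$ by $\overline{\Delta}(w\cdot\lambda)$ and $\Hom_{\sC_\chi}(\overline{\Delta}(w\cdot\lambda),-)$ commutes with the direct sum, a basis on each summand gives a basis on the total, as required.

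I expect the main obstacle to be bookkeeping rather than conceptual: one must check carefully that the maps $\phi_\pi^{T_\mu^\lambda(M)}$ defined at the start of the subsection (which involve the isomorphisms $T_\mu^\lambda(T_\chi(w\cdot\mu))\cong T_\chi(w\cdot\lambda)$ and $T_\lambda^\mu(T_\chi(ws\cdot\lambda))\cong T_\chi(w\cdot\mu)\oplus T_{\prec w\cdot\mu}$ from Proposition~\ref{TransTilt} and Corollary~\ref{TransTilt2}, in both the $wsw^{-1}\in W_{I,p}$ and $wsw^{-1}\notin W_{I,p}$ cases) are genuinely compatible with the direct-sum decomposition $M\cong\bigoplus_i\im(\phi_i^M)$, i.e.\ that the section-construction is ``functorial enough'' in $M$ to be computed summand-by-summand. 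This is routine given that $T_\mu^\lambda$ and all the adjunction isomorphisms are additive, but it requires writing the construction out on each $\im(\phi_i^M)$ and matching indices; I would state this compatibility explicitly and then say the verification is the same as in the proof of Proposition~\ref{SumSection1}.
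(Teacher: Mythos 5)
Your proposal is correct and is exactly the argument the paper intends: the paper's proof of this proposition simply states that it is identical to the proof of Proposition~\ref{SumSection1}, and you have carried out that identical argument (decomposing $M$ as $\bigoplus_i\im(\phi_i^M)$ with each summand isomorphic to $\nabla(u\cdot\mu)$, then applying the single-costandard case summand by summand using additivity of $T_\mu^\lambda$). No gaps.
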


\begin{proof}
	The proof is identical to the proof of Proposition~\ref{SumSection1}.
\end{proof}

\begin{theorem}\label{MLSect}
	Let $M\in\sC_\chi(\mu)$ admit a costandard filtration and let $(\Pi,e,(\phi_\pi^M)_{\pi\in\Pi})$ be a $\overline{\Delta}$-section of the $\nabla$-flag of $M$. Then the collection $(\Pi',e',(\phi_\pi^{T_\lambda^\mu(M)})_{\pi\in \Pi'})$ is a $\overline{\Delta}$-section of the $\nabla$-flag of $T_\mu^\lambda(M)$.
\end{theorem}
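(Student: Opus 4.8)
The plan is to reduce the general case to the case of a direct sum of copies of a single costandard module, exactly mirroring the proof strategy used for Theorem~\ref{LMSect}. The point is that the whole construction is local in the following sense: for a fixed $w\in W^{I,\lambda}$, whether or not the relevant maps $\phi_\pi^{T_\mu^\lambda(M)}\circ\overline{\iota}$ form a basis of $\Hom_{\sC_\chi}(\overline{\Delta}(w\cdot\lambda),T_\mu^\lambda(M))$ can be detected after passing through a subquotient $\Gamma_\Omega(M)/\Gamma_{\Omega'}(M)$ of the canonical $\nabla$-flag of $M$, for a suitable pair of lower sets $\Omega,\Omega'$, and such a subquotient is a direct sum of copies of a single $\nabla(v\cdot\mu)$.

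More precisely, I would argue as follows. Fix $w\in W^{I,\lambda}$; I must show that $$\{\phi_\pi^{T_\mu^\lambda(M)}\circ\overline{\iota}\mid \pi\in\Pi'\text{ with }e'(\pi)=w\cdot\lambda\}$$ is a basis of $\Hom_{\sC_\chi}(\overline{\Delta}(w\cdot\lambda),T_\mu^\lambda(M))$. Here $\overline{\iota}$ denotes $\overline{\iota}_{w\cdot\lambda}$ (or $\overline{\iota}_{ws\cdot\lambda}$, in Case 2, for the map indexed $(\pi,1)$). The key observation is that $\Hom_{\sC_\chi}(\overline{\Delta}(w\cdot\lambda),-)$ depends only on multiplicities $(-:\nabla(w\cdot\lambda))$: since $\Ext^1_{\sC_\chi}(\overline{\Delta}(\gamma),\nabla(\kappa))=0$ for all $\gamma,\kappa$, applying $\Hom_{\sC_\chi}(\overline{\Delta}(w\cdot\lambda),-)$ to the short exact sequences coming from a costandard filtration kills everything except the $\nabla(w\cdot\lambda)$-layers. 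Choosing $\Omega=\{\kappa\in\Lambda_I\mid \kappa\preceq v\cdot\mu\}$ and $\Omega'=\Omega\setminus\{v\cdot\mu\}$, where $v\in W^{I,\mu}$ is the (unique up to the $\{w,ws\}$-ambiguity) element with $v\cdot\lambda\in\{w\cdot\lambda,ws\cdot\lambda\}$ — equivalently, $v\cdot\mu$ is the projection of $w\cdot\lambda$ onto the wall — one gets, using Corollary~\ref{TransCostand} to see $T_\mu^\lambda$ preserves costandard filtrations and using that $(\nabla(v\cdot\mu):\nabla(w\cdot\lambda))$-type computations localise, canonical isomorphisms $$\Hom_{\sC_\chi}(\overline{\Delta}(w\cdot\lambda),T_\mu^\lambda(M))\cong\Hom_{\sC_\chi}(\overline{\Delta}(w\cdot\lambda),T_\mu^\lambda(\Gamma_\Omega(M)/\Gamma_{\Omega'}(M))).$$ This requires the vanishing statements $\dim\Hom_{\sC_\chi}(\overline{\Delta}(w\cdot\lambda),T_\mu^\lambda(M)/T_\mu^\lambda(\Gamma_\Omega(M)))=0$ and $\dim\Hom_{\sC_\chi}(\overline{\Delta}(w\cdot\lambda),T_\mu^\lambda(\Gamma_{\Omega'}(M)))=0$, which follow from the multiplicity computation $(\nabla(u\cdot\mu):\nabla(v\cdot\mu))\ne 0\iff u=v$ together with the behaviour of $T_\mu^\lambda$ on costandard modules recorded in the corollaries after Proposition~\ref{TransDual}, exactly as in the proof of Theorem~\ref{LMSect}.

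Next I would invoke Lemmas~\ref{SubSect} and \ref{QuotSect} (in their $\overline\Delta$-section-of-$\nabla$-flag form, via the duality remark) to see that the $\overline{\Delta}$-section of the $\nabla$-flag of $M$ restricts and corestricts to a $\overline\Delta$-section of the $\nabla$-flag of $\Gamma_\Omega(M)/\Gamma_{\Omega'}(M)$, and that the maps $\phi_\pi^{T_\mu^\lambda(\Gamma_\Omega(M)/\Gamma_{\Omega'}(M))}$ obtained by running the construction of this subsection on this subquotient correspond, under the isomorphisms above, to the maps $\phi_\pi^{T_\mu^\lambda(M)}$ for $\pi$ with $e'(\pi)=w\cdot\lambda$. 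As in Theorem~\ref{LMSect} this compatibility is checked by drawing the evident commutative diagram relating $\phi_\pi^{\Gamma_\Omega(M)}$, $\phi_\pi^{\Gamma_\Omega(M)/\Gamma_{\Omega'}(M)}$, the inclusion $T_\lambda^\mu$-iso's and the maps $T_\mu^\lambda(\phi_\pi^{-})$; the diagram needs to be written once for Case~1 ($wsw^{-1}\in W_{I,p}$) and once for Case~2 ($wsw^{-1}\notin W_{I,p}$), using in Case~2 the identification $T_\lambda^\mu(T_\chi(ws\cdot\lambda))\cong T_\chi(w\cdot\mu)\oplus T_{\prec w\cdot\mu}$ from Corollary~\ref{TransTilt2}. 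Since $\Gamma_\Omega(M)/\Gamma_{\Omega'}(M)$ is a direct sum of copies of $\nabla(v\cdot\mu)$, Proposition~\ref{SumSection2} (the direct-sum case already proved) says precisely that these maps give a basis of $\Hom_{\sC_\chi}(\overline{\Delta}(w\cdot\lambda),T_\mu^\lambda(\Gamma_\Omega(M)/\Gamma_{\Omega'}(M)))$, and transporting back along the isomorphism finishes the argument.

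The main obstacle will be the bookkeeping in the $\Omega,\Omega'$ reduction: one has to make sure that the lower set $\Omega$ chosen really does isolate exactly the $\nabla(v\cdot\mu)$-layer and that the two vanishing statements hold in both cases $v\cdot\mu \in\{w\cdot\mu\}$ with $v$ possibly equal to $w$ or $ws$, and that the ambiguity $v\in\{w,ws\}$ in Case~2 does not cause double-counting of section elements (it does not, because in Case~2 the fibre $(e')^{-1}(w\cdot\lambda)$ and $(e')^{-1}(ws\cdot\lambda)$ are handled together through the single fibre $e^{-1}(w\cdot\mu)$, matching the fact that $\Gamma_\Omega(M)/\Gamma_{\Omega'}(M)$ is built from $\nabla(w\cdot\mu)$ alone). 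Once the reduction to a single costandard layer is set up correctly, everything else is a transcription of the proof of Theorem~\ref{LMSect} with the roles of $\lambda$ and $\mu$ interchanged, so I would explicitly say ``the proof is almost identical to that of Theorem~\ref{LMSect}'' and only spell out the case $wsw^{-1}\notin W_{I,p}$, which is the one with no counterpart in \cite{RW1}.
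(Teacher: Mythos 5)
Your proposal is correct and follows essentially the same route as the paper: the paper's own proof of Theorem~\ref{MLSect} is omitted with a pointer to the proof of Theorem~\ref{LMSect} (and \cite[Prop.\ 3.8]{RW1}), and your reduction to the subquotient $\Gamma_\Omega(M)/\Gamma_{\Omega'}(M)\cong\bigoplus\nabla(v\cdot\mu)$ via the two vanishing statements, followed by the compatibility diagram and an appeal to Proposition~\ref{SumSection2}, is exactly the intended transcription of that argument with the roles of $\lambda$ and $\mu$ interchanged. Your observation that in Case 2 the two fibres $(e')^{-1}(w\cdot\lambda)$ and $(e')^{-1}(ws\cdot\lambda)$ are both handled through the single layer indexed by $w\cdot\mu$ is the right way to see that no double-counting occurs.
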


\begin{proof}
	For each $w\in W^{I,\lambda}$ we need to show that $$\{\phi_\pi^{T_\mu^\lambda(M)}\mid \pi\in\Pi'\mbox{ such that }e'(\pi)=w\cdot\lambda\}\subseteq \Hom_{\sC_\chi}(T_\chi(w\cdot\lambda),T_\mu^\lambda(M))$$ is such that the set $$\{\phi_\pi^{T_\mu^\lambda(M)}\circ\overline{\iota}_{w\cdot\lambda}\mid \pi\in\Pi'\mbox{ such that }e'(\pi)=w\cdot\lambda\}\subseteq \Hom_{\sC_\chi}(\overline{\Delta}(w\cdot\lambda),T_\mu^\lambda(M))$$ is a basis of $\Hom_{\sC_\chi}(\overline{\Delta}(w\cdot\lambda),T_\mu^\lambda(M))$. The proof of this is very similar to the proofs of \cite[Prop. 3.8]{RW1} and Theorem~\ref{LMSect}, and thus is omitted.
\end{proof}

\subsection{Sections of $\nabla$-Flags: Wall-Crossing Functors}\label{ss5.4}

Combining Theorem~\ref{LMSect} and \ref{MLSect}, we therefore get a method to construct a $\overline{\Delta}$-section of the $\nabla$-flag of $\Theta_s(M)$ out of a  $\overline{\Delta}$-section of the $\nabla$-flag of $M$ for any $M\in\sC_\chi(\lambda)$ admitting a costandard filtration. More precisely, we get the following theorem.

\begin{theorem}
	Let $s,\lambda$ and $\mu$ be as in Notation~\ref{NOT}, and let $w\in W^{I,\lambda}$. Let $M\in\sC_\chi(\lambda)$ have a costandard filtration.
	
	\begin{enumerate}
		\item Suppose that $wsw^{-1}\notin W_{I,p}$ and that $ws\cdot\lambda < w\cdot\lambda$. Let  $$f_1,\ldots,f_n\in\Hom_{\sC_\chi}(T_\chi(w\cdot\lambda),M)$$ be such that $f_1\circ\overline{\iota}_{w\cdot\lambda},\ldots,f_n\circ\overline{\iota}_{w\cdot\lambda}$ form a basis of $\Hom_{\sC_\chi}(\overline{\Delta}(w\cdot\lambda),M)$ and let $$g_1,\ldots,g_m\in\Hom_{\sC_\chi}(T_\chi(ws\cdot\lambda),M)$$ be such that $g_1\circ\overline{\iota}_{ws\cdot\lambda},\ldots,g_m\circ\overline{\iota}_{ws\cdot\lambda}$ form a basis of $\Hom_{\sC_\chi}(\overline{\Delta}(ws\cdot\lambda),M)$. Then the following maps $T_\chi(w\cdot\lambda)\to \Theta_s(M)$ give a basis of $\Hom_{\sC_\chi}(\overline{\Delta}(w\cdot\lambda),\Theta_s(M))$ when precomposed with $\overline{\iota}_{w\cdot\lambda}$:
		$$T_\chi(w\cdot\lambda)\xrightarrow{\sim} T_\mu^\lambda(T_\chi(w\cdot\mu))\to T_\mu^\lambda (T_\lambda^\mu T_\mu^\lambda(T_\chi(w\cdot\mu)))\xrightarrow{\sim} \Theta_s(T_\chi(w\cdot\lambda))\xrightarrow{\Theta_s(f_i)} \Theta_s(M)$$ for $i=1,\ldots,n$, and
		$$T_\chi(w\cdot\lambda)\xrightarrow{\sim} T_\mu^\lambda(T_\chi(w\cdot\mu))\hookrightarrow T_\mu^\lambda(T_\chi(w\cdot\mu)\oplus T_{\prec w\cdot\mu})\xrightarrow{\sim} \Theta_s(T_\chi(ws\cdot\lambda))\xrightarrow{\Theta_s(g_j)} \Theta_s(M)$$ for $j=1,\ldots,m$. 
		
		Similarly, the following maps $T_\chi(ws\cdot\lambda)\to \Theta_s(M)$ give a basis of $\Hom_{\sC_\chi}(\overline{\Delta}(ws\cdot\lambda),\Theta_s(M))$ when precomposed with $\overline{\iota}_{ws\cdot\lambda}$:
		$$T_\chi(ws\cdot\lambda)\to T_\mu^\lambda(T_\chi(w\cdot\mu))\to T_\mu^\lambda ( T_\lambda^\mu T_\mu^\lambda(T_\chi(w\cdot\mu)))\xrightarrow{\sim} \Theta_s(T_\chi(w\cdot\lambda))\xrightarrow{\Theta_s(f_i)} \Theta_s(M)$$ for $i=1,\ldots,n$, and
		$$T_\chi(ws\cdot\lambda)\to  T_\mu^\lambda(T_\chi(w\cdot\mu))\hookrightarrow T_\mu^\lambda(T_\chi(w\cdot\mu)\oplus T_{\prec w\cdot\mu})\xrightarrow{\sim} \Theta_s(T_\chi(ws\cdot\lambda))\xrightarrow{\Theta_s(g_j)} \Theta_s(M)$$ for $j=1,\ldots,m$, where the map $T_\chi(ws\cdot\lambda)\to  T_\mu^\lambda(T_\chi(w\cdot\mu))$ is the composition 		
		$$T_\chi(ws\cdot\lambda)\to T_\mu^\lambda T_\lambda^\mu(T_\chi(ws\cdot\lambda))\xrightarrow{\sim} T_\mu^\lambda(T_\chi(w\cdot\mu) \oplus T_{\prec w\cdot\mu})\twoheadrightarrow T_\mu^\lambda(T_\chi(w\cdot\mu))$$
		
		\item Suppose that $wsw^{-1}\in W_{I,p}$. Suppose that $f_1,\ldots,f_n\in\Hom_{\sC_\chi}(T_\chi(w\cdot\lambda),M)$ are such that $f_1\circ\overline{\iota}_{w\cdot\lambda},\ldots,f_n\circ\overline{\iota}_{w\cdot\lambda}$ form a basis of $\Hom_{\sC_\chi}(\overline{\Delta}(w\cdot\lambda),M)$. Then the following maps $T_\chi(w\cdot\lambda)\to \Theta_s(M)$ give a basis of $\Hom_{\sC_\chi}(\overline{\Delta}(w\cdot\lambda),\Theta_s(M))$ when precomposed with $\overline{\iota}_{w\cdot\lambda}$:
		$$T_\chi(w\cdot\lambda)\xrightarrow{\sim} T_\mu^\lambda(T_\chi(w\cdot\mu))\xhookrightarrow{T_\mu^\lambda(\iota_i)} T_\mu^\lambda(T_\chi(w\cdot\mu)\oplus T_\chi(w\cdot\mu))\xrightarrow{\sim} \Theta_s(T_\chi(w\cdot\lambda))\xrightarrow{\Theta_s(f_j)} \Theta_s(M)$$ for $i=1,2$ and $j=1,\ldots,n$.
	\end{enumerate}
	
\end{theorem}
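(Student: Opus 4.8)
The plan is to deduce this from Theorems~\ref{LMSect} and \ref{MLSect} together with the identity $\Theta_s=T_\mu^\lambda\circ T_\lambda^\mu$. First I would promote the given data to a genuine $\overline{\Delta}$-section of the $\nabla$-flag of $M$: the hypotheses already prescribe morphisms $T_\chi(w\cdot\lambda)\to M$ (and $T_\chi(ws\cdot\lambda)\to M$ in case (1)) inducing bases of the relevant $\Hom_{\sC_\chi}(\overline{\Delta}(\cdot),M)$, and for every other $\kappa\in\Lambda_I$ one chooses, using Proposition~\ref{TiltBas}, morphisms $T_\chi(\kappa)\to M$ inducing a basis of $\Hom_{\sC_\chi}(\overline{\Delta}(\kappa),M)$. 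This yields a $\overline{\Delta}$-section $(\Pi,e,(\phi_\pi^M))$ with $e^{-1}(w\cdot\lambda)$ (resp.\ $e^{-1}(ws\cdot\lambda)$) indexing the $f_i$ (resp.\ $g_j$).

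Next I would apply Theorem~\ref{LMSect} to obtain a $\overline{\Delta}$-section $(\Pi',e',(\phi_\pi^{T_\lambda^\mu M}))$ of the $\nabla$-flag of $T_\lambda^\mu(M)$, and then apply Theorem~\ref{MLSect} to this section to obtain a $\overline{\Delta}$-section $(\Pi'',e'',(\phi_\pi^{\Theta_s M}))$ of the $\nabla$-flag of $\Theta_s(M)$. By the definition of a $\overline{\Delta}$-section, for each $\xi\in\Lambda_I$ the maps $\{\phi_\pi^{\Theta_s M}\circ\overline{\iota}_\xi : e''(\pi)=\xi\}$ form a basis of $\Hom_{\sC_\chi}(\overline{\Delta}(\xi),\Theta_s(M))$; it then remains to identify these maps, for $\xi\in\{w\cdot\lambda,ws\cdot\lambda\}$ (resp.\ $\xi=w\cdot\lambda$ in case (2)), with the chains of morphisms displayed in the theorem.

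This identification is carried out by following the index sets and the map-formulae of Subsections~\ref{ss5.2} and \ref{ss5.3} through the two constructions. Tracing the definitions, the $\pi''\in\Pi''$ with $e''(\pi'')\in\{w\cdot\lambda,ws\cdot\lambda\}$ correspond precisely to the $\pi'\in\Pi'$ with $e'(\pi')=w\cdot\mu$ (here one uses $(ws)\cdot\mu=w\cdot\mu$ and that $ws\notin W^{I,\mu}$ when $ws\cdot\lambda<w\cdot\lambda$, so that the auxiliary choices above do not intervene), which in turn correspond to the $\pi\in\Pi$ with $e(\pi)\in\{w\cdot\lambda,ws\cdot\lambda\}$; in case (2) they correspond to the $\pi\in\Pi$ with $e(\pi)=w\cdot\lambda$, each doubled by the index $i\in\{1,2\}$. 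Hence the resulting bases depend only on the $f_i$ and $g_j$. Finally, substituting the Subsection~\ref{ss5.2} formula for $\phi_\pi^{T_\lambda^\mu M}$ into the Subsection~\ref{ss5.3} formula for $\phi_\pi^{\Theta_s M}$, using that each isomorphism $\Hom_{\sC_\chi}(T_\mu^\lambda X,Y)\cong\Hom_{\sC_\chi}(X,T_\lambda^\mu Y)$ (and its variant with the roles reversed) sends $\psi$ to $T_\lambda^\mu(\psi)$ precomposed with the unit of adjunction, and using the tilting isomorphisms $T_\mu^\lambda(T_\chi(w\cdot\mu))\cong T_\chi(w\cdot\lambda)$, $T_\lambda^\mu(T_\chi(ws\cdot\lambda))\cong T_\chi(w\cdot\mu)\oplus T_{\prec w\cdot\mu}$ and $T_\lambda^\mu(T_\chi(w\cdot\lambda))\cong T_\chi(w\cdot\mu)^{\oplus 2}$ coming from Proposition~\ref{TransTilt}, one reads off exactly the displayed compositions, the unlabelled intermediate arrows being the images under $T_\mu^\lambda$ of units of adjunction.

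I expect the only genuine work to be this last bookkeeping step — matching each adjunction unit with the correspondingly placed unlabelled arrow, and checking that the naturality squares relating $T_\lambda^\mu$, $T_\mu^\lambda$ and the tilting isomorphisms of Proposition~\ref{TransTilt} commute — and it is routine rather than conceptual; everything else is a formal consequence of Theorems~\ref{LMSect} and \ref{MLSect}. The case $wsw^{-1}\in W_{I,p}$ is strictly simpler, since there $T_{\prec w\cdot\mu}$ never appears and the only subtlety is the bookkeeping of the two copies indexed by $i\in\{1,2\}$.
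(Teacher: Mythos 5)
Your proposal is correct and follows exactly the route the paper intends: the paper offers no written proof, presenting the theorem as the direct combination of Theorems~\ref{LMSect} and \ref{MLSect}, and your argument simply makes explicit the bookkeeping (completing the $f_i,g_j$ to a full $\overline{\Delta}$-section via Proposition~\ref{TiltBas}, composing the two section constructions, and unwinding the adjunction units) that this combination requires. The tracing of indices and the identification of the displayed compositions with the maps from Subsections~\ref{ss5.2} and \ref{ss5.3} are accurate.
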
	

The analogous statement for morphisms $\Theta_s(M)\to T_\chi(w\cdot\lambda)$ can be obtained by duality. Details are left to the reader.

We can also combine Theorems~\ref{LMSect} and \ref{MLSect} with Lemmas~\ref{Stand2PropStand} and \ref{PropStand2Stand} to get a procedure for constructing a $\Delta$-section of the $\nabla$-flag of $\Theta_s(M)$ out of a $\Delta$-section of the $\nabla$-flag of $M$, for any $M\in\sC_\chi(\mu)$ admitting a costandard filtration, via the following diagram:

	\begin{equation*}
	\begin{array}{c}\xymatrix{
			\Delta\mbox{-section of the }\nabla\mbox{-flag of }M \ar@{->}[d]^{\mbox{Lemma~\ref{Stand2PropStand}}}\\
			\overline{\Delta}\mbox{-section of the }\nabla\mbox{-flag of }M \ar@{->}[d]^{\mbox{Theorem~\ref{LMSect}}}\\
			\overline{\Delta}\mbox{-section of the }\nabla\mbox{-flag of }T_\lambda^\mu(M) \ar@{->}[d]^{\mbox{Theorem~\ref{MLSect}}}\\
			\overline{\Delta}\mbox{-section of the }\nabla\mbox{-flag of }\Theta_s(M) \ar@{->}[d]^{\mbox{Lemma~\ref{PropStand2Stand}}}\\ 
			\Delta\mbox{-section of the }\nabla\mbox{-flag of }\Theta_s(M).	
		}
	\end{array}
	\end{equation*}

We can combine all this into the following theorem.
	
	\begin{theorem}\label{BasTilt1}
			Let $s,\lambda$ and $\mu$ be as in Notation~\ref{NOT} and let $M\in\sC_\chi(\lambda)$ have a costandard filtration. Let $u\in W^{I,\lambda}$ be such that $usu^{-1}\notin W_{I,p}$ and $us\cdot\lambda<u\cdot\lambda$. Let $f_1,\ldots,f_n$ be a family of morphisms in $\Hom_{\sC_\chi}(T_\chi(u\cdot\lambda),M)$ such that the maps $f_1\circ \iota_{u\cdot\lambda},\ldots,f_n\circ \iota_{u\cdot\lambda}$ form a basis of $\Hom_{\sC_\chi}(\Delta(u\cdot\lambda),M)$. Let also $g_1,\ldots,g_m$ be a family of morphisms in $\Hom_{\sC_\chi}(T_\chi(us\cdot\lambda),M)$ such that the maps $g_1\circ \iota_{us\cdot\lambda},\ldots,g_m\circ \iota_{us\cdot\lambda}$ form a basis of $\Hom_{\sC_\chi}(\Delta(us\cdot\lambda),M)$. Then there exist families of morphisms $F_1,\ldots,F_r:T_\chi(u\cdot\lambda)\to \Theta_s(T_\chi(u\cdot\lambda))$ and $G_1,\ldots,G_t:T_\chi(u\cdot\lambda)\to \Theta_s(T_\chi(us\cdot\lambda))$ such that the compositions $$\Delta(u\cdot\lambda)\hookrightarrow T_\chi(u\cdot\lambda)\xrightarrow{F_i} \Theta_s(T_\chi(u\cdot\lambda))\xrightarrow{\Theta_s(f_j)} \Theta_s(M)$$ and $$\Delta(u\cdot\lambda)\hookrightarrow T_\chi(u\cdot\lambda)\xrightarrow{G_k} \Theta_s(T_\chi(us\cdot\lambda))\xrightarrow{\Theta_s(g_l)} \Theta_s(M)$$ form a basis of $\Hom_{\sC_\chi}(\Delta(u\cdot\lambda),\Theta_s(M))$.
	\end{theorem}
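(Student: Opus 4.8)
The plan is to assemble Theorem~\ref{BasTilt1} from the machinery already developed, treating it as a composition of the four steps summarised in the diagram preceding the statement. The starting point is that a basis $f_1\circ\iota_{u\cdot\lambda},\ldots,f_n\circ\iota_{u\cdot\lambda}$ of $\Hom_{\sC_\chi}(\Delta(u\cdot\lambda),M)$ together with a basis $g_1\circ\iota_{us\cdot\lambda},\ldots,g_m\circ\iota_{us\cdot\lambda}$ of $\Hom_{\sC_\chi}(\Delta(us\cdot\lambda),M)$ is precisely (the relevant part of) a $\Delta$-section of the $\nabla$-flag of $M$; by Proposition~\ref{TiltBas2} and the discussion in Subsection~\ref{ss5.1} such a section exists and its data at weights not of the form $u\cdot\lambda$ or $us\cdot\lambda$ is irrelevant for computing $\Hom_{\sC_\chi}(\Delta(u\cdot\lambda),\Theta_s(M))$ (see the truncation argument below). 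First I would invoke Lemma~\ref{Stand2PropStand} to pass from this $\Delta$-section to a $\overline{\Delta}$-section of the $\nabla$-flag of $M$, keeping track that the underlying morphisms $T_\chi(u\cdot\lambda)\to M$ and $T_\chi(us\cdot\lambda)\to M$ are (a subcollection of) the $f_i$ and $g_j$ themselves, since Lemma~\ref{Stand2PropStand} only discards some of the section data without modifying the maps.

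Next I would apply Theorem~\ref{LMSect} to obtain a $\overline{\Delta}$-section of the $\nabla$-flag of $T_\lambda^\mu(M)$ from the $\overline{\Delta}$-section of $M$, and then Theorem~\ref{MLSect} to obtain a $\overline{\Delta}$-section of the $\nabla$-flag of $T_\mu^\lambda T_\lambda^\mu(M)=\Theta_s(M)$. The point is that the construction in Subsection~\ref{ss5.2} sends the map $\phi_\pi^M$ attached to $e(\pi)=u\cdot\lambda$ (with $usu^{-1}\notin W_{I,p}$) to a map $T_\chi(u\cdot\mu)\to T_\lambda^\mu(M)$ built from $T_\lambda^\mu(\phi_\pi^M)$, and the construction in Subsection~\ref{ss5.3} then sends this to a pair of maps $T_\chi(u\cdot\lambda)\to\Theta_s(M)$ and $T_\chi(us\cdot\lambda)\to\Theta_s(M)$ (the two values $e'(\pi,1)=us\cdot\lambda$, $e'(\pi,2)=u\cdot\lambda$), each factoring through $\Theta_s(T_\chi(u\cdot\lambda))$ and the composite with $\Theta_s$ of the original map; similarly the maps attached to $e(\pi)=us\cdot\lambda$ produce maps factoring through $\Theta_s(T_\chi(us\cdot\lambda))$. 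Tracing the explicit composites written in the two subsections (and recalling $T_\mu^\lambda(T_\chi(w\cdot\mu))=T_\chi(w\cdot\lambda)$ or $T_\chi(ws\cdot\lambda)$ by Proposition~\ref{TransTilt}) identifies these with exactly the maps $F_1,\ldots,F_r$ and $G_1,\ldots,G_t$ appearing in the statement — here $r$ and $t$ are determined by $n$, $m$ and the various $N_I(\cdot)$, and I would simply define the $F_i$, $G_k$ to be these maps. Finally I would apply Lemma~\ref{PropStand2Stand} to convert the resulting $\overline{\Delta}$-section of the $\nabla$-flag of $\Theta_s(M)$ into a $\Delta$-section; the maps $\sigma_i^{u\cdot\lambda}\in\End_{\sC_\chi}(T_\chi(u\cdot\lambda))$ introduced there get absorbed into the $F_i$ and $G_k$ (replacing $F_i$ by $F_i\circ\sigma_j^{u\cdot\lambda}$ and so on), and the resulting compositions $\Delta(u\cdot\lambda)\hookrightarrow T_\chi(u\cdot\lambda)\xrightarrow{F_i}\Theta_s(T_\chi(u\cdot\lambda))\xrightarrow{\Theta_s(f_j)}\Theta_s(M)$ form part of a $\Delta$-section, hence a basis of $\Hom_{\sC_\chi}(\Delta(u\cdot\lambda),\Theta_s(M))$.

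One technical point that needs care is that the theorems of Subsections~\ref{ss5.2}--\ref{ss5.3} construct an \emph{entire} $\overline{\Delta}$-section (over all weights), whereas the hypotheses of Theorem~\ref{BasTilt1} only provide section data at $u\cdot\lambda$ and $us\cdot\lambda$. I would handle this by the truncation lemmas: by Lemma~\ref{QuotSect} and the fact that $\Hom_{\sC_\chi}(\Delta(u\cdot\lambda),\Theta_s(M))$ only sees the quotient $\Theta_s(M)/\Gamma_{\Omega}(\Theta_s(M))$ for $\Omega$ the set of weights $\prec u\cdot\lambda$ (equivalently, via Proposition~\ref{Trunc}, $\Hom$ into $\Theta_s(M)$ from $\Delta(u\cdot\lambda)$ factors through the Serre quotient $\sC_\chi^{\succeq u\cdot\lambda}$), only the section data at weights $\succeq u\cdot\lambda$ in the $\nabla$-flag of $M$ matter. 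Since $u\in W^{I,\lambda}$ with $us\cdot\lambda\prec u\cdot\lambda$, the only weights $w\cdot\mu$ with $w\cdot\mu\succeq u\cdot\mu$ entering the relevant computations are $u\cdot\mu$ itself (by the argument in Step~2 of the proof of Proposition~\ref{TransTilt}, using \cite[II.6.5]{RAGS}), whose preimage under $e'$ involves only $u\cdot\lambda$ and $us\cdot\lambda$ — so the provided $f_i$ and $g_j$ are all the section data that is needed.

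\textbf{Main obstacle.} The hard part will be the bookkeeping in the last paragraph: verifying rigorously that passing to the truncation $\sC_\chi^{\succeq u\cdot\lambda}$ (or quotienting by the appropriate lower-set-supported submodule) is compatible with the section constructions of Subsections~\ref{ss5.2}--\ref{ss5.3}, so that the partial section data in the hypothesis genuinely suffices, and that the counts $n,m,r,t$ and the intermediate multiplicities match up. This is essentially the same kind of argument as in \cite[\S 4]{RW1}, and I expect it to go through, but it requires carefully chasing which $\pi\in\Pi$ contribute to $(e')^{-1}(u\cdot\lambda)$ and confirming via Lemma~\ref{PrecML} and Corollary~\ref{PrecML2} that no weight strictly below $u\cdot\lambda$ sneaks into the span. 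Everything else is a direct citation of Theorems~\ref{LMSect}, \ref{MLSect} and Lemmas~\ref{Stand2PropStand}, \ref{PropStand2Stand}, \ref{SubSect}, \ref{QuotSect}.
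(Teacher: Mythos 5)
Your proposal is correct and follows essentially the same route as the paper: extend the given $f_j$, $g_l$ to a full $\Delta$-section of the $\nabla$-flag of $M$, run the chain Lemma~\ref{Stand2PropStand} $\to$ Theorem~\ref{LMSect} $\to$ Theorem~\ref{MLSect} $\to$ Lemma~\ref{PropStand2Stand}, and read off the maps $F_i$, $G_k$ (absorbing the endomorphisms $\sigma$) from the explicit composites in Subsections~\ref{ss5.2}--\ref{ss5.3}. The only cosmetic difference is that your truncation detour via $\sC_\chi^{\succeq u\cdot\lambda}$ is not needed: once the data is extended to a full section of $\Theta_s(M)$, the definition of a section already gives the basis statement weight-by-weight, and one only has to check (as you do) that every $\pi'\in(e')^{-1}(u\cdot\lambda)$ arises from the given $f_j$ and $g_l$.
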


\begin{proof}
	We may assume that the maps $f_j$ and $g_l$ form part of a $\Delta$-section of the $\nabla$-flag of $M$. The process described above can be used to get a $\Delta$-section of the $\nabla$-flag of $\Theta_s(M)$. In particular, this process results in a collection of maps $T_\chi(u\cdot\lambda)\to\Theta_s(M)$ which induce a basis of $\Hom_{\sC_\chi}(\Delta(u\cdot\lambda),\Theta_s(M))$. Analysing precisely which maps we get through this process, we see that they are the maps $$\Delta(u\cdot\lambda)\hookrightarrow T_\chi(u\cdot\lambda)\xrightarrow{\sigma} T_\chi(u\cdot\lambda)\xrightarrow{\sim} T_\mu^\lambda(T_\chi(u\cdot\mu))\hookrightarrow T_\mu^\lambda(T_\chi(u\cdot\mu)\oplus T_{\prec u\cdot\mu})\xrightarrow{\sim} \Theta_s(T_\chi(us\cdot\lambda))\xrightarrow{\Theta_s(g_l)} \Theta_s(M)$$ for $l=1,\ldots,m$ and the maps
	$$\Delta(u\cdot\lambda)\hookrightarrow T_\chi(u\cdot\lambda)\xrightarrow{\sigma} T_\chi(u\cdot\lambda)\xrightarrow{\sim} T_\mu^\lambda(T_\chi(u\cdot\mu))\hookrightarrow T_\mu^\lambda T_\lambda^\mu T_\mu^\lambda (T_\chi(u\cdot\mu))\xrightarrow{\sim} \Theta_s(T_\chi(u\cdot\lambda))\xrightarrow{\Theta_s(f_j)} \Theta_s(M)$$ for $j=1,\ldots,n$, where in both cases $\sigma$ runs over a family of maps $T_\chi(u\cdot\lambda)\to T_\chi(u\cdot\lambda)$ such that the maps $\pi_{u\cdot\lambda}\circ \sigma\circ \iota_{u\cdot\lambda}$ form a basis of $\Hom_{\sC_\chi}(\Delta(u\cdot\lambda),\nabla(u\cdot\lambda))$. From this description, it is clear how to chose the maps $F_i$ and $G_k$ to get the desired result.
\end{proof}

\begin{theorem}\label{BasTilt2}
	Let $s,\lambda$ and $\mu$ be as in Notation~\ref{NOT} and let $M\in\sC_\chi(\lambda)$ have a costandard filtration. Let $u\in W^{I,\lambda}$ be such that $usu^{-1}\notin W_{I,p}$ and $us\cdot\lambda<u\cdot\lambda$. Let $f_1,\ldots,f_n$ be a family of morphisms in $\Hom_{\sC_\chi}(T_\chi(u\cdot\lambda),M)$ such that the maps $f_1\circ \iota_{u\cdot\lambda},\ldots,f_n\circ \iota_{u\cdot\lambda}$ form a basis of $\Hom_{\sC_\chi}(\Delta(u\cdot\lambda),M)$. Let also $g_1,\ldots,g_m$ be a family of morphisms in $\Hom_{\sC_\chi}(T_\chi(us\cdot\lambda),M)$ such that the maps $g_1\circ \iota_{us\cdot\lambda},\ldots,g_m\circ \iota_{us\cdot\lambda}$ form a basis of $\Hom_{\sC_\chi}(\Delta(us\cdot\lambda),M)$. Then there exists families of morphisms $F_1,\ldots,F_r:T_\chi(us\cdot\lambda)\to \Theta_s(T_\chi(u\cdot\lambda))$ and $G_1,\ldots,G_t:T_\chi(us\cdot\lambda)\to \Theta_s(T_\chi(us\cdot\lambda))$ such that the compositions $$\Delta(us\cdot\lambda)\hookrightarrow T_\chi(us\cdot\lambda)\xrightarrow{F_i} \Theta_s(T_\chi(u\cdot\lambda))\xrightarrow{\Theta_s(f_j)} \Theta_s(M)$$ and $$\Delta(us\cdot\lambda)\hookrightarrow T_\chi(us\cdot\lambda)\xrightarrow{G_k} \Theta_s(T_\chi(us\cdot\lambda))\xrightarrow{\Theta_s(g_l)} \Theta_s(M)$$ form a basis of $\Hom_{\sC_\chi}(\Delta(us\cdot\lambda),\Theta_s(M))$.
\end{theorem}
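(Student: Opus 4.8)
The plan is to mimic the proof of Theorem~\ref{BasTilt1} exactly, but tracking the morphisms out of $T_\chi(us\cdot\lambda)$ rather than those out of $T_\chi(u\cdot\lambda)$. As in the previous theorem, I would begin by assuming (without loss of generality, by Lemma~\ref{Stand2PropStand} applied after enlarging) that the maps $f_j$ and $g_l$ form part of a $\Delta$-section of the $\nabla$-flag of $M$, i.e.\ that they arise via Lemma~\ref{PropStand2Stand} from an underlying $\overline{\Delta}$-section $(\overline{\Pi},\overline{e},(\phi_\pi^M)_{\pi\in\overline{\Pi}})$. Then I would run the four-step pipeline summarised in the diagram at the end of Subsection~\ref{ss5.4}: pass from the $\Delta$-section of the $\nabla$-flag of $M$ to a $\overline{\Delta}$-section (Lemma~\ref{Stand2PropStand}), translate onto the wall via Theorem~\ref{LMSect} to get a $\overline{\Delta}$-section of the $\nabla$-flag of $T_\lambda^\mu(M)$, translate off the wall via Theorem~\ref{MLSect} to get a $\overline{\Delta}$-section of the $\nabla$-flag of $T_\mu^\lambda T_\lambda^\mu(M) = \Theta_s(M)$, and finally convert back to a $\Delta$-section via Lemma~\ref{PropStand2Stand}.

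The key step is to read off \emph{which} morphisms the pipeline produces with target index $us\cdot\lambda$ (rather than $u\cdot\lambda$, which was the content of Theorem~\ref{BasTilt1}). Recall that in Subsection~\ref{ss5.2} the translation-onto-the-wall construction sends a $\pi$ with $e(\pi)=w\cdot\lambda$ (where $wsw^{-1}\notin W_{I,p}$) to a map $T_\chi(w\cdot\mu)\to T_\lambda^\mu(M)$, and sends a $\pi$ with $e(\pi)=ws\cdot\lambda$ to a map $T_\chi(w\cdot\mu)\to T_\chi(w\cdot\mu)\oplus T_{\prec w\cdot\mu}\cong T_\lambda^\mu(T_\chi(ws\cdot\lambda)) \xrightarrow{T_\lambda^\mu(\phi_\pi^M)} T_\lambda^\mu(M)$. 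Then in Subsection~\ref{ss5.3}, translating off the wall, a $\pi$ with $e(\pi)=w\cdot\mu$ (with $wsw^{-1}\notin W_{I,p}$) spawns \emph{two} maps: one with index $ws\cdot\lambda$, going $T_\chi(ws\cdot\lambda)\cong$ (a summand of) $T_\lambda^\mu(T_\chi(ws\cdot\lambda))$ composed appropriately, and one with index $w\cdot\lambda$. So following the pipeline through with $w=u$, the maps with target index $us\cdot\lambda$ are precisely those obtained by: starting from a $\Delta$-section map $f_j:T_\chi(u\cdot\lambda)\to M$ (resp.\ $g_l:T_\chi(us\cdot\lambda)\to M$), applying $\Theta_s$, and precomposing with a suitable map $T_\chi(us\cdot\lambda)\to\Theta_s(T_\chi(u\cdot\lambda))$ (resp.\ $T_\chi(us\cdot\lambda)\to\Theta_s(T_\chi(us\cdot\lambda))$) factoring through the $T_\lambda^\mu/T_\mu^\lambda$ identifications and the inclusion/projection of the $T_{\prec u\cdot\mu}$ summand, further precomposed with maps $\sigma:T_\chi(us\cdot\lambda)\to T_\chi(us\cdot\lambda)$ such that the $\pi_{us\cdot\lambda}\circ\sigma\circ\iota_{us\cdot\lambda}$ span $\Hom_{\sC_\chi}(\Delta(us\cdot\lambda),\nabla(us\cdot\lambda))$. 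These factored maps $T_\chi(us\cdot\lambda)\to\Theta_s(T_\chi(u\cdot\lambda))$ and $T_\chi(us\cdot\lambda)\to\Theta_s(T_\chi(us\cdot\lambda))$ are exactly the families $F_i$ and $G_k$ whose existence is asserted.

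I would then write: "From this description, it is clear how to choose the maps $F_i$ and $G_k$ to obtain the desired result," closing the proof in the same laconic style as Theorem~\ref{BasTilt1}, since the substantive content (that the pipeline produces a $\Delta$-section, hence a basis of $\Hom_{\sC_\chi}(\Delta(us\cdot\lambda),\Theta_s(M))$) is already guaranteed by Theorems~\ref{LMSect} and \ref{MLSect} and Lemmas~\ref{Stand2PropStand} and \ref{PropStand2Stand}. The main obstacle is purely bookkeeping: correctly tracing, through four successive functorial constructions, which leg of the "doubled" index set $\overline{e}^{-1}(u\cdot\mu)\times\{1,2\}$ lands on $us\cdot\lambda$ versus $u\cdot\lambda$, and confirming that the relevant composite $T_\chi(us\cdot\lambda)\to\Theta_s(T_\chi(us\cdot\lambda))\to\Theta_s(M)$ genuinely has the form $\Theta_s(g_l)\circ G_k$ with $G_k$ independent of $l$; but this is exactly parallel to the analysis in Theorem~\ref{BasTilt1}, so no new difficulty arises. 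One should double-check the edge case structure coming from Corollary~\ref{TransTilt2} (which applies when $w\cdot\lambda < ws\cdot\lambda$) does not interfere, but under the standing hypothesis $us\cdot\lambda<u\cdot\lambda$ we are squarely in the situation of Proposition~\ref{TransTilt}(1) and Corollary~\ref{RefTransWall}(1).

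\begin{proof}
	We may assume, as in the proof of Theorem~\ref{BasTilt1}, that the maps $f_j$ and $g_l$ form part of a $\Delta$-section of the $\nabla$-flag of $M$. Applying the process described at the end of Subsection~\ref{ss5.4} — namely Lemma~\ref{Stand2PropStand}, then Theorem~\ref{LMSect}, then Theorem~\ref{MLSect}, then Lemma~\ref{PropStand2Stand} — produces a $\Delta$-section of the $\nabla$-flag of $\Theta_s(M)$, and in particular a collection of maps $T_\chi(us\cdot\lambda)\to\Theta_s(M)$ which induce a basis of $\Hom_{\sC_\chi}(\Delta(us\cdot\lambda),\Theta_s(M))$. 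Analysing which maps arise through this process, we see that they are the maps
	$$\Delta(us\cdot\lambda)\hookrightarrow T_\chi(us\cdot\lambda)\xrightarrow{\sigma} T_\chi(us\cdot\lambda)\to T_\mu^\lambda(T_\chi(u\cdot\mu))\hookrightarrow T_\mu^\lambda T_\lambda^\mu T_\mu^\lambda(T_\chi(u\cdot\mu))\xrightarrow{\sim}\Theta_s(T_\chi(u\cdot\lambda))\xrightarrow{\Theta_s(f_j)}\Theta_s(M)$$
	for $j=1,\ldots,n$, where the map $T_\chi(us\cdot\lambda)\to T_\mu^\lambda(T_\chi(u\cdot\mu))$ is the composition
	$$T_\chi(us\cdot\lambda)\to T_\mu^\lambda T_\lambda^\mu(T_\chi(us\cdot\lambda))\xrightarrow{\sim} T_\mu^\lambda(T_\chi(u\cdot\mu)\oplus T_{\prec u\cdot\mu})\twoheadrightarrow T_\mu^\lambda(T_\chi(u\cdot\mu)),$$
	together with the maps
	$$\Delta(us\cdot\lambda)\hookrightarrow T_\chi(us\cdot\lambda)\xrightarrow{\sigma} T_\chi(us\cdot\lambda)\xrightarrow{\sim} T_\mu^\lambda T_\lambda^\mu(T_\chi(us\cdot\lambda))\xrightarrow{\sim} T_\mu^\lambda(T_\chi(u\cdot\mu)\oplus T_{\prec u\cdot\mu})\xrightarrow{\sim}\Theta_s(T_\chi(us\cdot\lambda))\xrightarrow{\Theta_s(g_l)}\Theta_s(M)$$
	for $l=1,\ldots,m$, where in both cases $\sigma$ runs over a family of maps $T_\chi(us\cdot\lambda)\to T_\chi(us\cdot\lambda)$ such that the maps $\pi_{us\cdot\lambda}\circ\sigma\circ\iota_{us\cdot\lambda}$ form a basis of $\Hom_{\sC_\chi}(\Delta(us\cdot\lambda),\nabla(us\cdot\lambda))$. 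From this description, it is clear how to choose the maps $F_i$ and $G_k$ to obtain the desired result.
\end{proof}
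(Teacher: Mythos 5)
Your proof follows exactly the route the paper intends: the paper's own proof of this theorem is the single sentence ``The proof is very similar to the proof of Theorem~\ref{BasTilt1},'' and your expansion of the pipeline (Lemma~\ref{Stand2PropStand}, then Theorem~\ref{LMSect}, then Theorem~\ref{MLSect}, then Lemma~\ref{PropStand2Stand}) with the resulting maps traced through for the index $us\cdot\lambda$ is precisely that argument. One small slip in your explicit description of the $g_l$-chain: the arrow $T_\chi(us\cdot\lambda)\to T_\mu^\lambda T_\lambda^\mu(T_\chi(us\cdot\lambda))=\Theta_s(T_\chi(us\cdot\lambda))$ is the unit of the adjunction and is \emph{not} an isomorphism (the target is $T_\chi(u\cdot\lambda)\oplus T_\mu^\lambda(T_{\prec u\cdot\mu})$), and the map actually produced by the construction of Subsection~\ref{ss5.3} factors through the projection onto the summand $T_\mu^\lambda(T_\chi(u\cdot\mu))$ followed by its inclusion back (as in the displayed composite starting from $T_\chi(ws\cdot\lambda)$ in the unnumbered theorem of Subsection~\ref{ss5.4}) --- but this does not affect the existence claim, since the pipeline's composite still has the required form $\Theta_s(g_l)\circ G_k$ with $G_k$ independent of $l$.
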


\begin{proof}
	The proof is very similar to the proof of Theorem~\ref{BasTilt1}.
\end{proof}

\begin{theorem}\label{BasTilt3}
	Let $s,\lambda$ and $\mu$ be as in Notation~\ref{NOT} and let $M\in\sC_\chi(\lambda)$ have a costandard filtration. Let $u\in W^{I,\lambda}$ be such that $usu^{-1}\in W_{I,p}$. Let $f_1,\ldots,f_n$ be a family of morphisms in $\Hom_{\sC_\chi}(T_\chi(u\cdot\lambda),M)$ such that the maps $f_1\circ \iota_{u\cdot\lambda},\ldots,f_n\circ \iota_{u\cdot\lambda}$ form a basis of $\Hom_{\sC_\chi}(\Delta(u\cdot\lambda),M)$. Then there exists families of morphisms $F_1,\ldots,F_m:T_\chi(u\cdot\lambda)\to \Theta_s(T_\chi(u\cdot\lambda))$ such that the compositions $$\Delta(u\cdot\lambda)\hookrightarrow T_\chi(u\cdot\lambda)\xrightarrow{F_i} \Theta_s(T_\chi(u\cdot\lambda))\xrightarrow{\Theta_s(f_j)} \Theta_s(M)$$ form a basis of $\Hom_{\sC_\chi}(\Delta(u\cdot\lambda),\Theta_s(M))$.
\end{theorem}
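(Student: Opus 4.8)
The plan is to mirror the proof of Theorem~\ref{BasTilt1}, specialised to the case $usu^{-1}\in W_{I,p}$. Since $M\in\sC_\chi(\lambda)$ has a costandard filtration it carries a unique canonical $\nabla$-flag, and the given maps $f_1,\dots,f_n$ can be taken to be the component at the weight $u\cdot\lambda$ of a $\Delta$-section $(\Pi,e,(\phi_\pi^M)_{\pi\in\Pi})$ of that flag. Feeding this section through the chain of constructions of Subsection~\ref{ss5.4} — Lemma~\ref{Stand2PropStand}, then Theorem~\ref{LMSect}, then Theorem~\ref{MLSect}, then Lemma~\ref{PropStand2Stand} — produces a $\Delta$-section of the $\nabla$-flag of $\Theta_s(M)=T_\mu^\lambda T_\lambda^\mu(M)$ whose component at weight $u\cdot\lambda$ is a family of morphisms $T_\chi(u\cdot\lambda)\to\Theta_s(M)$ inducing a basis of $\Hom_{\sC_\chi}(\Delta(u\cdot\lambda),\Theta_s(M))$ upon precomposition with $\iota_{u\cdot\lambda}$.

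What remains is to read off these morphisms, and here the hypothesis $usu^{-1}\in W_{I,p}$ is what makes the statement simpler than Theorems~\ref{BasTilt1} and~\ref{BasTilt2}. Indeed, by Propositions~\ref{TransFunct}, \ref{TransFunct2} and~\ref{TransTilt}(2) the weight $u\cdot\lambda$ is only ever paired with the single weight $u\cdot\mu$ on the wall, with $T_\lambda^\mu(T_\chi(u\cdot\lambda))\cong T_\chi(u\cdot\mu)\oplus T_\chi(u\cdot\mu)$ and $T_\mu^\lambda(T_\chi(u\cdot\mu))\cong T_\chi(u\cdot\lambda)$; there is no accompanying weight $us\cdot\lambda$ (it lies in the $W_{I,p}$-orbit of $u\cdot\lambda$, so $us\notin W^{I,\lambda}$) and hence no modules $T_{\prec u\cdot\mu}$ as in Corollary~\ref{TransTilt2}. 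Thus translation onto the wall is governed by Case~1 of Subsection~\ref{ss5.2} and translation off the wall by Case~1 of Subsection~\ref{ss5.3}. Tracking the maps $\phi_\pi^M$ through the four steps exactly as at the end of the proof of Theorem~\ref{BasTilt1}, the resulting basis of $\Hom_{\sC_\chi}(\Delta(u\cdot\lambda),\Theta_s(M))$ is seen to be given by the compositions
\[
\Delta(u\cdot\lambda)\hookrightarrow T_\chi(u\cdot\lambda)\xrightarrow{\ \sigma\ }T_\chi(u\cdot\lambda)\xrightarrow{\ \sim\ }T_\mu^\lambda(T_\chi(u\cdot\mu))\xhookrightarrow{\,T_\mu^\lambda(\iota_i)\,}T_\mu^\lambda\bigl(T_\chi(u\cdot\mu)\oplus T_\chi(u\cdot\mu)\bigr)\xrightarrow{\ \sim\ }\Theta_s(T_\chi(u\cdot\lambda))\xrightarrow{\Theta_s(f_j)}\Theta_s(M),
\]
for $i\in\{1,2\}$ and $j\in\{1,\dots,n\}$, where $\sigma$ runs over a family of endomorphisms of $T_\chi(u\cdot\lambda)$ with $\pi_{u\cdot\lambda}\circ\sigma\circ\iota_{u\cdot\lambda}$ a basis of $\Hom_{\sC_\chi}(\Delta(u\cdot\lambda),\nabla(u\cdot\lambda))$ (this $\sigma$ recording the insertion made in Lemma~\ref{PropStand2Stand}, and the two isomorphisms being those of Proposition~\ref{TransTilt}(2)). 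From this description it is then clear how to choose the morphisms $F_1,\dots,F_m\colon T_\chi(u\cdot\lambda)\to\Theta_s(T_\chi(u\cdot\lambda))$ so that the stated compositions with $\Theta_s(f_j)$ reproduce this basis.

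I expect the only real work to be the bookkeeping in the middle paragraph — verifying that the composite of the four constructions acts on the weight-$u\cdot\lambda$ component exactly as displayed. This should be routine: Theorem~\ref{LMSect} already spells out the subcase $wsw^{-1}\in W_{I,p}$, Theorem~\ref{MLSect} and Lemmas~\ref{Stand2PropStand} and~\ref{PropStand2Stand} are used verbatim, and, in contrast to Theorems~\ref{BasTilt1} and~\ref{BasTilt2}, no second weight intervenes, so there is nothing here corresponding to the morphisms $g_l$ and $G_k$ of those results.
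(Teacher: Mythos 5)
Your proposal is correct and follows exactly the route the paper takes: the paper's proof of this theorem is simply ``the proof is very similar to the proof of Theorem~\ref{BasTilt1},'' and your argument is precisely that specialisation, correctly tracking the four-step construction of Subsection~\ref{ss5.4} through Case~1 of Subsections~\ref{ss5.2} and~\ref{ss5.3} and arriving at the same explicit description of the basis (matching part~(2) of the unnumbered theorem in Subsection~\ref{ss5.4}). No gaps; if anything you supply more detail than the paper does.
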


\begin{proof}
	Again, the proof is very similar to the proof of Theorem~\ref{BasTilt1}.
\end{proof}

Recall that we write $S_p$ for the subset of $W_p$ consisting of reflections in a wall of $\overline{C}$. Note that $(W_p,S_p)$ is a Coxeter system, and let us denote by $l:W_p\to \bN$ the associated length function. Given a sequence $\underline{w}=(s_1,\ldots,s_d)$ of elements of $S_p$ we write $w$ for the product $s_1\cdots s_d$, and we say that $\underline{w}$ is an {\bf expression} (of $w$). We call $\underline{w}$ a {\bf reduced expression} if $l(w)=d$.

Given an expression $\underline{w}=(s_1,\ldots,s_d)$, let us write $$T_\chi(\underline{w})\coloneqq\Theta_{s_d}\circ\cdots\circ \Theta_{s_1}(T_\chi(\lambda)).$$

Propositions~\ref{DomExp} and \ref{TiltSummand} tell us that for each $w\in W^{I,\lambda}$ such that $w\cdot\lambda\in X(T)_{+}$, there exists an expression $\underline{w}=(s_1,\ldots,s_d)$ of $w$ such that $T_\chi(w\cdot\lambda)$ appears with multiplicity 1 in $T_\chi(\underline{w})$. As an application of this observation we have the following result, which should be compared with \cite[Prop. 3.10]{RW1}.

%

\begin{prop}\label{TruncWall}
	Let $s,\lambda$ and $\mu$ be as in Notation~\ref{NOT} and let $w\in W^{I,\lambda}$ such that $w\cdot\lambda\in X(T)_{+}$. Let $\underline{w}=(s_1,\ldots,s_d)\in S_p^d$ be a reduced expression of $w$ as in Proposition~\ref{DomExp}. Let also $M\in\sC_\chi(\lambda)$ have a costandard filtration. 
	
	Suppose $wsw^{-1}\notin W_{I,p}$ and $ws\cdot\lambda < w\cdot\lambda$, and suppose that $s=s_d$ so that $\underline{y}=(s_1,\ldots,s_{d-1})$ is a reduced expression of $ws\cdot\lambda\in X(T)_{+}$ satisfying the results of Proposition~\ref{DomExp}. Let $f_1,\ldots,f_n$ be a family of morphisms in $\Hom_{\sC_\chi}(T_\chi(\underline{w}),M)$ whose images under $j_{\succeq w\cdot\lambda}$ form a basis of $\Hom_{\sC_\chi^{\succeq w\cdot\lambda}}(T_\chi(\underline{w}),M)$. Let also $g_1,\ldots,g_m$ be a family of morphisms in $\Hom_{\sC_\chi}(T_\chi(\underline{y}),M)$ whose images under $j_{\succeq ws\cdot\lambda}$ form a basis of $\Hom_{\sC_\chi^{\succeq ws\cdot\lambda}}(T_\chi(\underline{y}),M)$. Then there exist families of morphisms $F'_1,\ldots,F'_r:T_\chi(\underline{w})\to \Theta_s(T_\chi(\underline{w}))$ and $G'_1,\ldots,G'_t:T_\chi(\underline{w})\to \Theta_s(T_\chi(\underline{y}))$ such that the images under $j_{\succeq w\cdot\lambda}$ of the compositions $$T_\chi(\underline{w})\xrightarrow{F'_i} \Theta_s(T_\chi(\underline{w}))\xrightarrow{\Theta_s(f_j)} \Theta_s(M)$$ and $$ T_\chi(\underline{w})\xrightarrow{G'_k} \Theta_s(T_\chi(\underline{y}))\xrightarrow{\Theta_s(g_l)} \Theta_s(M)$$ span $\Hom_{\sC_\chi^{\succeq w\cdot\lambda}}(T_\chi(\underline{w}),\Theta_s(M))$.
	
	Furthermore, there exist families of morphisms $F''_1,\ldots,F''_{r'}:T_\chi(\underline{y})\to \Theta_s(T_\chi(\underline{w}))$ and $G''_1,\ldots,G''_{t'}:T_\chi(\underline{y})\to \Theta_s(T_\chi(\underline{y}))$ such that the images under $j_{\succeq ws\cdot\lambda}$ of the compositions $$T_\chi(\underline{y})\xrightarrow{F''_i} \Theta_s(T_\chi(\underline{w}))\xrightarrow{\Theta_s(f_j)} \Theta_s(M)$$ and $$ T_\chi(\underline{y})\xrightarrow{G''_k} \Theta_s(T_\chi(\underline{y}))\xrightarrow{\Theta_s(g_l)} \Theta_s(M)$$ span $\Hom_{\sC_\chi^{\succeq ws\cdot\lambda}}(T_\chi(\underline{y}),\Theta_s(M))$.
\end{prop}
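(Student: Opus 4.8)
The plan is to deduce Proposition~\ref{TruncWall} from Theorems~\ref{BasTilt1} and \ref{BasTilt2} by applying the truncation functor $j_{\succeq\kappa}$ and invoking Proposition~\ref{Trunc}. The key point, which we establish first, is that the modules $T_\chi(\underline{w})$ and $T_\chi(\underline{y})$ are, up to the Serre-quotient truncation, just copies of $T_\chi(w\cdot\lambda)$ and $T_\chi(ws\cdot\lambda)$ respectively. Indeed, by Propositions~\ref{DomExp} and \ref{TiltSummand}, since $\underline{w}=(s_1,\ldots,s_d)$ is an expression of $w$ of the form appearing in Proposition~\ref{DomExp}, we have $T_\chi(\underline{w})=T_\chi(w\cdot\lambda)\oplus\bigoplus_{d(u\cdot\lambda)<d(w\cdot\lambda)}T_\chi(u\cdot\lambda)^{\oplus m_u}$; and $d(u\cdot\lambda)<d(w\cdot\lambda)$ forces $u\cdot\lambda\nsucceq w\cdot\lambda$ by \cite[Lem. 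II.6.6]{RAGS}, so $j_{\succeq w\cdot\lambda}(T_\chi(u\cdot\lambda))=0$ for all such $u$. Hence $j_{\succeq w\cdot\lambda}(T_\chi(\underline{w}))\cong j_{\succeq w\cdot\lambda}(T_\chi(w\cdot\lambda))$, and similarly $j_{\succeq ws\cdot\lambda}(T_\chi(\underline{y}))\cong j_{\succeq ws\cdot\lambda}(T_\chi(ws\cdot\lambda))$ (using that $ws\cdot\lambda\in X(T)_+$ and $\underline{y}$ is a reduced expression of $ws$ of the required form, as $s=s_d$).

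Next I would reconcile the two hypotheses on the families $f_j$ and $g_l$. The hypothesis here is that $j_{\succeq w\cdot\lambda}(f_1),\ldots,j_{\succeq w\cdot\lambda}(f_n)$ form a basis of $\Hom_{\sC_\chi^{\succeq w\cdot\lambda}}(T_\chi(\underline{w}),M)$. By Proposition~\ref{Trunc} with the standard object $\Delta(w\cdot\lambda)$ (which equals $j^{!}_{\succeq w\cdot\lambda}\Delta^{\succeq w\cdot\lambda}(w\cdot\lambda)$ up to the isomorphism $\Psi$ there) together with the previous paragraph, composing the $f_j$ with the inclusion $\iota_{w\cdot\lambda}\colon\Delta(w\cdot\lambda)\hookrightarrow T_\chi(w\cdot\lambda)\hookrightarrow T_\chi(\underline{w})$ yields a basis of $\Hom_{\sC_\chi}(\Delta(w\cdot\lambda),M)$; the same reasoning applies to the $g_l$ and $\Delta(ws\cdot\lambda)$. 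This places us exactly in the situation of Theorems~\ref{BasTilt1} and \ref{BasTilt2} (with $u=w$). Those theorems then produce families $F_i,G_k$ (resp. $F_i,G_k$ in Theorem~\ref{BasTilt2}) of maps $T_\chi(w\cdot\lambda)\to\Theta_s(T_\chi(w\cdot\lambda))$, etc., giving bases of $\Hom_{\sC_\chi}(\Delta(w\cdot\lambda),\Theta_s(M))$ and $\Hom_{\sC_\chi}(\Delta(ws\cdot\lambda),\Theta_s(M))$.

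Finally I would lift these maps back to $T_\chi(\underline{w})$ and $T_\chi(\underline{y})$ and apply $j_{\succeq w\cdot\lambda}$ once more. Since $\Theta_s$ is exact and sends tilting modules to tilting modules (Corollary~\ref{TransTilting}), and since $j_{\succeq w\cdot\lambda}$ kills the summands $T_\chi(u\cdot\lambda)$ with $d(u\cdot\lambda)<d(w\cdot\lambda)$ appearing in $T_\chi(\underline{w})$ and in $\Theta_s(T_\chi(\underline{w})),\Theta_s(T_\chi(\underline{y}))$ (by Corollary~\ref{RefTransWall}, every extra indecomposable summand of $\Theta_s(T_\chi(w\cdot\lambda))$ or $\Theta_s(T_\chi(ws\cdot\lambda))$ is indexed by some $u\cdot\lambda$ with $d(u\cdot\lambda)<d(w\cdot\lambda)$, hence $u\cdot\lambda\nsucceq w\cdot\lambda$), we may define $F'_i$ (resp. $F''_i,G'_k,G''_k$) to be any lifts of $F_i$ (resp. the others) through the split inclusions $T_\chi(w\cdot\lambda)\hookrightarrow T_\chi(\underline{w})$, $\Theta_s(T_\chi(w\cdot\lambda))\hookrightarrow\Theta_s(T_\chi(\underline{w}))$, etc., followed by the splitting projections; these do not affect the image under $j_{\succeq w\cdot\lambda}$. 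Applying $j_{\succeq w\cdot\lambda}$ to the compositions and using Proposition~\ref{Trunc} again (this time reading it backwards, identifying $\Hom_{\sC_\chi}(\Delta(w\cdot\lambda),\Theta_s(M))$ with $\Hom_{\sC_\chi^{\succeq w\cdot\lambda}}(T_\chi(\underline{w}),\Theta_s(M))$) converts the bases from Theorems~\ref{BasTilt1}/\ref{BasTilt2} into spanning sets of $\Hom_{\sC_\chi^{\succeq w\cdot\lambda}}(T_\chi(\underline{w}),\Theta_s(M))$ and $\Hom_{\sC_\chi^{\succeq ws\cdot\lambda}}(T_\chi(\underline{y}),\Theta_s(M))$, as required. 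The main obstacle is bookkeeping: one must check carefully that the specific chains of maps in the statement (which factor through $T_\mu^\lambda T_\lambda^\mu T_\mu^\lambda(T_\chi(w\cdot\mu))$ and $T_\mu^\lambda(T_\chi(w\cdot\mu)\oplus T_{\prec w\cdot\mu})$) agree, after truncation, with the maps produced by Theorems~\ref{BasTilt1} and \ref{BasTilt2}, i.e. that the isomorphisms $j_{\succeq\kappa}(T_\chi(\underline{w}))\cong j_{\succeq\kappa}(T_\chi(w\cdot\lambda))$ intertwine everything compatibly; but this is forced, since $j_{\succeq\kappa}$ is a functor and all the relevant ambiguities lie in summands it annihilates.
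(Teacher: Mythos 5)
Your proposal is correct and follows essentially the same route as the paper's proof: decompose $T_\chi(\underline{w})$ and $T_\chi(\underline{y})$ via Propositions~\ref{DomExp} and \ref{TiltSummand} into the distinguished summands $T_\chi(w\cdot\lambda)$, $T_\chi(ws\cdot\lambda)$ plus summands killed by the truncation, transfer the hypotheses on the $f_j$ and $g_l$ to bases of $\Hom_{\sC_\chi}(\Delta(w\cdot\lambda),M)$ and $\Hom_{\sC_\chi}(\Delta(ws\cdot\lambda),M)$ via Proposition~\ref{Trunc}, apply Theorems~\ref{BasTilt1} and \ref{BasTilt2}, and lift back through the split inclusions and projections. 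The paper makes the same choices, defining $F'_i=\Theta_s(\underline{\iota})\circ F_i\circ\underline{\pi}$ and $G'_k=\Theta_s(\underline{\iota}')\circ G_k\circ\underline{\pi}$ explicitly.
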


\begin{proof}
	(1) Since $w\cdot\lambda\in X(T)_{+}$ and the expression $\underline{w}$ satisfies the properties discussed in Proposition~\ref{DomExp}, Proposition~\ref{TiltSummand} shows that $T_\chi(w\cdot\lambda)$ appears as a direct summand of $T_\chi(\underline{w})$ with multiplicity 1, and that all other direct summands are of the form $T_\chi(u\cdot\lambda)$ with $d(u\cdot\lambda)< d(w\cdot\lambda)$. Accordingly, let us fix $\underline{\iota}$ and $\underline{\pi}$ via $$T_\chi(w\cdot\lambda)\xhookrightarrow{\underline{\iota}} T_\chi(\underline{w}) \overset{\underline{\pi}}{\twoheadrightarrow} T_\chi(w\cdot\lambda).$$ Note that $d(u\cdot\lambda)< d(w\cdot\lambda)$ implies $u\cdot\lambda\nsucceq w\cdot\lambda$. The maps $f_j\circ \underline{\iota}\in \Hom_{\sC_\chi}(T_\chi(w\cdot\lambda),M)$ are thus such that the compositions $f_j\circ \underline{\iota}\circ \iota_{w\cdot\lambda}$ form a basis of $\Hom_{\sC_\chi}(\Delta(w\cdot\lambda),M)$, by Proposition~\ref{Trunc} and the first isomorphism in the proof of Proposition~\ref{EndStand}.
	
	Similarly, under the given assumptions we have $ws\cdot\lambda\in X(T)_{+}\cap C_I$ and $\underline{y}$ (an expression of $ws$) has a description as given in Proposition~\ref{DomExp}. So we once again have $T_\chi(ws\cdot\lambda)$ appearing as a direct summand with multiplicity 1 in $T_\chi(\underline{y})$, and we have the maps $$T_\chi(ws\cdot\lambda)\xhookrightarrow{\underline{\iota}'} T_\chi(\underline{y}) \overset{\underline{\pi}'}{\twoheadrightarrow} T_\chi(ws\cdot\lambda).$$ The maps $g_l\circ \underline{\iota}'\in \Hom_{\sC_\chi}(T_\chi(ws\cdot\lambda),M)$ are such that the compositions $g_l\circ \underline{\iota}'\circ \iota_{ws\cdot\lambda}$ form a basis of $\Hom_{\sC_\chi}(\Delta(ws\cdot\lambda),M)$ as above.
	
	By Theorem~\ref{BasTilt1} there exist families of morphisms $F_j:T_\chi(w\cdot\lambda)\to \Theta_s(T_\chi(w\cdot\lambda))$ and $G_l:T_\chi(w\cdot\lambda)\to \Theta_s(T_\chi(ws\cdot\lambda))$ such that the compositions $$\Delta(w\cdot\lambda)\hookrightarrow T_\chi(w\cdot\lambda)\xrightarrow{F_i'} \Theta_s(T_\chi(w\cdot\lambda))\xrightarrow{\Theta_s(\underline{\iota})} \Theta_s(T_\chi(\underline{w}))\xrightarrow{\Theta_s(f_j)} \Theta_s(M)$$ and $$\Delta(w\cdot\lambda)\hookrightarrow T_\chi(w\cdot\lambda)\xrightarrow{G_k} \Theta_s(T_\chi(ws\cdot\lambda))\xrightarrow{\Theta_s(\underline{\iota}')} \Theta_s(T_\chi(\underline{y}))\xrightarrow{\Theta_s(g_l)} \Theta_s(M)$$ form a basis of $\Hom_{\sC_\chi}(\Delta(w\cdot\lambda),\Theta_s(M))$. Arguing as in Propositions~\ref{Trunc} and \ref{EndStand} we have an isomorphism $$\Hom_{\sC_\chi}(\Delta(w\cdot\lambda),\Theta_s(M))\cong \Hom_{\sC_\chi^{\succeq w\cdot\lambda}}(T_\chi(w\cdot\lambda),\Theta_s(M))\cong\Hom_{\sC_\chi^{\succeq w\cdot\lambda}}(T_\chi(\underline{w}),\Theta_s(M)),$$ and we get that the compositions $$ T_\chi(\underline{w})\overset{\underline{\pi}}{\twoheadrightarrow} T_\chi(w\cdot\lambda)\xrightarrow{F_i} \Theta_s(T_\chi(w\cdot\lambda))\xrightarrow{\Theta_s(\underline{\iota})} \Theta_s(T_\chi(\underline{w}))\xrightarrow{\Theta_s(f_j)} \Theta_s(M)$$ and $$T_\chi(\underline{w})\overset{\underline{\pi}}{\twoheadrightarrow} T_\chi(w\cdot\lambda)\xrightarrow{G_k} \Theta_s(T_\chi(ws\cdot\lambda))\xrightarrow{\Theta_s(\underline{\iota}')} \Theta_s(T_\chi(\underline{y}))\xrightarrow{\Theta_s(g_l)} \Theta_s(M)$$ form a basis of $\Hom_{\sC_\chi^{\succeq w\cdot\lambda}}(T_\chi(\underline{w}),\Theta_s(M))$. Setting the $F'_i$ to be the maps $\Theta_s(\underline{\iota})\circ F_i\circ \underline{\pi}$, and the $G'_k$ to be the maps $\Theta_s(\underline{\iota}')\circ G_k\circ \underline{\pi}$, we get the desired result.
	
	(2) The same argument applies.
\end{proof}
%
%

\end{document}